\theoremstyle{plain}
\newtheorem{theorem}{Theorem}[section]
\newtheorem{corollary}[theorem]{Corollary}
\newtheorem{lemma}[theorem]{Lemma}
\newtheorem{proposition}[theorem]{Proposition}
\theoremstyle{definition}
\newtheorem{definition}[theorem]{Definition}
\newtheorem{example}[theorem]{Example}
\newtheorem{remark}[theorem]{Remark}
\def\ker#1{\mathrm{ker}(#1)}
\def\aut#1{\mathrm{Aut}(#1)}
\def\aff#1{\mathrm{Aff}#1}
\def\lmlt{\mathrm{LMlt}}
\def\dis{\mathrm{Dis}}
\def\sym{\mathrm{Sym}}
\def\c#1{\mathrm{con}_{#1}}
\def\Q{\mathcal{Q}_{\mathrm{Hom}}}
\def\setof#1#2{\{#1\,\mid\,#2\}}
\def\Q{\mathcal Q}
\def\LSS{\mathcal{LSS}}
\def\cg#1{\equiv_\alpha}
\newcommand*\xbar[1]{%
   \hbox{%
     \vbox{%
       \hrule height 0.5pt 
       \kern0.5ex
       \hbox{%
         \kern-0.1em
         \ensuremath{#1}%
         \kern-0.1em
       }%
     }%
   }%
} 
\title{Connected quandles of size $pq$ and $4p$}
\author{Marco Bonatto}
\address[M. Bonatto]{IMAS--CONICET and Universidad de Buenos Aires, 
		Pabell\'on~1, Ciudad Universitaria, 1428, Buenos Aires, Argentina}
\email{marco.bonatto.87@gmail.com}
\begin{document}



\subjclass[2010]{17D99, 08A62}

\begin{abstract}
	We classify all non-simple connected quandles of size $pq$ and $4p$ where $p,q$ are primes, as a special family of locally strictly simple quandles (i.e. quandles for which all proper subquandles are strictly simple). In particular we classify all latin quandles of size $pq$ and $4p$ and we show that latin quandles of size $8p$ are affine.
\end{abstract}

\maketitle

\section*{Introduction}

Quandles are binary algebras related to knot invariants \cite{J,Matveev} and they provide a special class of solutions to the set-theoretical quantum Yang-Baxter equation \cite{EGS,ESS}. \\
Quandles have been studied using both group and module theory and in a recent paper \cite{CP} we developed a universal algebraic approach, in particular towards the connection between the properties of the \textit{displacement group} and its subgroups and properties of congruences as {\it abelianness} and {\it centrality} in the sense of commutator theory \cite{comm}.

In \cite{Principal} the class of finite {\it strictly simple quandles}, i.e. quandles with no proper subquandles has been investigated and it has been proved that it coincides with the class of simple abelian quandles (in the sense of \cite{comm,CP}). They can be described using affine representations over finite fields and their lattice of subquandles containing a given element is the two elements lattice, i.e. it has height $1$. In this paper we are studying finite quandles such that every proper subquandle is strictly simple and we call them \textit{locally strictly simple} (LSS) quandles. Equivalently LSS quandles are quandles for which the height of the lattice of subquandles containing a given element is at most $2$. Strictly simple quandles are examples of LSS quandles, but there exist non-trivial examples as non-simple quandles of size 28 of the RIG database of GAP \cite{RIG}.\\
 Connected quandles of size $p$ \cite{EGS}, $p^2$ \cite{Grana_p2}, and the {\it affine} ones of size $p^3$ and $p^4$ \cite{Hou} where $p$ is a prime have been classified. All such quandles are affine and the first class of non-affine quandles which has been classified is the class of connected quandles of size $p^3$ \cite{GB}. Using the results on LSS quandles, we can provide the classification of two more families of non-affine quandles, namely the classification of non-simple connected quandles of size $pq$ and $4p$ where $p,q$ are primes.
 
In Section \ref{Sec: prelim} we collect some basic facts about quandles and we show some preliminary results.

In Section \ref{Sec:LSS} we characterize the congruence lattice of non-simple connected LSS quandles and we show that this class of quandles coincide with the class of {\it extensions} of a strictly simple quandle by a strictly simple quandle (i.e. quandles having a congruence with strictly simple factor and strictly simple blocks).\\
First we show that connected subdirectly reducible LSS quandles are direct products of strictly simple quandles and then we focus on the subdirectly irreducible case.\\
Finite connected abelian quandles are {\it polynomially equivalent} to modules over the ring of Laurent polynomial \cite{Principal}[Section 2.4], so we provide a module-theoretical description of abelian connected LSS quandles. 

In Section \ref{non-ab LSS} we investigate non-abelian LSS quandles employing a case-by-case discussion using the equivalence relation $\sigma_Q$ (already defined in \cite{CP} in connection with central congruences, see definition \eqref{sigma}), defining the classes of quandles $\LSS(p^m,q^n,\sigma_Q)$, for $p,q$  primes, as the classes of quandles which are extensions of a strictly simple quandle of size $q^n$ by a strictly simple quandle of size $p^m$ and with given $\sigma_Q$. We completely characterize the displacement groups of such quandles and we compute some constructions involving special and extraspecial $p$-groups. Then we focus on the case $n=1$ and $m=1$, with the goal to describe quandles of size $pq$ and $4p$ and as a byproduct we obtain that latin quandles of size $8p$ where $p$ is a prime are affine.

 The first step in the classification of non-simple connected quandles of size $pq$ and $4p$ where $p$ and $q$ are primes, is to show that they are all LSS quandles, except for a few small primes (see Section \ref{Sec:pq} and Section \ref{sec 4p} respectively). \\
 Connected affine quandles of size $pq$ and $4p$ decompones as direct products of quandles prime power size. We show that there exists only two non-affine non-simple connected quandles of size $pq$ whenever $p^2=1 \pmod{q}$ (Theorem \ref{iso_class_invol}) and two non-simple non-affine connected quandles of size $4p$ whenever $p=1\pmod 3$ (Theorem \ref{iso class 4p}) and according to Proposition \ref{final on ab ext} the latter can not be obtained by {\it abelian extensions}. In particular such quandles are all the non-affine latin quandles of size $pq$ and $4p$ respectively.\\ 
 All such quandles are latin and using the one-to-one correspondence between Bruck Loops of odd order and involutory latin quandles we can show that there is only one Bruck Loop of order $pq$, as already proved in \cite{BruckPetr}. \\
The classification strategy is similar to the one adopted in \cite{GB}. We first characterize the congruence lattice of the LSS quandles and then we identify the displacement group. Then we compute the isomorphism classes which are in one-to-one correspondence with conjugacy classes of automorphisms of such groups (see Theorem \ref{iso theorem1}). All the group theoretical results and the technical computations useful to this end are collected in the appendix \ref{Sec:appendix}.

Finally in Section \ref{non-connected} we give a description of finite non-connected LSS quandles.

\subsection*{Notation and terminology}

An algebra is a set $A$ endowed with a set of operations. We denote by $Sg(X)$ the subalgebra generated by $X\subseteq A$ and its automorphism group by $\aut{A}$. A congruence of $A$ is an equivalence relation which respects the algebraic structure. Congruences of an algebra form a lattice denoted by $Con(A)$ with minimum the identity relation $0_Q=\setof{(a,a)}{a\in Q}$ and maximum $1_Q=A\times A$. By virtue of the second homomorphism thereom homomorphic images and congruences are essentially the same. The factor algebra with respect to $\alpha\in Con(A)$ is denoted by $A/\alpha$. We refer the reader to \cite{UA} for further details. \\
Am algebra is nilpotent (resp. solvable) if there exists a chain of congruences
$$\alpha_0=0_A\leq \alpha_1\leq\ldots\leq \alpha_n=1_A$$
such that $\alpha_{i+1}/\alpha_i$ is central (resp. abelian) in $A/\alpha_i$ for every $0\leq i\leq n-1$ (see \cite{CP, comm} for the formal definitions).
 
The group operations are denoted by juxtaposition or by $+$ in the abelian case. The inner automorphism of an element $g $ of a group $G$ is denoted by $\widehat{g}$ and $N_G(S)$ (resp. $C_G(S)$) is the normalizer (resp. centralizer) of a subset $S\subseteq G$. The core of a subgroup $H$ (i.e. the biggest normal subgroup of $G$ contained in $H$) is denoted by $Core_G(H)$. If $G$ acts on a set $Q$ we denote  the set-wise point-stabilizer of a subset $S$ by $G_S=\setof{g\in G}{g(S)=S}$ and the point-wise stabilizer of $a$ in $G$ by $G_a$. The orbit of $a$ under the action of $G$ will be denoted by $a^G$. A group is called semiregular if $G_a=1$ for every $a\in G$ and regular if it semiregular and transitive. Note that the pointwise-stabilizer of a transitive group is normal if and only if it is trivial (the stabilizers of a transitive group are all conjugate).\\
If $f$ is an automorphism of a group $G$ we denote $Fix(f)$ the set of its fixed points $\setof{a\in G}{f(a)=a}$. If $H$ is an $f$-invariant subgroup of $G$ we denote by $f_H$ the induced mapping on $G/H$ defined as $f_H(gH)=f(g)H$.

\subsection*{Acknowledgments}
The author wants to thank Daniele Toller and Giuliano Bianco for the useful comments and remarks on the drafts of the present work.

\section{Preliminary results}\label{Sec: prelim}
Quandles are idempotent left-distributive left quasigroups, namely a quandle $Q$ is a a binary algebra $(Q,\ast,\backslash)$ satisfying
\begin{eqnarray*}
a\ast (a\backslash b) &=& a\backslash (a\ast b)=b,\\
a\ast( b\ast c)&=&(a\ast b)\ast (a\ast c),\\
a\ast a &=& a.
\end{eqnarray*}
for every $a,b\in Q$. Note that the left multiplication mapping $L_a:b\mapsto a\ast b$ is in the stabilizer of $a$ of the automorphism group of $Q$ for every $a\in Q$.
\begin{example}\label{example2}

(i) Any set $Q$ with the operation $a\ast b=b$ for every $a,b\in Q$ is a \emph{projection quandle}. If $|Q|=n$ we denote such quandle by $\mathcal{P}_n$.

(ii) Let $G$ be a group and $H\subseteq G$ be a subset closed under conjugation. Then $Conj(H)=(H,*)$, where $g*h=g^{-1} h g$ for every $h,g\in H$, is a quandle.

(iii) Let $G$ be a group, $f\in  \aut{G}$ and $H \leq Fix(f)$. Let $G/H$ be the set of left cosets of $H$ and the multiplication defined by
\begin{displaymath}
   aH\ast bH=af(a^{-1}b)H.
\end{displaymath}
Then $(G/H,*,\backslash)$ is a quandle denoted by $\Q(G,H,f)$. Such a quandle is called \emph{principal} (over $G$) if $H=1$ and it will be denoted just by $\Q(G,f)$, and it is \emph{affine} (over $G$) if $G$ is abelian, in this case we use the notation $\aff(G,f)$. 
\end{example}
A quandle is called {\it homogeneous} if $\aut{Q}$ acts transitively on $Q$. A quandle $Q$ is homogeneous quandles if and only if admits a representation as in Example \ref{example2}(iii), i.e. $Q=\mathcal{Q}(G,H,f)$ \cite{J}. \\
 The {\it left multiplication} group is the group generated by the left multiplications mappings and it is denoted by $\lmlt(Q)$ and the {\it displacement group} is the group generated by $ \setof{L_a L_b^{-1}}{a,b\in Q}$ and denoted by $\dis(Q)$. A quandle $Q$ is called {\it connected} if $\lmlt(Q)$ (or equivalently $\dis(Q)$) acts transitively on $Q$. \\
If $Q=\mathcal{Q}(G,H,f)$, the action of $\dis(Q)$ is given by the canonical left action of the group $[G,f]=\langle\setof{gf(g)^{-1}}{g\in G}\rangle$ on $G/H$ and $\dis(Q)\cong [G,f]/Core_G(H)$ (see \cite[Section 2]{GB}). According to \cite[Theorem 4.1]{HSV}, connected quandles can be represented over their displacement group as $\Q(\dis(Q),\dis(Q)_a,\widehat{L}_a)$. For connected quandles the representation over the displacement group provides useful information. In particular, since $[\dis(Q),\widehat{L_a}]=\dis(Q)$, the order of the left multiplications of $Q$ coincides with the order of $\widehat{L_a}$. 
 
 \begin{lemma}\label{order of aut and left mult}
 	Let $G$ be a finite group, $f\in \aut{G}$ and $H\leq Fix(f)$. If $G=[G,f]$ then $f$ and $f_H$ have the same order.
 \end{lemma}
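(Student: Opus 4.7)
The plan is to establish the nontrivial direction $|f|\leq |f_H|$; the other is clear because $f^n=id$ forces $f_H^n=id$. Set $d=|f_H|$ and $\phi=f^d$, so that the goal becomes $\phi=id$. By the choice of $d$ we have $\phi(g)H=gH$ for every $g\in G$, so the function
\[
c\colon G\longrightarrow H,\qquad c(g):=g^{-1}\phi(g),
\]
is well defined with image in $H$.

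Two features of $\phi$ will drive the argument: first, $\phi$ commutes with $f$, because it is a power of $f$; second, since $H\leq Fix(f)$ one has $H\leq Fix(\phi)$, so $\phi$ also fixes $H$ pointwise. Using these, I verify that $c$ vanishes on every canonical generator of $[G,f]$. Writing $\phi(g)=g\,c(g)$ with $c(g)\in H$ fixed by $f$, a direct calculation gives
\[
\phi\bigl(g f(g)^{-1}\bigr)=\phi(g)\,f(\phi(g))^{-1}=g\,c(g)\bigl(f(g)\,c(g)\bigr)^{-1}=g f(g)^{-1},
\]
and hence $c(g f(g)^{-1})=1$ for every $g\in G$.

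Next, a straightforward expansion of $c(g_1 g_2)=(g_1 g_2)^{-1}\phi(g_1)\phi(g_2)$ produces the crossed-homomorphism identities
\[
c(g_1 g_2)=g_2^{-1} c(g_1) g_2\cdot c(g_2),\qquad c(g^{-1})=g\,c(g)^{-1}g^{-1},
\]
which show that $\{g\in G : c(g)=1\}$ is closed under products and inverses, i.e.\ is a subgroup of $G$. Since this subgroup contains every element $g f(g)^{-1}$, it contains $[G,f]$, which equals $G$ by hypothesis; hence $c\equiv 1$ and $\phi=id$, as desired.

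The argument is essentially a short computation; the only delicate point is the cancellation in the identity $\phi(g f(g)^{-1})=g f(g)^{-1}$, which uses both the commutation $\phi f=f\phi$ and the $f$-invariance of $c(g)\in H$. The hypothesis $G=[G,f]$ enters exactly once, to promote the vanishing of $c$ on the canonical generators to the whole of $G$.
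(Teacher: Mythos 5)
Your proof is correct and follows essentially the same route as the paper: both show that $f^{|f_H|}$ fixes every generator $gf(g)^{-1}$ of $[G,f]=G$ (using $g^{-1}f^{|f_H|}(g)\in H\leq Fix(f)$) and conclude via $G=[G,f]$. The crossed-homomorphism identities are a harmless detour, since the set $\setof{g\in G}{c(g)=1}$ is just $Fix(f^{d})$, which is a subgroup because $f^{d}$ is an automorphism.
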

 
 \begin{proof}
 	Clearly if $f^n(g)=g$ for every $g\in G$ then also $f^n(g)H=H$ for every $g\in G$, so the order of $f_H$ divides the order of $f$. On the other hand if $f^n(g)H=gH$ for every $g\in G$ then $g^{-1}f^n(g)\in H\leq Fix(f)$ and so $f(g^{-1}f^n(g))=f(g^{-1})f^{n+1}(g))=g^{-1}f^n(g)$ for every $g\in G$. Therefore $f^n(gf(g)^{-1})=gf(g)^{-1}$ for every $g\in G$. Then the order of $f$ divides the order of $f_H$ since $G$ is generated by $\setof{gf(g)^{-1}}{g\in G}$.
 \end{proof}
 
The representation of connected quandles over groups is also useful for isomorphism checking. Indeed we have the following:

\begin{theorem}\label{iso theorem1}\cite[Theorem 5.11, Corollary 5.12]{GB}
	Let $Q_i = \mathcal{Q}(G_i,H_i,f_i)$ be a connected quandles for $i=1,2$. If $H_i=Fix(f_i)$ or $H_i=1$ for $i=1,2$ then $Q_1\cong Q_2$ if and only there exists a group isomorphism $\psi:G_1\longrightarrow G_2$ and $f_2=\psi f_1 \psi^{-1}$.
\end{theorem}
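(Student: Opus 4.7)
The plan is to prove the two implications separately: from groups to quandles by direct construction, and from quandles to groups by recognizing each $G_i$ (together with $f_i$) inside $\lmlt(Q_i)$.

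For the easier direction, given $\psi : G_1 \to G_2$ with $f_2 = \psi f_1 \psi^{-1}$, I would first observe that $\psi(H_1) = H_2$ in both allowed cases: trivially when $H_i = 1$, and because $\psi(Fix(f_1)) = Fix(\psi f_1 \psi^{-1}) = Fix(f_2)$ when $H_i = Fix(f_i)$. Then the induced map $\bar\psi(gH_1) = \psi(g) H_2$ is a well-defined bijection $Q_1 \to Q_2$, and a direct computation using $f_2 \psi = \psi f_1$ shows that it respects the quandle operation.

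For the converse, starting from a quandle isomorphism $\phi : Q_1 \to Q_2$, I would first use the transitivity of $\dis(Q_2)$ to compose $\phi$ with a displacement so that $\phi$ sends the base coset $H_1$ to $H_2$. Conjugation by $\phi$ then gives a group isomorphism $\Phi : \lmlt(Q_1) \to \lmlt(Q_2)$ restricting to an isomorphism of the displacement groups and satisfying $\Phi(L_{H_1}) = L_{H_2}$. Each $G_i$ maps into $\lmlt(Q_i)$ via the canonical left action $\rho_i : g \mapsto (xH_i \mapsto gxH_i)$, with $\ker \rho_i = Core_{G_i}(H_i)$, and a short calculation shows that conjugation by $L_{H_i}$ on $\rho_i(G_i)$ implements $f_i$. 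Provided each $\rho_i$ is injective, the composition $\rho_2^{-1} \circ \Phi|_{\rho_1(G_1)} \circ \rho_1$ is an isomorphism $G_1 \to G_2$ carrying $f_1$ to $f_2$.

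The crux is therefore the injectivity of $\rho_i$, equivalently $Core_{G_i}(H_i) = 1$, under the allowed hypotheses. This is trivial when $H_i = 1$. When $H_i = Fix(f_i)$, any $N \trianglelefteq G_i$ contained in $Fix(f_i)$ satisfies $[G_i, f_i] \subseteq C_{G_i}(N)$ (expand $f_i(gng^{-1}) = gng^{-1}$); combined with the connectedness identity $G_i = [G_i, f_i] \cdot Fix(f_i)$, this localizes $N$ inside the centralizer of itself in $G_i$, and either a closer inspection of how $Fix(f_i)$ acts on $N$, or the observation that $Q_i \cong \Q(G_i/N, Fix(f_i)/N, \bar{f_i})$ lets one pass to the reduced group and forces $N = 1$. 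This case analysis is where the hypothesis on $H_i$ is genuinely used, and where I expect the technical work to concentrate.
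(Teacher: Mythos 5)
Your ``if'' direction and the skeleton of your converse --- normalize $\phi$ so that it maps the base coset $H_1$ to $H_2$, conjugate by $\phi$, and use that conjugation by $L_{H_i}$ implements $f_i$ on the image of the left-translation action $\rho_i$ --- are correct, and the latter is indeed the standard route. The proof breaks, however, exactly at the step you call the crux. Your preparatory observations ($[G_i,f_i]\leq C_{G_i}(N)$ for $N\trianglelefteq G_i$ with $N\leq Fix(f_i)$, and $G_i=[G_i,f_i]\,Fix(f_i)$ by connectedness) are true, but they cannot force $N=1$, because $Core_{G_i}(Fix(f_i))=1$ simply does not follow from the stated hypotheses: for any connected $\mathcal{Q}(G,Fix(f),f)$ and any abelian group $A\neq 1$, put $G_2=G\times A$, $f_2=f\times\mathrm{id}_A$, $H_2=Fix(f_2)=Fix(f)\times A$; then $\mathcal{Q}(G_2,H_2,f_2)\cong\mathcal{Q}(G,Fix(f),f)$ is connected, while $1\times A\leq Core_{G_2}(H_2)$ and $G_2\not\cong G$. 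This example even shows the statement is false if read literally; the intended hypothesis, implicit in \cite{GB} and carried explicitly in every application in this paper (see Theorems \ref{automorf for quandles} and \ref{4p iff}), is that $G_i\cong\dis(Q_i)$ with $f_i=\widehat{L}_{a_i}$, i.e.\ the representation is the canonical one over the displacement group. (Your ``if'' direction also tacitly takes the same clause for both $i$; in a mixed case $H_1=1$, $H_2=Fix(f_2)\neq 1$ it already fails for cardinality reasons.)

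There is a second, independent gap: the identity $\Phi(\rho_1(G_1))=\rho_2(G_2)$ is used without justification, and it does not hold in general even when both cores are trivial. Conjugation by a quandle isomorphism canonically transports $\lmlt$, $\dis$ and $\mathrm{Aut}$, but not an arbitrarily chosen transitive group of automorphisms; in the case $H_i=Fix(f_i)$ the image $\rho_i(G_i)=\dis(Q_i)\,\rho_i(Fix(f_i))$ depends on the chosen representation and need not even be contained in $\lmlt(Q_i)$, contrary to what you assert. Concretely, for $Q=\aff(\mathbb{Z}_5,2)$ both $\mathcal{Q}(\dis(Q),\dis(Q)_0,\widehat{L}_0)$ and $\mathcal{Q}(\lmlt(Q),\lmlt(Q)_0,\widehat{L}_0)$ are connected, satisfy $H_i=Fix(f_i)$ and have trivial cores, and both represent the same quandle, yet $\dis(Q)\cong\mathbb{Z}_5$ and $\lmlt(Q)\cong\mathbb{Z}_5\rtimes\mathbb{Z}_4$ are not isomorphic. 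The repair is to read the hypotheses as intended: $G_i\cong\dis(Q_i)$ and $f_i=\widehat{L}_{a_i}$. For $H_i=1$ this is automatic, since connectedness of a principal quandle forces $[G_i,f_i]=G_i$, so that $\rho_i$ is an isomorphism onto $\dis(Q_i)$; for $H_i=Fix(f_i)$ it is a genuine extra assumption. With that reading $\rho_i(G_i)=\dis(Q_i)$ is intrinsic, $\Phi$ carries it across, and your computation using $\Phi(L_{H_1})=L_{H_2}$ closes the converse exactly as you set it up.
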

For every congruence $\alpha$ of a quandle $Q$ the mapping
\begin{equation}\label{pi alpha}
\pi_\alpha :\dis(Q)\longrightarrow \dis(Q/\alpha),\quad L_{x_1}L_{x_2}^{-1}  \mapsto L_{[x_1]}L_{[x_2]}^{-1}
\end{equation}
is a well-defined surjective group morphism and its kernel is
\begin{displaymath}
\dis^{\alpha}=\setof{h\in \dis(Q)}{[h(a)] =[a], \text{ for every } a\in Q}= \bigcap_{[a]\in Q/\alpha} \dis(Q)_{[a]},
\end{displaymath}
where $\dis(Q)_{[a]_\alpha}=\setof{h\in \dis(Q)}{h([a])= [a]}$ is the set-wise stabilizer in $\dis(Q)$. Note that $\dis(Q)_{[a]_\alpha}=\pi_\alpha^{-1}(\dis(Q/\alpha)_{[a]_\alpha})$. 

The representation of connected quandles over the displacement group can be used to get the representation of factors. 
\begin{lemma}\label{rep_for_factors}\cite[Lemma 1.4]{GB}\label{rep for factors}
Let $Q=\Q(G,G_a,f)$ be a connected quandle where $a\in Q$, $G=\dis(Q)$ and $f=\widehat{L_a}$. Then $Q/\alpha\cong \Q(G/\dis^\alpha,\dis(Q)_{[a]_\alpha}/\dis^\alpha,f_{\dis^\alpha})$ for every $\alpha\in Con(Q)$.
\end{lemma}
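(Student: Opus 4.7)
The plan is to derive the claimed representation of $Q/\alpha$ in two steps: first apply the representation theorem \cite[Theorem~4.1]{HSV} directly to the connected quandle $Q/\alpha$, and then transport the resulting data along the surjective morphism $\pi_\alpha\colon G\to\dis(Q/\alpha)$ defined in \eqref{pi alpha}. Since $Q$ is connected, its quotient $Q/\alpha$ is connected as well, being the image of a transitive action; hence applying \cite[Theorem~4.1]{HSV} at $[a]_\alpha\in Q/\alpha$ yields
$$Q/\alpha \;\cong\; \Q\!\left(\dis(Q/\alpha),\,\dis(Q/\alpha)_{[a]_\alpha},\,\widehat{L_{[a]_\alpha}}\right).$$
It then remains to identify each of the three pieces on the right with the corresponding datum for $Q$ modulo $\dis^\alpha$.

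For the first two pieces I would invoke $\pi_\alpha$, whose kernel is $\dis^\alpha$ by construction. The first isomorphism theorem gives $\dis(Q/\alpha)\cong G/\dis^\alpha$, and the displayed identity $\dis(Q)_{[a]_\alpha}=\pi_\alpha^{-1}(\dis(Q/\alpha)_{[a]_\alpha})$ recorded just before the lemma transports the stabilizer $\dis(Q/\alpha)_{[a]_\alpha}$ to $\dis(Q)_{[a]_\alpha}/\dis^\alpha$ under this isomorphism.

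The remaining point, which I expect to be the main obstacle, is to identify $\widehat{L_{[a]_\alpha}}$ with the induced automorphism $f_{\dis^\alpha}$. First, $\dis^\alpha$ is $f$-invariant: if $h\in\dis^\alpha$ and $B$ is an $\alpha$-block, then $L_ahL_a^{-1}(B)=L_a(L_a^{-1}(B))=B$, since $L_a$ is a quandle automorphism and therefore permutes $\alpha$-blocks. Hence $f_{\dis^\alpha}$ is well defined on $G/\dis^\alpha$. The intertwining relation $\pi_\alpha\circ f=\widehat{L_{[a]_\alpha}}\circ\pi_\alpha$ is then a short computation on the generators $L_xL_y^{-1}$ of $G$: using the definition of $\pi_\alpha$ on generators together with the standard quandle identity $L_aL_xL_a^{-1}=L_{a\ast x}$, one obtains
$$\pi_\alpha\!\left(L_a(L_xL_y^{-1})L_a^{-1}\right)=L_{[a\ast x]_\alpha}L_{[a\ast y]_\alpha}^{-1}=L_{[a]_\alpha}\pi_\alpha(L_xL_y^{-1})L_{[a]_\alpha}^{-1},$$
which shows that $f_{\dis^\alpha}$ corresponds to $\widehat{L_{[a]_\alpha}}$ under $G/\dis^\alpha\cong\dis(Q/\alpha)$. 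Gluing the three identifications gives the claimed isomorphism, the rest being bookkeeping.
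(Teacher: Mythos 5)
Your proposal is correct: representing the connected quotient $Q/\alpha$ over its displacement group via \cite[Theorem 4.1]{HSV} and then transporting the three pieces of data along $\pi_\alpha$, whose kernel is $\dis^\alpha$, together with the intertwining relation $\pi_\alpha\circ f=\widehat{L_{[a]_\alpha}}\circ\pi_\alpha$ checked on the generators $L_xL_y^{-1}$, is exactly the standard argument; the paper itself gives no proof but quotes \cite[Lemma 1.4]{GB}, whose proof runs along these same lines. One small touch-up: the reason $L_a^{\pm1}$ permute $\alpha$-blocks is that $\alpha$ is a congruence (compatible with $\ast$ and $\backslash$), not merely that $L_a$ is an automorphism — a general automorphism need not preserve a given congruence — and alternatively the $f$-invariance of $\dis^\alpha$ follows for free from your intertwining relation, since $\dis^\alpha=\Ker{\pi_\alpha}$.
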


Let $N\in Norm(Q)=\setof{N\trianglelefteq \lmlt(Q)}{N\leq \dis(Q)}$. The orbit decomposition with respect to the action of $N$ denoted by $\mathcal{O}_N$ and the relation $\c{N}=\setof{(a,b)\in Q^2}{L_a L_b^{-1}}$ are congruences of $Q$ \cite[Lemma 2.6, Lemma 3.4]{CP}. If $\alpha\in Con(Q)$ the {\it displacement group relative to the congruence $\alpha$} is defined analogically to the displacement group as $\dis_\alpha=\langle\setof{L_a L_b^{-1}}{a\, \alpha\, b}\rangle$ (note that relative displacement groups belongs to $Norm(Q)$ and that $[\dis(Q),\dis^\alpha]\leq\dis_\alpha\leq \dis^\alpha$). 
%
In particular if $\alpha$ is a minimal congruence (with respect to inclusion) then its relative displacement group is minimal in $Norm(Q)$.
\begin{lemma}\label{minimal congruences}
	Let $Q$ be a quandle and $\alpha$ be a minimal congruence. Then $\dis_\alpha$ is a minimal element of $Norm(Q)$. In particular if $\dis_\alpha$ is solvable, then it is abelian.
\end{lemma}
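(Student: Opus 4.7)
The plan is to exploit the Galois-type correspondence between $Norm(Q)$ and congruences: because $\alpha$ is minimal among the congruences of $Q$, the normal subgroup $\dis_\alpha$ should be minimal in $Norm(Q)$. I will take an arbitrary $N \in Norm(Q)$ with $N \leq \dis_\alpha$ and aim to show $N=1$ or $N=\dis_\alpha$.

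The first, downward direction is immediate from the excerpt: $\dis_\alpha \leq \dis^\alpha$, and $\dis^\alpha$ by definition consists of the elements of $\dis(Q)$ that stabilize every $\alpha$-block setwise. Hence each $N$-orbit sits inside an $\alpha$-block, so the orbit congruence $\mathcal{O}_N$ satisfies $\mathcal{O}_N \leq \alpha$. Minimality of $\alpha$ leaves two cases. If $\mathcal{O}_N = 0_Q$, then $N$ fixes every point of $Q$ and, since $\lmlt(Q)$ acts faithfully on $Q$, we obtain $N=1$.

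The remaining case $\mathcal{O}_N = \alpha$ is where I expect the only real work. The task is to upgrade a transitivity statement to $\dis_\alpha \leq N$, i.e.\ to produce the specific generators $L_a L_b^{-1}$, with $a\,\alpha\,b$, as elements of $N$. Given such a pair, $\mathcal{O}_N = \alpha$ lets me pick $g \in N$ with $g(a) = b$, and using $L_{g(a)} = g L_a g^{-1}$ (valid because $\lmlt(Q) \leq \aut{Q}$) I rewrite
\[
L_b L_a^{-1} = g L_a g^{-1} L_a^{-1}.
\]
Normality of $N$ in $\lmlt(Q)$ places $L_a g^{-1} L_a^{-1}$ into $N$, so the entire product lies in $N$, and therefore $L_a L_b^{-1} \in N$ as well. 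Letting $(a,b)$ range over all $\alpha$-related pairs yields $\dis_\alpha \leq N$, and together with the assumption $N \leq \dis_\alpha$ this gives $N = \dis_\alpha$.

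For the second sentence, I will apply the minimality just proved to $[\dis_\alpha, \dis_\alpha]$: it is characteristic in $\dis_\alpha$, hence normal in $\lmlt(Q)$, and contained in $\dis(Q)$, so it belongs to $Norm(Q)$ and sits inside $\dis_\alpha$. Minimality forces it to equal $1$ or $\dis_\alpha$, and the second option would make $\dis_\alpha$ a nontrivial perfect solvable group, which is impossible; hence $\dis_\alpha$ is abelian. The main obstacle in the plan is the promotion step $\mathcal{O}_N = \alpha \Rightarrow \dis_\alpha \leq N$, since it is there (and only there) that the normality of $N$ in $\lmlt(Q)$ is genuinely used; the rest is a direct application of the definitions recalled just before the statement.
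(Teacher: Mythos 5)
Your proof is correct and follows essentially the same route as the paper: show $\mathcal{O}_N\leq\alpha$, split by minimality of $\alpha$, and in the case $\mathcal{O}_N=\alpha$ use $L_{g(a)}=gL_ag^{-1}$ together with normality of $N$ in $\lmlt(Q)$ to force $\dis_\alpha\leq N$, then treat the derived subgroup of $\dis_\alpha$ as an element of $Norm(Q)$ to get abelianness. No substantive differences to report.
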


\begin{proof}
	Let $N< \dis_\alpha$. Then $\mathcal{O}_N\leq \alpha$. If equality holds, $b=n(a)$ for some $n\in N$ whenever $a\,\alpha\, b$ and then 
	$$L_a L_b^{-1}=L_a L_{n(a)}^{-1}=\underbrace{L_a n^{-1} L_a^{-1}}_{\in N} n \in N$$ and therefore $\dis_\alpha\leq N$, contradiction. Hence $\mathcal{O}_N=0_Q$ and $N=1$. If $\dis_\alpha$ is solvable, then its derived subgroup is a proper subgroup and it belongs to $Norm(Q)$ and therefore it is trivial i.e. $\dis_\alpha$ is abelian. 
\end{proof}
The properties of congruences as abeliannes and centrality in the sense of \cite{comm} are completely determined by the properties of the relative displacement groups (see \cite[Theorem 1.1]{CP}). The smallest congruences of $Q$ with abelian factor is denoted by $\gamma_Q$ and the center of $Q$, i.e. biggest central congruence is denoted by $\zeta_Q$ (these congruences are the analogous of the derived subgroup and the center of a group in the commutator theory for quandles \cite{comm,CP}).

The equivalence relation $\sigma_Q$ is defined as 
\begin{equation}\label{sigma}
a\, \sigma_Q \, b \quad \text{if and only if}\quad \dis(Q)_a=\dis(Q)_b.\end{equation} 
This relation contains every central congruence of $Q$ \cite[Proposition 5.9]{CP} and its classes are subquandles which are blocks with respect to the action of $\lmlt(Q)$ \cite[Proposition 1.5]{Principal}. Moreover a connected quandle $Q$ is principal if and only if $\sigma_Q=1_Q$.

The structure of groups in $Norm(Q)$ can be investigated using the following Lemma.

\begin{lemma}\label{embedding as quandle}
	Let $Q$ be a connected quandle, $\alpha\in Con(Q)$ such that $Q/\alpha=Sg([a_1],\ldots, [a_n])$ and let $N\in Norm(Q)$ such that $N\leq \dis^\alpha$. Then:
	\begin{itemize}
		\item[(i)] the mapping
		\begin{displaymath}\psi: \dis^\alpha\longrightarrow \prod_{i =1}^n \aut{[a_i]},\quad h\mapsto (h|_{[a_1]},\ldots, h|_{[a_n]}),
		\end{displaymath}
		is an embedding of groups and $N|_{[a_i]_\alpha}\cong N|_{[a_j]_\alpha}$ for every $1\leq i,j\leq n$.
		
		\item[(ii)] If $\alpha=\mathcal{O}_N\leq \sigma_Q$ and $[a]_\alpha$ is a principal connected quandle then $N$ embeds into $\dis([a])^n$.
	\end{itemize}
\end{lemma}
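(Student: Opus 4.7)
For (i), the hypothesis $N\leq \dis^\alpha$ forces every $h\in N$ (indeed every $h\in \dis^\alpha$) to preserve each block $[a_i]$ setwise, and since $[a_i]$ is a subquandle and $h\in \aut{Q}$, the restriction $h|_{[a_i]}$ lies in $\aut{[a_i]}$. The map $\psi$ is then visibly a group homomorphism. The core of (i) is injectivity, for which I would prove the subclaim that $Q$ is generated as a subquandle by $[a_1]\cup\cdots\cup [a_n]$. Given the hypothesis $Q/\alpha=Sg([a_1],\ldots,[a_n])$, every block $[b]$ of $Q/\alpha$ is a word in $[a_1],\ldots,[a_n]$ under $\ast$ and $\backslash$, and I would induct on the length of this word. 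For the inductive step write $[b]=[c]\ast [d]$ (or $[c]\backslash [d]$) with $[c],[d]$ contained in the subquandle generated by the $[a_i]$; picking any $c\in [c]$, the map $L_c$ sends $[d]$ bijectively onto $[c\ast d]=[b]$, this bijectivity being the key point since in a connected quandle all blocks of a congruence have the same cardinality (because $\dis(Q)$ acts transitively on $Q$ and permutes the $\alpha$-blocks). Thus every element of $[b]$ has the form $c\ast z$ with $z\in [d]$, and both $c$ and $z$ lie in the subquandle generated by the $[a_i]$. Any element in the kernel of $\psi$ then fixes this generating set of $Q$ pointwise and, being an automorphism, must be the identity. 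For the isomorphism $N|_{[a_i]}\cong N|_{[a_j]}$: by connectedness there is $g\in \dis(Q)$ with $g(a_i)=a_j$; since $\dis(Q)$ permutes $\alpha$-blocks, $g([a_i])=[a_j]$; and since $N\trianglelefteq \lmlt(Q)$, conjugation $h|_{[a_i]}\mapsto (ghg^{-1})|_{[a_j]}$ is a well-defined isomorphism.

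For (ii), my plan is to prove the stronger equality $N|_{[a_i]}=\dis([a_i])$ inside $\aut{[a_i]}$ for each $i$ and then combine with (i). First, using $\alpha\leq \sigma_Q$: for $x,y\in [a_i]$ one has $\dis(Q)_x=\dis(Q)_y$, and hence the stabilizers $N_x=N\cap \dis(Q)_x$ and $N_y=N\cap \dis(Q)_y$ coincide. Since $\alpha=\mathcal{O}_N$ means $N$ acts transitively on $[a_i]$ and all $N_y$ for $y\in [a_i]$ are equal, the common value $N_x$ coincides with the pointwise stabilizer $\bigcap_{y\in [a_i]} N_y$, i.e.\ with the kernel of $N\to \sym([a_i])$. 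Hence $N/N_x$ acts faithfully and transitively on $[a_i]$, so $N|_{[a_i]}$ is regular and $|N|_{[a_i]}|=|[a_i]|$. Second, the commutator trick from Lemma \ref{minimal congruences} gives $\dis_\alpha\leq N$: whenever $a\alpha b$, pick $n\in N$ with $n(a)=b$; then $L_b=L_{n(a)}=nL_an^{-1}$, so $L_aL_b^{-1}=L_anL_a^{-1}n^{-1}=[L_a,n]\in N$ by normality. Third, for $x,y\in [a_i]$ both $L_x$ and $L_y$ preserve $[a_i]$ setwise (since $x\alpha a_i$ and idempotence give $[x\ast a_i]=[a_i]$, and $L_x$ is a bijection), restricting there to $L_x^{[a_i]}$ and $L_y^{[a_i]}$, so $\dis_\alpha|_{[a_i]}\supseteq \dis([a_i])$. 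Combining, $\dis([a_i])\leq \dis_\alpha|_{[a_i]}\leq N|_{[a_i]}$; but $|\dis([a_i])|=|[a_i]|$ by the principal connected hypothesis on $[a]$, matching $|N|_{[a_i]}|$, so $N|_{[a_i]}=\dis([a_i])$. Since connectedness of $Q$ gives $[a_i]\cong [a]$ for every $i$, we have $\prod_i \dis([a_i])\cong \dis([a])^n$, and the embedding $\psi|_N$ from (i) lands inside $\prod_i N|_{[a_i]}=\prod_i \dis([a_i])$, completing the proof.

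The main obstacle I expect is the regularity step in (ii): one must correctly leverage both $\alpha=\mathcal{O}_N$ (for transitivity) and $\alpha\leq \sigma_Q$ (for equality of the point-stabilizers in $N$) to conclude that the common stabilizer coincides with the kernel of the restricted action, so that the quotient acts faithfully and transitively on the block and hence regularly. Once this order equality $|N|_{[a_i]}|=|[a_i]|=|\dis([a_i])|$ is in place, the containment $\dis_\alpha\leq N$ collapses the two groups, and the whole statement reduces to the embedding already provided by (i).
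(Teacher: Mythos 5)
Your proof is correct, and part (i) follows essentially the paper's own route (the paper simply asserts that the union of the blocks generates $Q$ and that a kernel element fixing these generators is trivial, and obtains $N|_{[a_i]}\cong N|_{[a_j]}$ by conjugating with an element of $\dis(Q)$ carrying one block to the other — exactly your argument, with your induction on word length filling in the generation claim the paper leaves implicit). In part (ii) the first half coincides with the paper: $\alpha=\mathcal{O}_N$ gives transitivity of $N$ on each block, and $\alpha\leq\sigma_Q$ forces any $h\in N$ fixing a point of a block to restrict to the identity on that block, so $N|_{[a]}$ is a regular automorphism group of $[a]$. Where you genuinely diverge is the identification of $N|_{[a]}$ with $\dis([a])$: the paper at this point cites \cite[Proposition 2.1]{Principal} (every regular automorphism group of a principal connected quandle is isomorphic to its displacement group), whereas you prove the stronger on-the-nose equality $N|_{[a_i]}=\dis([a_i])$ directly, by importing the commutator trick of Lemma \ref{minimal congruences} to get $\dis_\alpha\leq N$, observing that left multiplications by block elements restrict to the block so $\dis([a_i])\leq \dis_\alpha|_{[a_i]}\leq N|_{[a_i]}$, and then collapsing by a cardinality count. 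Your route is more self-contained and yields extra information (the restricted group literally equals the block's displacement group), at the cost of one tacit finiteness assumption in the counting step; this is harmless in the paper's finite setting, and can be removed entirely by noting that a transitive subgroup of a semiregularly acting group must be the whole group (take $g\in N|_{[a_i]}$, choose $d\in\dis([a_i])$ with $d(x)=g(x)$; then $d^{-1}g$ fixes $x$ and semiregularity gives $g=d$). The same remark applies to the bijectivity step in your generation argument for (i): surjectivity of $L_c:[d]\to[b]$ is immediate from $u=c\ast(c\backslash u)$ with $c\backslash u\,\alpha\, d$, with no appeal to equal block sizes.
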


\begin{proof}
	(i) The union of the blocks of $a_1,\ldots,a_n$ generates the quandle $Q$. If $\psi(h)=1$ then $h$ fixes a set of generators of $Q$ and therefore $h=1$. Then $\psi$ is an embedding. Let $a_i,a_j\in Q$, $h\in \dis(Q)$ such that $h([a_i])=[a_j]$ and $\psi_i: N\longrightarrow N|_{[a_i]}$. Then $\ker{\psi_j}=h^{-1} \ker{\psi_i} h$ and so the mapping
	\begin{displaymath}
	N|_{[a]_i}\longrightarrow N|_{[a_j]},\quad g|_{[a_i]}\mapsto h^{-1}gh|_{[a]_j}
	\end{displaymath}
	is a well defined group isomorphism.
	
	(ii) The group $N$ is transitive on each block of $\alpha$ and whenever $h(a)=a$ then $h|_{[a]}=1$ since $\alpha\leq \sigma_Q$. Then $N|_{[a]}$ is a regular automorphism group of $[a]$. The blocks are principal connected quandles, so every regular automorphism group is isomorphic to the displacement group of the block, then $H|_{[a]}\cong \dis([a])$ \cite[Proposition 2.1]{Principal}. 
\end{proof}

The set $L(Q)=Conj(\setof{L_a}{a\in Q})$ is a conjugation quandle and $L_Q:Q\to L(Q)$ mapping $a$ to $L_a$ is a surjective quandle morphism. We denote by $\lambda_Q$ the kernel of such homomorphism. A quandle is called {\it faithful } if $L_Q$ is injective, i.e. $\lambda_Q=0_Q$ and {\it latin} if all right multiplications $R_a: b\mapsto b\ast a$ are bijective and in such case we use the notation $R_a^{-1}(b)=b/a$. Latin quandle are faithful and connected. The pointwise stabilizer of an element $a\in Q$ in a faithful quandle is $\dis(Q)_a=Fix(\widehat{L}_a)$. \\ 
We refer the reader to \cite{AG,HSV,J} for further basic results on quandles.

\section{Locally strictly simple quandles}\label{Sec:LSS}
\subsection*{Congruence lattice}

{\it Strictly simple quandles} are quandles with no proper subquandles (proper subquandles have more than one element). The class of finite strictly simple quandle is well understood as it coincides with the class of all simple abelian quandles, that is the class cointaining all the simple affine quandles and $\mathcal{P}_2$ (see \cite[Section 3.1]{Principal}). Every strictly simple connected quandle is latin and it has an affine representation as $M=\aff(\mathbb{Z}_p^m,f)$ where $f$ acts irreducibly. In particular, finite strictly simple quandles are in one-to-one correspondence with irreducible polymonials over finite fields.
%

The lattice of subquandles containing a fixed element a a strictly simple quandles is the two element lattice and so it has height $1$. We investigate the class of finite connected quandles for which this lattice has height at most $2$.

\begin{definition}
	A quandle $A$ is said to be {\it locally strictly simple} (LSS) if all its proper subquandles are strictly simple.
\end{definition}


Recall that connected quandles are congruence uniform (the blocks of a congruence have all the same cardinality) and that a quandle is called {\it subdirectly irreducible} if the intersection of all non-trivial congruence is non-trivial (and reducible otherwise). In particular if $\setof{\alpha_i}{i\in I}$ are trivially intersecting congruence of a quandle $Q$, then $Q$ embeds into $\prod_{i\in I} Q/\alpha_i$ \cite{UA}.  \\

\begin{lemma}\label{lemma_1_on_HSS}
Let $Q$ be a connected LSS quandle. Then:
\begin{itemize}
\item[(i)] $\alpha\wedge \beta=0_Q$ for every $\alpha,\beta\in Con(Q)$.
\item[(ii)] $Q/\alpha$ is strictly simple for every $\alpha\in Con(Q)$.
\end{itemize}
Moreover, $Con(Q)$ is one of those in figure \ref{lattices2}.
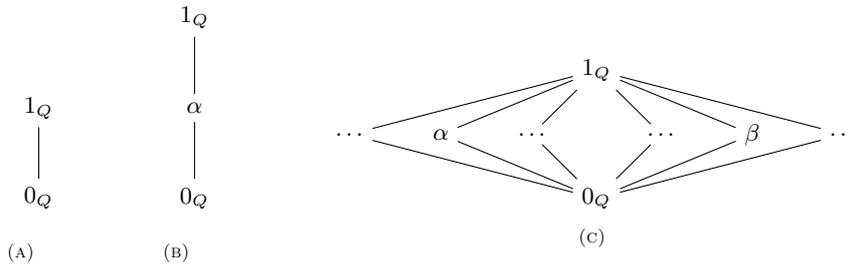
\begin{figure}[!h]
\subfloat[][]{
\begin{tikzpicture}[node distance=1.2cm]
\title{Lattices}
\node(A4)                           {$1_{Q}$};
\node(C32)      [below of=A4]       {$0_{Q}$};
\draw(A4)       -- (C32);
\end{tikzpicture}}\qquad\qquad
\subfloat[][]{
\begin{tikzpicture}[node distance=1.2cm]
\title{Lattices}
\node(A4)                           {$1_{Q}$};
\node(V4)       [below of=A4] {$\alpha$};
\node(C32)      [below of=V4]       {$0_{Q}$};
\draw(A4)       -- (V4);
\draw(V4)       -- (C32);
\end{tikzpicture}}\qquad\qquad\subfloat[][]{
\begin{tikzpicture}[node distance=1.2cm]
\title{Lattices}
\node(A1)                           {$1_{Q}$};
\node(B)       [below right of=A1] {$\ldots$};
\node(BB)       [below left of=A1] {$\ldots$};
\node(B n-1)       [left of=BB] {$\alpha$};
\node(Bn)       [left of=B n-1] {$\ldots$};
\node(B 2)       [right of=B] {$\beta$};
\node(B 1)      [right of=B 2]  {$\ldots$};
\node(C)      [below right of=BB]       {$0_{Q}$};
\draw(A1)       -- (Bn);
\draw(A1)       -- (B n-1);
\draw(A1)       -- (B);
\draw(A1)       -- (BB);
\draw(A1)       -- (B 2);
\draw(A1)       -- (B 1);
\draw(Bn)       -- (C);
\draw(B n-1)       -- (C);
\draw(B)       -- (C);
\draw(BB)       -- (C);
\draw(B 1)       -- (C);
\draw(B 2)       -- (C);
\end{tikzpicture}}
\caption{(A) Simple - (B) Subdirectly Irreducible - (C) Subdirectly reducible.}\label{lattices2}
\end{figure}
\end{lemma}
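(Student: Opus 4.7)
The plan is to combine two structural facts about congruences on a connected quandle with the LSS hypothesis: (a) connected quandles are congruence uniform, so all blocks of a fixed congruence have the same size; and (b) every block of a congruence is a subquandle. Hence for any $\alpha$ with $0_Q<\alpha<1_Q$, every block $[a]_\alpha$ has size strictly between $1$ and $|Q|$, so it is a proper subquandle of $Q$ and the LSS hypothesis forces it to be strictly simple. In particular, the only subquandles of $[a]_\alpha$ are singletons and $[a]_\alpha$ itself. This is the single observation that drives both (i) and (ii).

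For part (i), I would take two distinct proper non-trivial congruences $\alpha,\beta$ and suppose toward a contradiction that $\alpha\wedge\beta\ne 0_Q$. Then some block $C$ of $\alpha\wedge\beta$ has at least two elements, and $C$ is a subquandle of $Q$ contained in the strictly simple block $[a]_\alpha$ for some $a$. Strict simplicity of $[a]_\alpha$ forces $C=[a]_\alpha$, and congruence uniformity applied jointly to $\alpha$ and $\alpha\wedge\beta$ upgrades this single equality of blocks to an equality of partitions, so $\alpha\wedge\beta=\alpha$, i.e.\ $\alpha\le\beta$. By symmetry $\beta\le\alpha$, contradicting $\alpha\ne\beta$.

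For part (ii), fix $\alpha$ with $0_Q<\alpha<1_Q$ and let $S\subsetneq Q/\alpha$ be a proper subquandle of the factor. Then $\pi_\alpha^{-1}(S)$ is a subquandle of $Q$ of size $|S|\cdot k<|Q|$, where $k$ is the common size of the $\alpha$-blocks, and it contains at least one full block $[a]_\alpha$ of size $k\ge 2$. If $|S|\ge 2$, then $\pi_\alpha^{-1}(S)$ is a proper subquandle of $Q$, hence strictly simple, so the subquandle $[a]_\alpha$ sitting inside it must coincide with $\pi_\alpha^{-1}(S)$; this gives $|S|=1$, a contradiction. Therefore $Q/\alpha$ has no proper subquandles, i.e.\ it is strictly simple.

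The lattice picture follows at once: by (ii) every non-trivial proper congruence is a coatom of $Con(Q)$ (its factor being simple), by (i) any two distinct such congruences have trivial meet, and then maximality forces their join to be $1_Q$. So $Con(Q)$ has height at most two and falls into one of the three shapes (A), (B), (C) in Figure \ref{lattices2}, according to whether the number of non-trivial proper congruences is zero, exactly one, or at least two. The only delicate point is reading the statement of (i) with the tacit hypothesis that $\alpha,\beta$ are distinct and distinct from $1_Q$; past that, the proof is pure book-keeping on block sizes via congruence uniformity, and I anticipate no real obstacle.
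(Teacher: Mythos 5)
Your argument is correct, and for part (i) and the final lattice bookkeeping it is essentially the paper's own proof: the paper likewise observes that two blocks $[a]_\alpha$ and $[a]_\beta$ are strictly simple subquandles, hence intersect in a singleton as soon as they differ, and then invokes congruence uniformity of connected quandles to pass from one trivial intersection to $\alpha\wedge\beta=0_Q$; your contradiction phrasing ($\alpha\wedge\beta=\alpha$ and, by symmetry, $=\beta$) is the same computation read in the other direction, and both readings rely on the same tacit hypothesis that $\alpha,\beta$ are distinct proper congruences. Where you genuinely diverge is part (ii). The paper argues via generation: since $[a]_\alpha$ has at least two elements, it is a proper subquandle of $Sg([a]_\alpha,[b]_\alpha)$ whenever $[a]_\alpha\neq[b]_\alpha$, so the LSS hypothesis forces $Sg([a]_\alpha,[b]_\alpha)=Q$, i.e.\ any two distinct elements of $Q/\alpha$ generate it, which is exactly strict simplicity of the factor; this generation fact is reused later (any triple $a,b,c$ with $[a]_\alpha=[b]_\alpha\neq[c]_\alpha$ generates $Q$, and the classification of subquandles in the subsequent corollary). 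You instead pull back a putative proper subquandle $S$ of $Q/\alpha$ to $\pi_\alpha^{-1}(S)$, which contains a full block of size $k\geq 2$, and use uniformity to count $|\pi_\alpha^{-1}(S)|=|S|\cdot k<|Q|$, so that strict simplicity of this proper subquandle of $Q$ collapses it onto a single block and forces $|S|=1$. Both mechanisms rest on the same core observation (blocks are strictly simple and the only proper subquandles available), and your preimage-plus-counting version is equally valid and arguably more self-contained; the paper's version has the small advantage of directly producing the two-generation statement that it exploits afterwards, although that can of course be recovered from strict simplicity of the factor. No gap to report.
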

\begin{proof}
(i) The blocks $[a]_\alpha$ and $[a]_\beta$ are strictly simple quandles. So either their intersection is trivial or they coincide. If $\alpha\neq \beta$, then $[a]_\alpha\bigcap [a]_\beta=\{a\}$ for some $a\in Q$. Since the blocks have all the same size then $\alpha\wedge\beta=0_Q$.


(ii) Let $\alpha\in Con(Q)$. Then if $[a]_\alpha\neq [b]_\alpha$ then $[a]_\alpha$ is a proper subquandle of $Sg([a]_\alpha,[b]_\alpha)$ and so $Q=Sg([a]_\alpha,[b]_\alpha)$. Hence $Q/\alpha=	Sg([a]_\alpha,[b]_\alpha)$ for every $[a]_{\alpha}\neq [b]_{\alpha}$ and then $Q/\alpha$ is strictly simple.

The last statement follows by (i) and (ii).
\end{proof}

A characterization of (non-simple) LSS quandles is offered by the following Proposition. 
\begin{remark}\label{on generators of LSS}
	Let $Q$ be a connected LSS quandle and $\alpha\in Con(Q)$. Since every block and the factor $Q/\alpha$ are generated by any pair of their elements then $Q$ is generated by any triple $\setof{a,b,c\in Q}{[a]_\alpha= [b]_\alpha \neq [c]_\alpha }$ (see also \cite[Lemma 6.1]{GB}).
\end{remark}
\begin{proposition}\label{LSS iff}
Let $Q$ be a non-simple connected quandle. The following are equivalent:
\begin{itemize}
\item[(i)] $Q$ is LSS.
\item[(ii)] $Q/\alpha$ and $[a]_\alpha$ are strictly simple quandles for every $\alpha\in Con(Q)$ and every $a\in Q$.
\item[(iii)] There exists $\alpha\in Con(Q)$, such that $Q/\alpha$ and $[a]_\alpha$ are strictly simple quandles for every $a\in Q$.
\end{itemize}
\end{proposition}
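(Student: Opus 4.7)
The plan is to verify the cycle (i) $\Rightarrow$ (ii) $\Rightarrow$ (iii) $\Rightarrow$ (i), where the first two implications are essentially bookkeeping and the last carries all the content. For (i) $\Rightarrow$ (ii), the factor $Q/\alpha$ is strictly simple by Lemma \ref{lemma_1_on_HSS}(ii), and for any non-trivial $\alpha$ each block $[a]_\alpha$ is a proper subquandle of $Q$ and therefore strictly simple by the LSS hypothesis. For (ii) $\Rightarrow$ (iii), a witness congruence exists because $Q$ is non-simple.

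For (iii) $\Rightarrow$ (i), I would fix the $\alpha$ provided by (iii) and take an arbitrary proper subquandle $S \subsetneq Q$, aiming to show $S$ is strictly simple. The key observation is the trichotomy coming from strict simplicity of each block: the intersection $S \cap [a]_\alpha$ is empty, a singleton, or the full block $[a]_\alpha$. This naturally splits the argument in two cases. In the transversal case every $S \cap [a]_\alpha$ has at most one element, so the restriction to $S$ of the projection $Q \to Q/\alpha$ is an injective quandle morphism into the strictly simple quandle $Q/\alpha$; its image is either a singleton or all of $Q/\alpha$, and hence $S$ itself is a singleton or isomorphic to $Q/\alpha$, strictly simple in either case. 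In the block-absorbing case $S$ contains some full block $[a_0]_\alpha$; if $S$ equals that block we are done, otherwise I pick $b \in S$ with $[b]_\alpha \neq [a_0]_\alpha$, and, because $\alpha$ is non-trivial, pick two distinct points $a_0, a_0' \in [a_0]_\alpha \subseteq S$, so that the triple $\{a_0, a_0', b\}$ generates $Q$ by Remark \ref{on generators of LSS}, contradicting $S \subsetneq Q$.

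The main (and essentially only) obstacle is the block-absorbing case, and the whole of its content is packaged into Remark \ref{on generators of LSS}: once one grants that any triple of the form "two elements of one block plus one element outside" generates $Q$, the case collapses to a one-line contradiction. Without that remark the core difficulty would be to prove directly that a subquandle of an LSS quandle properly containing a full block must equal $Q$, which is precisely the combinatorial fact the remark encodes.
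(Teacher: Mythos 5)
Your proof is correct and takes essentially the same route as the paper's: the paper also splits (iii) $\Rightarrow$ (i) into the transversal case (where $S$ embeds into the strictly simple $Q/\alpha$) and the case where some block meets $S$ in at least two points, and then invokes the same generation fact for a triple consisting of two points in one block and one point outside. The only caveat is that Remark \ref{on generators of LSS} is stated for LSS quandles, so to avoid circularity you should observe that its argument only needs the blocks and the factor of $\alpha$ to be generated by any pair of their elements, which is exactly what hypothesis (iii) provides — the same implicit step the paper takes.
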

\begin{proof}
(i) $\Rightarrow$ (ii) It follows by Lemma \ref{lemma_1_on_HSS} (iii) and since $[a]_\alpha$ is a proper subquandle of $Q$.

(ii) $\Rightarrow$ (iii) Clear.

(iii) $\Rightarrow$ (i) Let $S$ be a subquandle of $Q$. Then either the image of $S$ under the canonical map $a\mapsto [a]_\alpha$ is the whole $Q/\alpha$ or it is just a one element set.\\
 In the first case, for every $[a]_\alpha$ in $Q/\alpha$	there exists $s_a\in S\cap [a]_\alpha$. If $S\cap [a]_\alpha=\{s_a\}$ for every $a\in Q$, then $S\cong Q/\alpha$ and so $S$ is strictly simple. If $ s_a\neq b\in S\cap [a]_\alpha1$ for some $a\in Q$, then $Sg(a,b,s_c)=Q\leq S$.\\
In the second case, $S\subseteq [a]_\alpha$ for some $a\in Q$, then either $|S|=1$ or $S=[a]_\alpha$, and so $S$ is strictly simple.
\end{proof}

The isomorphism classes of subquandles of $Q$ are completely determined by its factors and by the block of its congruences. Indeed, by the same argument used in Proposition \ref{LSS iff} to prove the implication (iii) $\Rightarrow$ (i) we have:

\begin{corollary}\label{subquandles}
Let $Q$ be a connected LSS non simple quandle. Every subquandle of $Q$ is isomorphic either to $[a]_\alpha$ or to $Q/\alpha$ for some $\alpha\in Con(Q)$.
\end{corollary}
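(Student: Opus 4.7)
The plan is to imitate the argument used to prove (iii)$\Rightarrow$(i) in Proposition~\ref{LSS iff}, but track which quandle each possible $S$ ends up being isomorphic to, rather than merely checking strict simplicity. Since $Q$ is non-simple, pick any non-trivial $\alpha\in Con(Q)$; by Lemma~\ref{lemma_1_on_HSS}(ii) the factor $Q/\alpha$ is strictly simple, and by Proposition~\ref{LSS iff} so is each block $[a]_\alpha$.

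First I would fix a subquandle $S\leq Q$ and consider its image $\bar{S}=\{[s]_\alpha : s\in S\}$ inside $Q/\alpha$. As $\bar{S}$ is a subquandle of a strictly simple quandle, it is either a singleton or all of $Q/\alpha$, which gives the two cases to analyse.

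In the first case $S\subseteq [a]_\alpha$ for some $a\in Q$, and strict simplicity of $[a]_\alpha$ forces $S$ to be either a one element subquandle (hence isomorphic to $[b]_{0_Q}$ for any $b$) or the whole of $[a]_\alpha$. In the second case $S$ meets every $\alpha$-block; pick $s_{a}\in S\cap [a]_\alpha$ for each $a\in Q$. If $S\cap [a]_\alpha=\{s_a\}$ for every $a$, the canonical projection restricts to a bijective quandle morphism $S\to Q/\alpha$, so $S\cong Q/\alpha$. Otherwise, some block contains two distinct elements $s_a\neq b$ of $S$, and then for any $c$ not $\alpha$-related to $a$, Remark~\ref{on generators of LSS} gives $Sg(s_a,b,s_c)=Q$, so $S=Q$; this is covered by using the congruence $1_Q$ (whose unique block is $Q$) or equivalently $0_Q$ (whose quotient is $Q$).

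Since no real obstacle appears, the write-up is essentially bookkeeping; the only point I want to be careful about is making sure the degenerate subquandles, namely singletons and $S=Q$ itself, are accommodated by allowing $\alpha$ to be $0_Q$ or $1_Q$, so that the phrase ``for some $\alpha\in Con(Q)$'' in the statement really covers every possibility produced by the case analysis.
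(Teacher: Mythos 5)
Your proposal is correct and follows essentially the same route as the paper, which simply reuses the case analysis from the proof of (iii) $\Rightarrow$ (i) in Proposition \ref{LSS iff}: the image of $S$ in $Q/\alpha$ is either a singleton (so $S$ sits inside a strictly simple block) or all of $Q/\alpha$ (so $S$ is a transversal isomorphic to $Q/\alpha$, or contains two $\alpha$-related points and equals $Q$). Your extra care with the degenerate cases (singletons and $S=Q$, absorbed by taking $\alpha=0_Q$ or $1_Q$) is just the bookkeeping the paper leaves implicit.
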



\subsection*{Connected abelian LSS quandles}\label{direct prod}

In this section we describe finite, abelian connected LSS quandles. Finite connected abelian quandles are latin and they are polynomially equivalent to modules over $\mathbb{Z}[t,t^{-1}]$ \cite[Section 2.4]{Principal}. In particular finite connected affine quandles are latin and we can consider them as modules over $\mathbb{Z}[t]$. 
Indeed they admit an affine representation as $\aff(A,f)$ where $A$ is an abelian group and $f\in \aut{A}$. In particular congruences are in one-to-one correspondence with subquandles containing $0$ and with submodules \cite[Proposition 2.18, Remark 2.19]{Principal} i.e. with $f$-invariant subgroups. Moreover two connected finite affine quandles are isomorphic if and only if they are isomorphic as modules.

The first class of abelian LSS quandles is given by subdirectly reducible LSS quandles (i.e. quandle which congruence lattice is as in figure \ref{lattices2}(C) which are just direct product of a pair of strictly simple quandles.

\begin{theorem}\label{LSS reducible}
	Let $Q$ be a finite connected subdirectly reducible LSS quandle. Then $Q\cong M\times N$ where $M$ and $N$ are connected strictly simple quandles. 
\end{theorem}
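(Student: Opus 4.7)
The plan is to exploit Lemma \ref{lemma_1_on_HSS} to pick two distinct nontrivial congruences $\alpha,\beta\in Con(Q)$ with $\alpha\wedge\beta=0_Q$ and with $Q/\alpha$, $Q/\beta$ strictly simple, and then to show that the canonical subdirect map
\[
\phi\colon Q\longrightarrow Q/\alpha\times Q/\beta,\qquad x\longmapsto([x]_\alpha,[x]_\beta),
\]
is a bijection. Both the existence of such a pair $\alpha,\beta$ and the strict simplicity of the factors are guaranteed by Lemma \ref{lemma_1_on_HSS}, since subdirect reducibility forces the congruence lattice of $Q$ to be the one depicted in Figure \ref{lattices2}(C). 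Since connectedness is preserved under quotients (the map $\pi_\alpha$ from \eqref{pi alpha} being surjective), the two factors $Q/\alpha$ and $Q/\beta$ are automatically connected strictly simple, so the desired conclusion will follow once $\phi$ is shown to be an isomorphism.

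Injectivity of $\phi$ is an immediate reformulation of $\alpha\wedge\beta=0_Q$. For surjectivity I would use congruence uniformity, which gives $|Q|=|Q/\alpha|\cdot|[a]_\alpha|$, to reduce the task to the single size identity $|[a]_\alpha|=|Q/\beta|$. I expect to prove this identity via a pair of dichotomies coming from the strict simplicity of the block $[a]_\alpha$ and of the factor $Q/\beta$.

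For the upper bound $|[a]_\alpha|\leq|Q/\beta|$, the intersection $\beta\cap([a]_\alpha\times[a]_\alpha)$ is a congruence on the strictly simple (hence simple) quandle $[a]_\alpha$, so it is either $0_{[a]_\alpha}$ or $1_{[a]_\alpha}$. In the latter case $[a]_\alpha\subseteq[a]_\beta$ yields $\alpha\leq\beta$ and therefore $\alpha=\alpha\wedge\beta=0_Q$, contradicting $\alpha\neq 0_Q$. So the restriction is trivial and the map $[a]_\alpha\to Q/\beta$ is injective. For the lower bound $|[a]_\alpha|\geq|Q/\beta|$, the image of $[a]_\alpha$ under the projection $Q\to Q/\beta$ is a subquandle of the strictly simple quandle $Q/\beta$, so it is either a single class or the entire factor; the single-class possibility again forces $\alpha\leq\beta$, and the same contradiction. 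Combining the two inequalities yields $|[a]_\alpha|=|Q/\beta|$, whence $\phi$ is bijective and $Q\cong Q/\alpha\times Q/\beta$, with both factors connected strictly simple.

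No step is really an obstacle: the argument is essentially bookkeeping around the fact that two nontrivial congruences meeting trivially cannot be comparable. The only place that requires a little care is keeping the two dichotomies (restriction to a block versus image in a factor) properly separated, since both ultimately rely on the same observation that $\alpha\leq\beta$ together with $\alpha\wedge\beta=0_Q$ would force the nontrivial congruence $\alpha$ to coincide with $0_Q$.
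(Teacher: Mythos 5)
Your proposal is correct and follows essentially the same route as the paper: embed $Q$ subdirectly into $Q/\alpha\times Q/\beta$ via Lemma \ref{lemma_1_on_HSS}, observe that strict simplicity forces the block $[a]_\alpha$ to map onto $Q/\beta$, and conclude by a cardinality count using congruence uniformity. The only difference is that you also prove the (redundant) injectivity of $[a]_\alpha\to Q/\beta$, whereas the paper gets the reverse inequality directly from the embedding $|Q|\leq|Q/\alpha|\,|Q/\beta|$.
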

\begin{proof}
	Lemma \ref{lemma_1_on_HSS} shows that any subdirectly reducible LSS connected quandle $Q$ has an embedding into the product of any pair of its factors, i.e., $$Q\hookrightarrow \, Q/\alpha\times Q/\beta,$$ 
	for every $\alpha,\beta \in Con(Q)$. 
	So, $Q$ embeds into a product of two strictly simple quandles. In particular $[a]_\alpha$ maps onto $Q/\beta$ and then $|Q|=|[a]_{\alpha}||Q/\alpha|\geq |Q/\beta||Q/\alpha|$. Therefore $Q/\cong Q/\alpha \times Q/\beta$.
	
\end{proof}

\begin{remark}
	Let $Q=M\times N$ where $M,N$ are strictly simple quandles. 
	\begin{itemize}
		\item[(i)] if $M\cong N$ then $Q$ is isomorphic to $\aff(\mathbb{F}_{|M|}^2,\lambda I)$ and its automorphism group is doubly transitive and any pair of elements generate a subquandle isomorphic to $M$ \cite[Section 3.3]{Principal}. In particular $Con(Q)$ has $|M|+1$ non trivial  congruences (it is the lattice of subspaces of a 2-dimensional vector space): 
		\begin{center}
			
			\begin{tikzpicture}[node distance=1.2cm]
			\title{Lattices}
			\node(A1)                           {$1_{Q}$};
			\node(B)       [below right of=A1] {$\ldots$};
			\node(BB)       [below left of=A1] {$\ldots$};
			\node(B n-1)       [left of=BB] {$\alpha_2$};
			\node(Bn)       [left of=B n-1] {$\alpha_1$};
			\node(B 2)       [right of=B] {$\ldots$};
			\node(B 1)      [right of=B 2]  {$\alpha_{|M|+1}$};
			\node(C)      [below right of=BB]       {$0_{Q}$};
			\draw(A1)       -- (Bn);
			\draw(A1)       -- (B n-1);
			\draw(A1)       -- (B);
			\draw(A1)       -- (BB);
			\draw(A1)       -- (B 2);
			\draw(A1)       -- (B 1);
			\draw(Bn)       -- (C);
			\draw(B n-1)       -- (C);
			\draw(B)       -- (C);
			\draw(BB)       -- (C);
			\draw(B 1)       -- (C);
			\draw(B 2)       -- (C);
			\end{tikzpicture}\, .
		\end{center}

		\item[(ii)] Otherwise, let $M=Q/\alpha$ and $N=Q/\beta$ and let $\gamma$ be a congruence different from $\alpha,\beta$. Then $M=[0]_{\beta}$ and $N=[0]_{\alpha}$ are both isomorphic to to $Q/\gamma$ since $\alpha\wedge \gamma=\beta\wedge\gamma=0_Q$. Hence $Q$ has just two proper congruences:
		\begin{center}
			
			\begin{tikzpicture}[node distance=1.2cm]
			\title{Lattices}
			\node(A1)                           {$1_{Q}$};
			\node(B)       [below right of=A1] {$\alpha$};
			\node(BB)       [below left of=A1] {$\beta$};
			\node(C)      [below right of=BB]       {$0_{Q}$};
			\draw(A1)       -- (B);
			\draw(A1)       -- (BB);
			\draw(B)       -- (C);
			\draw(BB)       -- (C);
			\end{tikzpicture}\, .
		\end{center}
		\noindent Let $a,b\in Q$ such that $[a]_\alpha\neq [b]_\alpha$ and $[a]_\beta\neq [b]_\beta$. Then $Sg(a,b)$ maps onto both factors and then $Sg(a,b)= Q$.
	\end{itemize}
\end{remark}

Note that if $Q=\aff(A,f)$ is a subdirectly irreducible LSS connected quandle, namely the congruence lattice of $Q$ is as in \ref{lattices2}(B), then $A$ has a unique proper $f$-invariant subgroup.

\begin{lemma}\label{exp}
	Subdirectly irreducible abelian LSS connected quandle are affine over $p$-groups of exponent less or equal to $p^2$. 
\end{lemma}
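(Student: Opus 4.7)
The plan is to exploit the characterization of the congruence lattice (figure \ref{lattices2}(B)) together with the fact that, for an affine quandle $\mathrm{Aff}(A,f)$, congruences are in bijection with $f$-invariant subgroups of $A$. Since $Q$ is subdirectly irreducible, the remark immediately before the lemma already gives us that $A$ has a unique proper $f$-invariant subgroup $B$, and by Proposition \ref{LSS iff} together with Corollary \ref{subquandles} both the block $[0]_\alpha$ (which is $\mathrm{Aff}(B,f|_B)$) and the factor $Q/\alpha\cong\mathrm{Aff}(A/B,f_B)$ are strictly simple connected quandles. By the affine description of finite strictly simple quandles recalled at the start of Section \ref{Sec:LSS}, $B$ is elementary abelian of exponent $p$ and $A/B$ is elementary abelian of exponent $q$ for some primes $p,q$.

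The key step is to show $p=q$. For this I would use that the primary components $A_p$ and $A_q$ of a finite abelian group are characteristic, hence $f$-invariant. If $p\neq q$, then $A$ would split as $A_p\oplus A_q$ with both summands nontrivial and $f$-invariant. Neither of $A_p$ or $A_q$ can equal $B$, because $B$ contains only $p$-torsion while $A/B$ has nontrivial $q$-torsion (so $A_q\not\subseteq B$, forcing $A_q\neq B$; similarly for $A_p$). This would furnish at least two distinct proper $f$-invariant subgroups, contradicting subdirect irreducibility. Hence $p=q$ and $A$ is a $p$-group.

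Once $A$ is a $p$-group, the exponent bound is immediate: for every $a\in A$ we have $pa\in B$ because $A/B$ has exponent $p$, and then $p^2 a = p(pa)=0$ because $B$ has exponent $p$. Thus $A$ has exponent at most $p^2$, as required.

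The only part that requires some care is the argument that $p=q$; specifically one has to be sure that one cannot have $B=A_p$ with $A/B=A_q$ as the only proper $f$-invariant subgroup situation. This is ruled out because in that scenario $A_q$ itself would also be a proper nontrivial $f$-invariant subgroup distinct from $A_p=B$, so the uniqueness of $B$ is what forces primes to coincide. The rest is bookkeeping on exponents of abelian $p$-groups with an elementary abelian quotient by an elementary abelian subgroup.
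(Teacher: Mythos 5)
Your proof is correct, and it follows the paper's underlying idea (characteristic subgroups of $A$ are $f$-invariant, hence submodules, hence congruences, and subdirect irreducibility leaves no room for two of them) but implements the exponent bound differently. The paper gets $p$-primality exactly as you do, from the primary components, and then bounds the exponent via the chain $\ker(p)\leq\ker(p^2)$ of characteristic subgroups, which would produce two distinct proper submodules if the exponent exceeded $p^2$. You instead use the structural fact that both the block $[0]_\alpha\cong\aff(B,f|_B)$ and the factor $\aff(A/B,f_B)$ are strictly simple, hence elementary abelian, so $pa\in B$ and $p^2a=0$ for all $a\in A$; this is a perfectly valid and arguably more direct route, and it buys you the extra information that $B$ and $A/B$ are elementary abelian of the same prime, which the paper only extracts later. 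One small slip: in the $p\neq q$ case your claim that $A_p\neq B$ is actually false (one has $B\leq A_p$ and $|A_p|=|B|$, so $A_p=B$), but this does not harm the argument, and you correctly repair it in your closing paragraph: the contradiction only requires that $A_q$ be a proper nontrivial $f$-invariant subgroup distinct from $B$, which holds since $A_q$ is nontrivial $q$-torsion while $B$ is $p$-torsion. It would also be worth one line to note that the block cannot be the projection quandle $\mathcal{P}_2$ (connectedness of $Q$ forces $\mathrm{Fix}(f)=0$, or simply observe that even then $B\cong\mathbb{Z}_2$ is elementary abelian), so the appeal to the affine description of strictly simple quandles is safe.
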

\begin{proof}
	Let $Q=\aff(A,f)$. The $p$-components of $A$ provides trivially intersecting congruences as they are characteristic subgroups and then submodules. Moreover, the map $p:A\to A$, $a\mapsto pa$ provides a chain of submodules $ker(p)\leq ker (p^2)$ as they are characteristic subgroups. Therefore $A$ is a $p$-group with exponent less or equal than $p^2$.
\end{proof}
If the exponent of the group is $p$ a complete characterization is given by module theory.
\begin{proposition}\label{abelian and cyclic mod}
	Let $Q=\aff(\mathbb{Z}_p^n,f)$ be a connected quandle. The following are equivalent:
	\begin{itemize}
		\item[(i)] $Q$ is a subdirectly irreducible LSS quandle.
		\item[(ii)] $Q\cong \mathbb{Z}_p[t]/(g^2)$ as $\mathbb{Z}_p[t]$-module where $g$ is an irreducible polynomial over $\mathbb{Z}_p$. 
	\end{itemize}
	Moreover $Q$ is two-generated and $n$ is even.
\end{proposition}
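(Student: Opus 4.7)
The plan is to translate every hypothesis on $Q$ into a statement about $V:=\mathbb{Z}_p^n$ viewed as a $\mathbb{Z}_p[t]$-module (with $t$ acting as $f$), and then read off the structure from elementary module theory. Throughout I will use that congruences of $\aff(V,f)$ are in bijection with $\mathbb{Z}_p[t]$-submodules of $V$, that subquandles are precisely cosets of submodules, and that simple $\mathbb{Z}_p[t]$-modules correspond (via the affine construction, which is automatically connected here since $1-f$ is invertible on $V$) to the strictly simple affine quandles classified in \cite[\S3.1]{Principal}. Under these translations, $Q$ is LSS exactly when every proper submodule of $V$ is simple, and $Q$ is subdirectly irreducible exactly when $V$ has a unique minimal non-zero submodule $M$; together with Lemma~\ref{lemma_1_on_HSS}(ii), which forces $V/M$ to be simple, the combined hypotheses of (i) reduce to saying that the submodule lattice of $V$ is the three-element chain $0<M<V$ with both $M$ and $V/M$ simple.

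For (i) $\Rightarrow$ (ii) the key step is to establish cyclicity of $V$. I would pick any $v\in V\setminus M$; since $\mathbb{Z}_p[t]\cdot v$ is a submodule not contained in $M$, the chain property forces $\mathbb{Z}_p[t]\cdot v=V$. Writing $V\cong \mathbb{Z}_p[t]/(h)$, the submodule lattice corresponds to the lattice of monic divisors of $h$, so having exactly three submodules together with unique factorization in $\mathbb{Z}_p[t]$ forces $h$ to be $g^2$ for a single irreducible $g$, giving (ii). For the converse (ii) $\Rightarrow$ (i) I would just read off the submodule lattice of $\mathbb{Z}_p[t]/(g^2)$, which is precisely $\{0,(g)/(g^2),V\}$; this places $\mathrm{Con}(Q)$ in the shape of Figure~\ref{lattices2}(B) and makes both the unique block and the non-trivial factor isomorphic to $\mathbb{Z}_p[t]/(g)$, which is a simple module and hence a strictly simple quandle. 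Proposition~\ref{LSS iff} then yields that $Q$ is LSS.

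For the ``moreover'' part, the cyclicity of $V$ gives two-generation of $Q$: if $v$ generates $V$ as a $\mathbb{Z}_p[t]$-module, then the subquandle of $\aff(V,f)$ generated by $\{0,v\}$ equals the $\mathbb{Z}[t,t^{-1}]$-submodule it generates, and this collapses to $\mathbb{Z}_p[t]\cdot v=V$ because $t$ is a unit in $\mathbb{Z}_p[t]/(g^2)$ (as $f$ is an automorphism). Finally, $n$ is even since $p^n=|V|=|\mathbb{Z}_p[t]/(g^2)|=p^{2\deg g}$. The main obstacle is really the cyclicity step in (i) $\Rightarrow$ (ii); once that is in hand, everything else is a direct translation into module theory and a routine count of divisors.
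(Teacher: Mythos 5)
Your proof is correct and follows essentially the same route as the paper: view $Q$ as a $\mathbb{Z}_p[t]$-module, observe that the hypotheses of (i) force the submodule lattice to be the three-element chain, conclude that the module is cyclic and primary and hence isomorphic to $\mathbb{Z}_p[t]/(g^2)$, and read the converse and the ``moreover'' claims off the submodule lattice. The only difference is cosmetic: where the paper cites module-theoretic lemmas (three-element submodule lattice iff primary and cyclic, and the dimension count giving $n=2\deg g$), you supply the short direct arguments yourself (any $v\notin M$ generates, counting monic divisors of the annihilator, and counting cardinalities), which is a self-contained rendering of the same proof.
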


\begin{proof}
	(i) $\Rightarrow$ (ii) Every affine quandle over $\mathbb{Z}_p^n$ is a module over $\mathbb{Z}_p[t,t^{-1}]$ and since $Q$ is finite we can consider it as a module over $\mathbb{Z}_p[t]$. The lattice of submodules of $Q$ is the three elements if and only if $Q$ is a primary and cyclic module \cite[Lemma 2]{modules2}.
	According to \cite[Lemma 1]{modules}, as module $Q\cong \mathbb{Z}_p[t]/(g^{2})$ where $g$ is an irreducible polynomial.
	
	(ii) $\Rightarrow$ (i) The multiplication by $t+(g^2)$ in $\mathbb{Z}_p[t]/(g^2)$ is invertible. Indeed if $th\in (g^2)$ then $g^2$ divides $h$.	Let $Q=\aff(\mathbb{Z}_p[t]/(g^2),f)$ where $f(x+(g^2))=tx+(g^2)$. If $(1-f)(x+(g^2))=(1-t)x+(g^2)= (g^2)$. Then $g^2$ divides $(1-t)x$ and then $g^2$ divides $x$. Hence $R_{(g^2)}=1-f$ is invertible and then $Q$ is latin. Moreover $Q$ has just one submodule according to \cite[Lemma 1]{modules}. Then $Q$ is a subdirectly irreducible abelian LSS quandle since subquandles are cosets of submodules \cite[Remark 2.19]{Principal}.
	
	The last statement follows since $Q$ is a cyclic module and therefore $Q$ is generated as a quandle by $0$ and $a$ for some $a\in Q$. The dimension of $\dis(Q)$ is $2deg(p)$ \cite[Lemma 1]{modules}. 
\end{proof}
Note that quandles described in Proposition \ref{abelian and cyclic mod} are in one-to-one correspondence with irreducible polynomials as for connected strictly simple quandles. 

\begin{proposition}
	Let $Q=\aff(A,f)$ be a connected abelian subdirectly irreducible LSS quandle. If $exp(A)=p^2$, then $A\cong \mathbb{Z}_{p^2}^{n}$ and $Q/\alpha	\cong [a]$.
\end{proposition}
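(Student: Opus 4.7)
The plan is to extract the structure of $A$ from the fact that there is exactly one proper non-trivial $f$-invariant subgroup (since affine-quandle congruences correspond bijectively to $f$-invariant subgroups), and then exhibit an explicit isomorphism between the block and the factor.

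First I would identify two canonical $f$-invariant subgroups: $pA=\{pa:a\in A\}$ and $A[p]=\{a\in A:pa=0\}$. Both are characteristic in $A$, hence $f$-invariant. Since $\exp(A)=p^2$, the subgroup $pA$ is nonzero (otherwise $\exp(A)\leq p$) and $A[p]$ is a proper subgroup (for the same reason). Moreover $pA\subseteq A[p]$ because $p(pa)=p^2a=0$. Because $Q=\aff(A,f)$ is subdirectly irreducible, the lattice of $f$-invariant subgroups is the three-element chain, so $pA$ and $A[p]$ must coincide with the unique proper non-trivial $f$-invariant subgroup $B$, and in particular $pA=A[p]$.

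Next I would use the classification of finite abelian $p$-groups. Since $\exp(A)=p^2$, one may write $A\cong \mathbb{Z}_{p^2}^{r}\times \mathbb{Z}_{p}^{s}$ with $r\geq 1$. A direct computation gives $pA\cong \mathbb{Z}_{p}^{r}$ while $A[p]\cong \mathbb{Z}_{p}^{r+s}$. The equality $pA=A[p]$ established above forces $s=0$, so $A\cong \mathbb{Z}_{p^2}^{n}$ with $n=r$.

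For the second assertion, consider the map
\begin{equation*}
\phi: A/pA\longrightarrow pA,\qquad a+pA\mapsto pa.
\end{equation*}
It is well-defined (if $a-a'\in pA$, then $p(a-a')\in p^2A=0$), a group homomorphism, and surjective by construction; it is injective because $pa=0$ means $a\in A[p]=pA$. Finally $\phi$ intertwines the induced $f$-actions, since $\phi(\overline{f}(a+pA))=pf(a)=f(pa)=f(\phi(a+pA))$, so $\phi$ is an isomorphism of $\mathbb{Z}[t,t^{-1}]$-modules between $(A/pA,\overline{f})$ and $(pA,f|_{pA})$. Since congruences and subquandles of affine quandles are cosets of submodules and isomorphic modules give isomorphic affine quandles \cite[Remark 2.19]{Principal}, we conclude $Q/\alpha\cong \aff(A/pA,\overline{f})\cong \aff(pA,f|_{pA})=[0]_\alpha\cong [a]$, where the last isomorphism uses that $Q$ is homogeneous and hence $\alpha$ is congruence-uniform with pairwise isomorphic blocks.

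I do not anticipate a genuine obstacle: the exponent hypothesis combined with the uniqueness of the proper $f$-invariant subgroup pins down $A$ immediately, and the map $a+pA\mapsto pa$ is the natural candidate to realize $Q/\alpha\cong[a]$. The only point requiring a little care is checking that $\phi$ respects $f$ as well as the group law, which is straightforward.
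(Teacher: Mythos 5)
Your proposal is correct and follows essentially the same route as the paper: the paper also observes that $\mathrm{Im}(p)=pA$ and $\ker(p)$ are $f$-invariant, forces $pA=\ker(p)$ from the uniqueness of the proper submodule, deduces $A\cong\mathbb{Z}_{p^2}^n$, and identifies $Q/\alpha\cong A/pA\cong pA\cong[a]$ as modules. Your write-up merely makes explicit the structure-theorem computation and the equivariant isomorphism $a+pA\mapsto pa$ that the paper leaves implicit.
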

\begin{proof}
	Since $A$ has a unique $f$-invariant subroup, $Im(p)=ker(p)=pA$. Therefore $A\cong \mathbb{Z}_{p^2}^n$ and $Q/\alpha\cong  A/pA\cong pA=ker(p)\cong [a]$ as modules.
\end{proof}

\begin{example}
	Let $h=x^n+a_{n-1}x^{n-1}+\ldots+a_{1}x+a_{0}$ be an irreducible polynomial over $\mathbb{Z}_p$. Let  $$\setof{e_i=(\underbrace{0,\ldots, 1}_i,0,\ldots,0)}{1\leq i \leq n}$$ be a canonical set of generators for $A=\mathbb{Z}_{p^2}^n$ and  $Q=\aff(A,F)$ where
	\begin{center}
		$F=\begin{bmatrix}
		0 & 0 & \ldots &  \ldots &-a_{0} \\
		1& 0 &\ldots & \ldots & -a_{1} \\
		0 &1&\ldots &  \ldots &-a_{2} \\
		0 &0 &\ldots & \ldots & \ldots\\
		0 & 0 &\ldots &1 & -a_{n-1} \\
		\end{bmatrix}$.
	\end{center}
	The induced matrix on the factor $A/pA$ with respect to the basis $\setof{e_i+pA}{1\leq i \leq n}$ and the restriction to $pA$ with respect to the basis $\setof{pe_i}{1\leq i \leq n}$ are equal to $F$ which is the companion matrix of $h$. Therefore, $A/pA$ and $pA$ are isomorphic as simple modules and $Q$ is LSS and subdirectly irreducible by Proposition \ref{LSS iff}. In particular for every strictly simple quandle this construction provides a quandle with the desired properties. 
\end{example}

\section{Connected non-abelian LSS quandles}\label{non-ab LSS}

Connected non-abelian LSS quandles are subdirectly irreducible, since all quandles described in Theorem \ref{LSS reducible} are abelian. In the following we are describing such quandles according to different properties, in particular according to the equivalence relation $\sigma_Q$ (see \eqref{sigma}). So, we will denote by $\LSS(p^m,q^n,\sigma_Q)$ the class of finite non-abelian subdirectly irreducible connected quandles with given $\sigma_Q$ and such that the unique congruence has factor of size $q^n$ and blocks of size $p^m$.

The unique proper congruence of a quandle $Q\in \LSS(p^m,q^n,\sigma_Q)$ is $\gamma_Q$, since the unique factor is abelian.
\begin{corollary}\label{congruence of SI is [1,1]}
	Let $Q\in LSS(p^m,q^n,\sigma_Q)$ and let $G=\dis(Q)$. Then the unique non-trivial congruence of $Q$ is $\gamma_Q$,  $Q/\gamma_Q\cong \aff(G/\gamma_1(G),f_{\gamma_1(G)})$ and $\dis(Q)_a\leq\dis^{\gamma_Q}=\gamma_1(G)$ for every $a\in Q$.
\end{corollary}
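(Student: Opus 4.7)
The plan is to verify the three assertions of the corollary in order, then finish with the identification involving $\gamma_1(G)$.

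First, by Lemma~\ref{lemma_1_on_HSS}, a subdirectly irreducible LSS connected quandle has a unique non-trivial congruence $\alpha$; by Proposition~\ref{LSS iff} the factor $Q/\alpha$ is strictly simple and connected, hence abelian (in fact affine) by the characterization of strictly simple connected quandles recalled at the start of Section~\ref{Sec:LSS}. Therefore $\alpha$ has abelian factor, which forces $\gamma_Q \leq \alpha$; combined with $\gamma_Q \neq 0_Q$ (because $Q$ itself is non-abelian by assumption), we conclude $\gamma_Q = \alpha$.

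Second, strictly simple connected quandles are latin, so the displacement group of $Q/\gamma_Q$ acts regularly and $\dis(Q/\gamma_Q)_{[a]_{\gamma_Q}} = 1$. Pulling back along $\pi_{\gamma_Q} : G \to \dis(Q/\gamma_Q)$ from \eqref{pi alpha} gives $\dis(Q)_{[a]_{\gamma_Q}} = \dis^{\gamma_Q}$, and in particular $\dis(Q)_a \leq \dis(Q)_{[a]_{\gamma_Q}} = \dis^{\gamma_Q}$. Feeding this into Lemma~\ref{rep for factors} with $\alpha = \gamma_Q$ yields
\[
Q/\gamma_Q \cong \Q(G/\dis^{\gamma_Q},\,1,\,f_{\dis^{\gamma_Q}}),
\]
a principal representation; abelianness of $Q/\gamma_Q$ makes $G/\dis^{\gamma_Q}$ abelian, so this is in fact the affine representation $\aff(G/\dis^{\gamma_Q}, f_{\dis^{\gamma_Q}})$.

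Third, the identification $\dis^{\gamma_Q} = \gamma_1(G)$ with the commutator subgroup translates the abelian-factor property into group-theoretic language: one inclusion $[G,G] \leq \dis^{\gamma_Q}$ is immediate from $G/\dis^{\gamma_Q}$ being abelian, while the reverse uses the minimality of $\gamma_Q$ together with the correspondence between congruences and relative displacement groups from \cite[Theorem 1.1]{CP}, applied to the candidate congruence $\c{[G,G]}$ (whose factor is abelian since its displacement group is a quotient of $G/[G,G]$). I expect this last identification to be the main obstacle, since extracting a matching congruence from the group-theoretic datum $[G,G]$ requires carefully verifying that $\c{[G,G]}$ has abelian factor and fits into the commutator-theoretic translation without being forced up to $1_Q$ by the scarcity of congruences in $Q$.
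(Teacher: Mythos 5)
Your first two steps are sound: the uniqueness of the non-trivial congruence and its identification with $\gamma_Q$ is exactly the paper's argument, and your derivation of $\dis(Q)_a\leq \dis(Q)_{[a]_{\gamma_Q}}=\dis^{\gamma_Q}$ together with Lemma~\ref{rep for factors} correctly gives $Q/\gamma_Q\cong\aff(G/\dis^{\gamma_Q},f_{\dis^{\gamma_Q}})$. (One small misattribution: the regularity of $\dis(Q/\gamma_Q)$ does not follow from the factor being \emph{latin} -- latin quandles can have non-regular displacement groups -- but from the factor being \emph{connected affine}, where the displacement group consists of translations; the conclusion you need is nevertheless true.) The paper itself dispenses with all of this by quoting \cite[Proposition 2.6]{GB} once the congruence lattice is pinned down.

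The genuine gap is your third step, and it is precisely the part you flag as "the main obstacle": the inclusion $\dis^{\gamma_Q}\leq\gamma_1(G)$. Your proposed route through the congruence $\c{\gamma_1(G)}$ only shows that this congruence has abelian factor (since $\gamma_1(G)\leq\dis^{\c{\gamma_1(G)}}$, its displacement group is a quotient of $G/\gamma_1(G)$), whence by minimality $\gamma_Q\leq\c{\gamma_1(G)}$; unwinding this gives $L_aL_b^{-1}\in\gamma_1(G)$ whenever $a\,\gamma_Q\,b$, i.e.\ $\dis_{\gamma_Q}\leq\gamma_1(G)$. That controls only the \emph{relative} displacement group, not the block kernel $\dis^{\gamma_Q}$: the latter is strictly larger in general, and indeed -- as you yourself established in step two -- it contains every point stabilizer $\dis(Q)_a$ (one has $\dis^{\gamma_Q}=\dis_{\gamma_Q}\dis(Q)_a$ when the blocks are connected). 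So to close the argument you would still have to prove that $\dis(Q)_a\leq\gamma_1(G)$, equivalently that the abelianization $G/\gamma_1(G)$ has order exactly $q^n$ rather than merely surjecting onto $\dis(Q/\gamma_Q)\cong\mathbb{Z}_q^n$; nothing in your sketch addresses this, and the cited \cite[Theorem 1.1]{CP} (which concerns abelianness of congruences via $\dis_\alpha$, not abelianness of factors) does not supply it. This missing inclusion is exactly the content the paper imports from \cite[Proposition 2.6]{GB}, so as written your proof establishes the first two claims of the corollary but not the identity $\dis^{\gamma_Q}=\gamma_1(G)$ nor the representation of $Q/\gamma_Q$ over $G/\gamma_1(G)$.
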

\begin{proof}
The quandle $Q$ is not abelian and $Q/\alpha$ is abelian. Then $\gamma_Q$ is the unique non trivial congruence. Then we can apply \cite[Proposition 2.6]{GB}.
\end{proof}

%
%
%
%

\subsection*{The classes $\LSS(2,q^n,\sigma_Q)$}
Let assume that $Q$ is a finite LSS connected quandle with projection subquandles. Then $Q$ is $Q$ is not abelian, since it is not latin \cite[Corollary 2.6]{Principal} and it is subdirectly irreducible since all the quandles in Thereom \ref{LSS reducible} are latin. The factor $Q/\gamma_Q$ has no projection subquandles, hence the blocks of $\gamma_Q$ are projection subquandles of size $2$, since the only strictly simple projection quandle has size $2$. Therefore $Q\in \LSS(2,q^n,\sigma_Q)$.

%


\begin{theorem}\label{LSS with P_2}
	Let $Q\in \LSS(2,q^n,\sigma_Q)$. Then either $|Q|=6$ or $q=2$ and $Q$ is a non-faithful principal quandle over an extraspecial $2$-group.
\end{theorem}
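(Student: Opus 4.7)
The plan is to split into two cases according to the size of $\dis^{\gamma_Q}$. The factor $Q/\gamma_Q$ is strictly simple of size $q^n$, hence affine, $Q/\gamma_Q\cong\aff(\mathbb{F}_{q^n},\alpha)$, with regular displacement group $\dis(Q/\gamma_Q)\cong\mathbb{F}_{q^n}$; and the blocks of $\gamma_Q$ are strictly simple of size two, hence isomorphic to $\mathcal{P}_2$. Lemma \ref{embedding as quandle}(i) applied to $\gamma_Q$ and any two generators of $Q/\gamma_Q$ embeds $\dis^{\gamma_Q}$ into $\aut{\mathcal{P}_2}\times\aut{\mathcal{P}_2}\cong\mathbb{Z}_2^2$, while transitivity of $\dis(Q)$ on $Q$ gives $|\dis(Q)|=|\dis^{\gamma_Q}|\cdot q^n\geq 2q^n$; hence $|\dis^{\gamma_Q}|\in\{2,4\}$.

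If $|\dis^{\gamma_Q}|=2$, then $\dis(Q)$ is regular on $Q$, so $Q=\Q(G,f)$ is principal with $G=\dis(Q)$ of order $2q^n$; by Corollary \ref{congruence of SI is [1,1]} the subgroup $\dis^{\gamma_Q}$ is the unique nontrivial $f$-invariant normal subgroup of $G$, of order $2$. If $q$ were odd, Schur--Zassenhaus together with the centrality of an order-$2$ normal subgroup would give $G\cong\mathbb{Z}_2\times P$ with $P$ a characteristic (hence $f$-invariant) Hall-$q$-subgroup, producing a second nontrivial $f$-invariant subgroup, contradicting uniqueness. So $q=2$ and $G$ is a $2$-group. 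Uniqueness applied to the $f$-invariant characteristic subgroups $Z(G)$, $[G,G]$ and $\Phi(G)$ — each nontrivial because $Q$ (and so $G$) is non-abelian, and proper because $G$ is a non-trivial $p$-group — forces all three to coincide with $\dis^{\gamma_Q}$; thus $G$ is extraspecial. The same uniqueness applied to $Fix(f)$, which contains $\dis^{\gamma_Q}$ automatically (as $f$ acts trivially on a group of order $2$) but cannot equal $G$ (else $f=\mathrm{id}$ and $Q$ is a projection), gives $Fix(f)=\dis^{\gamma_Q}$, whence $\lambda_Q=\gamma_Q$ and $Q$ is non-faithful.

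If $|\dis^{\gamma_Q}|=4$, then $|\dis(Q)_a|=2$ and $Q$ is not principal. Regularity of $\dis(Q/\gamma_Q)$ gives $\dis(Q)_a\leq\dis^{\gamma_Q}$, and counting in the order-$4$ stabilizer $\dis(Q)_{[a]}$ (which must act transitively on the two-element block $[a]$) shows $\dis(Q)_a=\dis(Q)_b$ for $a\gamma_Q b$, so $\sigma_Q=\gamma_Q$. I would then analyze the conjugation action of $\dis(Q)/\dis^{\gamma_Q}\cong\mathbb{F}_{q^n}$ on $\dis^{\gamma_Q}\cong\mathbb{Z}_2^2$, i.e.\ a homomorphism $\mathbb{F}_{q^n}\to\aut{\mathbb{Z}_2^2}\cong S_3$. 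A trivial action would make $\dis^{\gamma_Q}$ central in $\dis(Q)$, hence $\dis(Q)_a$ central, so all stabilizers would coincide and $\sigma_Q=1_Q$, contradicting non-principality. Therefore the image is nontrivial of order dividing $\gcd(q^n,6)$, forcing $q\in\{2,3\}$. Let $K'\leq\dis(Q)$ be the preimage of the kernel of this action: since $K'=C_{\dis(Q)}(\dis^{\gamma_Q})$ is the centralizer of an $\lmlt(Q)$-normal subgroup, it is itself $\lmlt(Q)$-normal, so $\mathcal{O}_{K'}\in Con(Q)$ and $\gamma_Q\leq\mathcal{O}_{K'}$ (since $\dis^{\gamma_Q}\leq K'$ is transitive on each block). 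For $q=3$, $|K'|=4\cdot 3^{n-1}<|Q|=2\cdot 3^n$, so $\mathcal{O}_{K'}=\gamma_Q$ and $|K'_a|=2\cdot 3^{n-1}\leq|\dis(Q)_a|=2$, forcing $n=1$ and $|Q|=6$. For $q=2$, there is no connected strictly simple quandle of size $2$, so $n\geq 2$; then $|K'|=2^{n+1}=|Q|$ is the only numerically feasible possibility, making $K'$ regular, and thus $\dis(Q)_a\leq K'_a=\{1\}$ contradicts $|\dis(Q)_a|=2$.

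The main technical obstacle is the second case: one must verify that the preimage $K'$ is indeed $\lmlt(Q)$-normal (not merely $\dis(Q)$-normal) so that $\mathcal{O}_{K'}$ is a genuine congruence, and then balance the arithmetic inequalities between $|K'|$, $|K'_a|$, $|Q|$ and $|\dis(Q)_a|=2$ to rule out all cases but $q=3$, $n=1$.
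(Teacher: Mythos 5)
Your overall strategy is sound and genuinely different from the paper's. The paper splits on faithful versus non-faithful and then outsources the heavy lifting: the faithful case is settled by an external classification of faithful extensions with projection blocks (giving $|Q|=6$ directly), the non-faithful case uses the fact that $Q/\lambda_Q$ affine forces $Q$ principal, a Sylow-type result to exclude odd $q$, and a centrality result for $p$-quandles to identify $\gamma_1(G)=Z(G)=\Phi(G)$. You instead split on $|\dis^{\gamma_Q}|\in\{2,4\}$, which is available because Lemma \ref{embedding as quandle}(i) embeds $\dis^{\gamma_Q}$ into $\aut{\mathcal{P}_2}\times\aut{\mathcal{P}_2}\cong\mathbb{Z}_2^2$, and you then argue almost entirely inside the displacement group: in the regular case you use the bijection between $f$-invariant normal subgroups and congruences (valid here since $\mathcal{O}_N$ recovers $N$ as the block of the identity) to force $q=2$ and extraspecialness; in the non-regular case you run an orbit--stabilizer count against the centralizer $K'=C_{\dis(Q)}(\dis^{\gamma_Q})$, recovering $|Q|=6$ for $q=3$ and a contradiction for $q=2$. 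This buys a more self-contained proof at the cost of redoing, in this special case, work the paper imports; the paper's citations, conversely, make its argument three lines long.

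Two steps need repair, though both are patchable with tools you already have. First, in the regular case you apply "uniqueness of the nontrivial $f$-invariant normal subgroup" to $Fix(f)$, but $Fix(f)$ is not known to be normal in $G$ at that point, so the uniqueness statement does not apply to it; what you actually need is either the observation that $\lambda_Q$ is a congruence whose class of the base point is $Fix(f)\supseteq\dis^{\gamma_Q}\neq 1$, hence $\lambda_Q=\gamma_Q$ by uniqueness of the congruence, or simply that a nontrivial central $z\in\dis^{\gamma_Q}\leq Fix(f)$ satisfies $L_z=L_1$ in $\Q(G,f)$, which already gives non-faithfulness. Second, in the case $|\dis^{\gamma_Q}|=4$, $q=2$, the inference "$|K'|=|Q|$, making $K'$ regular" is a non sequitur: equality of orders does not force transitivity. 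You must treat both admissible values of $\mathcal{O}_{K'}$: if $\mathcal{O}_{K'}=\gamma_Q$ then $|K'_a|=2^{n}\geq 4>|\dis(Q)_a|=2$, a contradiction (the same arithmetic you use for $q=3$), and if $\mathcal{O}_{K'}=1_Q$ then $K'$ is regular and $\dis(Q)_a\leq\dis^{\gamma_Q}\leq K'$ forces $\dis(Q)_a\leq K'_a=1$, again a contradiction. With these two adjustments your argument is complete and reaches exactly the statement of Theorem \ref{LSS with P_2}.
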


\begin{proof}
	Let	$|Q/\gamma_Q|=q^n$.
	If $Q$ is faithful then we can apply \cite[Theorem 4.8(i)]{Principal} and so $|Q|=6$.
	If $Q$ is not faithful then $\gamma_Q=\lambda_Q$ and $Q/\lambda_Q$ is affine, so $Q$ is principal over some group $G$ by \cite[Proposition 5.3]{GB}. Therefore, $|Q|=|G|=2q^n$. If $q\geq 3$ then $G$ has a normal $p$-Sylow group of index $2$. According to \cite[Theorem 2.10]{Principal} it induces a congruence with non-connected factor of size $2$, contradiction.\\
	If $q=2$ then $\gamma_Q=\zeta_Q$, since any $p$-quandle have non-trivial center by \cite[Corollary 6.6]{CP} and $Q$ is non-abelian. Moreover $G=\dis(Q)$ has just one characteristic subgroup, namely $\dis^{\gamma_1(Q)}=\gamma_1(G)=Z(G)=\Phi(G)\cong \mathbb{Z}_2$. So $G$ is an extraspecial $2$-group.
\end{proof}
Note that there are $2$ non-isomorphic connected quandles of size $6$ in the RIG library of GAP and they are both faithful, subdirectly irreducible and LSS, in particular they are in $\LSS(2,3,\gamma_Q)$. Using the criterion of connectedness for quandles represented by nilpotent groups given in \cite[Corollary 2.8]{GB} we can identify the automorphisms of extraspecial $2$-groups giving rise to quandles $\mathcal{Q}(G,f)$ in $\LSS(2,2^{2n},1_Q)$.
\begin{proposition}\label{2 extraspecial}
	Let $G$ be an extraspecial $2$-group of size $2^{2n+1}$ and $f\in \aut{G}$. The following are equivalent:
	\begin{itemize}
		\item[(i)] $Q=\mathcal{Q}(G,f)\in\LSS(2,2^{2n},1_Q)$.
		\item[(ii)] $f_{\gamma_1(G)}$ acts irreducibly on $G/\gamma_1(G)$.
	\end{itemize}
	Isomorphism classes of such quandles are in one-to-one correspondece with conjugacy classes of such automorphisms.
\end{proposition}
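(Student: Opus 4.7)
The plan is to verify each defining property of $\LSS(2,2^{2n},1_Q)$ by reading off information from the group $G$ and its action under $f$.

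For (i) $\Rightarrow$ (ii), I would invoke Corollary \ref{congruence of SI is [1,1]}: if $Q=\Q(G,f)\in \LSS(2,2^{2n},1_Q)$ then the unique proper congruence is $\gamma_Q$, and $Q/\gamma_Q\cong \aff(G/\gamma_1(G), f_{\gamma_1(G)})$. By Proposition \ref{LSS iff} this factor is strictly simple, and since strictly simple connected affine quandles are precisely those whose underlying linear map acts irreducibly, $f_{\gamma_1(G)}$ must act irreducibly on $G/\gamma_1(G)\cong \mathbb{F}_2^{2n}$.

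For (ii) $\Rightarrow$ (i), I would proceed in four steps. First, connectedness of $Q$: by the criterion \cite[Corollary 2.8]{GB} for nilpotent representing groups, $\Q(G,f)$ is connected iff $1-f_{\gamma_1(G)}$ is invertible; an irreducible action on the nontrivial space $G/\gamma_1(G)$ (with $2n\geq 2$) is automatically nontrivial, so its kernel is a proper invariant subspace, hence zero, and connectedness follows. This gives $\dis(Q)=G$, and principality yields $\sigma_Q=1_Q$. Second, subdirect irreducibility: the non-trivial congruences of $\Q(G,f)$ correspond to the $f$-invariant normal subgroups $1\subsetneq N\subsetneq G$; since $G$ is extraspecial, $Z(G)=\gamma_1(G)$ is its unique minimal normal subgroup, so $N\supseteq Z(G)$, and then $N/Z(G)$ is an $f_{\gamma_1(G)}$-invariant subspace forced by irreducibility to be $0$ or all of $G/Z(G)$. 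Hence $\gamma_Q$ is the only proper congruence, with blocks of size $|Z(G)|=2$. Third, the blocks are $\mathcal{P}_2$: for $a\in G$ and $c$ the unique non-identity element of $Z(G)$, since $|Z(G)|=2$ forces $f(c)=c$, one computes $a\ast(ca)=af(a^{-1}ca)=af(c)=ac=ca$, so $\{a,ca\}\cong \mathcal{P}_2$ is strictly simple. Finally, $Q/\gamma_Q$ is strictly simple by (ii), so Proposition \ref{LSS iff} certifies $Q$ is LSS; non-abelianness follows from $\dis(Q)=G$ being non-abelian for any non-trivial extraspecial group.

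The isomorphism statement is immediate from Theorem \ref{iso theorem1}: with $H_1=H_2=1$ and $G_1=G_2=G$, isomorphism of $\Q(G,f_1)$ and $\Q(G,f_2)$ is equivalent to $f_2=\psi f_1 \psi^{-1}$ for some $\psi\in\aut{G}$, i.e., to $\aut{G}$-conjugacy of the automorphisms.

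The main obstacle lies in the congruence-analysis step: one must combine the correspondence between congruences of a principal connected quandle $\Q(G,f)$ and $f$-invariant normal subgroups of $G$ (via $N\in Norm(Q)$ and $\mathcal{O}_N$) with the specific normal subgroup structure of extraspecial $2$-groups (unique minimal normal subgroup $Z(G)$), and then translate irreducibility of $f_{\gamma_1(G)}$ into the absence of intermediate $f$-invariant normal subgroups. All other verifications reduce to direct computation in $G$.
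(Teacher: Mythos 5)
Your overall route is essentially the paper's: (i) $\Rightarrow$ (ii) from strict simplicity of the factor, connectedness and $\dis(Q)\cong G$ from the nilpotent-group criterion of \cite{GB}, membership in $\LSS(2,2^{2n},1_Q)$ via Proposition \ref{LSS iff}, and the last assertion from Theorem \ref{iso theorem1}; the verification that the $Z(G)$-blocks are copies of $\mathcal{P}_2$, that $\sigma_Q=1_Q$ by principality, and that $Q$ is non-abelian because $\dis(Q)\cong G$ is non-abelian, are all fine. The one step that is not sound as written is the subdirect-irreducibility argument. You assert that the non-trivial congruences of a principal connected quandle $\Q(G,f)$ correspond to the $f$-invariant normal subgroups $1\subsetneq N\subsetneq G$, and then invoke the unique minimal normal subgroup of an extraspecial group. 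That correspondence is not available to quote and does not hold in the stated generality: since $\dis(Q)\cong G$ acts regularly, the identity block of a congruence is a subgroup $N$ with $f(N)=N$, but the congruence property only forces the $G$-normal closure of $\setof{nf(n)^{-1}}{n\in N}$ to lie in $N$, not normality of $N$. The congruence $\lambda_Q$ shows why this is delicate: its identity block is $Fix(f)$, which for a principal quandle need not be normal, and your quandle is exactly one of the non-faithful ones, so congruences $\alpha$ with $\dis_\alpha=1$ (equivalently $\alpha\leq\lambda_Q$) genuinely occur and are not reached by the naive passage through $Norm(Q)$ and $\mathcal{O}_N$ that you sketch as the way to fill the gap.

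The gap is repairable, and in two ways. Directly with subgroups: if $N\neq 1$ is $f$-invariant and $N\cap Z(G)=1$, then $NZ(G)/Z(G)$ is a non-zero $f_{\gamma_1(G)}$-invariant subspace of $G/Z(G)$, hence all of it by irreducibility, so $G=NZ(G)$ and $\gamma_1(G)=Z(G)\leq N$, a contradiction; thus every non-trivial $f$-invariant subgroup of $G$ contains $Z(G)$ (and is then automatically normal, since it contains $\gamma_1(G)$), so every non-trivial congruence contains the $Z(G)$-coset congruence. Alternatively, argue as the paper does: every non-trivial $N\in Norm(Q)$ contains $Z(G)\cong\mathbb{Z}_2$ because $G$ is nilpotent, which handles all congruences with $\dis_\alpha\neq 1$; the remaining ones satisfy $\alpha\leq\lambda_Q$, and $\lambda_Q$ is precisely the $Z(G)$-coset congruence because irreducibility forces $Fix(f)=Z(G)$. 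With either repair your proof coincides in substance with the paper's.
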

\begin{proof}
(i) $\Rightarrow$ (ii) It follows since the factor $Q/\gamma_Q$ is strictly simple.

(ii) $\Rightarrow$ (i) The quandle $\aff(G/\Phi(G),f_{\Phi(G)})$ is connected. According to \cite[Proposition 2.7]{GB}, $Q$ is connected and $\dis(Q)\cong G$. Moreover $\Phi(G)=\gamma_1(G)=Z(G)\cong \mathbb{Z}_2$ is contained in every $N\in Norm(Q)$ since $G$ is nilpotent, so $Q$ is subdirectly irreducible. The factor with respect to the congruence defined by $Z(G)$ is simple, therefore $Q$ is LSS by Proposition \ref{LSS iff}.  

Isomorphism classes of connected principal quandles over a group $G$ correspond to conjugacy classes of automorphisms (Theorem \ref{iso theorem1}).
\end{proof}

\begin{example}
	
A GAP computer search for quandles as in Corollary \ref{2 extraspecial} reveals that there exist just one isomorphism class of such quandles as of size $8,32,128$ and two of size $512$. All these examples are over the extraspecial $2$-group constructed as the central product of one copy of the quaternion groups and several copies of the dihedral group. 

\end{example}
\subsection*{The classes $\LSS(p^m,q^n,\sigma_Q)$}
From now on we assume that $p^m>2$ and so the quandles in $\LSS(p^m,p^n,\sigma_Q)$ have no projection subquandles. In particular they are faithful (the quandles in Proposition \ref{2 extraspecial} are the only non-faithful connected LSS quandles). 

For faithful quandles the relation between the congruence lattice and the lattice $Norm(Q)$ has better properties. 
Indeed if $Q$ is faithful, a congruence $\alpha$ is abelian (central) if and only if $\dis_\alpha$ is abelian (central) and $\zeta_Q=\c{Z(\dis(Q))}$. So for quandles in $\LSS(p^m,q^n,\sigma_Q)$ the subgroup $\gamma_1(\dis(Q))$ is the biggest element of $Norm(Q)$ (in particular it is a maximal characteristic subgroup) and $\dis_{\gamma_Q}$ is the smallest. Moreover finite connected faithful nilpotent quandles splits as direct product of quandles of prime power size \cite[Theorem 1.4]{CP}, hence the subdirectly irreducible ones have size a power of a prime.

 \begin{lemma}\label{embedding of ker}
 	Let $Q\in \LSS(p^m,q^n,\sigma_Q)$ and let $G=\dis(Q)$. Then:
 	\begin{itemize}
 	
 		\item[(i)] $\gamma_2(G)\leq \dis_{\gamma_Q}\cong \mathbb{Z}_p^{m+k}$ where $k\leq m$ and $\gamma_Q$ is abelian.
 		 
 		\item[(ii)] If $p\neq q$ then $\gamma_2(G)=\dis_{\gamma_Q}$ and $Z(G)=1$.
 	\end{itemize}
 \end{lemma}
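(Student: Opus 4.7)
The inclusion $\gamma_2(G)\leq\dis_{\gamma_Q}$ is immediate from the general bound $[\dis(Q),\dis^{\gamma_Q}]\leq\dis_{\gamma_Q}$ recalled in Section \ref{Sec: prelim}, together with $\dis^{\gamma_Q}=\gamma_1(G)$ from Corollary \ref{congruence of SI is [1,1]}. Since $\gamma_Q$ is the unique nontrivial congruence of the subdirectly irreducible $Q$, it is minimal, so by Lemma \ref{minimal congruences} the group $\dis_{\gamma_Q}$ is a minimal element of $\mathrm{Norm}(Q)$ and will be abelian as soon as it is soluble. For solubility I would pick two distinct blocks $[a],[b]$; these generate $Q/\gamma_Q$ (strictly simple, so two-generated), so Lemma \ref{embedding as quandle}(i) embeds $\dis_{\gamma_Q}\leq\dis^{\gamma_Q}\hookrightarrow\aut{[a]}\times\aut{[b]}$. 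Each block is a connected strictly simple affine quandle $\aff(\mathbb{Z}_p^m,f)$ (as $p^m>2$), whose automorphism group is the soluble semidirect $\mathbb{Z}_p^m\rtimes\mathbb{F}_{p^m}^*$. Hence $\dis_{\gamma_Q}$ is soluble, so abelian, and by faithfulness $\gamma_Q$ is an abelian congruence.

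Since $\dis_{\gamma_Q}$ is non-trivial (else $\gamma_Q\leq\lambda_Q=0_Q$, contradiction), the orbit congruence $\mathcal{O}_{\dis_{\gamma_Q}}\leq\gamma_Q$ is not $0_Q$ and therefore equals $\gamma_Q$; being abelian and transitive on each block, $\dis_{\gamma_Q}$ acts regularly there. The unique Sylow $p$-subgroup of $\aut{[a]}$ (having order coprime to $p$ outside the translation part) is $\dis([a])\cong\mathbb{Z}_p^m$, and any regular subgroup has order $p^m$, so $\dis_{\gamma_Q}|_{[a]}=\dis([a])$. The two-block embedding then places $\dis_{\gamma_Q}$ in $\mathbb{Z}_p^m\times\mathbb{Z}_p^m$ with both projections surjective, whence $\dis_{\gamma_Q}\cong\mathbb{Z}_p^{m+k}$ with $0\leq k\leq m$.

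\textbf{For part (ii).} To prove $\dis_{\gamma_Q}=\gamma_2(G)$, I would first show that $\gamma_1(G)/\gamma_2(G)$ is a $q$-group. Set $\bar G=G/\gamma_2(G)$, which is nilpotent of class $\leq 2$, so commutators are central and $\bar G/\gamma_1(\bar G)\cong G/\gamma_1(G)\cong\mathbb{Z}_q^n$ has exponent $q$; hence for $x,y\in\bar G$, $[x,y]^q=[x^q,y]=1$ since $x^q\in\gamma_1(\bar G)\leq Z(\bar G)$. So $\gamma_1(\bar G)$ has exponent $q$. Then $\dis_{\gamma_Q}/\gamma_2(G)$ is simultaneously a quotient of the $p$-group $\dis_{\gamma_Q}$ and a subgroup of a $q$-group, so trivial when $p\neq q$, yielding $\gamma_2(G)=\dis_{\gamma_Q}$.

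For $Z(G)=1$: for any $z\in Z(G)$, decompose $z=z_pz_q$ into commuting $p$- and $q$-parts (both central). The orbit congruence $\mathcal{O}_{\langle z_q\rangle}$ lies in $\{0_Q,\gamma_Q,1_Q\}$, and in the latter two cases its orbit size $p^m$ (resp.\ $p^m q^n$) would have to divide $|z_q|$, which is a $q$-power; this is impossible for $p\neq q$, forcing $\mathcal{O}_{\langle z_q\rangle}=0_Q$, hence $z_q=1$ by faithfulness. Thus $Z(G)$ is a $p$-group; it maps trivially to the $q$-group $G/\gamma_1(G)$, so $Z(G)\leq\gamma_1(G)$, and combined with $\gamma_1(G)/\gamma_2(G)$ being a $q$-group this gives $Z(G)\leq\gamma_2(G)=\dis_{\gamma_Q}$. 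Now $Z(G)\cap\dis_{\gamma_Q}=Z(G)$ is the intersection of two subgroups normal in $\lmlt(Q)$ and contained in $\dis(Q)$, hence belongs to $\mathrm{Norm}(Q)$; by minimality of $\dis_{\gamma_Q}$ in $\mathrm{Norm}(Q)$ it is either trivial or the whole of $\dis_{\gamma_Q}$. If the latter, then $\dis_{\gamma_Q}\leq Z(G)$, so $\gamma_Q$ is a central congruence of the faithful $Q$, making $Q$ nilpotent of class $2$; by \cite[Theorem 1.4]{CP}, $Q\cong Q_p\times Q_q$, which is subdirectly reducible, contradicting our standing assumption. Hence $Z(G)=1$.

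The step I expect to require the most care is the final one: recognizing that $Z(G)\cap\dis_{\gamma_Q}$ sits inside $\mathrm{Norm}(Q)$ and exploiting the dichotomy forced by minimality together with the non-nilpotency of a subdirectly irreducible non-abelian $Q$ of mixed order $p^m q^n$.
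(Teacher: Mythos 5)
Your part (i) is essentially the paper's own argument: the same two-block embedding of $\dis^{\gamma_Q}$ into $\aut{[a]}\times\aut{[b]}\cong(\mathbb{Z}_p^m\rtimes\mathbb{Z}_{p^m-1})^2$ gives solvability, Lemma \ref{minimal congruences} upgrades this to abelianness, and regularity on each block places $\dis_{\gamma_Q}$ inside $\mathbb{Z}_p^{2m}$ with a surjective projection, giving $\mathbb{Z}_p^{m+k}$, $k\leq m$. Your only deviations there are cosmetic but sound: you obtain $\gamma_2(G)\leq\dis_{\gamma_Q}$ from the recalled bound $[\dis(Q),\dis^{\gamma_Q}]\leq\dis_{\gamma_Q}$ together with Corollary \ref{congruence of SI is [1,1]} rather than citing \cite{CP}, you get $\mathcal{O}_{\dis_{\gamma_Q}}=\gamma_Q$ from uniqueness of the congruence rather than from connectedness of the blocks, and you get regularity from ``abelian plus transitive'' instead of invoking Lemma \ref{embedding as quandle}(ii). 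Part (ii) is where you genuinely diverge. The paper argues that $Q$ is not nilpotent, so $\gamma_2(G)\neq 1$, and minimality of $\dis_{\gamma_Q}$ in $Norm(Q)$ forces $\gamma_2(G)=\dis_{\gamma_Q}$; then $\zeta_Q=0_Q$ yields $Z(G)=1$ in one line. You instead show by a class-$2$ commutator computation that $\gamma_1(G)/\gamma_2(G)$ has exponent $q$, so that $\dis_{\gamma_Q}/\gamma_2(G)$ is simultaneously a $p$-group and a $q$-group; this is correct, purely group-theoretic, and has the side benefit of showing already at this stage that $|G|$ is a $\{p,q\}$-number. Your $Z(G)=1$ argument is longer than the paper's, but its core (central congruence $\Rightarrow$ nilpotent of class $2$ $\Rightarrow$ direct product by \cite[Theorem 1.4]{CP} $\Rightarrow$ contradicts subdirect irreducibility) is the same contradiction the paper exploits, just reached through the $Norm(Q)$-minimality dichotomy.

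One step needs repair. You treat $\mathcal{O}_{\langle z_q\rangle}$ as a congruence, but the paper only guarantees $\mathcal{O}_N\in Con(Q)$ for $N\in Norm(Q)$, i.e.\ for $N\trianglelefteq\lmlt(Q)$ contained in $\dis(Q)$; the cyclic group $\langle z_q\rangle$ is central in $\dis(Q)$ but need not be normalized by the left translations, since $L_c z_q L_c^{-1}$ is only guaranteed to lie in $Z(G)$, not in $\langle z_q\rangle$. The fix is immediate: replace $\langle z_q\rangle$ by the $q$-primary component $Z(G)_q$ of the abelian group $Z(G)$, which is characteristic in $\dis(Q)$ and hence normal in $\lmlt(Q)$, so $\mathcal{O}_{Z(G)_q}\in\{0_Q,\gamma_Q,1_Q\}$; its orbits have $q$-power size, so it cannot be $\gamma_Q$ or $1_Q$ (whose blocks have size $p^m$, resp.\ $p^mq^n$), hence $Z(G)_q=1$. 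You should also make explicit that the decomposition $z=z_pz_q$ is exhaustive: this follows because, after your first paragraph, $\gamma_2(G)$ is a $p$-group, $\gamma_1(G)/\gamma_2(G)$ is a $q$-group and $G/\gamma_1(G)\cong\mathbb{Z}_q^n$, so $|G|$ has no other prime divisors (alternatively, run the same orbit argument on every $r$-component of $Z(G)$ with $r\neq p$). With these two touches your proof is complete.
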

 \begin{proof}
 	(i) Since $Q=Sg(a,[b])$ then $\dis^{\gamma_Q}$ embeds into $\aut{[a]}\times \aut{[b]}\cong (\mathbb{Z}_p^m\rtimes \mathbb{Z}_{p^m-1})^2$ . Hence $\dis_{\gamma_Q}\leq \dis^{\gamma_Q}$ is solvable and then it is abelian by Lemma \ref{minimal congruences}. Moreover $\gamma_Q=\mathcal{O}_{\dis_{\gamma_Q}}$ since the blocks are connected. We can apply Lemma \ref{embedding as quandle}(ii) to $\dis_{\gamma_Q}$, so it is isomorphic to $\mathbb{Z}_p^{m+k}$ with $k\leq m$ since it is transitive on each block. Then $\gamma_Q$ is abelian according to \cite[Theorem 1.1]{CP} and $\gamma_2(G)\leq \dis_{\gamma_Q}$ by \cite[Proposition 3.3]{CP}.

 	(ii) If $p\neq q$ then $Q$ is not nilpotent, and  then $\gamma_2(G)=\dis_{\gamma_Q}$ since $\dis_{\gamma_Q}$ is minimal in $Norm(Q)$. Moreover $\zeta_Q=\c{Z(G)}=0_Q$ and accordingly $Z(G)= 1$. 
 \end{proof}
  In the sequel we discuss several cases according to the equivalence relation $\sigma_Q$. We can have the following cases:

\begin{lemma} \label{cases for sigma}
	Let $Q \in  \LSS(p^m,q^n,\sigma_Q)$. Then one of the following holds:
	\begin{itemize}
		\item[(i)] $\sigma_Q=1_Q$ and $Q$ is principal.
		\item[(ii)] $\sigma_Q=\gamma_Q$.
		\item[(iii)] $\sigma_Q\wedge \gamma_Q = 0_Q$.
	\end{itemize}
	In particular if $p=q$ then $\gamma_Q\leq \sigma_Q$.
\end{lemma}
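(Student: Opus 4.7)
My plan is to analyze the $\sigma_Q$-class $S=[a]_{\sigma_Q}$ of a fixed $a\in Q$ and study how it interacts with the unique non-trivial congruence $\gamma_Q$. The key inputs are that $S$ is a subquandle of $Q$ by \cite[Proposition 1.5]{Principal}, that by Proposition~\ref{LSS iff} both $Q/\gamma_Q$ and each block $[x]_{\gamma_Q}$ are strictly simple, and that by Corollary~\ref{subquandles} the possible sizes of subquandles of $Q$ are $1,p^m,q^n,p^mq^n$.

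I first consider the image $\pi(S)\subseteq Q/\gamma_Q$ under the canonical projection $\pi$; being a subquandle of the strictly simple $Q/\gamma_Q$, it is either a singleton or all of $Q/\gamma_Q$. In the first case $S$ sits inside the block $[a]_{\gamma_Q}$ and is a subquandle of this strictly simple quandle, so either $S=\{a\}$ (whence $\sigma_Q=0_Q$, case~(iii)) or $S=[a]_{\gamma_Q}$; in the latter case translating by $\lmlt(Q)$ shows that every $\sigma_Q$-class is a $\gamma_Q$-block, so $\sigma_Q=\gamma_Q$, case~(ii). In the second case $S$ meets every $\gamma_Q$-block and each intersection is either a singleton or the whole block (another appeal to strict simplicity); letting $k$ count the blocks contained entirely in $S$, one has $|S|=k(p^m-1)+q^n$, and matching with $\{1,p^m,q^n,p^mq^n\}$ forces either $k=0$ (so $|S|=q^n$ and $\sigma_Q\wedge\gamma_Q=0_Q$, case~(iii)) or $k=q^n$ (so $S=Q$, $\sigma_Q=1_Q$, case~(i) with $Q$ principal).

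For the final statement, I would use that when $p=q$ the quandle $Q$ has prime power order $p^{m+n}$, hence by \cite[Corollary 6.6]{CP} $\zeta_Q\neq 0_Q$; since $\gamma_Q$ is the only non-trivial congruence of $Q$ this forces $\zeta_Q=\gamma_Q$, and then $\gamma_Q=\zeta_Q\leq\sigma_Q$ by \cite[Proposition 5.9]{CP}, in particular excluding case~(iii) whenever $p=q$. The step I expect to require most care is the arithmetic elimination inside the second case: one must rule out $|S|\in\{1,p^m\}$ for $1\leq k<q^n$, which follows from the estimates $q^n,p^m\geq 2$ so that any such $k$ pushes $|S|$ strictly between $q^n$ and $p^mq^n$; the only candidate in this open interval is $|S|=p^m$ (when $q^n<p^m$), and the equation $k(p^m-1)=p^m-q^n$ has no positive integer solution since its left side is at least $p^m-1$ while the right side is at most $p^m-2$. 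The rest of the argument is a repeated and essentially routine appeal to the strict simplicity of factors and blocks.
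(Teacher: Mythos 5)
Your argument is correct, but it takes a longer route than the paper's. The paper exploits one extra observation at the outset: if $\sigma_Q\neq 1_Q$, then the class $[a]_{\sigma_Q}$ is itself a \emph{proper} subquandle of $Q$ (the $\sigma_Q$-classes are blocks of the transitive $\lmlt(Q)$-action, so they all have the same size), hence strictly simple. Intersecting it with the strictly simple block $[a]_{\gamma_Q}$ then immediately gives that the two either coincide or meet exactly in $\{a\}$, and translating by $\lmlt(Q)$ yields $\sigma_Q=\gamma_Q$ or $\sigma_Q\wedge\gamma_Q=0_Q$; no projection to $Q/\gamma_Q$ and no size bookkeeping are needed. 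Your projection-plus-counting argument (image of $S=[a]_{\sigma_Q}$ in the factor, the formula $|S|=k(p^m-1)+q^n$, matching against the admissible subquandle sizes $1,p^m,q^n,p^mq^n$ from Corollary \ref{subquandles}) is valid, and your elimination of $1\le k<q^n$ is sound, but it essentially re-proves in arithmetic form what strict simplicity of $S$ gives at once: a proper $S$ cannot contain a whole block of size $p^m\ge 2$ as a proper subquandle. For the final claim your route (prime-power size, \cite[Corollary 6.6]{CP} gives $\zeta_Q\neq 0_Q$, uniqueness of the proper congruence gives $\zeta_Q=\gamma_Q$, then $\gamma_Q\le\sigma_Q$ by \cite[Proposition 5.9]{CP}) matches the paper's; just make explicit that $\zeta_Q\neq 1_Q$ because quandles in $\LSS(p^m,q^n,\sigma_Q)$ are non-abelian by definition of the class, which rules out the remaining value of $\zeta_Q$ in the three-element congruence lattice.
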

\begin{proof}
	The blocks of $\sigma_Q$ are subquandles of $Q$ and they are blocks with respect to the action of $\lmlt(Q)$. Assume that $\sigma_Q\neq 1_Q$. All the subquandles of $Q$ are strictly simple, so either $[a]_{\gamma_Q}$ and $ [a]_{\sigma_Q}$ coincide or their intersection is the singleton $\{a\}$. \\
	If $p=q$, the quandle $Q$ is nilpotent and then $\gamma_Q$ is a central congruence, i.e. $\gamma_Q= \zeta_Q\leq\sigma_Q$. 
\end{proof}


Recall that, if $Q$ is connected, the block of $a$ with respect to $\sigma_Q$ is the orbit of $a$ under the action of $N(\dis(Q)_a)$ \cite[Theorem 2.4]{Principal}.

\begin{lemma}\label{mu smaller than S}
	Let $Q\in \LSS(p^m,q^n,\sigma_Q)$, $p\neq q$ and $G=\dis(Q)$. The following are equivalent:
	\begin{itemize}
		\item[(i)] $\gamma_Q\leq \sigma_Q$.
		\item[(ii)] $\gamma_1(G)=\gamma_2(G)\cong \mathbb{Z}_p^{m+k}$ with $ k\leq m$.
		\item[(iii)] $Z(\gamma_1(G))\neq 1$.
	\end{itemize}
	
\end{lemma}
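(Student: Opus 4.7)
The plan is to verify the cycle (iii)$\Rightarrow$(i)$\Rightarrow$(ii)$\Rightarrow$(iii). Throughout I use the identifications $\dis^{\gamma_Q}=\gamma_1(G)$ from Corollary \ref{congruence of SI is [1,1]}, together with $\dis_{\gamma_Q}=\gamma_2(G)\cong \mathbb{Z}_p^{m+k}$ ($k\leq m$) and $Z(G)=1$ from Lemma \ref{embedding of ker}; the standing assumption $p^m>2$ guarantees that $Q$ is faithful.

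For (iii)$\Rightarrow$(i): the subgroup $Z(\gamma_1(G))$ is characteristic in $\gamma_1(G)$ and hence normal in $\lmlt(Q)$, so it lies in $Norm(Q)$. Being contained in $\dis^{\gamma_Q}$, its orbit congruence sits below $\gamma_Q$; faithfulness together with $Z(\gamma_1(G))\neq 1$ rules out the trivial option, forcing the orbit congruence to equal $\gamma_Q$. Consequently $Z(\gamma_1(G))$ acts transitively on every $\gamma_Q$-block. For $b\in [a]_{\gamma_Q}$, I pick $z\in Z(\gamma_1(G))$ with $b=z(a)$; since $G_a\leq \gamma_1(G)$ and $z$ centralizes $\gamma_1(G)$, I obtain $G_b=zG_az^{-1}=G_a$, whence $a\,\sigma_Q\,b$.

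For (i)$\Rightarrow$(ii), the main step, I would apply Lemma \ref{embedding as quandle}(ii) with $N=\gamma_1(G)=\dis^{\gamma_Q}$ and $\alpha=\mathcal{O}_N=\gamma_Q\leq \sigma_Q$. The factor $Q/\gamma_Q$ is strictly simple hence generated by any two classes, and each block $[a]_{\gamma_Q}$ is strictly simple, therefore connected affine over $\mathbb{Z}_p^m$ and in particular principal with $\dis([a])\cong \mathbb{Z}_p^m$. The resulting embedding $\gamma_1(G)\hookrightarrow \dis([a])^2\cong \mathbb{Z}_p^{2m}$ shows that $\gamma_1(G)$ is an elementary abelian $p$-group. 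I then regard $\gamma_1(G)$ as an $\mathbb{F}_p[G/\gamma_1(G)]$-module; since $|G/\gamma_1(G)|=q^n$ is coprime to $p$, Maschke's theorem produces the decomposition $\gamma_1(G)=\gamma_1(G)^{G/\gamma_1(G)}\oplus [G,\gamma_1(G)]$. The invariant summand equals $\gamma_1(G)\cap Z(G)$, which vanishes by Lemma \ref{embedding of ker}(ii), so $\gamma_1(G)=[G,\gamma_1(G)]=\gamma_2(G)$, and combining with Lemma \ref{embedding of ker}(i) yields (ii).

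The closing implication (ii)$\Rightarrow$(iii) is immediate: under (ii) the group $\gamma_1(G)\cong \mathbb{Z}_p^{m+k}$ is abelian and nontrivial, so $Z(\gamma_1(G))=\gamma_1(G)\neq 1$. I expect the hardest part to be the semisimplicity argument inside (i)$\Rightarrow$(ii): it relies essentially on both the coprimality $\gcd(p,q)=1$ (for Maschke's theorem) and on $Z(G)=1$ (to kill the invariant summand), and dropping either hypothesis would prevent the conclusion $[G,\gamma_1(G)]=\gamma_1(G)$.
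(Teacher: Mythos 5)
Your proof is correct, and its skeleton --- the cycle (iii)$\Rightarrow$(i)$\Rightarrow$(ii)$\Rightarrow$(iii), the identification $\dis^{\gamma_Q}=\gamma_1(G)$ with $\dis(Q)_a\leq\gamma_1(G)$, and the embedding of $\gamma_1(G)$ into $\dis([a])^2\cong\mathbb{Z}_p^{2m}$ via Lemma \ref{embedding as quandle}(ii) --- is the same as the paper's. The differences lie in two steps. For (iii)$\Rightarrow$(i) the paper notes that the orbits of $Z(\gamma_1(G))$ lie simultaneously inside $\sigma_Q$-blocks (since $Z(\gamma_1(G))$ centralizes the stabilizers) and inside $\gamma_Q$-blocks, and then uses strict simplicity of the blocks; you instead observe that $\mathcal{O}_{Z(\gamma_1(G))}$ is a nontrivial congruence below $\gamma_Q$, invoke subdirect irreducibility to force it to equal $\gamma_Q$, and conjugate stabilizers by central elements --- an equivalent argument. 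The more substantive divergence is in (i)$\Rightarrow$(ii): after the embedding, the paper quotes \cite[Proposition 4.7]{Principal} to conclude that $G/\gamma_2(G)$ is a $q$-group and finishes by comparing orders, whereas you treat $\gamma_1(G)$ as an $\mathbb{F}_p[G/\gamma_1(G)]$-module with $|G/\gamma_1(G)|=q^n$ coprime to $p$, apply the coprime-action (Maschke) decomposition $\gamma_1(G)=\left(\gamma_1(G)\cap Z(G)\right)\oplus[G,\gamma_1(G)]$, and kill the fixed summand using $Z(G)=1$ from Lemma \ref{embedding of ker}(ii). Your route replaces the external citation by a standard and self-contained coprime-action fact (at the price of explicitly using $Z(G)=1$, which the paper's count does not need at this point), while the paper's citation packages the same information more quickly; both arguments are valid and yield $\gamma_1(G)=\gamma_2(G)=\dis_{\gamma_Q}\cong\mathbb{Z}_p^{m+k}$ as required.
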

\begin{proof}
	(i) $\Rightarrow$ (ii) The subgroup $\gamma_1(G)$ is $\gamma_Q$-semiregular and transitive on each block and $Q/\gamma_Q$ is generated by two elements. By Lemma \ref{embedding as quandle}(ii) $\gamma_1(G)$ embeds into $\mathbb{Z}_p^{2m}$ and has at least size $p^m$. According to \cite[Proposition 4.7]{Principal} $G/\gamma_2(Q)$ is a $q$-group and since $|G|=|\gamma_1(G)|q^n=p^{m+k}q^n$ then $\gamma_1(G)=\gamma_2(G)$.
	
	
	(ii) $\Rightarrow$ (iii) Clear. 
	
	(iii) $\Rightarrow$ (i) Let $Z=Z(\gamma_1(G))$. The orbits of $Z$ are contained in the blocks of $\sigma_Q$: indeed $\dis(Q)_a\leq \gamma_1(G)$ and then $Z$ centralizes $\dis(Q)_a$. Then $a\neq a^{Z}\subseteq  [a]_{\sigma_Q}\cap [a]_{\gamma_Q}$ and so $[a]_{\gamma_Q}\leq [a]_{\sigma_Q}$, since $[a]_{\gamma_Q}$ is strictly simple for every $a\in Q$. Hence $\gamma_Q\leq \sigma_Q$.
\end{proof}

\subsubsection*{Case $p=q$:} let us start with the case $p=q$. In this case $\gamma_Q=\zeta_Q\leq\sigma_Q$ and so $Q$ is nilpotent of length $2$. We can characterize the displacement group of such quandles using the following Proposition.

\begin{proposition}\label{sims} \cite[Proposition 9.2.5]{Sims}
	Let $G$ be a group and let $X\subseteq G$ such that $G/\gamma_1(G)=\langle\setof{ x\gamma_1(G)}{x\in X}\rangle$. Then $\gamma_1(G)/\gamma_2(G)=\langle \setof{[x,y]\gamma_2(G)}{x,y\in X}$. In particular if $G/\gamma_1(G)$ is cyclic, then $\gamma_1(G)=\gamma_2(G)$.
\end{proposition}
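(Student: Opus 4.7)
The plan is to work modulo $N := \gamma_2(G) = [\gamma_1(G), G]$, so that the image of $\gamma_1(G)$ becomes a \emph{central} subgroup of $G/N$. This centrality is precisely what will make the commutator map bi-homomorphic on $G/N$, and the result then follows from a direct generator count.

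First I would derive the bilinearity identities
\[
[ab, c] \equiv [a, c][b, c] \pmod{N}, \qquad [a, bc] \equiv [a, b][a, c] \pmod{N}
\]
for all $a, b, c \in G$. Starting from the universal identity $[ab, c] = [a, c]^{b}[b, c]$ and expanding $[a, c]^{b} = [a, c]\,[[a, c], b]$, I would observe that the double commutator $[[a, c], b]$ lies in $[\gamma_1(G), G] = N$ and so vanishes in the quotient; the second identity is symmetric. Two corollaries used below are $[a, b] \equiv 1 \pmod{N}$ whenever $a$ or $b$ lies in $\gamma_1(G)$, and $[a^{-1}, b] \equiv [a, b]^{-1} \pmod{N}$.

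Next, I would invoke the generating hypothesis: every $g \in G$ admits a decomposition $g = x_{i_1}^{\varepsilon_1} \cdots x_{i_k}^{\varepsilon_k} \cdot g'$ with $x_{i_j} \in X$, $\varepsilon_j \in \{\pm 1\}$, and $g' \in \gamma_1(G)$. Iterated application of the two bilinearity identities then collapses $[g, h]$ modulo $N$ to an integer-exponent product of the commutators $[x, y]$ with $x, y \in X$ (all contributions from the $\gamma_1(G)$-tails drop out by the first corollary). Since the set $\{[g, h] : g, h \in G\}$ generates $\gamma_1(G) = [G, G]$, its image generates $\gamma_1(G)/N$; hence so do the cosets $[x, y]N$ with $x, y \in X$, proving the main assertion.

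For the final statement, if $G/\gamma_1(G)$ is cyclic I would take $X = \{x\}$ for a single generator. The only candidate is then $[x, x]N = N$, so $\gamma_1(G)/N$ is trivial, i.e., $\gamma_1(G) = \gamma_2(G)$. I do not anticipate a genuine obstacle anywhere; the only care required is the bookkeeping of signs and orderings in the bilinear reduction, which is standard for class-$2$ nilpotent manipulations.
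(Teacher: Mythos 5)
Your proof is correct. The paper gives no proof of its own here (the proposition is quoted from Sims, Proposition 9.2.5), and your argument --- passing to $G/\gamma_2(G)$, where $\gamma_1(G)/\gamma_2(G)$ is central, using the resulting bilinearity $[ab,c]\equiv[a,c][b,c]$ and $[a,bc]\equiv[a,b][a,c]$ modulo $\gamma_2(G)$, writing each element of $G$ as a word in $X^{\pm1}$ times an element of $\gamma_1(G)$ whose contribution dies modulo $\gamma_2(G)$, and then specializing to a one-element $X$ for the cyclic case --- is the standard proof of this fact and is essentially the argument in the cited reference.
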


\begin{theorem}\label{p=q case}
Let $Q\in \LSS(p^m,p^n,\sigma_Q)$, $G=\dis(Q)$ and $a\in Q$. Then $Q$ is latin, $Z(G)=\dis_{\zeta_Q}\cong \mathbb{Z}_p^m$ and
$$ \Phi(G)=\gamma_1(G) \cong Z(G) \times G_a\cong \mathbb{Z}_p^{m+k}.$$ In particular $n\geq 2$.
\end{theorem}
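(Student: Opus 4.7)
The plan is to exploit the nilpotency of class $2$ forced by $p=q$ (Lemma \ref{cases for sigma} gives $\gamma_Q=\zeta_Q\leq\sigma_Q$) and to pin down $Z(G)$, $\gamma_1(G)$ and $G_a$ through orbit counts combined with two applications of Lemma \ref{embedding as quandle}(ii).

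I would first prove $\dis_{\zeta_Q}=Z(G)$. The inclusion $\dis_{\zeta_Q}\leq Z(G)$ is immediate from $\zeta_Q=\c{Z(G)}$ for faithful quandles. For the reverse, $Z(G)$ acts semiregularly on $Q$ (as centralizer of the transitive group $G$) and a direct check ($b=z(a)$ yields $L_aL_b^{-1}=f(z)z^{-1}\in Z(G)$) gives $\mathcal{O}_{Z(G)}\leq\zeta_Q$, so each $Z(G)$-orbit embeds into a $\zeta_Q$-block of size $p^m$; on the other hand $\mathcal{O}_{\dis_{\zeta_Q}}$ cannot be $0_Q$ (that would give $\dis_{\zeta_Q}=1$ and, by faithfulness, $\zeta_Q=0_Q$), so since $\zeta_Q$ is the unique non-trivial congruence, $\mathcal{O}_{\dis_{\zeta_Q}}=\zeta_Q$, meaning $\dis_{\zeta_Q}$ is transitive on blocks and both groups have order $p^m$. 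Applying Lemma \ref{embedding as quandle}(ii) with $\alpha=\gamma_Q$ (using $\gamma_Q\leq\sigma_Q$ and that blocks are principal affine over $\mathbb{Z}_p^m$ since $p^m>2$, with $Q/\gamma_Q$ two-generated) embeds both $Z(G)$ and $\gamma_1(G)=\dis^{\gamma_Q}$ into $\dis([a])^2\cong\mathbb{Z}_p^{2m}$. Combined with $|G/\gamma_1(G)|=p^n$ (Corollary \ref{congruence of SI is [1,1]}), this forces $Z(G)\cong\mathbb{Z}_p^m$ and $\gamma_1(G)$ elementary abelian of order $p^m\cdot |G_a|$. The intersection $Z(G)\cap G_a=1$ (semiregularity of $Z(G)$) together with this count produces the internal direct product decomposition $\gamma_1(G)=Z(G)\times G_a$, and since $G/\gamma_1(G)\cong\mathbb{Z}_p^n$ has exponent $p$ we get $G^p\leq \gamma_1(G)$, hence $\Phi(G)=G'G^p=\gamma_1(G)$.

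The inequality $n\geq 2$ is by contradiction: if $n=1$ then $G/\gamma_1(G)$ is cyclic, so Proposition \ref{sims} forces $\gamma_1(G)=\gamma_2(G)$; combined with $\gamma_2(G)\leq Z(G)$ (Lemma \ref{embedding of ker}(i)) and $Z(G)\leq\gamma_1(G)$, this yields $\gamma_1(G)=Z(G)$ and so $G_a=1$. But then $G/Z(G)\cong \mathbb{Z}_p$ is cyclic, forcing $G$ abelian and $|Z(G)|=1$, contradicting $|Z(G)|=p^m\geq p$.

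Finally, I would verify $Q$ is latin by showing $R_a$ injective in the representation $Q=\mathcal{Q}(G,G_a,\widehat{L_a})$ with $f=\widehat{L_a}$. The equation $R_a(g_1G_a)=R_a(g_2G_a)$ translates into $kf(k)^{-1}\in h^{-1}G_a h$ with $k=g_2^{-1}g_1$ and $h=f(g_2)$. Reducing modulo $\gamma_1(G)$ and using that $f_{\gamma_1(G)}$ acts irreducibly and non-trivially on $G/\gamma_1(G)$ (since $Q/\gamma_Q$ is a non-projection strictly simple quandle) forces $k\in\gamma_1(G)$. Writing $k=zg$ with $z\in Z(G)$, $g\in G_a$ and using $f|_{G_a}=\mathrm{id}$ simplifies $kf(k)^{-1}$ to $zf(z)^{-1}\in Z(G)$; any central element of a conjugate of $G_a$ belongs to $G_a\cap Z(G)=1$, so $zf(z)^{-1}=1$, i.e., $z\in Fix(f|_{Z(G)})$, which is trivial because $f|_{Z(G)}$ acts irreducibly and non-trivially (the block $[a]_{\gamma_Q}$ being a non-projection strictly simple quandle). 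Hence $k\in G_a$, as required. The main obstacle is coordinating the three actions of $f$ --- irreducibly on $Z(G)$, irreducibly on $G/\gamma_1(G)$, and trivially on $G_a$ --- in a way compatible with the direct product $\gamma_1(G)=Z(G)\times G_a$.
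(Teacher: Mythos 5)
Your argument is correct, but it takes a more self-contained route than the paper. The paper's own proof is essentially a sequence of citations: it gets that $G$ is a $p$-group from an external result, and then invokes \cite[Proposition 3.5]{GB} (for faithful connected quandles of nilpotency length $2$) to obtain in one stroke that $Q$ is latin, that $Z(G)=\dis_{\zeta_Q}\cong\dis([a])\cong\mathbb{Z}_p^m$, and that $\gamma_1(G)=Z(G)\times G_a$; only the steps $\Phi(G)=\gamma_1(G)$ and $n\geq 2$ (via Proposition \ref{sims}) are argued in place. You instead reconstruct all of this from the paper's internal toolkit: semiregularity of $Z(G)$ plus block-transitivity of $\dis_{\zeta_Q}$ for the identification $Z(G)=\dis_{\zeta_Q}$ of order $p^m$, Lemma \ref{embedding as quandle}(ii) for elementary abelianness of $\gamma_1(G)=\dis^{\gamma_Q}$, an order count with $Z(G)\cap G_a=1$ for the splitting, and a direct coset computation in $\mathcal{Q}(G,G_a,\widehat{L_a})$ for latinness --- which is exactly the content of the cited proposition, so your version is longer but elementary and verifiable inside the paper. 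A few loose ends are worth tightening, though none is a real gap: the identity $\Phi(G)=\gamma_1(G)G^p$ needs $G$ to be a $p$-group, which does follow from your own count $|G|=p^{m+n}|G_a|$ with $G_a\leq\gamma_1(G)$ elementary abelian, but should be said; the claim $\gamma_2(G)\leq Z(G)$ is not literally Lemma \ref{embedding of ker}(i) --- that lemma gives $\gamma_2(G)\leq\dis_{\gamma_Q}$, and you must combine it with $\dis_{\gamma_Q}=\dis_{\zeta_Q}\leq Z(G)$ from your first paragraph; $Fix(f|_{Z(G)})=1$ follows more directly from faithfulness ($Fix(f)=G_a$, so $Fix(f|_{Z(G)})=G_a\cap Z(G)=1$) without identifying $[a]\cong\aff(Z(G),f|_{Z(G)})$; and having shown one right translation $R_a$ injective you should add the one-line remark that homogeneity ($R_{\varphi(a)}=\varphi R_a\varphi^{-1}$) transfers bijectivity to every $R_b$. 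Your finish of $n\geq 2$ also differs slightly from the paper's: the paper implicitly uses that $\gamma_1(G)=\gamma_2(G)$ in a nilpotent group forces $G$ abelian, while you route through $G/Z(G)$ cyclic; both work.
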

\begin{proof}
According to \cite[Corollary 5.2]{GiuThe} $G$ is a $p$-group. The displacement group of the factor $G/ \gamma_1(G)$ is elementary abelian, so $ \gamma_1(G)= \Phi(G)$. The quandle $Q$ is a finite connected faithful quandle of nilpotency length $2$. Then according to \cite[Proposition 3.5]{GB} $Q$ is latin, $Z(G)=\dis_{\zeta_Q}\cong \dis([a]) \cong \mathbb{Z}_p^m$, the kernel $\gamma_1(G)$ decomposes as the direct product of the center and the stabilizer $G_a$ and the last isomorphism follows from Lemma \ref{mu smaller than S}(ii). According to Proposition \ref{sims}, $G/\gamma_1(G)$ can not be cyclic, therefore $n\geq 2$. 
\end{proof}

The class $\LSS(p^m,p^n,1_Q)$ is given by a class of principal quandles over special $p$-groups.
\begin{proposition}
Let $Q$ be a finite quandle. The following are equivalent:
\begin{itemize}
\item[(i)] $Q$ is principal latin quandle in $\LSS(p^m,p^n,1_Q)$.

\item[(ii)] $Q\cong \mathcal{Q}(G,f)$ where $G$ is a special $p$-group, $f$ acts irreducibly on $Z(G)\cong \mathbb{Z}_p^m$ and $f_{Z(G)}$ acts irreducibly on $G/Z(G)\cong \mathbb{Z}_p^n$.
\end{itemize}
In particular $m\leq \frac{n(n-1)}{2}$.\\
Isomorphism classes of such quandles are in one-to-one correspondece with conjugacy classes of such automorphisms. 
\end{proposition}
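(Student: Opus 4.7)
The plan is to mirror the structure of Proposition \ref{2 extraspecial}, adapting it to arbitrary primes $p$ and to an elementary abelian centre of arbitrary rank, and to extract the group-theoretic description in direction (i) $\Rightarrow$ (ii) from Theorem \ref{p=q case}.

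For (i) $\Rightarrow$ (ii), since $\sigma_Q = 1_Q$ the quandle $Q$ admits a principal representation $Q \cong \mathcal{Q}(G,f)$ with $G = \dis(Q)$, $f = \widehat{L_a}$ and trivial stabilizer $G_a = 1$. Substituting $G_a = 1$ in Theorem \ref{p=q case} yields
\[
 \Phi(G) \;=\; \gamma_1(G) \;=\; Z(G) \;\cong\; \mathbb{Z}_p^m,
\]
which is exactly the definition of a special $p$-group. By Corollary \ref{congruence of SI is [1,1]}, $Q/\gamma_Q \cong \aff(G/Z(G), f_{Z(G)})$ is strictly simple of size $p^n$, so $f_{Z(G)}$ acts irreducibly on $G/Z(G) \cong \mathbb{Z}_p^n$. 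Since $\dis_{\gamma_Q} = Z(G)$ acts regularly on the block $[a]_{\gamma_Q}$, this block is isomorphic to $\aff(Z(G), f|_{Z(G)})$, and its strict simplicity forces $f|_{Z(G)}$ to act irreducibly on $Z(G)$.

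For (ii) $\Rightarrow$ (i), set $Q = \mathcal{Q}(G,f)$. The affine quandle $\aff(G/Z(G), f_{Z(G)})$ is connected by irreducibility of $f_{Z(G)}$, so \cite[Proposition 2.7]{GB} gives that $Q$ is connected and $\dis(Q) \cong G$. Any $N \in Norm(Q)$ is a normal $f$-invariant subgroup of $G$; if $N \neq 1$, nilpotence of $G$ yields $N \cap Z(G) \neq 1$, and then irreducibility of $f$ on $Z(G)$ forces $Z(G) \leq N$. Passing to $N/Z(G) \leq G/Z(G)$ and using irreducibility of $f_{Z(G)}$ gives $N \in \{Z(G), G\}$, so $Q$ is subdirectly irreducible with unique non-trivial congruence $\gamma_Q = \mathcal{O}_{Z(G)}$. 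Its factor and blocks are strictly simple by construction, so Proposition \ref{LSS iff} places $Q$ in $\LSS(p^m,p^n,\sigma_Q)$; the principality gives $\sigma_Q = 1_Q$, and Theorem \ref{p=q case} supplies the latin property.

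For the bound, since $G$ is special the commutator map descends to a surjective alternating $\mathbb{F}_p$-bilinear map $\Lambda^2(G/Z(G)) \twoheadrightarrow [G,G] = Z(G)$, and comparing dimensions gives $m \leq \binom{n}{2}$. The isomorphism statement is immediate from Theorem \ref{iso theorem1} applied with $H_1 = H_2 = 1$: two such quandles are isomorphic iff their representing automorphisms are conjugate via a group isomorphism, and since $\dis(Q) \cong G$ is a quandle invariant, this reduces, after fixing $G$, to an $\Aut{G}$-conjugacy statement. The main technical hurdle will be the subdirect irreducibility step in (ii) $\Rightarrow$ (i), where nilpotence must be combined with both irreducibility hypotheses to rule out any normal $f$-invariant subgroup other than $1$, $Z(G)$ and $G$.
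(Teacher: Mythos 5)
Your proof is correct and follows essentially the same route as the paper: (i) $\Rightarrow$ (ii) specializes Theorem \ref{p=q case} to the trivial-stabilizer case to obtain a special $p$-group and reads off both irreducibility conditions from strict simplicity of the block and of the factor, while (ii) $\Rightarrow$ (i) exhibits the $Z(G)$-coset congruence and invokes Proposition \ref{LSS iff}, with connectedness via \cite[Proposition 2.7]{GB} and the isomorphism statement via Theorem \ref{iso theorem1}, exactly as in the paper's parallel argument for Proposition \ref{2 extraspecial}. The only cosmetic difference is the bound $m\leq\binom{n}{2}$, which you derive from the alternating commutator map $\Lambda^2(G/Z(G))\twoheadrightarrow Z(G)$ instead of citing Proposition \ref{sims}; the two arguments carry the same content.
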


\begin{proof}
(i) $\Rightarrow$ (ii) Let $Q\in \LSS(p^m,p^n,1_Q)$ and let $G=\dis(Q)$. Then $\gamma_2(G)\leq \dis_{\zeta_Q}=Z(G)=\gamma_1(G)$ and then $\gamma_2(G)=1$ and so $G$ is a special $p$-group. By Lemma \ref{rep_for_factors}, $Q/\zeta_Q\cong \aff(G/Z(G),f_{Z(G)})$. Since $Q$ is LSS then the action of $f$ over $Z(G)=[1]_{\gamma_Q}$ and the action of $f_{Z(G)}$ over $\dis(Q/\zeta_Q)\cong G/Z(G)$ is irreducible.

(ii) $\Rightarrow$ (i) The coset partition with respect to $Z(G)$ is a congruence of the quandle $Q=\Q(G,f)$ with strictly simple factor and strictly simple blocks. Then $Q$ is LSS according to Proposition \ref{LSS iff}.

	By Proposition \ref{sims}, $\gamma_1(G)\cong \mathbb{Z}_p^m$ has at most $\binom{n}{2}$ generators, therefore $m\leq \binom{n}{2}=\frac{n(n-1)}{2}$.
\end{proof}

\begin{example}
	Let $G$ be and extraspecial $p$-group and let $Q=\mathcal{Q}(G,f)$ be a LSS subdirectly irreducible connected quandle. According to \cite[Corollary 1]{winter1972} the exponent of $G$ is $p$. According to the results of \cite{GB}, there are $\frac{(p-1)^2}{2}$ non-isomorphic quandles in $\LSS(p,p^2,1_Q)$ and such quandles are the unique non-abelian subdirectly irreducible connected LSS quandles of size $p^3$.
\end{example}

\subsubsection*{\textit Case $p \neq q$}: now we consider the case $p\neq q$. First we can characterize the structure of the displacement group. The subgroup $\gamma_1(\dis(Q))$ is a maximal characteristic subgroup of $\dis(Q)$, the quotient $\dis(Q)/\gamma_1(G)$ and $\gamma_2(\dis(Q))$ are elementary abelian group. Then we can apply \cite[Proposition 4.7]{Principal}.
\begin{theorem}\label{mu p neq q}
Let $Q\in  \LSS(p^m,q^n,\sigma_Q)$ and $p\neq q$. If $\gamma_Q\leq \sigma_Q$ then 
\begin{equation}\label{dis for gamma=sigma}
\dis(Q)\cong \mathbb{Z}_p^{m+k}\rtimes_\rho \mathbb{Z}_q^n
\end{equation}
 where $k\leq m$, $\rho$ is a faithful action and $\gamma_1(\dis(Q))=\gamma_2(\dis(Q))=\dis_{\gamma_Q}$.
\end{theorem}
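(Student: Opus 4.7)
The plan is to combine Lemmas \ref{embedding of ker} and \ref{mu smaller than S} to pin down $\gamma_1(G)$ exactly, identify the quotient $G/\gamma_1(G)$ via the strictly simple factor $Q/\gamma_Q$, and then split the resulting short exact sequence via Schur--Zassenhaus, extracting faithfulness of $\rho$ from the triviality of the center.

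First, since $\gamma_Q\leq\sigma_Q$ by assumption, Lemma \ref{mu smaller than S} immediately yields $\gamma_1(G)=\gamma_2(G)\cong \Z_p^{m+k}$ with $k\leq m$. On the other hand, Lemma \ref{embedding of ker}(ii) (invoking $p\neq q$) gives $\gamma_2(G)=\dis_{\gamma_Q}$, so all three subgroups coincide. This already establishes the equalities $\gamma_1(G)=\gamma_2(G)=\dis_{\gamma_Q}$ asserted in the theorem and pins down this normal subgroup as $\Z_p^{m+k}$.

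Next, I would identify the quotient. By Corollary \ref{congruence of SI is [1,1]} we have $\dis^{\gamma_Q}=\gamma_1(G)$, so the surjection $\pi_{\gamma_Q}$ of \eqref{pi alpha} induces $G/\gamma_1(G)\cong \dis(Q/\gamma_Q)$. Now $Q/\gamma_Q$ is a connected strictly simple quandle of size $q^n$, hence affine over $\F_{q^n}$ by \cite[Section 3.1]{Principal}, so its displacement group is $\Z_q^n$. Thus $G$ sits in a short exact sequence
\[
1\longrightarrow \Z_p^{m+k}\longrightarrow G\longrightarrow \Z_q^n\longrightarrow 1,
\]
and since $\gcd(p,q)=1$, Schur--Zassenhaus produces a complement of $\gamma_1(G)$ in $G$, yielding $G\cong \Z_p^{m+k}\rtimes_\rho \Z_q^n$ for some action $\rho$.

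For the faithfulness of $\rho$ I would apply the standard center computation for a semidirect product with abelian normal subgroup: a pair $(n,h)$ is central exactly when $n$ is $\rho$-fixed and $h\in \ker{\rho}$, so $\ker{\rho}$ embeds as a direct factor of $Z(G)$. Since $Z(G)=1$ by Lemma \ref{embedding of ker}(ii), in particular $\ker{\rho}=1$, i.e.\ $\rho$ is faithful. The main subtlety, and essentially the only obstacle, is the careful identification $\dis^{\gamma_Q}=\gamma_1(G)$, without which the quotient need not match $\dis(Q/\gamma_Q)$; this is precisely what Corollary \ref{congruence of SI is [1,1]} provides, and the theorem becomes a clean packaging of the earlier lemmas together with Schur--Zassenhaus, in line with the remark that \cite[Proposition 4.7]{Principal} applies to this situation.
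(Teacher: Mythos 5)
Your proof is correct and follows essentially the same route as the paper: the paper likewise combines Lemma \ref{embedding of ker} and Lemma \ref{mu smaller than S} to get $\gamma_1(G)=\gamma_2(G)=\dis_{\gamma_Q}\cong\mathbb{Z}_p^{m+k}$ and deduces faithfulness of $\rho$ from $\ker{\rho}\leq Z(G)=1$. The only difference is that where the paper cites \cite[Proposition 4.7(i)]{Principal} for the semidirect product decomposition, you derive it directly by identifying $G/\gamma_1(G)\cong\dis(Q/\gamma_Q)\cong\mathbb{Z}_q^n$ via Corollary \ref{congruence of SI is [1,1]} and splitting with Schur--Zassenhaus, which is a legitimate unpacking of the same step.
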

\begin{proof}
	By virtue of Lemma \ref{embedding of ker} and Lemma \ref{mu smaller than S} we can apply \cite[Proposition 4.7(i)]{Principal} and then $\dis(Q)\cong \mathbb{Z}_p^{m+k}\rtimes_\rho \mathbb{Z}_q^n$. The kernel of $\rho$ is a subgroup of the center of $G$. According to Lemma \ref{embedding of ker} $Z(G)=1$ and then the action $\rho$ is faithful. 
\end{proof}

Let us consider the case $\sigma_Q\wedge \gamma_Q=0_Q$.
\begin{theorem}\label{dis mu and S =0}
Let $Q\in \LSS(p^m,q^n,\sigma_Q)$ and $\sigma_Q\wedge \gamma_Q=0_Q$. Then 
\begin{equation}\label{dis with extraspecial}
\dis(Q)\cong \mathbb{Z}_p^{m+k}\rtimes_{\rho} K,
\end{equation}
 where $K$ is an extraspecial $q$-group, $q$ divides $p^m-1$, $n$ is even and $k\leq m$. If $q$ is odd then $\exp(K)=q$.
\end{theorem}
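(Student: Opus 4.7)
The plan is to collect the structural constraints on $G=\dis(Q)$ forced by $\sigma_Q\wedge\gamma_Q=0_Q$, then apply \cite[Proposition~4.7]{Principal} in the branch complementary to the one that produced Theorem~\ref{mu p neq q}, and finally read off the numerical constraints from standard facts about faithful representations of extraspecial $q$-groups on elementary abelian $p$-groups.

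First I would record the consequences of the hypotheses. Because $Q$ is non-simple, $\gamma_Q\neq 0_Q$, and $\sigma_Q\wedge\gamma_Q=0_Q$ then forces $\gamma_Q\not\leq\sigma_Q$. By the contrapositive of Lemma~\ref{mu smaller than S} we obtain $Z(\gamma_1(G))=1$; in particular $\gamma_2(G)\lneq \gamma_1(G)$, since equality would make $\gamma_2(G)\neq 1$ a non-trivial characteristic abelian subgroup of its own centre. Lemma~\ref{embedding of ker}(ii) then gives $\gamma_2(G)=\dis_{\gamma_Q}\cong \mathbb{Z}_p^{m+k}$ with $k\leq m$ and $Z(G)=1$, while Corollary~\ref{congruence of SI is [1,1]} identifies $G/\gamma_1(G)\cong \mathbb{Z}_q^n$ as the displacement group of the strictly simple affine factor $Q/\gamma_Q$.

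These data match exactly the hypotheses of the second branch of \cite[Proposition~4.7]{Principal}: $\gamma_2(G)$ is elementary abelian, $G/\gamma_1(G)$ is elementary abelian, the inclusion $\gamma_2(G)<\gamma_1(G)$ is strict, and $Z(\gamma_1(G))=1$. The proposition then delivers the semidirect decomposition $G\cong \mathbb{Z}_p^{m+k}\rtimes_\rho K$ with $K$ extraspecial of order $q^{n+1}$. Since extraspecial $q$-groups have order $q^{2r+1}$ (the commutator induces a non-degenerate alternating form on the $\mathbb{F}_q$-vector space $K/Z(K)\cong\mathbb{Z}_q^n$), $n$ is forced to be even. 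The divisibility $q\mid p^m-1$ comes from the observation that any non-trivial central element of $K$ acts faithfully on each block $[a]_{\gamma_Q}$ (otherwise it would lie in $Z(G)=1$), and this action realises an element of order $q$ acting on the $1$-dimensional $\mathbb{F}_{p^m}$-line underlying the block, hence an element of $\mathbb{F}_{p^m}^{\times}$. Finally, for $q$ odd, any element of order $q^2$ in $K$ would act on a block with order divisible by $q^2$ while mapping into the elementary abelian quotient $K/Z(K)$; the classification of extraspecial $q$-groups admitting such a faithful irreducible action (invoked in the preceding example via \cite[Corollary~1]{winter1972}) then rules out the exponent-$q^2$ type, leaving $\exp(K)=q$.

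The principal obstacle is invoking \cite[Proposition~4.7]{Principal} in exactly the right branch and verifying that its hypotheses are in hand for our $G$—in particular the interplay between $Z(\gamma_1(G))=1$ and $\gamma_2(G)\lneq\gamma_1(G)$. Once the decomposition with extraspecial complement $K$ is secured, the numerical assertions $n$ even, $q\mid p^m-1$, and $\exp(K)=q$ for odd $q$ follow from the standard representation theory of extraspecial $q$-groups acting faithfully on elementary abelian $p$-groups, together with the strict simplicity of the blocks.
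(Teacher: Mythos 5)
Your opening reductions (using Lemma \ref{cases for sigma} implicitly to get $p\neq q$, then Lemma \ref{mu smaller than S} and Lemma \ref{embedding of ker} to reach the hypotheses of \cite[Proposition 4.7(ii)]{Principal}) match the paper. The genuine gap is in what you claim that proposition delivers: it yields only $G\cong\gamma_2(G)\rtimes_\rho K$ with $K$ a \emph{special} $q$-group and $Z(K)\cong\gamma_1(G)/\gamma_2(G)$ — not an extraspecial group of order $q^{n+1}$. A priori $Z(K)$ is elementary abelian of unknown rank, so the statements ``$K$ extraspecial'', ``$|K|=q^{n+1}$'' and hence ``$n$ even'' are exactly what still has to be proved, and your proposal skips this. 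The paper's key step is to show $Z(K)$ is cyclic: every $z\in Z(K)$ has order $q$, so it fixes a point $c_a$ in each block (the fixed-point-free automorphisms of the strictly simple block, whose automorphism group is $\mathbb{Z}_p^m\rtimes\mathbb{Z}_{p^m-1}$, have order $p$), and since $Q=Sg(a,[b])$ the restriction map $Z(K)\to \aut{[b]}_{c_b}\cong\mathbb{Z}_{p^m-1}$ is injective (a kernel element fixes the generating set $[b]\cup\{c_a\}$ pointwise). This one embedding gives cyclicity of $Z(K)$, hence $|Z(K)|=q$ and $K$ extraspecial with $n$ even, and it gives $q\mid p^m-1$ at the same time. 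Without some argument of this kind your route does not reach the stated conclusion.

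Two further steps as written would not survive scrutiny. First, your justification that a nontrivial $z\in Z(K)$ acts nontrivially on \emph{each} block ``otherwise it would lie in $Z(G)=1$'' is false: acting trivially on a single block does not make $z$ central in $G$; what is true (and suffices for the divisibility once extraspecialness is secured) is that $z$ acts nontrivially on \emph{some} block because $G$ acts faithfully on $Q$, and then its restriction of order $q\neq p$ fixes a point and lies in the cyclic stabilizer $\mathbb{Z}_{p^m-1}$. Second, the exponent argument for odd $q$ is off target: an element of order $q^2$ in $K$ need not act on a block with order $q^2$, and even if it did, $q^2\mid p^m-1$ is not excluded; moreover both exponent types of extraspecial $q$-groups admit faithful irreducible representations, so ``admitting such a faithful irreducible action'' does not rule out exponent $q^2$. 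The correct input, as in the paper, is that the automorphism of $K\cong G/\gamma_2(G)$ induced by $\widehat{L_a}$ acts irreducibly on $K/Z(K)\cong G/\gamma_1(G)$ because $Q/\gamma_Q$ is strictly simple; \cite[Corollary 1]{winter1972} then forces $\exp(K)=q$.
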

\begin{proof}
Let $G=\dis(Q)$. By virtue of Lemma \ref{embedding of ker} and Lemma \ref{mu smaller than S} we can apply \cite[Proposition 4.7(ii)]{Principal} and then $G\cong \gamma_2(G)\rtimes_{\rho} K$ where $K$ is a special $q$-group and $Z(K)\cong \gamma_1(G)/\gamma_2(G)$.

The automorphism group of a block is $ \aut{[a]}= \mathbb{Z}_p^m \rtimes  \mathbb{Z}_{p^m-1}$ and the only fixed-point-free automorphisms have order $p$. The elements $z\in Z(K)$ have order $q$  then $z\in \dis(Q)_{c_a}$ for some $c_a\in [a]$ for every $[a]\in Q/\gamma_Q$. Since $Q=Sg(a,[b])$, then
$$Z(K)\longrightarrow \aut{[b]}_{c_b}\cong \mathbb{Z}_{p^m-1},\quad h\mapsto h|_{[b]}$$
is an embedding and therefore $Z(K)$ is cyclic and $q$ divides $p^m-1$. Thus $K$ is a extraspecial $q$-group, so $n$ is even.

The induced automorphism over $K$ acts irreducibly on $K/Z(K)$. According to \cite[Corollary 1]{winter1972} if $q$ is odd then $K$ has exponent $q$.
%
%
%
%
\end{proof}

Note that the action $\rho$ in \eqref{dis for gamma=sigma} and in \eqref{dis with extraspecial} can be extended to the whole displacement group setting $\rho(x)=1$ whenever $x\in \gamma_2(\dis(Q))$. Sometimes we use such extension of $\rho$ instead of $\rho$ itself.\\
The equivalence $\sigma_Q$ is controlled by the properties of the action of $\dis_{\gamma_Q}$:
\begin{lemma}\label{sigma_Q=0 iff}
Let $Q\in  \LSS(p^m,q^n,\sigma_Q)$ and $\gamma_Q\wedge \sigma_Q=0_Q$. The following are equivalent:
\begin{itemize}
\item[(i)] $\sigma_Q = 0_Q$.
\item[(ii)] $\left(\dis_{\gamma_Q}\right)_a\neq 1$ for every $a\in Q$.
\end{itemize}

\end{lemma}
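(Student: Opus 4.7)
The plan is to relate $\sigma_Q = 0_Q$ to $N_G(H) = H$ and then exploit the structure theorem \ref{dis mu and S =0}. Set $G = \dis(Q)$ and $H = \dis(Q)_a$. By \cite[Theorem~2.4]{Principal}, $[a]_{\sigma_Q} = a^{N_G(H)}$, so $|[a]_{\sigma_Q}| = [N_G(H):H]$ and $\sigma_Q = 0_Q$ iff $N_G(H) = H$. The last assertion of Lemma \ref{cases for sigma} gives $\gamma_Q \leq \sigma_Q$ whenever $p = q$, so the hypothesis $\gamma_Q \wedge \sigma_Q = 0_Q$ together with $\gamma_Q \neq 0_Q$ forces $p \neq q$. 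Theorem \ref{dis mu and S =0} then yields $G \cong N \rtimes K$ with $N = \dis_{\gamma_Q} \cong \mathbb{Z}_p^{m+k}$, $K$ extraspecial of order $q^{n+1}$, and $\gamma_1(G) = N \rtimes Z(K)$. Counting gives $|H| = |G|/|Q| = p^k q$; since $H \leq \gamma_1(G)$ by Corollary \ref{congruence of SI is [1,1]}, Sylow decomposes $H = T \rtimes S$ with $T = H \cap N = (\dis_{\gamma_Q})_a$ of order $p^k$ and $|S| = q$ (a Sylow $q$-subgroup of $H$, conjugate to $Z(K)$ inside $\gamma_1(G)$).

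For the direction $(ii)\Rightarrow(i)$, assume $T \neq 1$. By Corollary \ref{subquandles} the subquandle $[a]_{\sigma_Q}$ has size $1$, $p^m$, or $q^n$; the hypothesis $\sigma_Q \wedge \gamma_Q = 0_Q$ makes the map $[a]_{\sigma_Q}\to Q/\gamma_Q$ injective, and its image is a subquandle of the strictly simple quandle $Q/\gamma_Q$, so has size $1$ or $q^n$. Hence the size-$p^m$ case is impossible (as $p \neq q$ and $m,n\geq 1$). If $|[a]_{\sigma_Q}| = q^n$, then $|N_G(H)| = p^k q^{n+1}$, so a Sylow $q$-subgroup of $N_G(H)$ is a full Sylow $q$-subgroup of $G$; replacing $a$ by a suitable $G$-conjugate, one may assume $K \leq N_G(H)$, and hence $\rho(K)(T) = T$.

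The technical core is then to show $\mathrm{Stab}_K(T) = Z(K)$, which combined with $|K| = q^{n+1} > q = |Z(K)|$ (as $n\geq 2$, $K$ being extraspecial) contradicts $K \leq N_G(T)$. Writing $T_{[b]} := (\dis_{\gamma_Q})_b$ for the pointwise kernel of $N$ on the block $[b]_{\gamma_Q}$ (independent of the representative because $N$ is abelian), one has $\rho(k)(T_{[a]}) = T_{[k(a)]}$; since $K/Z(K) \cong G/\gamma_1(G)$ acts regularly on $Q/\gamma_Q$, the identity $\mathrm{Stab}_K(T_{[a]}) = Z(K)$ reduces to the claim that distinct $\gamma_Q$-blocks have distinct such kernels. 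This is the main obstacle, but it follows from LSS generation: if $T_{[a_i]} = T_{[a_j]}$ for two different blocks, any non-trivial element of the common kernel fixes $[a_i]\cup[a_j]$ pointwise, while $[a_i]\cup[a_j]$ generates $Q$ by Remark \ref{on generators of LSS}, contradicting the faithfulness of $G$ on $Q$.

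For $(i)\Rightarrow(ii)$ I would argue the contrapositive. If $T = 1$, so $k = 0$, then after conjugation $H = Z(K)$, and clearly $K \leq N_G(Z(K))$. A direct semidirect-product computation shows that $n\in N$ normalizes $Z(K)$ iff $(1 - \rho(z_0))(n) = 0$ for a generator $z_0$ of $Z(K)$. When $k = 0$ the identification $N \to [a]_{\gamma_Q}$, $n\mapsto n(a)$, is $Z(K)$-equivariant, and $z_0$ acts on $[a]_{\gamma_Q}$ as multiplication by a non-trivial $q$-th root of unity via the embedding $Z(K) \hookrightarrow \aut{[a]_{\gamma_Q}}_a \cong \mathbb{Z}_{p^m - 1}$ from the proof of Theorem \ref{dis mu and S =0}, so such a map has only $0$ as a fixed point. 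Hence $N_G(Z(K)) = K$, giving $|[a]_{\sigma_Q}| = [K : Z(K)] = q^n > 1$ and therefore $\sigma_Q \neq 0_Q$.
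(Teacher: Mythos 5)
Your argument is correct, but it runs along a genuinely different (and much heavier) route than the paper's. The paper works directly with quandle stabilizers: for (ii)$\Rightarrow$(i) it takes $b\,\sigma_Q\,a$ with $b\notin[a]_{\gamma_Q}$ and notes that $\left(\dis_{\gamma_Q}\right)_a=\left(\dis_{\gamma_Q}\right)_b$ then fixes $[a]_{\gamma_Q}\cup\{b\}$ pointwise, a generating set, hence is trivial; for (i)$\Rightarrow$(ii) it notes that $\left(\dis_{\gamma_Q}\right)_a=1$ forces $\dis(Q)_a\cong\mathbb{Z}_q$, which embeds into $\aut{[b]}\cong\mathbb{Z}_p^m\rtimes\mathbb{Z}_{p^m-1}$ and, having order coprime to $p$, fixes some $c_b\in[b]$, so $\dis(Q)_a=\dis(Q)_{c_b}$ and $\sigma_Q\neq 0_Q$. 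You instead convert everything through the normalizer description $[a]_{\sigma_Q}=a^{N_G(\dis(Q)_a)}$ and lean on Theorem \ref{dis mu and S =0}, Sylow counting, and the regularity of $K/Z(K)$ on $Q/\gamma_Q$. The kernel of your (ii)$\Rightarrow$(i) step, namely that distinct $\gamma_Q$-blocks have distinct subgroups $\left(\dis_{\gamma_Q}\right)_b$, is exactly the paper's generation argument in disguise (Remark \ref{on generators of LSS} plus faithfulness), so the surrounding Sylow and normalizer scaffolding is sound but not needed. Your (i)$\Rightarrow$(ii) step is genuinely different and at its core slicker: once $\dis(Q)_a$ is conjugated to $Z(K)$, the inclusion $K\leq N_G(Z(K))$ together with $|K|=q^{n+1}>q=|Z(K)|$ already yields $\sigma_Q\neq 0_Q$, so your further computation $N_G(Z(K))=K$ — whose justification that a generator of $Z(K)$ restricts to a non-trivial scalar on $[a]_{\gamma_Q}$ again requires the same ``block plus an outside fixed point generates $Q$'' argument hidden in the proof of Theorem \ref{dis mu and S =0} — can simply be dropped. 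What your route buys is extra quantitative information ($|[a]_{\sigma_Q}|\in\{1,q^n\}$, and when $\sigma_Q\neq0_Q$ the normalizer $N_G(\dis(Q)_a)$ contains a full Sylow $q$-subgroup); what the paper's buys is a short proof that uses only the LSS generation property and the structure of $\aut{[b]}$, independent of the finer structure of $K$.
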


\begin{proof}
Let $\sigma_Q\neq 0_Q$ and $b\, \sigma_Q\, a$ such that $b\notin [a]_{\gamma_Q}$. Therefore, $\left(\dis_{\gamma_Q} \right)_a=\left(\dis_{\gamma_Q} \right)_b$ and $\left(\dis_{\gamma_Q} \right)_a$ fixes $[a]$ and $b$ which generate $Q$. Hence $\left(\dis_{\gamma_Q} \right)_a=1$. On the other hand if $\left(\dis_{\gamma_Q} \right)_a=1$ then
 $\dis(Q)_a\cong \mathbb{Z}_q$ for every $a\in Q$ and it embeds into $\aut{[b]}\cong \mathbb{Z}_p^m\rtimes \mathbb{Z}_{p^m-1}$ for every $b\notin [a]$. The fixed-point-free elements of $\aut{[b]}$ have order $p$, therefore $\dis(Q)_a$ fixes some $c_b\in [b]$ and so $\dis(Q)_a=\dis(Q)_{c_b}$, i.e. $\sigma_Q\neq 0_Q$.
\end{proof}
\begin{corollary}\label{faithful action and sigma =0}
	Let $Q\in\LSS(p^m,q^n,0_Q)$. Then the action $\rho$ in \eqref{dis with extraspecial} is faithful and $N\left(\left(\dis_{\gamma_Q}\right)_a\right)=\dis^{\gamma_Q}$.
\end{corollary}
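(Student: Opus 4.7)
The plan is to handle the two claims independently. Faithfulness of $\rho$ is short and uses only the extraspecial structure of $K$ from Theorem \ref{dis mu and S =0} together with $Z(\dis(Q))=1$ from Lemma \ref{embedding of ker}(ii). The normalizer identity is the substantive part and requires combining the regularity of the action of $\dis(Q/\gamma_Q)$ on blocks, the abelianness of $\dis_{\gamma_Q}$ from Lemma \ref{embedding of ker}(i), and the hypothesis $\sigma_Q=0_Q$ through Lemma \ref{sigma_Q=0 iff}.

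For faithfulness, set $N=\ker{\rho}\trianglelefteq K$. Since $K$ is extraspecial with $Z(K)\cong \mathbb{Z}_q$, every nontrivial normal subgroup of $K$ meets $Z(K)$ nontrivially, so $N\neq 1$ would force $Z(K)\leq N$. By construction $N$ centralises $\dis_{\gamma_Q}=\gamma_2(\dis(Q))$, and $Z(K)$ also centralises $K$, so $Z(K)$ would be contained in $Z(\dis(Q))$, contradicting Lemma \ref{embedding of ker}(ii).

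For the normalizer identity, write $G=\dis(Q)$ and $H=(\dis_{\gamma_Q})_a$. A preliminary observation is that $\dis^{\gamma_Q}$ coincides with the set-wise stabilizer $G_{[a]_{\gamma_Q}}$: since $Q/\gamma_Q$ is an affine connected strictly simple quandle, $G/\dis^{\gamma_Q}\cong \dis(Q/\gamma_Q)$ acts regularly on $Q/\gamma_Q$, so both subgroups have index $q^n$ in $G$ and the containment $\dis^{\gamma_Q}\subseteq G_{[a]_{\gamma_Q}}$ is an equality. For $\dis^{\gamma_Q}\subseteq N_G(H)$, I would use that $\dis_{\gamma_Q}$ is normal in $G$ and abelian, hence on each block all points share the same stabilizer in $\dis_{\gamma_Q}$: given $g\in \dis^{\gamma_Q}$, one has $gHg^{-1}\leq \dis_{\gamma_Q}$, this conjugate fixes $g(a)\in [a]_{\gamma_Q}$, and the size count $|H|=p^k$ forces $gHg^{-1}=(\dis_{\gamma_Q})_{g(a)}=(\dis_{\gamma_Q})_a=H$.

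The reverse inclusion $N_G(H)\subseteq \dis^{\gamma_Q}$ is where the hypothesis $\sigma_Q=0_Q$ enters and is the main obstacle. If $g\in N_G(H)$ then $H$ fixes $g(a)$ and by the same size argument $H=(\dis_{\gamma_Q})_{g(a)}$. Suppose $g(a)\notin [a]_{\gamma_Q}$; then $H$ would fix $[a]_{\gamma_Q}$ and $[g(a)]_{\gamma_Q}$ pointwise, so its fixed subquandle would contain two distinct blocks and would therefore equal $Q$ by Remark \ref{on generators of LSS}. This forces $H=1$, contradicting Lemma \ref{sigma_Q=0 iff}; hence $g(a)\in [a]_{\gamma_Q}$ and $g\in G_{[a]_{\gamma_Q}}=\dis^{\gamma_Q}$.
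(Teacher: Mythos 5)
Your proof is correct. For the normalizer identity you follow essentially the paper's own route: the inclusion $\dis^{\gamma_Q}\subseteq N\left(\left(\dis_{\gamma_Q}\right)_a\right)$ comes from abelianness (and transitivity on blocks) of $\dis_{\gamma_Q}$, so that all points of a block share the same stabilizer, and the reverse inclusion from the fact that $\left(\dis_{\gamma_Q}\right)_a$ fixes $[a]_{\gamma_Q}$ pointwise, hence if it also fixed a point outside that block it would fix a generating set of $Q$ and be trivial, contradicting Lemma \ref{sigma_Q=0 iff}; your only addition is to make explicit the final step $G_{[a]_{\gamma_Q}}=\dis^{\gamma_Q}$ via regularity of $\dis(Q/\gamma_Q)$, which the paper leaves implicit when passing from $h(a)\,\gamma_Q\,a$ to $h\in\dis^{\gamma_Q}$. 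The faithfulness argument, however, is genuinely different: the paper again exploits $\sigma_Q=0_Q$, noting that then $N(\dis(Q)_a)=\dis(Q)_a$, so any $h\in K$ with $\rho(h)=1$ lies in $\dis(Q)_a\cap K\leq \gamma_1(\dis(Q))\cap K\leq Z(K)$ and hence in $Z(\dis(Q))=1$; you instead observe that a nontrivial kernel, being a nontrivial normal subgroup of the $q$-group $K$, would contain $Z(K)$, which would then centralize both $\gamma_2(\dis(Q))$ and $K$ and so lie in $Z(\dis(Q))=1$. Your version is purely group-theoretic and slightly more general, since it only uses the hypotheses of Theorem \ref{dis mu and S =0} (i.e. $\sigma_Q\wedge\gamma_Q=0_Q$, which also gives $p\neq q$ and hence $Z(\dis(Q))=1$ by Lemma \ref{embedding of ker}(ii)), whereas the paper's derivation leans on the stabilizer characterization of $\sigma_Q$; both are equally short.
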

 \begin{proof}
  Since $\sigma_Q=0_Q$ then $N(\dis(Q)_a)=\dis(Q)_a$ for every $a\in Q$ and $\left(\dis_{\gamma_Q}\right)_a$ is not trivial by Lemma \ref{sigma_Q=0 iff}.\\
   If $\rho(h)=1$ then $h\in N(\dis(Q)_a)\bigcap K=\dis(Q)_a\bigcap K\leq \gamma_1(\dis(Q))\bigcap K\leq Z(K)$. So $h\in Z(\dis(Q))=1$ and then $\rho$ is faithful.\\ 
  The congruence $\gamma_Q$ is abelian, so $\left(\dis_{\gamma_Q}\right)_{a}=\left(\dis_{\gamma_Q}\right)_{b}$ whenever $a\,\alpha\,b$, and then $\dis^{\gamma_Q}$ normalizes $\left(\dis_{\gamma_Q}\right)_{a}$. On the other hand the subgroup $\left(\dis_{\gamma_Q}\right)_{a,b}$ is trivial whenever $b\notin[a]$ since $Q$ is generated by $[a]$ and   $b$. So, if $h$ normalizes $\left(\dis_{\gamma_Q}\right)_a$ then $\left(\dis_{\gamma_Q}\right)_{a}=\left(\dis_{\gamma_Q}\right)_{h(a)}=\left(\dis_{\gamma_Q}\right)_{a,h(a)}$, and so $h(a)\,\alpha\, a$ i.e. $h\in \dis^{\gamma_Q}$. 
\end{proof}
%
Note that if $\sigma_Q\neq 0_Q$ then $\dis_{\gamma_Q}\cong \mathbb{Z}_p^m$ and so it has the minimal admitted size. On the other extreme, i.e. when the size of $\dis_{\gamma_Q}$ is maximal, we have the following:
\begin{proposition}\label{prop of LSS}
Let $Q\in  \LSS(p^m,q^n,0_Q)$. If $\dis_{\gamma_Q}\cong \mathbb{Z}_p^{2m}$ then $Sg(a,b)\cong Q/\gamma_Q$ whenever $[a]\neq [b]$ and $Q$ is latin. 
\end{proposition}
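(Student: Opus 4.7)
The plan is to prove both assertions by producing, for each pair $a,b\in Q$ with $[a]\ne[b]$, an automorphism $z\in\dis(Q)_a\cap\dis(Q)_b$ of order $q$ whose fixed-point set is a section of the quotient $Q\to Q/\gamma_Q$ (one element per block, forming a subquandle $\cong Q/\gamma_Q$); then $Sg(a,b)$ will lie inside this section and strict simplicity will force equality, and latinity will follow from the fact that any pair of elements lies in a latin section. First I would unpack the hypothesis using Lemma \ref{embedding as quandle}(i) applied to two blocks $[a],[b]$ generating $Q/\gamma_Q$: since $\dis_{\gamma_Q}\cong\mathbb{Z}_p^{2m}$ is a $p$-group, its embedding into $\aut{[a]}\times\aut{[b]}$ lands in the unique Sylow-$p$ subgroup $\dis([a])\times\dis([b])\cong\mathbb{Z}_p^{2m}$, so it is an isomorphism. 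Consequently $(\dis_{\gamma_Q})_a$ has order $p^m$ and acts regularly on every $[b]\ne[a]$. Combined with Theorem \ref{dis mu and S =0} (giving $G=\dis(Q)=\dis_{\gamma_Q}\rtimes K$ with $K$ extraspecial of order $q^{n+1}$), one counts $|G|=p^{2m}q^{n+1}$, $|G_a|=p^m q$, and since $G_a$ is already transitive on $[b]$, $|G_{a,b}|=q$ whenever $[a]\ne[b]$.

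By Cauchy's theorem, $G_{a,b}$ contains an element $z$ of order $q$; since $z\in G_a\leq\dis^{\gamma_Q}=\gamma_1(G)=\gamma_2(G)\rtimes Z(K)$ (using Corollary \ref{congruence of SI is [1,1]} and $\gamma_2(G)=\dis_{\gamma_Q}$ from Lemma \ref{embedding of ker}), I write $z=nw$ with $n\in\gamma_2(G)$ and $w\in Z(K)\setminus\{1\}$. The technical heart is to show $Z(K)$ acts without non-zero fixed vectors on $\gamma_2(G)$: restricted to a block $[c]$, $w|_{[c]}=(t_w,r_w)\in\mathbb{Z}_p^m\rtimes\mathbb{Z}_{p^m-1}$ has $r_w\ne 1$ (a translation of order $q$ in a $p$-group would be trivial, but Theorem \ref{dis mu and S =0} forces $w|_{[c]}\ne 1$), and conjugation by $(t_w,r_w)$ sends a translation $(s_n,1)$ to $(r_w s_n,1)$; since $r_w-1$ is invertible in $\mathbb{F}_{p^m}$, only $s_n=0$ is fixed. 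Applying this on the two generating blocks and invoking Lemma \ref{embedding as quandle}(i), any $n\in\gamma_2(G)$ centralized by $w$ must be trivial. Hence the norm $\sum_{i=0}^{q-1}\rho(w^i)$ vanishes on $\gamma_2(G)$, every element of $\gamma_1(G)\setminus\gamma_2(G)$ has order $q$, and Schur-Zassenhaus (since $\gcd(|\gamma_2(G)|,q)=1$) produces $g\in\dis_{\gamma_Q}$ with $\langle z\rangle=gZ(K)g^{-1}$.

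The subquandle $Fix(Z(K))$, identified in the proof of Theorem \ref{dis mu and S =0} as picking out exactly one element per block, is a section of size $q^n$ isomorphic to $Q/\gamma_Q$, and $Fix(z)=g(Fix(Z(K)))$ is another such section containing both $a$ and $b$. Therefore $Sg(a,b)\subseteq Fix(z)\cong Q/\gamma_Q$, and since this subquandle is strictly simple while $Sg(a,b)$ meets at least two blocks, $Sg(a,b)=Fix(z)\cong Q/\gamma_Q$. Latinity follows at once: if $x\ast c=x'\ast c$, projecting to the latin quandle $Q/\gamma_Q$ forces $[x]=[x']$; if this common class is $[c]$ we use that $[c]$ is latin, while otherwise both $Sg(x,c)$ and $Sg(x',c)$ are the (unique) section through $c$ and $x\ast c$, hence equal, and such a section contains only one representative of the block $[x]$, so $x=x'$. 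The main obstacle is establishing the fixed-point-free action of $Z(K)$ on $\gamma_2(G)$; once in hand, this both forces $\langle z\rangle$ to be a $\dis_{\gamma_Q}$-conjugate of $Z(K)$ and pins down $Fix(z)$ as a translate of the canonical section $Fix(Z(K))$.
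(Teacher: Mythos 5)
Your proof is correct, and it takes a genuinely different route from the paper's. The two arguments share the same underlying facts: the hypothesis $\dis_{\gamma_Q}\cong \mathbb{Z}_p^{2m}$ makes $\left(\dis_{\gamma_Q}\right)_a$ regular on every block $[b]\neq[a]$ (you get this by observing that the embedding of Lemma \ref{embedding as quandle} must land onto $\dis([a])\times\dis([b])$; the paper gets it from semiregularity via generation plus the order count), and order-$q$ elements of $\gamma_1(\dis(Q))$ fix exactly one point in each block. But the paper then argues by contradiction: if one cross-block pair generated $Q$, regularity would force every cross-block pair to generate $Q$, while an order-$q$ element of $\gamma_1(\dis(Q))$ fixes such a pair, a contradiction; so $Sg(a,b)$ is proper and hence isomorphic to $Q/\gamma_Q$. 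You instead compute $|\dis(Q)_{a,b}|=q$ directly, prove $Z(K)$ acts without nonzero fixed vectors on $\gamma_2(\dis(Q))$, use Schur--Zassenhaus to see $\dis(Q)_{a,b}$ is a $\dis_{\gamma_Q}$-conjugate of $Z(K)$, and identify $Sg(a,b)$ with the one-point-per-block section $Fix(z)$, equality following from strict simplicity. Your route costs extra machinery (norm map, conjugacy of complements) but is more constructive: it exhibits $Sg(a,b)$ as the fixed-point set of the two-point stabilizer and shows all such cross-sections are translates of $Fix(Z(K))$, which in turn makes your latinity argument (project to the latin factor, then use uniqueness of the section through $c$ and $x\ast c$, or latinity of a block) cleaner than the paper's terse cancellation inside $Sg(a,b)$. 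Two steps you should state explicitly: the orbit bound $b^{\dis(Q)_a}\subseteq[b]$ needed for $|\dis(Q)_{a,b}|=q$ uses $\dis(Q)_a\leq\dis^{\gamma_Q}$ (Corollary \ref{congruence of SI is [1,1]}), which you invoke only afterwards; and the nontriviality of $w|_{[c]}$ on every block is not literally the statement of Theorem \ref{dis mu and S =0} but the generation argument inside its proof (if $w$ were trivial on a block it would fix that block pointwise together with a fixed point in another block, hence fix a generating set). Both are easily supplied, so there is no gap.
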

\begin{proof}
Note that $Sg(a,b)\cong h(Sg(a,b))=Sg(a,h(b))$ for every $a,b\in Q$ and every $h\in \dis(Q)_a$. The action of $\left(\dis_{\gamma_Q}\right)_a\cong \mathbb{Z}_p^m$ restricted to $[b]$ is regular whenever $[a]\neq [b]$, since $Q=Sg([a],c)$ for every $c\in [b]$ and $\left(\dis_{\gamma_Q}\right)_a$ fix all the elements of $[a]$. If $Q=Sg(a,b)$ for some $a,b\in Q$, then $Q=Sg(c,d)$ for any $c\,\gamma_Q \, a$ and $d\,\gamma_Q\,b$. Any element of order $q$ in $\gamma_1(\dis(Q))$ fixes one element in each block of $\gamma_Q$ and then it fixes some pair of generators, contradiction. Let $b*a=c*a\in Sg(a,b)$. Since $Sg(a,b)$ is isomorphic to $[a]$ or to $Q/\gamma_Q$, it is latin. Therefore $c=(b*a)/a=b$ and $Q$ is latin. 
\end{proof}

\begin{remark}
	Let $Q\in  \LSS(p^m,q^n,\sigma_Q)$ and $G=\dis(Q)$. Then the Frattini subgroup $\Phi(G)$ is properly contained in the Fitting subgroup $F(G)$ ($G$ is solvable) and $\gamma_2(G)\leq F(G)\leq \gamma_1(G)$ since $F(G)$ is characteristic and $\gamma_2(G)$ is abelian. If $\gamma_1(G)=\gamma_2(G)$ then $F(G)=\gamma_2(G)$ and $\Phi(G)=1$. Otherwise, $\gamma_2(G)$ has prime index in $\gamma_1(G)$ and $\gamma_1(G)$ has trivial center. So again $F(G)=\gamma_2(G)$ and $\Phi(G)=1$.
\end{remark}

\subsection*{The classes $\LSS(p,q^n,\sigma_Q)$}
In this section we turn our attention to the case $m=1$.

\begin{corollary}\label{cor for pq}
Let $Q\in \LSS(p^m,q^n,\sigma_Q)$ and $p\neq q$.
\begin{itemize}
\item[(i)] If $n=1$ then $\gamma_Q \leq \sigma_Q$.
\item[(ii)] If $m=1$ then $\dis_{\gamma_Q}\cong \mathbb{Z}_p^2$.
\end{itemize}
In particular the class $\LSS(p,q^n,1_Q)$ is empty. 
\end{corollary}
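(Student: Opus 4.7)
My approach is a case analysis on $\sigma_Q$ via Lemma \ref{cases for sigma}, which offers three mutually exclusive possibilities: $\sigma_Q = 1_Q$, $\sigma_Q = \gamma_Q$, or $\sigma_Q \wedge \gamma_Q = 0_Q$. For (i), the first two possibilities already satisfy $\gamma_Q \leq \sigma_Q$, while the third is ruled out when $n=1$ because Theorem \ref{dis mu and S =0} forces $n$ to be even.

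For (ii), Lemma \ref{embedding of ker}(i) gives $\dis_{\gamma_Q} \cong \mathbb{Z}_p^{1+k}$ with $k \in \{0,1\}$, so the task reduces to showing $k=1$ in each of the three subcases. When $\sigma_Q = \gamma_Q$, a cardinality count gives $|\dis(Q)_a| = |\dis(Q)|/|Q| = p^k$; if $k=0$, all stabilizers are trivial and hence equal, forcing the contradiction $\sigma_Q = 1_Q$. When $\sigma_Q \wedge \gamma_Q = 0_Q$, noting that $\sigma_Q$ is a congruence (its classes are $\lmlt(Q)$-blocks) and that a subdirectly irreducible LSS quandle has only the congruences $0_Q, \gamma_Q, 1_Q$, we conclude $\sigma_Q = 0_Q$; Lemma \ref{sigma_Q=0 iff} then gives $(\dis_{\gamma_Q})_a \neq 1$, and transitivity of $\dis_{\gamma_Q}$ on the blocks of size $p$ yields $|(\dis_{\gamma_Q})_a| = p^k$, forcing $k \geq 1$.

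The main obstacle --- and also the source of the ``in particular'' clause $\LSS(p,q^n,1_Q) = \emptyset$ --- is ruling out $\sigma_Q = 1_Q$. In that case $Q$ is principal, so $|\dis(Q)| = |Q| = pq^n$; Theorem \ref{mu p neq q} (applicable since $\gamma_Q \leq \sigma_Q$ trivially) gives $\dis(Q) \cong \mathbb{Z}_p^{1+k} \rtimes_\rho \mathbb{Z}_q^n$ with $\rho$ faithful. Matching cardinalities forces $k=0$, and since $\mathbb{Z}_q^n$ must embed in the cyclic group $\Aut{\mathbb{Z}_p} \cong \mathbb{Z}_{p-1}$, also $n=1$. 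Hence $\dis(Q) \cong G := \mathbb{Z}_p \rtimes \mathbb{Z}_q$ with $q \mid p-1$ and $Q \cong \mathcal{Q}(G, \widehat{L}_a)$. The crux is then to show that every $f \in \Aut{G}$ induces the identity on $G/P \cong \mathbb{Z}_q$ (where $P = \gamma_1(G) \cong \mathbb{Z}_p$): for any $y \in G \setminus P$, the conjugation action $\alpha_y \in \Aut{P}$ has order $q$, and since $\Aut{P} \cong \mathbb{Z}_{p-1}$ is abelian we have $\alpha_{f(y)} = f|_P\, \alpha_y\, f|_P^{-1} = \alpha_y$, so $f(y)y^{-1} \in C_G(P) = P$ (by faithfulness of the $\mathbb{Z}_q$-action), i.e.\ $f(y) \in yP$. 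By Lemma \ref{rep_for_factors}, $Q/\gamma_Q \cong \mathcal{Q}(\mathbb{Z}_q, 1) = \mathcal{P}_q$ is a projection quandle and hence disconnected, contradicting the connectedness of $Q$. This completes (ii).
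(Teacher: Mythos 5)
The weak point is the third subcase of part (ii), where $\sigma_Q\wedge\gamma_Q=0_Q$. There you conclude $\sigma_Q=0_Q$ by asserting that $\sigma_Q$ is a congruence, justified only by the fact that its classes are blocks of $\lmlt(Q)$. That justification does not suffice: an $\lmlt(Q)$-invariant partition is preserved by all left multiplications (so $a\,\sigma_Q\,b$ implies $c*a\,\sigma_Q\,c*b$), but a congruence also requires compatibility in the left argument, i.e. $a\,\sigma_Q\,b$ implies $a*c\,\sigma_Q\,b*c$, which amounts to $L_b^{-1}L_a$ normalizing $\dis(Q)_c$ for every $c$ --- nothing in the paper provides this, and the paper is careful to call $\sigma_Q$ only an equivalence relation. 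Indeed, the paper's own deduction that $\sigma_Q=0_Q$ in this situation (Proposition \ref{then q=2}) is obtained \emph{from} Corollary \ref{cor for pq}(ii) together with Lemma \ref{sigma_Q=0 iff}, so your shortcut inverts the paper's logical order and in effect presupposes what part (ii) is needed to establish. The situation you must genuinely exclude is ``$\sigma_Q\neq 0_Q$ and $\sigma_Q\wedge\gamma_Q=0_Q$'': if it occurred, Lemma \ref{sigma_Q=0 iff} would give $(\dis_{\gamma_Q})_a=1$, hence $k=0$ and $\dis_{\gamma_Q}\cong\mathbb{Z}_p$, so this is not a removable formality but exactly the point where an argument is missing.

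The paper closes (ii) without any case distinction on $\sigma_Q$: since $p\neq q$, Lemma \ref{embedding of ker}(ii) gives $Z(\dis(Q))=1$, so $\zeta_Q=0_Q$ and the minimal congruence $\gamma_Q$ is not central; $\dis_{\gamma_Q}$ embeds into $\mathbb{Z}_p^{2}$ by Lemma \ref{embedding of ker}(i), and \cite[Lemma 4.4]{Principal} rules out the cyclic possibility, forcing $\dis_{\gamma_Q}\cong\mathbb{Z}_p^2$; non-principality then follows since $(\dis_{\gamma_Q})_a$ has order $p$. Importing that lemma (or any independent proof that $(\dis_{\gamma_Q})_a\neq 1$ in your third case) would repair the gap. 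The rest of your argument is sound: part (i) via Lemma \ref{cases for sigma} and the parity of $n$ in Theorem \ref{dis mu and S =0} is a correct alternative to the paper's route through Proposition \ref{sims} and Lemma \ref{mu smaller than S}; your elimination of $\sigma_Q=1_Q$ by showing every automorphism of $\mathbb{Z}_p\rtimes\mathbb{Z}_q$ acts trivially modulo the derived subgroup, so that $Q/\gamma_Q$ would be projection, is correct and more self-contained than the paper's remark; and in the case $\sigma_Q=\gamma_Q$ the count $|\dis(Q)_a|=p^k$ is fine but silently uses $|\dis(Q)|=p^{1+k}q^n$, i.e. Theorem \ref{mu p neq q} (equivalently $\dis^{\gamma_Q}=\dis_{\gamma_Q}$), which should be cited.
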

\begin{proof}
(i) Let $G=\dis(Q)$. According to Proposition \ref{sims}, $\gamma_1(G)=\gamma_2(G)$ since $\dis(Q/\gamma_Q)\cong G/\gamma_1(G)$ is cyclic.  So by Lemma \ref{mu smaller than S}, $\gamma_Q\leq \sigma_Q$.

(ii) The congruence $\zeta_Q$ is trivial, hence $\gamma_Q$ is not central. The subgroup $\dis_{\gamma_Q}$ embeds into $\mathbb{Z}_p^2$ and according to \cite[Lemma 4.4]{Principal} it is not cyclic. So $\dis_{\gamma_Q}\cong \mathbb{Z}_p^2$. In particular $Q$ is not principal, since $\dis(Q)_a$ is non-trivial.
\end{proof}
\noindent Let us consider the case $\gamma_Q=\sigma_Q$.

\begin{proposition}\label{ex on 2to the n p}
Let $Q\in \LSS(p,q^n,\gamma_Q)$. Then $q>2$ and $n=1$.
\end{proposition}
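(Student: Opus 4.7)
The plan is to pin down the structure of $\dis(Q)$ using the results already proved, and then exploit the representation theory of $\mathbb{Z}_q^n$ on $\mathbb{F}_p^2$ together with the strict simplicity of $Q/\gamma_Q$ to force $n=1$. Throughout I take $p\neq q$, consistent with the tacit assumption of this subsection (if $p=q$ then Theorem \ref{p=q case} already forces $n\ge 2$, so there is no case to check against the conclusion). The main obstacle is ruling out $n=2$ in the split-torus case; this is where the compatibility between the two actions induced by $\widehat{L}_a$ on $\gamma_1(\dis(Q))$ and on $\dis(Q)/\gamma_1(\dis(Q))$ comes in.

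First, since $\gamma_Q=\sigma_Q$, Theorem \ref{mu p neq q} gives $\dis(Q)\cong\mathbb{Z}_p^{1+k}\rtimes_\rho\mathbb{Z}_q^n$ with $\rho$ faithful and $k\le 1$. Corollary \ref{cor for pq}(ii) (with $m=1$) forces $\dis_{\gamma_Q}\cong\mathbb{Z}_p^2$, so $k=1$ and $\dis(Q)\cong\mathbb{Z}_p^2\rtimes_\rho\mathbb{Z}_q^n$. Since $q\ne p$, the abelian $q$-subgroup $\rho(\mathbb{Z}_q^n)$ of $GL_2(\mathbb{F}_p)$ is semisimple and therefore lies in a maximal torus. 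The non-split torus is cyclic of order $p^2-1$, in which case $\mathbb{Z}_q^n$ is cyclic and $n=1$; otherwise $\rho(\mathbb{Z}_q^n)$ sits inside the split torus $\mathbb{F}_p^\times\times\mathbb{F}_p^\times$, bounding $n\le 2$.

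It remains to exclude $n=2$ in the split case. Write $\rho(k)=\mathrm{diag}(\chi_1(k),\chi_2(k))$ in a simultaneous eigenbasis $(v_1,v_2)$; faithfulness of $\rho$ forces $\chi_1\ne\chi_2$ and $\ker{\chi_1}\cap\ker{\chi_2}=1$, so the eigenlines $V_1,V_2$ are the only $\rho(\mathbb{Z}_q^2)$-isotypic components. Since $\gamma_1(\dis(Q))$ is characteristic, $\widehat{L}_a$ induces $A\in GL_2(\mathbb{F}_p)$ on it and $\bar f\in\aut{\mathbb{Z}_q^2}$ on the quotient, with the semidirect-product compatibility
\[
A\,\rho(k)\,A^{-1}=\rho(\bar f(k))\qquad(k\in\mathbb{Z}_q^2).
\]
Thus $A$ normalizes $\rho(\mathbb{Z}_q^2)$ and permutes $\{V_1,V_2\}$. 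If $A$ fixes both eigenlines, then $A$ is diagonal and centralizes $\rho(\mathbb{Z}_q^2)$, giving $\bar f=\mathrm{id}$; if $A$ swaps them, a direct computation yields $\chi_1\circ\bar f=\chi_2$ and $\chi_2\circ\bar f=\chi_1$, hence $\bar f^2=\mathrm{id}$ (using $\ker{\chi_1}\cap\ker{\chi_2}=1$). In both subcases the minimal polynomial of $\bar f$ divides $t^2-1$ and therefore has only degree-one irreducible factors over $\mathbb{F}_q$. But by Lemma \ref{rep_for_factors}, $Q/\gamma_Q\cong\aff(\mathbb{Z}_q^2,\bar f)$, whose strict simplicity forces the minimal polynomial of $\bar f$ to be irreducible of degree $2$---contradiction. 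Hence $n=1$.

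Finally, if $q=2$ and $n=1$, then $Q/\gamma_Q$ is a connected quandle of size $2$; but the only quandle on a $2$-element set is $\mathcal{P}_2$, whose left multiplication group is trivial, so $\mathcal{P}_2$ is not connected. Hence $q\ge 3$.
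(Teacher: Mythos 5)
Your proof is correct, and while it opens exactly as the paper does---Theorem \ref{mu p neq q} plus Corollary \ref{cor for pq}(ii) give $\dis(Q)\cong\mathbb{Z}_p^2\rtimes_\rho\mathbb{Z}_q^n$ with $\rho$ faithful, and the split/non-split torus dichotomy (the paper's Lemmas \ref{eigenvalues of A}, \ref{normalizer of matrices}, \ref{faithful action}) disposes of the irreducible case and bounds $n\le 2$ in the diagonalizable case---the crucial step of excluding $n=2$ is handled by a genuinely different mechanism. The paper observes that if $n=2$ then $\rho(\mathbb{Z}_q^2)$ is the full group of diagonal matrices of order dividing $q$, hence contains a scalar $kI$; the corresponding element normalizes $\dis(Q)_a$, so by the description of $\sigma_Q$-blocks as orbits under $N(\dis(Q)_a)$ one gets $\sigma_Q=1_Q$, contradicting $\sigma_Q=\gamma_Q$. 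You instead exploit the compatibility $A\rho(k)A^{-1}=\rho(\bar f(k))$ of $\widehat{L}_a$ with the semidirect structure: $A$ permutes the two eigenlines, so the automorphism $\bar f$ induced on $G/\gamma_1(G)\cong\mathbb{Z}_q^2$ satisfies $\bar f^2=\mathrm{id}$ (or $\bar f=\mathrm{id}$), whereas $Q/\gamma_Q\cong\aff(\mathbb{Z}_q^2,\bar f)$ strictly simple forces $\bar f$ to act irreducibly. Your route only uses $\gamma_Q\le\sigma_Q$ (through Theorem \ref{mu p neq q}) and the strict simplicity of the factor, so it in fact rules out $n=2$ in the whole $\gamma_Q\le\sigma_Q$ regime and stays inside elementary linear algebra; the paper's route is shorter given its earlier $\sigma_Q$-machinery but leans on the class hypothesis $\sigma_Q=\gamma_Q$ itself. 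Two small remarks: the claim that an abelian semisimple subgroup of $GL_2(\mathbb{F}_p)$ lies in a maximal torus is a rank-$2$ fact (centralizer of a regular semisimple element, or scalar image), worth a word since it fails in higher rank; and your parenthetical on $p=q$ is slightly off as stated---Theorem \ref{p=q case} giving $n\ge 2$ does not reconcile $p=q$ with the conclusion $n=1$; rather, $p\neq q$ is tacit here (as it is in the paper's own proof, which invokes results stated only for $p\neq q$).
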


\begin{proof}
By virtue of Corollary \ref{cor for pq}(ii) $Q$ is not principal and by Theorem \ref{mu p neq q} $\dis(Q)=G\cong \mathbb{Z}_p^2\rtimes_{\rho} \mathbb{Z}_q^n$ where $\rho$ is a faithful action. Then $q$ divides $p^2-1$. \\
If $q$ divides $p-1$ or $q=2$ then the action of $\mathbb{Z}_q^n$ is diagonalizable (see Lemma \ref{eigenvalues of A}) and then $\mathbb{Z}_q^n$ embeds into the subgroup of diagonal matrices of order $q$ (see Lemma \ref{normalizer of matrices}). The subgroup of diagonal matrices of order $q$ has size $q^2$, then $n\leq 2$. In $n=2$ then there exists $h\in \mathbb{Z}_q^2$ such that $\rho(h)=kI$ and so $h\in N(\dis(Q)_a)$. Accordingly $Sg(h(a), [a]_{\gamma_Q})=Q\subseteq [a]_{\sigma_Q}$ and then $\sigma_Q=1_Q$, contradiction. Hence $n=1$ and $q>2$ since the two elements quandle is not connected. \\
If $q>2$ divides $p+1$ then by Lemma \ref{eigenvalues of A} and Lemma \ref{faithful action}(i) $\mathbb{Z}_q^n$ is cyclic, i.e. $n=1$.
\end{proof}
\noindent If $Q\in \LSS(p^m,q^n,\sigma_Q)$ and $\sigma_Q\wedge \gamma_Q=0_Q$, then $n$ is even 
according to Theorem \ref{dis mu and S =0}.
\begin{proposition}\label{then q=2}
Let $Q\in \LSS(p,q^{2n},\sigma_Q)$ such that $\sigma_Q\wedge\gamma_Q=0_Q$. Then $\sigma_Q=0_Q$ and $q=2$.  
\end{proposition}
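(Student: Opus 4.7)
The plan is to first use the small size of $\dis_{\gamma_Q}$ (which is $\mathbb{Z}_p^2$ by Corollary~\ref{cor for pq}(ii)) to force $\sigma_Q = 0_Q$ via Lemma~\ref{sigma_Q=0 iff}, and then, with faithfulness of the $K$-action granted by Corollary~\ref{faithful action and sigma =0}, to deduce $q=2$ by embedding an extraspecial $q$-group into $GL_2(\mathbb{F}_p)$ and exploiting the cyclic Sylow structure of $PGL_2(\mathbb{F}_p)$ when $q$ is odd. The main technical obstacle will be this last Sylow analysis; everything else is bookkeeping on top of the structure theorems already proved.

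For the first step, since $m=1$ the group $\dis_{\gamma_Q}$ is isomorphic to $\mathbb{Z}_p^2$ by Corollary~\ref{cor for pq}(ii), and its orbits on $Q$ coincide with the $\gamma_Q$-blocks (as recorded in the proof of Lemma~\ref{embedding of ker}(i)); each block has size $p^m = p$, so by orbit-stabilizer $(\dis_{\gamma_Q})_a$ has order $p$, in particular non-trivial, for every $a \in Q$. The equivalence in Lemma~\ref{sigma_Q=0 iff} then yields $\sigma_Q = 0_Q$.

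For the second step, Theorem~\ref{dis mu and S =0} writes $\dis(Q) \cong \mathbb{Z}_p^2 \rtimes_\rho K$ with $K$ extraspecial of order $q^{2n+1}$ and $q \mid p-1$; since $\sigma_Q = 0_Q$, Corollary~\ref{faithful action and sigma =0} ensures $\rho$ is faithful, so $K$ embeds into $GL_2(\mathbb{F}_p)$. Assume toward contradiction that $q$ is odd; then $K$ has exponent $q$ (again by Theorem~\ref{dis mu and S =0}), so $K/Z(K) \cong \mathbb{Z}_q^{2n}$ is elementary abelian of rank $2n \geq 2$. For a generator $z$ of $Z(K)$, the matrix $\rho(z) \in GL_2(\mathbb{F}_p)$ has order $q$ with eigenvalues in $\mathbb{F}_p^*$ (because $q \mid p-1$); if those eigenvalues were distinct, its centralizer in $GL_2(\mathbb{F}_p)$ would be a split torus, and since $z$ is central in $K$ the whole of $\rho(K)$ would lie in this abelian centralizer, contradicting faithfulness of $\rho$. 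Hence $\rho(z)$ is a scalar, so $\rho$ induces an injection $K/Z(K) \hookrightarrow PGL_2(\mathbb{F}_p)$. For odd $q \mid p-1$ one has $q \nmid p+1$, so the $q$-part of $|PGL_2(\mathbb{F}_p)| = p(p^2-1)$ equals $q^{v_q(p-1)}$ and a Sylow $q$-subgroup is conjugate to the image of the split diagonal torus, hence cyclic; this is incompatible with an elementary abelian $q$-subgroup of rank $\geq 2$. Therefore $q = 2$, as required.
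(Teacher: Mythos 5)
Your proof is correct, and its first half coincides with the paper's: Corollary \ref{cor for pq}(ii) gives $\dis_{\gamma_Q}\cong\mathbb{Z}_p^2$, the orbit--stabilizer count on the $p$-element blocks makes every $(\dis_{\gamma_Q})_a$ non-trivial, and Lemma \ref{sigma_Q=0 iff} yields $\sigma_Q=0_Q$; likewise the reduction to $\dis(Q)\cong\mathbb{Z}_p^2\rtimes_\rho K$ with $K$ extraspecial, $q\mid p-1$ and $\rho$ faithful, and the observation that $\rho(z)$ must be scalar for central $z$ (otherwise $\rho(K)$ would lie in the abelian centralizer of $\rho(z)$, namely the diagonal subgroup) are exactly the paper's steps. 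You genuinely diverge only in the final contradiction for odd $q$: the paper diagonalizes $\rho_x$ for a non-central $x\in K$, picks $y\notin C_K(x)$ and computes the $2\times 2$ commutator $\rho_y^{-1}\rho_x^{-1}\rho_y\rho_x$ explicitly, forcing it to equal $-I$, so the generator of $\langle\rho_z\rangle\cong\mathbb{Z}_q$ has order $2$; you instead note that the kernel of $K\to PGL_2(\mathbb{F}_p)$ is exactly $Z(K)$ (scalars are central and $\rho$ is faithful) and that for odd $q\mid p-1$ the Sylow $q$-subgroups of $PGL_2(\mathbb{F}_p)$ are cyclic -- an order count together with the image of the split torus (more precisely, its $q$-part) -- which cannot contain $K/Z(K)\cong\mathbb{Z}_q^{2n}$ of rank $2n\geq 2$. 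Your variant avoids coordinates and in effect shows the more general fact that for odd $q\mid p-1$ every $q$-subgroup of $GL_2(\mathbb{F}_p)$ is abelian, at the price of invoking the Sylow structure of $PGL_2(\mathbb{F}_p)$; the paper's computation is elementary and self-contained and additionally identifies the commutator as $-I$. All the structural inputs you cite (Theorem \ref{dis mu and S =0}, Corollaries \ref{cor for pq} and \ref{faithful action and sigma =0}, Lemma \ref{sigma_Q=0 iff}) are used exactly as intended.
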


\begin{proof}
The equivalence $\sigma_Q$ is trivial by virtue of Corollary \ref{cor for pq}(ii) and Lemma \ref{sigma_Q=0 iff}.\\ 
Let $q>2$ and $Q\in \LSS(p,q^{2n},0_Q)$. Then $\dis(Q)\cong \mathbb{Z}_p^2\rtimes_{\rho} K$ where $K$ is the extraspecial $q$-group of exponent $q$ and $q$ divides $p-1$. 
So according to Lemma \ref{eigenvalues of A} $\rho_x$ is diagonalizable for every $x\in K$. If the eigenvalues of $\rho_z$ are different, then $K$ embeds in the centralizer of $\rho_z$ which is the subgroup of the diagonal matrices. Since $K$ is not abelian then $\rho_z$ is a scalar matrix. 
%
%
If $x$ is not a central element of $K$ then $\rho_x$ has different eigenvalues $\mu, \lambda$. Let $y\notin C_K(x)$, then 
\begin{eqnarray*}
\rho_y^{-1} \rho_x^{-1}\rho_y\rho_x &=&
\begin{bmatrix}
a & c\\
b & d
\end{bmatrix}^{-1}
\begin{bmatrix}
\mu_1 & 0\\
0 & \lambda
\end{bmatrix}^{-1}
\begin{bmatrix}
a & c\\
b & d
\end{bmatrix}
\begin{bmatrix}
\mu & 0\\
0 & \lambda
\end{bmatrix}=
\frac{(\mu \lambda)^{-1}}{\det(\rho_g)}\begin{bmatrix}
\mu(\lambda ad-\mu bc)	& \lambda(\lambda-\mu)cd\\
\mu(\mu-\lambda)ab	 & \lambda(\mu ad-\lambda bc)
\end{bmatrix} =\rho_z^s.
\end{eqnarray*}
Therefore $a=d=0$ and $\mu=-\lambda$. So $[\rho_y,\rho_x]=-I\in \langle\rho_z\rangle$ which has order $2$. Hence $q=2$. \end{proof}
%
\noindent Recall that strictly simple quandles have an affine representation over finite fields given by $\aff(\mathbb{F}_q,\lambda)$ where $\lambda\in \mathbb{F}^*$. In particular the orbits of the bijection $x\mapsto \lambda x$ have all length equal to the multiplicative order of $\lambda$.
\begin{proposition}\label{on order of LeftMul of LSS}
Let $Q\in \LSS(p,2^{2n},0_Q)$ and let $k$ be the order of the left multiplications of $Q/\gamma_Q$. Then $|L_a|=k$ for every $a\in Q$ and $p=1 \pmod k$.
\end{proposition}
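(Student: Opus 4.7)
The plan is to handle the two claims separately: determine $|L_a|$ through the cycle structure of $L_a$ on $Q$, and then derive $p\equiv 1\pmod{k}$ by comparing the two actions induced by $\widehat{L_a}$ on the composition factors $V=\dis_{\gamma_Q}$ and $K\cong \dis(Q)/V$ of the displacement group. Recall that $\dis(Q)=V\rtimes_\rho K$ with $V\cong \mathbb{Z}_p^2$ (Corollary \ref{cor for pq}(ii)) and $K$ an extraspecial $2$-group (Theorem \ref{dis mu and S =0}), and that $Q$ is faithful since $p\geq 3$ rules out the exceptional case of Proposition \ref{2 extraspecial}.

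For $|L_a|=k$: all $L_a$ share a common order by connectedness of $Q$, so it suffices to fix one $a$. The block $[a]\cong\aff(\mathbb{F}_p,\nu)$ is strictly simple and connected, hence $\nu\neq 1$ and $L_a|_{[a]}$ is $\nu$-multiplication of order $|\nu|$. For $b\notin [a]$, Proposition \ref{prop of LSS} gives $Sg(a,b)\cong Q/\gamma_Q$ and so $L_a|_{Sg(a,b)}$ is a left multiplication of this strictly simple quandle, of order $k$; this forces the $\langle L_a\rangle$-orbit of $b$ to have length exactly $k$. In particular $\phi:=L_a^k$ fixes $Q\setminus[a]$ pointwise. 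Now $|Q\setminus[a]|=p(2^{2n}-1)$ exceeds $\max(p,2^{2n})$, the maximum size of a proper subquandle of $Q$ as listed in Corollary \ref{subquandles}, so $Q\setminus[a]$ generates $Q$. For $c\in[a]$ and $d\notin[a]$ we have $c*d\in [a]*[d]\neq[a]$ (since $\lambda\neq 0$ in $\aff(\mathbb{F}_{2^{2n}},\lambda)$), whence $\phi(c)*d=\phi(c*d)=c*d$; so $L_cL_{\phi(c)}^{-1}\in\dis(Q)$ fixes the generating set $Q\setminus[a]$ and must be trivial, giving $\phi(c)=c$ by faithfulness. Therefore $L_a^k=id$, and combined with the converse divisibility $k\mid|L_a|$ coming from the projection $Q\to Q/\gamma_Q$, we obtain $|L_a|=k$.

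For $p\equiv 1\pmod{k}$: the automorphism $\widehat{L_a}$ of $\dis(Q)$ preserves $V$ and descends to $\dis(Q)/V\cong K$, yielding $A\in GL(V)$ and $\phi\in\aut{K}$. As $\widehat{L_a}$ is an automorphism of the semidirect product, these satisfy the compatibility $A\rho_k A^{-1}=\rho_{\phi(k)}$ for every $k\in K$. Faithfulness of $Q$ gives $V_a\leq\dis(Q)_a=Fix(\widehat{L_a})$, so $A$ fixes $V_a$ pointwise, while the induced action of $A$ on $V/V_a\cong[a]$ is $L_a|_{[a]}$, i.e.\ $\nu$-multiplication; hence $A$ has distinct eigenvalues $1,\nu\in\mathbb{F}_p$, is diagonalisable, and $|A|=|\nu|$. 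On the other hand $\phi$ acts trivially on $Z(K)\cong \mathbb{Z}_2$ and with order $k$ on $K/Z(K)=\dis(Q)/\gamma_1(\dis(Q))$ (the action underlying $L_{[a]}$ on $Q/\gamma_Q$), while $L_a^k=id$ forces $\phi^k=id$; hence $|\phi|=k$. Finally, injectivity of $\rho$ together with the compatibility relation realises $\phi$ as the conjugation action of $A$ on $\rho(K)\leq GL_2(\mathbb{F}_p)$, whose order divides $|A|=|\nu|$; so $k\mid|\nu|$. Combined with $|\nu|\mid k$ (from $L_a^k=id$ acting trivially on $V$), we get $|\nu|=k$, and $\nu\in\mathbb{F}_p^*$ yields $k\mid p-1$.

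The delicate point is keeping the bookkeeping straight in the second part: one needs to check carefully that the two actions of $\widehat{L_a}$ on $V$ and on $K$ are intertwined by $A\rho_k A^{-1}=\rho_{\phi(k)}$, and that this translates into the divisibility $|\phi|\mid|A|$. The first part is essentially a careful count of cycle lengths, combined with the (rather elementary) generating property of $Q\setminus[a]$.
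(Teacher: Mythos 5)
Your proposal is correct, and although it starts from the same ingredients as the paper (Proposition \ref{prop of LSS} for the cycles outside $[a]$, and the decomposition $\dis(Q)\cong\mathbb{Z}_p^2\rtimes_\rho K$ with $K$ extraspecial and $\rho$ faithful), the two decisive steps are carried out along a genuinely different route. For $|L_a|=k$, the paper identifies $[a]\cong\aff(\mathbb{Z}_p,t)$, passes to an eigenbasis of $F=\widehat{L_a}|_{\gamma_2(\dis(Q))}$, and uses that $\rho_x$ is non-diagonal for non-central $x$ (Corollary \ref{faithful action and sigma =0}) to get $F^k\rho_xF^{-k}=\rho_x$, hence $t^k=1$ and the cycles inside $[a]$ divide $k$; you instead prove $L_a^k=\mathrm{id}$ directly by a combinatorial generating-set argument ($Q\setminus[a]$ is too large to lie in a proper subquandle by Corollary \ref{subquandles}, and faithfulness finishes), which is more elementary and bypasses the matrix computation entirely. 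For $p\equiv 1\pmod k$, you compare the orders of the two induced actions: $k=|\phi|$ divides $|A|=|\nu|$ via the faithfulness of $\rho$ (Corollary \ref{faithful action and sigma =0}) and the compatibility $A\rho_xA^{-1}=\rho_{\phi(x)}$, while $|\nu|$ divides $k$ because $L_a^k=\mathrm{id}$; hence $|\nu|=k$ and $k\mid p-1$. This is actually tighter than the paper's own final inference, whose displayed computation only yields $t^k=1$ (i.e.\ the order of $t$ divides $k$) before asserting $p\equiv 1\pmod k$; the reverse divisibility $k\mid|\nu|$, which your argument supplies, is exactly what that conclusion requires, so your route buys a cleaner (and fully airtight) ending at the cost of a slightly longer bookkeeping of the intertwining relation. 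Two small points to make explicit: $V$ acts transitively on $[a]$ (the blocks of $\gamma_Q$ are the orbits of $\dis_{\gamma_Q}$, cf.\ Lemma \ref{embedding of ker}), so $V/V_a\cong[a]$ with $|V_a|=p$ and $1$ genuinely occurs as an eigenvalue of $A$ (alternatively, $V_a\neq 1$ by Lemma \ref{sigma_Q=0 iff} since $\sigma_Q=0_Q$); and in the step ``$L_cL_{\phi(c)}^{-1}$ fixes $Q\setminus[a]$'' one should note that $L_{\phi(c)}$ preserves $Q\setminus[a]$ setwise (its class is $[a]$, and $[a]\ast[x]=[a]$ only for $[x]=[a]$), which is immediate.
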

\begin{proof}
Let $Q=\mathcal{Q}(G,H,f)$ where $G=\dis(Q)$, $f=\widehat{L}_a$ and $H=Fix(f)$. Let $b\notin [a]$. According to Proposition \ref{prop of LSS}, $Sg(a,b)\cong Q/\gamma_Q$ and then $|b^{L_a}|=|[b]^{L_{[a]}}|=k$, since all the cycles of $L_{[a]}$ have equal length. The block $[a]$ is isomorphic to $\aff(\gamma_2(G)/\gamma_2(G)_a,f|_{\gamma_2(G)})\cong \aff(\mathbb{Z}_p,t)$ for some $t\neq 0,1\in\mathbb{Z}_p$. So restriction of $f$ to $\gamma_2(G)$ is diagonalizable with eigenvalues $1$ and $t\neq 1$. We denote by $F$ the matrix with respect to a basis of eigenvectors. If $x\in K=G/\gamma_2(G)$ is a non central element of $K$ then $\rho_x$ is not diagonal with respect to this basis since $x$ does not normalize $\left(\dis_\alpha\right)_a$ (see \ref{faithful action and sigma =0}). The center of $K$ is fixed by $f_{\gamma_2(G)}$ and its induced mapping over $K/Z(K)\cong G/\gamma_1(G)$ is $f_{\gamma_1(G)}$. Then according to Lemma \ref{order of aut and left mult} the order of $f_{\gamma_2(G)}$ and the order of $f_{\gamma_1(G)}$ coincide and so $f^k(x)=x d$ where $d\in\gamma_2(G)$. Hence
 \begin{equation}\label{commutator condition}
 \rho_{f^k(x)}=F^k\rho_{x}F^{-k}=
 \begin{bmatrix} 
1 & 0\\
0 & t^k
\end{bmatrix} 
\begin{bmatrix}
a & c\\
b & d
\end{bmatrix}
\begin{bmatrix} 
1 & 0\\
0 & t^{-k}\end{bmatrix}=\begin{bmatrix}
a	& ct^{-k}\\
bt^k	 &  d
\end{bmatrix}=
 \rho_{x}=\begin{bmatrix} 
a & c\\
b & d\end{bmatrix}\, .
 \end{equation}
Equation \eqref{commutator condition} implies that $t^k=1$.
Therefore $p-1=0 \pmod k$ and so $|b^{L_a}|$ divides $k$ whenever $b\in [a]_\alpha$. Therefore $|L_a|=l.c.m.\setof{|b^{L_a}|}{b\in Q}=k$ for every $a\in Q$.
\end{proof}

 According to Proposition \ref{ex on 2to the n p} and Proposition \ref{then q=2} $\LSS(p,q,\gamma_Q)$ and $\LSS(p,4,0_Q)$ are the classes of the smallest examples of faithful LSS subdirectly irreducible quandles. 

Using the results of this section we can prove a non-existing result for non-affine latin quandles of size $8p$ where $p$ is a prime.
%
%
%

\begin{theorem}
Let $p$ be a prime. Latin quandles of size $8p$ are affine.
\end{theorem}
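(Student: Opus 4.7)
I plan to argue by contradiction: assume $Q$ is a non-affine latin quandle of size $8p$ and derive a contradiction. Two preliminary facts will sharply restrict the congruence structure, and their proofs both follow the same counting argument. First, \emph{no latin quandle of size $2p$ with $p$ odd exists}: such a $Q'$ would have solvable $\dis(Q')$ (by Glauberman), so strict simplicity would force prime-power order, while non-simplicity would force a congruence with blocks of size $\geq 3$ (blocks of a latin quandle are latin and no latin quandle of size $2$ exists) and factor of size $\geq 3$ (no connected quandle of size $2$ exists), both incompatible with $mn=2p$. Second, by the same counting plus solvability, \emph{latin quandles of sizes $4$ and $8$ are strictly simple} --- necessarily affine over $\mathbb{F}_2^2$ and $\mathbb{F}_2^3$ with irreducible linear part.

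Now fix $p$ odd: as $8p$ is not a prime power, $Q$ is not simple and admits a minimal congruence $\alpha$; since right multiplications of a latin quandle permute the blocks of any congruence (by bijectivity of $R_y$ together with congruence uniformity), both $[a]_\alpha$ and $Q/\alpha$ are latin. Combined with the two preceding facts, the only block/factor splits left are $(|[a]_\alpha|,|Q/\alpha|)\in\{(p,8),(8,p)\}$, and in each of these both blocks and factor are strictly simple, so $Q$ is LSS by Proposition~\ref{LSS iff}.

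In the case $(p,8)$, $Q\in\LSS(p,2^3,\sigma_Q)$; I rule out each option for $\sigma_Q$: Corollary~\ref{cor for pq}(ii) excludes $\sigma_Q=1_Q$, Proposition~\ref{ex on 2to the n p} excludes $\sigma_Q=\gamma_Q$ (it requires the factor prime to be odd, but here $q=2$), and Proposition~\ref{then q=2} together with Theorem~\ref{dis mu and S =0} excludes $\sigma_Q\wedge\gamma_Q=0_Q$ (they force the factor exponent to be even, but here $n=3$ is odd). In the case $(8,p)$, $Q\in\LSS(2^3,p,\sigma_Q)$; if $Q$ is subdirectly reducible, Theorem~\ref{LSS reducible} gives $Q\cong M\times N$ with $M,N$ strictly simple of sizes $8$ and $p$, so $Q$ is a product of affine quandles and hence affine --- contradicting our assumption. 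If $Q$ is subdirectly irreducible, Corollary~\ref{cor for pq}(i) together with Theorem~\ref{mu p neq q} yield $\dis(Q)\cong\mathbb{Z}_2^{3+k}\rtimes_\rho\mathbb{Z}_p$ with $k\leq 3$ and $\rho$ faithful, together with $Z(\dis(Q))=1$ by Lemma~\ref{embedding of ker}(ii); combined with the latin identity $\dis(Q)_a=\mathrm{Fix}(\widehat{L_a})\cong\mathbb{Z}_2^k\leq\gamma_1(\dis(Q))$, I explicitly analyze $\widehat{L_a}$ acting on the summands of $\gamma_1(\dis(Q))$, in the spirit of the matrix computation at the heart of Proposition~\ref{on order of LeftMul of LSS}, to rule out latin examples. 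The remaining case $p=2$ (so $|Q|=16=2^4$) is handled analogously: the only admissible split is $(4,4)$, landing in $\LSS(2^2,2^2,\sigma_Q)$, where Theorem~\ref{p=q case} together with a matching latin analysis closes the argument.

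The main obstacle is the subdirectly irreducible half of the $(8,p)$ case, where ruling out latin examples requires simultaneously exploiting the fixed-vector constraint $Z(\dis(Q))=1$, the latin identification of the stabilizer, and the $\rho$-induced decomposition of $\gamma_1(\dis(Q))$; the preceding cases are essentially direct invocations of the LSS classification developed earlier in the paper.
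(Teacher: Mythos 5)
Your structural reduction (no latin quandles of size $2$ or $2p$, hence the only possible block/factor splits are $(p,8)$ and $(8,p)$, hence $Q$ is LSS; the $(p,8)$ case dies because $\LSS(p,2^n,\sigma_Q)$ forces $n$ even; the reducible $(8,p)$ case is a product of affine quandles) matches the paper's argument. But the proof has a genuine gap exactly where the real difficulty sits: the subdirectly irreducible $(8,p)$ case and the small cases $p\le 5$ are not proved, only promised. You correctly reach $\dis(Q)\cong\mathbb{Z}_2^{3+k}\rtimes_\rho\mathbb{Z}_p$ with $k\le 3$, but then write that you will ``explicitly analyze $\widehat{L_a}$ \ldots to rule out latin examples'' without giving the analysis; note that the computation in Proposition~\ref{on order of LeftMul of LSS} lives in a quite different setting ($2\times 2$ matrices over $\mathbb{Z}_p$ with an extraspecial $2$-group on top), and there is no reason to expect it to transfer verbatim. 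The paper itself cannot avoid finiteness here: it observes that $p$ must divide $|GL_{3+k}(2)|$, so $p\in\{7,31\}$, and then runs an exhaustive GAP search over the groups $\mathbb{Z}_2^{3+k}\rtimes\mathbb{Z}_p$ and their automorphisms (and a RIG check for $p\le 5$). Your proposal neither performs the $p\in\{7,31\}$ reduction nor supplies a replacement for the computation, and the same criticism applies to the $p=2$ case, where ``a matching latin analysis closes the argument'' is again a placeholder; worse, for $p=2$ the size $16=2^4$ \emph{is} a prime power, so your opening step ``$8p$ is not a prime power, hence $Q$ is not simple'' does not apply, and you must separately exclude a non-affine simple latin quandle of order $16$ before the $(4,4)$ split analysis even starts.

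A second, smaller issue is the auxiliary claim that latin quandles of sizes $4$ and $8$ are strictly simple. Your counting argument does show they are simple (any proper congruence would need blocks and factor of size at least $3$ dividing $4$ or $8$), but simplicity is weaker than strict simplicity: you still need that a \emph{simple} latin quandle of order $8$ is abelian/affine with irreducible linear part, and ``Glauberman'' does not deliver this, since Glauberman's solvability theorems concern Bruck loops (equivalently involutory latin quandles) of \emph{odd} order. The paper asserts this fact about order $8$ without proof, so you should either cite a classification of connected quandles of order $8$, verify computationally, or give a structural argument (e.g.\ via the structure of $\dis(Q)$ for simple quandles) that rules out a non-abelian simple latin quandle on $8$ points. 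As it stands, the proposal reproduces the routine part of the paper's proof and leaves unproven precisely the steps the paper settles by the $p\in\{7,31\}$ bound plus computer search.
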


\begin{proof}
A computer search on the RIG library of GAP reveals that the claim is true for $p\leq 5$.\\ 
Let $p>5$. The quandle $Q$ has no factor of size $2$, and it has also no factor of size $4$, since in such case the blocks of the congruence have size $2p$, but there is no connected quandle of size $2p$ for $p>5$ \cite{McC}. Simple factors of $Q$ have prime power size, so they have either size $8$ or $p$. Let $\alpha$ and $\beta$ be maximal congruences of $Q$ and $\gamma=\alpha\wedge\beta$. Clearly $Q/\gamma$ embeds subdirectly into $Q/\alpha\times Q/\beta$. Both factors are strictly simple, therefore the block $[a]_{\alpha}$ maps onto $Q/\beta$ and then $|Q/\gamma|=|Q/\alpha||[a]_{\alpha}|\geq |Q/\alpha||Q/\beta|$. Hence $Q/\gamma\cong Q/\alpha\times Q/\beta$. So, if $Q$ has two different maximal congruence then $Q$ has a factor of size either $64$, $p^2$ or $8p$. 
Since the size of the factors divides the size of $Q$, the first two cases can not happen and in the last case $Q$ is a direct product of simple latin quandle and then affine. Let $\alpha$ be the unique maximal congruence of $Q$. If the blocks of $\alpha$ have size $p$ then it is also minimal, if the blocks have size $8$, since all the latin quandles of size $8$ are strictly simple then $\alpha$ is minimal aswell. Hence if $Q$ is not affine, it has a unique proper congruence, which is $\gamma_Q$ since $Q$ is not affine and the factor is affine, and then $Q\in \LSS(p,8,\sigma_Q)$ or $Q\in \LSS(8,p,\sigma_Q)$. According to Propositions \ref{ex on 2to the n p} and \ref{then q=2} if $Q\in \LSS(p,2^n,\sigma_Q)$ then $n$ is even. Hence $Q\in \LSS(8,p,\sigma_Q)$ and therefore $\sigma_Q\leq \gamma_Q$ by Corollary \ref{cor for pq}(i). \\
In particular, by Theorem \ref{mu p neq q} we have that $\gamma_2(\dis(Q))=\dis_\alpha=\dis^\alpha=\gamma_1(\dis(Q)$ and it embeds into $\dis([a])^2\cong \mathbb{Z}_2^6$. Therefore $\dis(Q)\cong \dis^\alpha\rtimes \mathbb{Z}_p$ and in particular $p$ divides the size of $\aut{\dis^\alpha}=GL_{3+k}(2)$ for some $0\leq k\leq 3$. Hence $p\in \{7,31\}$. An exhaustive GAP computer search on the groups of the form $\mathbb{Z}_2^{3+k}\rtimes \mathbb{Z}_p$ and their automorphisms for $0\leq k\leq 3$ and $p\in \{7,31\}$ shows that there are no such quandles.
\end{proof}	

\section{Connected quandles of size $pq$}\label{Sec:pq}

A complete description of connected quandles of size $p^2$ is provided in \cite{Grana_p2}. In this section we give a complete classification of non-simple connected quandles of size $pq$ where $p$ and $q$ are different primes. As we shall see, they are LSS and so we can apply the results of the previous Section. 

First of all note that all connected quandles of size $pq$ are faithful. \begin{lemma}
Connected quandles of size $pq$ are faithful.
\end{lemma}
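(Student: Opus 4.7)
The plan is to argue by contradiction: assume $\lambda_Q\neq 0_Q$ and deduce that $Q$ must then be affine, hence latin, hence faithful, contradicting the assumption.

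First I would observe that each block of $\lambda_Q$ is a projection subquandle: if $a\,\lambda_Q\,b$ then $L_a=L_b$, so $a\ast b=L_a(b)=L_b(b)=b$. By congruence uniformity on a connected quandle of order $pq$, the common block size divides $pq$. Size $1$ is ruled out by the assumption, and size $pq$ would make $Q$ itself a projection quandle, which is disconnected for $|Q|>1$. Swapping $p$ and $q$ if necessary, I may assume the blocks have size $p$ and that $M=Q/\lambda_Q$ has prime size $q$. By the classification of connected quandles of prime order, $M$ is affine and latin with $\dis(M)\cong \mathbb{Z}_q$.

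Next, set $K=\dis^{\lambda_Q}$. Since $L_a L_b^{-1}=1$ whenever $a\,\lambda_Q\,b$, the relative displacement group $\dis_{\lambda_Q}$ is trivial, so the inclusion $[\dis(Q),\dis^{\lambda_Q}]\leq \dis_{\lambda_Q}$ recalled in the preliminaries places $K$ inside $Z(\dis(Q))$. Because $M$ is latin, point stabilizers in $\dis(M)$ are trivial, and the quotient map $\pi_{\lambda_Q}\colon\dis(Q)\to \dis(M)$ identifies the setwise block-stabilizer $\dis(Q)_B$ with $K$ itself. Connectedness of $Q$, together with the congruence-compatibility of the $\dis(Q)$-action, then forces $K$ to act transitively on each block; being central in the transitive group $\dis(Q)$, $K$ is semiregular on $Q$. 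Combining these, $K$ acts regularly on each block, so $|K|=p$ and $|\dis(Q)|=pq$.

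To close the argument, $\dis(Q)$ is a central extension of the cyclic group $\mathbb{Z}_q$ with central kernel $\mathbb{Z}_p$, hence abelian of order $pq$, hence cyclic. It acts regularly on $Q$, so $Q$ is principal over an abelian group, i.e.\ affine. Affine connected finite quandles are latin (the map $1-f$ on the underlying abelian group is bijective by connectedness), and latin quandles are faithful, giving $\lambda_Q=0_Q$, the desired contradiction. The step I expect to be the main obstacle is pinning down the regularity of $K$ on each block: this is where the order computation $|\dis(Q)|=pq$ is sealed, and it relies on simultaneously exploiting the centrality of $K$ in $\dis(Q)$ and the latinness of the quotient $M$ to identify $\dis(Q)_B$ with $K$.
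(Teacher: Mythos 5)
Your proof is correct, but it takes a different route from the paper: the paper disposes of the lemma in one line by citing \cite[Theorem 1.1]{MeAndPetr}, which says that for a connected quandle $Q$ the factor $Q/\lambda_Q$ cannot have prime size, and then observes that every proper factor of a quandle of size $pq$ has prime size. You instead re-derive the needed special case of that theorem from first principles: blocks of $\lambda_Q$ are projection subquandles, so $\dis_{\lambda_Q}=1$ and the inclusion $[\dis(Q),\dis^{\lambda_Q}]\leq\dis_{\lambda_Q}$ forces $\dis^{\lambda_Q}\leq Z(\dis(Q))$; since the prime-size factor has regular displacement group, $\dis^{\lambda_Q}$ equals the setwise block stabilizer, is regular on blocks, and $\dis(Q)$ ends up abelian of order $pq$ acting regularly, so $Q$ is affine, hence latin, hence faithful --- a contradiction. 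What your argument buys is self-containedness: it uses only facts already recalled in the preliminaries (the commutator inclusion, the representation over $\dis(Q)$, the prime-order classification of connected quandles), and it also covers $p=q$; what the paper's argument buys is brevity and generality, since the cited theorem handles arbitrary prime-size factors of $\lambda_Q$ without any hypothesis on $|Q|$. Two cosmetic points: ``abelian of order $pq$, hence cyclic'' is only valid for $p\neq q$, but abelianness is all you use; and the triviality of point stabilizers in $\dis(Q/\lambda_Q)$ comes from $|\dis(Q/\lambda_Q)|=q=|Q/\lambda_Q|$ together with transitivity (i.e.\ regularity), not from latinness per se --- latin quandles can have nontrivial stabilizers in their displacement groups.
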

\begin{proof}
According to \cite[Theorem 1.1]{MeAndPetr} if $Q$ is connected, then $Q/\lambda_Q$ has not prime size. If $|Q|=pq$ every factor of $Q$ has prime size, so $\lambda_Q$ is trivial.\end{proof}
Using that every factor and the blocks of every congruence has prime size blocks it is easy to show that the congruence lattice of connected quandles of size $pq$ is one of the lattices in Figure \ref{lattices2}. Let us first characterize subdirectly reducible quandles of size $pq$.
%

\begin{proposition}\label{pq affine} 
Let $p\neq q$ and let $Q$ be a connected quandle $Q$ of size $pq$. The following are equivalent:
\begin{itemize}
\item[(i)] $Q\cong \aff(\mathbb{Z}_p,f)\times\aff(\mathbb{Z}_q,g)$.
\item[(ii)] $Q$ is subdirectly reducible. 
\end{itemize}
In particular, there are $(p-2)(q-2)$ isomorphism classes of such quandles.
\end{proposition}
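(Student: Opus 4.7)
The plan is to dispatch the two implications of the equivalence separately and then perform the counting, using the observation recorded just before the statement that $Con(Q)$ takes the shape of one of the lattices in Figure \ref{lattices2}.

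For $(i) \Rightarrow (ii)$, if $Q \cong \aff(\mathbb{Z}_p,f) \times \aff(\mathbb{Z}_q,g)$, the two coordinate projections give distinct non-trivial congruences $\alpha,\beta$ of $Q$ with $\alpha \wedge \beta = 0_Q$, which is exactly the definition of subdirect reducibility.

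For $(ii) \Rightarrow (i)$, subdirect reducibility together with the observed shape of $Con(Q)$ provides two distinct non-trivial congruences $\alpha,\beta$ with $\alpha \wedge \beta = 0_Q$, giving a subdirect embedding $Q \hookrightarrow Q/\alpha \times Q/\beta$. Both factors are connected of prime size (either $p$ or $q$) and hence affine and simple over $\mathbb{F}_p$ or $\mathbb{F}_q$ by \cite{EGS}. The crux is to rule out $|Q/\alpha| = |Q/\beta|$: if both sizes were $p$, then each block $[a]_\alpha$ would have size $q$ and the map $b \mapsto [b]_\beta$ would embed $[a]_\alpha$ as a subquandle of $Q/\beta \cong \aff(\mathbb{Z}_p,h)$ (injectivity on the block uses $\alpha \wedge \beta = 0_Q$); but the subquandles of $\aff(\mathbb{Z}_p,h)$ are cosets of $h$-invariant subgroups of $\mathbb{Z}_p$, i.e.\ singletons or the whole quandle, forcing $q \in \{1,p\}$ and contradicting $p \neq q$ prime. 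Hence $\{|Q/\alpha|,|Q/\beta|\} = \{p,q\}$, and the subdirect embedding is onto by cardinality comparison.

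For the count, the connected quandles of prime size $p$ are the affine quandles $\aff(\mathbb{Z}_p,f)$ with $f \in \mathbb{Z}_p \setminus \{0,1\}$, pairwise non-isomorphic by Theorem \ref{iso theorem1} (conjugation in the abelian group $\aut{\mathbb{Z}_p}$ is trivial), giving $p-2$ classes and analogously $q-2$ classes for size $q$. Since $p \neq q$, the two direct factors are distinguished by size in any product decomposition, so distinct pairs $(f,g)$ yield non-isomorphic products, totalling $(p-2)(q-2)$. The main technical point I expect to be the subquandle argument above which forces $|Q/\alpha| \neq |Q/\beta|$; the remaining steps follow routinely from the classification of connected quandles of prime size and the preceding congruence-lattice analysis for size $pq$.
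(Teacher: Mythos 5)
Your proposal is correct and takes essentially the same route as the paper: the paper's proof of (ii)$\Rightarrow$(i) observes that all blocks and factors have prime size (so distinct congruences meet at $0_Q$ and factors are strictly simple) and then invokes the argument of Theorem \ref{LSS reducible}, where the block injects into, hence surjects onto, the other strictly simple factor and cardinality forces $Q\cong Q/\alpha\times Q/\beta$ --- the same strict-simplicity-plus-cardinality mechanism as your step ruling out $|Q/\alpha|=|Q/\beta|$. The count $(p-2)(q-2)$ is only asserted in the paper, and your justification via the classification of prime-size connected quandles and Theorem \ref{iso theorem1} fills that in consistently.
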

\begin{proof}
 (i) $\Rightarrow$ (ii) Clear.

%

(ii) $\Rightarrow$ (i) Note that any congruence of $Q$ has block of prime size, so any pair of congruences meet at $0_Q$ and every congruence has strictly simple factor of prime size. Therefore $Q$ is a direct product (see Theorem \ref{LSS reducible}).
%
%
%
%
%
%
%
\end{proof}
Note that quandles in Proposition \ref{pq affine} are connected subdirectly reducible LSS quandles as described in Theorem \ref{LSS reducible}.

Let us consider the subdirectly irreducible case. The unique proper congruence is $\gamma_Q$ since the factor has prime size and then it is abelian, and the blocks have prime size. According to \cite{EGS} they are either connected or projection. In the first case $Q$ is a LSS quandle and $Q\in \LSS(p,q,\gamma_Q)$ (see Corollary \ref{cor for pq}). In the second case we have the following:

\begin{proposition}\label{proj blocks}
Let $Q$ be a  subdirectly irreducible connected quandle of size $pq$. If the blocks of $\gamma_Q$ are projection subquandles then $Q\in \LSS(2,3,\gamma_Q)$.
\end{proposition}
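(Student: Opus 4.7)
The plan is to assemble the preceding structure theory for $\LSS(2,q^n,\sigma_Q)$ with the size/faithfulness constraints of connected quandles of size $pq$.

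First I would identify the parameters $p^m$ and $q^n$. Since $Q$ is subdirectly irreducible of size $pq$, its only non-trivial congruence is $\gamma_Q$, and both the factor $Q/\gamma_Q$ (of size $q$) and each block $[a]_{\gamma_Q}$ (of size $p$) have prime order. By hypothesis the blocks are projection subquandles; as the only strictly simple projection quandle is $\mathcal{P}_2$, this forces block size $2$, hence $p=2$ and $q\geq 3$. Connected quandles of prime size are affine and strictly simple (Eisermann--Etingof--Gra\~na--Soloviev), so $Q/\gamma_Q$ is also strictly simple, and Proposition~\ref{LSS iff}(iii) then gives that $Q$ is LSS.

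Next I would verify non-abelianness in order to place $Q$ in a class $\LSS(2,q,\sigma_Q)$. Finite connected abelian quandles are latin, but $Q$ contains a projection subquandle of size $2$ on which right-multiplication is constant, so $Q$ is not latin and therefore not abelian. Thus $Q\in \LSS(2,q,\sigma_Q)$ for some $\sigma_Q$, and the central step is to invoke Theorem~\ref{LSS with P_2}: either $|Q|=6$ or $q=2$ with $Q$ a non-faithful principal quandle over an extraspecial $2$-group. The opening lemma of this section shows that connected quandles of size $pq$ are faithful, and $p\neq q$ excludes $q=2$, so the only surviving alternative is $|Q|=6$, that is $q=3$.

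It remains to pin down $\sigma_Q$. Since $n=1$, Corollary~\ref{cor for pq}(i) yields $\gamma_Q\leq \sigma_Q$, and the final assertion of the same corollary states that $\LSS(2,3,1_Q)$ is empty, ruling out $\sigma_Q=1_Q$. Because $\gamma_Q$ is the unique non-trivial congruence, $\sigma_Q$ must lie in $\{0_Q,\gamma_Q,1_Q\}$, and the only option compatible with $\gamma_Q\leq \sigma_Q\neq 1_Q$ is $\sigma_Q=\gamma_Q$. I therefore do not anticipate any substantial obstacle: the proof is a bookkeeping argument collecting Theorem~\ref{LSS with P_2}, Corollary~\ref{cor for pq} and the faithfulness lemma, so the only real work has already been done in Section~\ref{non-ab LSS}.
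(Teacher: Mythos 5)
Your proof has a genuine gap at its very first step, and it is exactly where the real content of Proposition \ref{proj blocks} lies. The hypothesis only says that the blocks of $\gamma_Q$ are projection subquandles of prime size; at that point nothing tells you that they are strictly simple, so you cannot invoke ``the only strictly simple projection quandle is $\mathcal{P}_2$'' to force block size $2$. Strict simplicity of the blocks is equivalent to the block size being $2$, i.e.\ to the LSS property you are trying to establish, so the inference is circular: a priori the blocks could be $\mathcal{P}_p$ for any prime $p$, and your argument contains nothing that would exclude, say, a subdirectly irreducible connected quandle of size $35$ whose blocks are $\mathcal{P}_5$ or $\mathcal{P}_7$. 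Everything after that step (Proposition \ref{LSS iff}, non-latin hence non-abelian, Theorem \ref{LSS with P_2} combined with faithfulness of connected quandles of size $pq$ and $p\neq q$) is fine \emph{once} $p=2$ is known, but the reduction to $p=2$ is precisely what is missing. The paper closes this gap by a different mechanism: $Q$ is a faithful connected extension of a strictly simple quandle by a projection quandle of prime size, so \cite[Theorem 4.3]{Principal} applies, and since $\gamma_1(\dis(Q))=\gamma_2(\dis(Q))=\dis_{\gamma_Q}$ (Proposition \ref{sims}, because the displacement group of the prime-size factor is cyclic) one gets $\gamma_Q=\sigma_Q$ and hence $|Q|=6$. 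Some argument of this kind, dealing with projection blocks of arbitrary prime size, is indispensable.

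A secondary point: even after reducing to $|Q|=6$, your appeal to Corollary \ref{cor for pq}(i) and to the emptiness of $\LSS(p,q^n,1_Q)$ to pin down $\sigma_Q=\gamma_Q$ is outside the stated scope of those results, which are proved under the standing assumption $p^m>2$ made at the beginning of the subsection on the classes $\LSS(p^m,q^n,\sigma_Q)$ (so that the quandles there are faithful and have no projection subquandles, which fails here). The paper instead identifies the two connected quandles of size $6$ directly (they are faithful, subdirectly irreducible, and lie in $\LSS(2,3,\gamma_Q)$), and for size $6$ such a direct verification is the cleanest way to finish.
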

\begin{proof}
The quandle $Q$ is a faithful connected extension of a strictly simple quandle by a projection quandle of prime power size. Then \cite[Theorem 4.3]{Principal} applies and since $\gamma_1(G)=\gamma_2(G)=\dis_{\gamma_Q}$ (by Proposition \ref{sims}) then $\gamma_Q=\sigma_Q$ and so $|Q|=6$. As we already pointed out before $Q\in \LSS(2,3,\gamma_Q)$.
\end{proof}

From now on we assume that $p\geq 3$ (since two elements quandles are projection). The following Theorem characterizes quandles in $\LSS(p,q,\gamma_Q)$ by automorphisms of the group $G_{p,q}\cong \mathbb{Z}_p^2\rtimes_\rho \mathbb{Z}_q$ with $\det{\rho(1)}=1$ which are  investigated in the Appendix \ref{Sec:appendix2}. In particular recall that $\LSS(p,q,1_Q)$ is empty (see Corollary \ref{cor for pq}(ii)). 
\begin{theorem}\label{automorf for quandles}
Let  $Q$ be a quandle. The following are equivalent: 
\begin{itemize}
\item[(i)] $Q\in \LSS(p,q,\gamma_Q)$.
\item[(ii)] $Q\cong \mathcal{Q}(G_{p,q},Fix(f),f)$, where $f_{\gamma_1(G_{p,q})}=-1$, $det(f|_{\gamma_1(G_{p,q})})=-1$, $Tr(f|_{\gamma_1(G_{p,q})})=0$ and $\dis(Q)\cong G_{p,q}$.
\end{itemize}
In particular $Q$ is latin.
\end{theorem}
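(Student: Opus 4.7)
The plan is to combine the structural results of Section~\ref{non-ab LSS} with the general representation of a connected quandle over its displacement group, and then to verify the three explicit numerical conditions on $f=\widehat{L}_a$ by a direct computation inside $G_{p,q}$.

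For $(i)\Rightarrow(ii)$, I would first apply Proposition~\ref{ex on 2to the n p} to record $q>2$, and then combine Theorem~\ref{mu p neq q} with Corollary~\ref{cor for pq}(ii) to obtain $\dis(Q)\cong \mathbb{Z}_p^2\rtimes_\rho \mathbb{Z}_q = G_{p,q}$ with $\rho$ faithful. Since $p\geq 3$ rules out the non-faithful scenario of Theorem~\ref{LSS with P_2}, the quandle $Q$ is faithful, so $\dis(Q)_a=Fix(\widehat{L}_a)$, and the standard representation of \cite[Theorem 4.1]{HSV} gives $Q\cong \mathcal{Q}(G_{p,q},Fix(f),f)$ with $f=\widehat{L}_a$.

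The main obstacle is pinning down the three numerical invariants of $f$. By Lemma~\ref{rep_for_factors} the factor $Q/\gamma_Q$ is $\aff(\mathbb{Z}_q,f_{\gamma_1(G_{p,q})})$; since $f$ is a group automorphism of the semidirect product $G_{p,q}$ it must conjugate $\rho(1)$ to $\rho(f_{\gamma_1(G_{p,q})}(1))$, and using the built-in condition $det(\rho(1))=1$ together with the fact that, for $q>2$, $\rho(k)$ is $GL_2(\mathbb{F}_p)$-conjugate to $\rho(1)$ only when $k=\pm 1$, the induced map $f_{\gamma_1(G_{p,q})}\in\aut{\mathbb{Z}_q}$ must be $\pm 1$; the case $+1$ is excluded because it would make $Q/\gamma_Q$ a projection quandle, so $f_{\gamma_1(G_{p,q})}=-1$. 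With this in hand, the conjugation relation $f|_{\gamma_1(G_{p,q})}\,\rho(1)\,f|_{\gamma_1(G_{p,q})}^{-1}=\rho(1)^{-1}$ together with $|Fix(f)|=p$ (so the $1$-eigenspace of $f|_{\gamma_1(G_{p,q})}$ is one-dimensional) forces the other eigenvalue of $f|_{\gamma_1(G_{p,q})}$ to be $-1$, producing $Tr(f|_{\gamma_1(G_{p,q})})=0$ and $det(f|_{\gamma_1(G_{p,q})})=-1$. The rigorous automorphism analysis of $G_{p,q}$ that underwrites this argument is delegated to Appendix~\ref{Sec:appendix2}.

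For $(ii)\Rightarrow(i)$, given such an $f$ I would set $Q=\mathcal{Q}(G_{p,q},Fix(f),f)$ and apply Proposition~\ref{LSS iff} to the congruence arising from $\gamma_1(G_{p,q})$: its factor is $\aff(\mathbb{Z}_q,-1)$ and its blocks are $\aff(\mathbb{Z}_p,-1)$, both strictly simple, so $Q$ is LSS. Connectedness and $\dis(Q)\cong G_{p,q}$ follow from $[G_{p,q},f]=G_{p,q}$ (read off the eigenvalues of $f$ on the two natural layers $\gamma_1(G_{p,q})$ and $G_{p,q}/\gamma_1(G_{p,q})$) and from $Core_{G_{p,q}}(Fix(f))=1$, placing $Q\in \LSS(p,q,\gamma_Q)$. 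For the \emph{latin} claim I would observe that both the factor $\aff(\mathbb{Z}_q,-1)$ and the blocks $\aff(\mathbb{Z}_p,-1)$ are latin because $1-(-1)=2$ is invertible in $\mathbb{F}_p$ and $\mathbb{F}_q$ (as $p,q>2$); combining this with the explicit eigenvalue form of $f|_{\gamma_1(G_{p,q})}$ yields the bijectivity of each $R_a$ on $Q$ by a direct orbit computation in $G_{p,q}$.
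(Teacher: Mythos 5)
Your overall architecture matches the paper's, but there is a genuine gap at the very point the theorem is about: the identification of $\dis(Q)$ with $G_{p,q}$ in (i)$\Rightarrow$(ii). Theorem~\ref{mu p neq q} together with Corollary~\ref{cor for pq}(ii) only gives $\dis(Q)\cong \mathbb{Z}_p^{2}\rtimes_\rho\mathbb{Z}_q$ with $\rho$ faithful; it does \emph{not} give $\det(\rho(1))=1$, and without that condition Proposition~\ref{uniqueness of the group} does not apply, since non-isomorphic faithful semidirect products exist (e.g.\ $\rho(1)=\mathrm{diag}(\lambda,\lambda^{2})$ when $q\mid p-1$). You then invoke ``the built-in condition $\det(\rho(1))=1$'' to argue that conjugacy of $\rho(1)$ with $\rho(k)$ forces $u_f=\pm1$, so your argument is circular: the determinant condition is exactly what must be proved before the group can be called $G_{p,q}$. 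The paper closes this by running the implication in the other order: writing $f|_{\gamma_1}\,A\,f|_{\gamma_1}^{-1}=A^{u_f}$ with $A=\rho(1)$, taking determinants to get $\det(A)^{u_f}=\det(A)$ with $\det(A)^{q}=1$, and observing that $\det(A)\neq 1$ would force $u_f\equiv 1\pmod q$, making $Q/\gamma_Q$ a projection quandle — contradicting connectedness; only then does Proposition~\ref{uniqueness of the group} identify the group and Proposition~\ref{aut of G_A} give $u_f=-1$. Your sketch is repairable along these lines (the ingredients are already on your page), but as written the step ``$\dis(Q)=G_{p,q}$'' is unjustified.

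A secondary weakness is the latin claim. Latinity of the factor $\aff(\mathbb{Z}_q,-1)$ and of the blocks $\aff(\mathbb{Z}_p,-1)$ is \emph{not} enough: the factor only yields that $x*a=y*a$ forces $x\,\gamma_Q\,y$, and the blocks only control right translations by elements of the same block, while the hard case is $x\,\gamma_Q\,y$ with $a$ outside their block (the quandles $(28,3)$–$(28,6)$ in the paper have latin blocks and latin factor yet are not latin). The ``direct orbit computation'' you defer to is precisely the nontrivial content of Remark~\ref{existence of aut with desired prop}(ii): by homogeneity it suffices to show one right translation is bijective, namely the map $gH\mapsto gf(g)^{-1}H$ on $G_{p,q}/H$, and its injectivity uses $FA=A^{-1}F$ together with the fact that $Fix(f)$ is not an eigenspace of $A$. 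You should either carry out that computation or cite it explicitly; the invertibility of $1-(-1)=2$ in $\mathbb{F}_p$ and $\mathbb{F}_q$ alone does not settle it.
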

\begin{proof}
(i) $\Rightarrow$ (ii) 
According to Theorem \ref{mu p neq q} and Corollary \ref{cor for pq}(i) we have that $\dis(Q)\cong G= \mathbb{Z}_p^2\rtimes_\rho \mathbb{Z}_q$, where $\rho$ is a faithful action with $\rho(1)=A$ and $Z(G)=1$ and $q$ divides $p^2-1$. So $Q\cong \mathcal{Q}(G,Fix(f),f)$ for some $f\in \aut{G}$ which is identified by
\begin{equation}\label{not aut}
f = \begin{cases}
f|_{\gamma_1(G)} =F,\quad 
c\mapsto  c^{u_f} d_f        \,,
\end{cases}
\end{equation}
where $F\in GL_2(p)$, $1\leq u_f\leq q-1$ and $d_f\in \gamma_1(G)$. Note that $FAF^{-1}=A^{u_f}$, since $f$ is an automorphism and in particular $det(A)^{u_f}=det(A)$ and $det(A)^q=1$. If $det(A)\neq 1$ then $u_f=1 \pmod q$ and accordingly $Q/\gamma_Q\cong \aff(G/\gamma_1(G),f_{\gamma_1(G)})$ is projection, contradiction. Therefore $det(A)=1$ and then $G\cong G_{p,q}$ by virtue of Proposition \ref{uniqueness of the group}. According to Proposition \ref{aut of G_A}, $u_f=-1$.\\
The subgroup $H=Fix(f)$ is not normal (see Remark \ref{existence of aut with desired prop}) and then $Core_{G_{p,q}}(H)=1$, so in particular $Fix(f)$ is not invariant under $A$. 
Accordingly $\dis(Q)\cong G_{p,q}/Core_{G_{p,q}}(H)=G_{p,q}$.\\ 
The restriction $F$ is diagonalizable with eigenvalues $1, k\neq 0,1$. Since $f\in \aut{G}$ then $FA=A^{-1}F$ (*). Using a basis of eigenvectors of $F$, $a\in H$ and $b$ condition (*) implies that $k=-1$ and so $Tr(F)=0$ and $det(F)=-1$.

(ii) $\Rightarrow$ (i) Let $Q\cong \mathcal{Q}(G,H,f)$ where $G=G_{p,q}$, $H=Fix(f)$, $u_f=-1$ and $Tr(F)=0$ and $det(F)=-1$. The matrix $F$ has eigenvalues $1$ and $-1$, so $H\cong \mathbb{Z}_p$ and $|Q|=pq$. Moreover $Q$ has a congruence with strictly simple connected factor and strictly simple connected blocks and so it is connected and LSS (see Proposition \ref{LSS iff}).  

In order to show that $Q$ is latin, note that the mapping \eqref{right mult} in Remark \ref{existence of aut with desired prop}(ii) is the right multiplication by $H\in Q$ and then $Q$ is latin. 
 \end{proof}

We show that there exists just two subdirectly irreducible connected quandles of size $pq$ up to isomorphism using the isomorphism Theorem \ref{iso theorem1}. By virtue of Theorem \ref{automorf for quandles} the quandle $\mathfrak{Q}_d=\mathcal{Q}(G_{p,q},Fix(f_d),f_d)$, where
\begin{equation}\label{non-involutory}
f_d = 
    \begin{cases}
    F=\begin{bmatrix}
    0 & 1\\
    1 & 0\\ 
    \end{bmatrix}, \quad c\mapsto  c^{-1} d,       
    \end{cases}.
\end{equation}
and $d\in \gamma_1(G_{p,q})$ are latin quandles of size $pq$.

\begin{theorem}\label{iso_class_invol}
Let $p,q\geq 3$ be primes such that $p^2=1\pmod q$ and $1\neq a\in \gamma_1(G_{p,q})$. The quandles $\mathfrak{Q}_1$ and $\mathfrak{Q}_a$ are the unique subdirectly irreducible connected quandle of size $pq$ up to isomorphism. In particular, they are the unique non-affine latin quandles of size $pq$.
\end{theorem}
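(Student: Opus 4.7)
The plan is to reduce the theorem to counting $\aut{G_{p,q}}$-conjugacy classes of certain automorphisms. By Theorem~\ref{automorf for quandles}, every subdirectly irreducible connected quandle $Q$ of size $pq$ with $p\geq 3$ is isomorphic to $\mathcal{Q}(G_{p,q},Fix(f),f)$ for some $f\in\aut{G_{p,q}}$ with $u_f=-1$, $\det F=-1$, and $Tr(F)=0$, where $F=f|_{\gamma_1(G_{p,q})}$. Since $H=Fix(f)$, Theorem~\ref{iso theorem1} applies, so isomorphism among such quandles is equivalent to $\aut{G_{p,q}}$-conjugacy of the $f$'s. The goal is to show that there are exactly two such conjugacy classes, represented by $f_1$ and $f_a$ from equation~\eqref{non-involutory}.

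Using the description of $\aut{G_{p,q}}$ from the appendix, one first shows that any admissible $f$ is conjugate to some $f_d$ with $F$ equal to the standard form $F_0=\m{0}{1}{1}{0}$: the set of matrices $F\in GL_2(\mathbb{F}_p)$ with $\det F=-1$, $Tr(F)=0$ and $FAF^{-1}=A^{-1}$ forms a single orbit under $\aut{G_{p,q}}$, as follows from the explicit structure of $C_{GL_2(\mathbb{F}_p)}(A)$ (with a mild case split between $q\mid p-1$ and $q\mid p+1$) together with the outer automorphisms of $G_{p,q}$. After this reduction $f=f_d$ for some $d\in N=\gamma_1(G_{p,q})\cong\mathbb{F}_p^2$. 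The condition $F_0AF_0=A^{-1}$, together with $\det A=1$ (Proposition~\ref{uniqueness of the group}), forces $A=\m{\alpha}{\beta}{-\beta}{\delta}$ with $\alpha\delta+\beta^2=1$; an immediate computation yields
\[
\det(A+F_0)=\alpha\delta-(1+\beta)(1-\beta)=\alpha\delta+\beta^2-1=0,
\]
so $W:=\mathrm{Im}(A+F_0)$ is a $1$-dimensional subspace of $\mathbb{F}_p^2$.

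Next, I would determine the action of the $F_0$-stabilizer in $\aut{G_{p,q}}$ on $d$. A short computation (using $FA=A^{-1}F$ and the invertibility of $A-I$, which holds because $1$ is not an eigenvalue of $A$) shows that inner conjugation by $n\in N$ sends $f_d$ to $f_{d+v(n)}$ for a linear $v:N\to N$ with $\mathrm{Im}(v)=W$. The outer automorphisms $\psi$ that preserve $F_0$ must have $\psi|_N=M$ with $MAM^{-1}=A^j$ and $MF_0M^{-1}=F_0$; comparing characteristic polynomials of $A$ and $A^j$ forces $j\in\{\pm 1\}$, and Schur's lemma applied to the irreducible action of $\langle A,F_0\rangle\cong D_q$ on $\mathbb{F}_p^2$ then leaves only $M\in\{\lambda I,\lambda F_0:\lambda\in\mathbb{F}_p^\ast\}$. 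Both types act on $d$ modulo $W$ by $\mathbb{F}_p^\ast$-scaling on the quotient $\mathbb{F}_p^2/W\cong\mathbb{F}_p$, whose orbits are $\{0\}$ and $\mathbb{F}_p\setminus\{0\}$. Hence there are exactly two conjugacy classes.

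The two classes can also be distinguished directly by order: $f_0$ is an involution (since $F_0^2=I$ and $f_0(c)=c^{-1}$), while $f_d$ for $d\notin W$ has order strictly greater than~$2$, which by Lemma~\ref{order of aut and left mult} equals the order of the left multiplications of the quandle. For the ``non-affine latin'' clause, every non-affine connected quandle of size $pq$ is necessarily subdirectly irreducible (by Proposition~\ref{pq affine}, affine connected quandles of size $pq$ are exactly the subdirectly reducible ones), the projection-block alternative is excluded by Proposition~\ref{proj blocks} since $p,q\geq 3$, and both $\mathfrak{Q}_1$ and $\mathfrak{Q}_a$ are latin by Theorem~\ref{automorf for quandles}. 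The main obstacle is the initial reduction to $F=F_0$, which depends on the structure of $\aut{G_{p,q}}$ established in the appendix and on the case split between $q\mid p-1$ and $q\mid p+1$.
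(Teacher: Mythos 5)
Your proposal is correct, but it takes a genuinely different route from the paper's. The paper argues by a class-equation count: it enumerates all admissible automorphisms of $G_{p,q}$ ($p^2(p-1)$ of them when $q\mid p-1$, and analogously when $q\mid p+1$), computes the centralizers of the two chosen representatives $f_1$ and $f_a$ (orders $2p(p-1)$ and $2p$), and checks that the two orbit sizes exhaust the total, so no further class can exist. You instead bring every admissible automorphism to the normal form $f_d$ with $F=F_0$ (using transitivity of $C_{GL_2(p)}(A)$ on the admissible $F$'s — this is where the $q\mid p-1$ versus $q\mid p+1$ split lives: diagonal matrices in the split case, the norm-one part of the Singer cycle in the non-split case) and then analyse the residual action on the translation parameter $d$: translations along the line $W=\mathrm{Im}(A+F_0)$ coming from the inner/translation automorphisms, and $\mathbb{F}_p^\ast$-scalings on $N/W$ coming from the linear parts $\lambda I,\lambda F_0$ (your appeal to Schur's lemma should be supplemented by the remark that the commutant of the dihedral action is the scalars already over $\mathbb{F}_p$, which holds in both cases). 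I checked the key computations: with the conventions $f_d(c)=c^{-1}d$ and $\widehat{n}f_d\widehat{n}^{-1}=f_{d+v(n)}$ one indeed gets $\mathrm{Im}(v)=\mathrm{Im}(A+F_0)=\Ker{1-AF_0}$, and the quotient action is by scalars, so there are exactly two classes. What your route buys is finer information: it shows that $\mathfrak{Q}_d\cong\mathfrak{Q}_1$ precisely when $d\in W$, equivalently when $f_d$ is an involution (i.e. $\mathfrak{Q}_d$ is involutory); hence the representative $a$ in the statement cannot be completely arbitrary — for the $p-1$ nonzero values $a\in W$ the two displayed quandles are isomorphic — whereas the paper's proof only treats $a=(1,0)$ (and its displayed centralizer elements of $f_1$ with $H=\mathrm{diag}(x,x^{-1})$ do not commute with $F$ unless $x=\pm1$, although the count $2p(p-1)$ is correct). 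Two small points to complete when writing this up: the single-orbit claim for admissible $F$'s does require the explicit computation in each of the two cases, and for the final ``non-affine latin'' clause you should still quote, as the paper does, that simple latin quandles have prime power size — the subdirect-irreducibility dichotomy alone does not rule out a hypothetical simple latin quandle of size $pq$.
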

\begin{proof}
Let $q$ divides $p-1$. According to Lemma \ref{eigenvalues of A}, $A$ is diagonazable with eigenvalues $k,k^{-1}$ (Lemma \ref{eigenvalues of A}) and so the matrices $F$ such that $Tr(F)=0$, $det(F)=-1$ and $FAF=A^{-1}$, are of the form
	\begin{eqnarray*}
		F=     \begin{bmatrix}
			0 & x^{-1}\\
			x & 0
		\end{bmatrix}, \quad\text{where $x\neq 0$}.  
	\end{eqnarray*}
	So the number of automorphism of $G_{p,q}$ as in Theorem \ref{automorf for quandles} is $p^2(p-1)$. It is easy to compute the centralizer of $f_1$ as
	\begin{equation}
	h = 	\begin{cases}
	H=\begin{bmatrix}
	x & 0\\
	0 & x^{-1}\\ 
	\end{bmatrix}, \quad c\mapsto  c d,     \quad d\in \ker{1-AF}  
	\end{cases},\quad 
	h = 	\begin{cases}
	H=\begin{bmatrix}
	0 & x^{-1}\\
	x & 0\\
	\end{bmatrix}, \quad c\mapsto  c^{-1} d,     \quad d\in \ker{1-AF}  
	\end{cases}
	\end{equation}
and then it has size $2(p-1)p$. Let $a=(1,0)$. Likewise the centralizer of $f_a$ is
	\begin{equation}
	h = 
	\begin{cases}
	H=F, \quad c\mapsto  c d,     \quad d=(u,-ku), \,u\in\mathbb{Z}_p  
	\end{cases}\quad 
	h = 
	\begin{cases}
	H=F, \quad c\mapsto  c^{-1} d,    \quad d=(1-ku,u),\, u\in\mathbb{Z}_p
	\end{cases}
	\end{equation}
	and then it has size $2p$. Then $f_1$ and $f_a$ are not conjugate since their centralizer have different orders. Then
	\begin{eqnarray*}
		[\aut{G_{p,q}}:C_{\aut{G_{p,q}}}(f_1)]+[\aut{G_{p,q}}:C_{\aut{G_{p,q}}}(f_a)]&=&\frac{2p^2(p-1)^2}{2p(p-1)}+\frac{2p^2(p-1)^2}{2p}=p^2(p-1)
	\end{eqnarray*}
so $f_1$ and $f_a$ are representatives of conjugacy classes of the desired automorphisms.\\
If $q$ divides $p+1$ the same enumeration argument will follow taking into account that in such case the matrix $A$ in a suitable basis is 
	$$A=\begin{bmatrix}
	0 & -1\\
	1 & -b\\
\end{bmatrix}$$
and $x^2 +bx+1$ is an irreducible polynomial since $A$ has no eigenspaces (see \ref{eigenvalues of A}).

Simple latin quandle have size a power of a prime \cite{Principal,Stanos}, therefore $\mathfrak{Q}_1$ and $\mathfrak{Q}_a$ are the unique latin quandles of size $pq$. 
\end{proof}
The only involutory strictly simple quandles are $\aff(\mathbb{Z}_p,-1)$. So if $Q\in \LSS(p^m,q^n,\sigma_Q)$ is involutory, then $n=m=1$ and $Q\cong \mathfrak{Q}_1$ as defined in \eqref{non-involutory}.\\
According to the enumeration above, the (non-simple) connected subdirectly irreducible quandle of size $3p$ with $p>2$ are the Galkin quandles defined in \cite{C3,C4}.

As a byproduct we obtain the classification of non-associative Bruck loops of order $pq$ already shown in \cite{BruckPetr}, by virtue of the one to one correspondence between involutory latin quandles and Bruck loops of odd order \cite{Stanos}. Indeed $\mathfrak{Q}_1$ is the unique non-affine involutory latin quandle of size $pq$.

\begin{corollary}
Let $p,q\geq 3$ be primes such that $p^2=1\pmod q$. There exists a unique non-associative Bruck loop of size $pq$ up to isomorphism.
\end{corollary}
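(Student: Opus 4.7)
The plan is to translate the claim into one about involutory latin quandles and then invoke the classification already established. Since $p, q \geq 3$, the order $pq$ is odd, so the bijection of \cite{Stanos} between Bruck loops of odd order and involutory latin quandles of odd order applies. Under this bijection the associative Bruck loops, i.e.\ the abelian groups, correspond to the affine involutory quandles $\aff(A,-1)$; hence non-associative Bruck loops of order $pq$ are in bijection with isomorphism classes of non-affine involutory latin quandles of order $pq$. So the task reduces to counting the latter.

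Next I would invoke Theorem \ref{iso_class_invol}, which under the hypothesis $p^2=1\pmod q$ gives exactly two non-affine latin quandles of size $pq$ up to isomorphism, namely $\mathfrak{Q}_1$ and $\mathfrak{Q}_a$. It remains to decide which of these is involutory. By Lemma \ref{order of aut and left mult}, for a connected $Q\cong \Q(G,H,f)$ the order of the left multiplications equals the order of $f$, so the question is whether $f_d^2=\mathrm{id}$ for $d\in\{1,a\}$. Both $f_1$ and $f_a$ restrict to the same involution $F$ on $\gamma_1(G_{p,q})$, so the only thing to check is the action on a lift $c\in G_{p,q}$ of a generator of $G_{p,q}/\gamma_1(G_{p,q})\cong\mathbb{Z}_q$. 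Expanding $f_d^2(c)$ inside the semidirect product $G_{p,q}=\mathbb{Z}_p^2\rtimes_\rho \mathbb{Z}_q$ and using $c^{-1}\gamma c=A^{-1}\gamma$ for $\gamma\in\gamma_1(G_{p,q})$, the equation $f_d^2(c)=c$ reduces to the linear condition $d\in\ker{AF-I}$. For $d=1$ this is automatic, so $\mathfrak{Q}_1$ is involutory; for the specific vector $a=(1,0)$ chosen in the proof of Theorem \ref{iso_class_invol}, $(AF-I)a\neq 0$, so $\mathfrak{Q}_a$ is not involutory. This matches the assertion already made in the paragraph preceding the corollary.

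Combining the two reductions yields a unique non-affine involutory latin quandle of order $pq$ up to isomorphism, namely $\mathfrak{Q}_1$, and therefore via the Stanovsky bijection a unique non-associative Bruck loop of order $pq$. The only step beyond citing prior results is the involutivity computation in the middle paragraph, which is a short calculation in $G_{p,q}$ once the condition $d\in\ker{AF-I}$ has been isolated; I do not expect any real obstacle here, since the classification work has already been carried out in Theorem \ref{iso_class_invol}.
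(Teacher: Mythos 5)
Your proof is correct and follows essentially the same route as the paper: reduce via the correspondence between Bruck loops of odd order and involutory latin quandles, invoke Theorem \ref{iso_class_invol} to get the two candidates $\mathfrak{Q}_1$ and $\mathfrak{Q}_a$, and identify $\mathfrak{Q}_1$ as the unique non-affine involutory one. Your explicit check that $f_d^2=\mathrm{id}$ exactly when $d\in\ker{AF-I}$ (so $d=1$ works and $a=(1,0)$ does not) is a correct, slightly more computational version of the paper's terser argument that an involutory quandle in $\LSS(p^m,q^n,\sigma_Q)$ must be $\mathfrak{Q}_1$.
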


\section{Connected quandles of size $4p$}\label{sec 4p}

%
In this section we classify all non-simple connected quandles of size $4p$ and we show that all but a few small exceptions are examples of LSS connected quandles. \\
If $p>2$ then connected quandles of size $4p$ are faithful.
\begin{lemma}
Connected quandles of size $4p$ with $p>2$ are faithful. 
\end{lemma}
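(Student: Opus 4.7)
The plan is to follow the same scheme as the previous lemma for size $pq$. Assume for contradiction that $Q$ is a connected non-faithful quandle of size $4p$ with $p>2$. By \cite[Theorem 1.1]{MeAndPetr} the factor $Q/\lambda_Q$ does not have prime size, and $|Q/\lambda_Q|=1$ would force $L_a=L_b$ for all $a,b\in Q$ and hence make $Q$ a projection quandle (impossible since $|Q|>1$ and $Q$ is connected). Among the proper non-prime divisors of $4p$ greater than $1$, only $4$ and $2p$ remain, so $|Q/\lambda_Q|\in\{4,2p\}$.

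Two structural observations set up both cases. First, $\lambda_Q$ is always a central congruence: $\dis_{\lambda_Q}=\langle L_aL_b^{-1}:a\,\lambda_Q\,b\rangle=1$ by the very definition of $\lambda_Q$, so by \cite[Theorem 1.1]{CP} one has $\lambda_Q\leq\zeta_Q$ and $\dis^{\lambda_Q}\leq Z(\dis(Q))$. Second, the blocks of $\lambda_Q$ are projection subquandles, because $L_a=L_b$ inside a block forces $a\ast b=L_a(b)=L_b(b)=b$.

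In the case $|Q/\lambda_Q|=4$ the factor is forced to be the tetrahedron, the unique connected quandle of size $4$, with $\dis(Q/\lambda_Q)\cong\mathbb{Z}_2^2$, and the blocks are copies of $\mathcal{P}_p$. Since $\dis(Q)$ is transitive and $\dis^{\lambda_Q}$ is normal in it, the orbits of $\dis^{\lambda_Q}$ have a common size dividing $p$; the size-$1$ case would force $\dis^{\lambda_Q}=1$ and $\dis(Q)\hookrightarrow\mathbb{Z}_2^2$, contradicting $|\dis(Q)|\geq|Q|=4p$. Hence $\dis^{\lambda_Q}$ is transitive on each block, and by Lemma \ref{embedding as quandle}(i) applied with two generators of the tetrahedron it embeds into a product of two copies of the symmetric group on a block, with each projection a transitive abelian subgroup, hence cyclic of order $p$. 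Thus $|\dis^{\lambda_Q}|\in\{p,p^2\}$ and $|\dis(Q)|\in\{4p,4p^2\}$. The central extension $\dis^{\lambda_Q}\rightarrow\dis(Q)\rightarrow\mathbb{Z}_2^2$ has factors of coprime orders, so by Schur--Zassenhaus it splits, and since $\dis^{\lambda_Q}$ is central the complement acts trivially, making $\dis(Q)$ abelian. The canonical representation $Q\cong\mathcal{Q}(\dis(Q),\dis(Q)_a,\widehat{L}_a)$ then presents $Q$ as affine over an abelian group of order $4p$, and connected affine quandles are faithful --- contradicting $\lambda_Q\neq 0_Q$.

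In the case $|Q/\lambda_Q|=2p$, the factor would be a connected quandle of size $2p$; by McCarron's classification \cite{McC} no such quandle exists for $p>5$, so the case is vacuous there, and for the remaining small cases $p=3$ and $p=5$ (i.e.\ $|Q|=12$ and $|Q|=20$) I would invoke a direct inspection of the RIG library of GAP, in the same style as the computer verification employed in the proof of the $8p$ theorem above. The main obstacle is the central-extension bookkeeping in the case $|Q/\lambda_Q|=4$ --- correctly forcing $\dis(Q)$ to be abelian so that the affinity contradiction kicks in; once that is in hand, the remaining steps (invoking \cite{MeAndPetr}, the centrality of $\lambda_Q$, and the small library-verified cases) are essentially routine.
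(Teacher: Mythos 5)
Your route is genuinely different from the paper's. The paper, after excluding prime-size factors of $Q/\lambda_Q$ via \cite[Theorem 1.1]{MeAndPetr}, simply quotes \cite[Example 8.3]{MeAndPetr}: a non-faithful connected quandle whose factor by $\lambda_Q$ is the four-element connected quandle has all $\lmlt(Q)$-orbits of size $8$, hence has size $8$, a contradiction with $|Q|=4p$, $p>2$. You instead re-derive the contradiction structurally inside $\dis(Q)$ (central kernel over the tetrahedron, Schur--Zassenhaus, affine hence faithful), and you also treat the possibility $|Q/\lambda_Q|=2p$ explicitly (vacuous for $p>5$ by \cite{McC}, RIG inspection for $p\in\{3,5\}$), a case the paper's one-line proof passes over in silence; that part of your plan is sound and in the spirit of the computer checks used elsewhere in the paper.

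The weak point is your justification of the pivotal containment $\dis^{\lambda_Q}\leq Z(\dis(Q))$. From $\dis_{\lambda_Q}=1$ the commutator-theoretic results of \cite{CP} give (strong) abelianness of $\lambda_Q$, not centrality --- the equivalence ``$\alpha$ central iff $\dis_\alpha$ central'' is available only for faithful quandles, which is exactly what you cannot assume here --- and even for a genuinely central congruence $\alpha$ the group $\dis^{\alpha}$ need not lie in $Z(\dis(Q))$: for connected quandles of nilpotency length two one has $\dis^{\zeta_Q}=\gamma_1(\dis(Q))\cong Z(\dis(Q))\times \dis(Q)_a$, which exceeds the center whenever the stabilizer is nontrivial. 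So as written this step does not follow from what you cite. Fortunately the containment is true for $\lambda_Q$ for a direct reason: if $h\in\dis^{\lambda_Q}$ then $L_{h(a)}=L_a$ for every $a\in Q$, and since $L_{h(a)}=hL_ah^{-1}$, the element $h$ centralizes every left translation and hence all of $\lmlt(Q)\supseteq\dis(Q)$; thus $\dis^{\lambda_Q}=\dis(Q)\cap Z(\lmlt(Q))\leq Z(\dis(Q))$. With this one-line repair (and the routine remark that a transitive abelian $\dis(Q)$ is regular, so that $|\dis(Q)|=4p$, forcing $Q$ to be affine, hence latin and faithful), your argument goes through.
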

\begin{proof}
Assume that $Q$ is not faithful. Then $Q/\lambda_Q$ has not prime size (see \cite[Theorem 1.1]{MeAndPetr}) so $Q/\lambda_Q$ is the unique connected quandle of size $4$. According to \cite[Example 8.3]{MeAndPetr} the orbits with respect to $\lmlt(Q)$ of every element of $Q$ has size $8$, therefore $Q$ has size $8$.
\end{proof}

If $p>5$ there are no connected quandles of size $2p$ \cite{McC,HSV}, so every factor of a connected quandle of size $4p$ has size either $4$ or $p$ and consequently also the blocks of congruences have size either $4$ or $p$. Therefore the congruence lattice is one of the lattices in Figure \ref{lattices2}.  In particular if $Q$ is subdirectly reducible then it is a direct product (the same argument of \ref{pq affine} applies).\\
Recall that there exists a unique connected quandle of size $4$, i.e. $\aff(\mathbb{Z}^2_2,f)$ where the order of $f$ is $3$ and it is strictly simple. 

\begin{proposition}
Let $Q$ be a connected quandle of size $4p$ with $p>5$. The following are equivalent:
\begin{itemize}
\item[(i)] $Q$ is subdirectly reducible.
\item[(ii)] $Q\cong \aff(\mathbb{Z}_2^2,f)\times \aff(\mathbb{Z}_p,g)$.
In particular there are $p-2$ such quandles.
\end{itemize}
\end{proposition}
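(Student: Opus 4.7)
The plan is to mirror the blueprint of Proposition~\ref{pq affine} adapted to size $4p$. The direction (ii) $\Rightarrow$ (i) is immediate: the two canonical projections of a direct product yield two distinct congruences with trivial meet, hence $Q$ is subdirectly reducible.

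For (i) $\Rightarrow$ (ii), I would fix two distinct congruences $\alpha,\beta$ with $\alpha\wedge\beta=0_Q$ and use the constraint established just above: for $p>5$, each factor and each block of $Q$ has size $4$ or $p$, with the unique size-$4$ connected quandle being the strictly simple $\aff(\mathbb{Z}_2^2,f)$ and every size-$p$ connected quandle being the strictly simple $\aff(\mathbb{Z}_p,g)$. Since two distinct elements of one $\alpha$-block cannot share a $\beta$-block (by $\alpha\wedge\beta=0_Q$), the canonical map $[a]_\alpha \to Q/\beta$ is an injective quandle homomorphism, embedding $[a]_\alpha$ as a subquandle of the strictly simple quandle $Q/\beta$. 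This forces $|[a]_\alpha|\in\{1,|Q/\beta|\}$; the first alternative would force $\alpha=0_Q$, so $|[a]_\alpha|=|Q/\beta|$, and symmetrically $|[a]_\beta|=|Q/\alpha|$.

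Combining these equalities with congruence uniformity $|Q|=|[a]_\alpha||Q/\alpha|=4p$ leaves exactly one viable configuration: the cases where both factors have size $4$ or both have size $p$ yield $4p=16$ or $4p=p^2$, both ruled out by $p>5$. So, up to relabeling, $|Q/\alpha|=4$ and $|Q/\beta|=p$. The remaining cardinality $|Q|=|Q/\alpha||Q/\beta|$ forces the subdirect embedding $Q\hookrightarrow Q/\alpha\times Q/\beta$ to be a bijection, hence an isomorphism of quandles, giving the decomposition $Q\cong \aff(\mathbb{Z}_2^2,f)\times \aff(\mathbb{Z}_p,g)$.

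For the enumeration, the size-$4$ factor $\aff(\mathbb{Z}_2^2,f)$ is unique up to isomorphism, while the size-$p$ factor $\aff(\mathbb{Z}_p,g)$ ranges over the $p-2$ pairwise non-isomorphic choices with $g\in \mathbb{Z}_p\setminus\{0,1\}$, producing exactly $p-2$ quandles. The main obstacle here is ensuring that only the mixed $(4,p)$-split survives; everything else in the proof is a straightforward consequence of strict simplicity of the factors together with congruence uniformity.
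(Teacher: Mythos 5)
Your argument is correct and is essentially the paper's own: the paper disposes of this case by noting that ``the same argument of Proposition~\ref{pq affine} applies'', i.e.\ the counting argument of Theorem~\ref{LSS reducible} (trivially meeting congruences give a subdirect embedding $Q\hookrightarrow Q/\alpha\times Q/\beta$, strict simplicity of the factors forces $|[a]_\alpha|=|Q/\beta|$, and cardinality forces the embedding to be onto), which is exactly what you reconstruct, together with the routine size bookkeeping ruling out $16$ and $p^2$ and the enumeration of the prime-size factor. The only additions you make are to spell out details the paper leaves implicit, at the same level of rigor as the paper's treatment of the $pq$ case.
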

%
%

We turn now out attention to the subdirectly irreducible case. 
\begin{lemma}
Let $Q$ be a subdirectly irreducible connected quandle of size $4p$ with $p>5$. Then $\gamma_Q$ is the unique congruence of $A$, $\zeta_Q=0_Q$ and $Z(\dis(Q))=1$.
\end{lemma}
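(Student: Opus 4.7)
The plan is to establish the three conclusions in order, each leveraging the previous one.

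First, subdirect irreducibility pins the congruence lattice of $Q$ down to Figure \ref{lattices2}(B), so $Q$ has a unique proper non-trivial congruence $\alpha$. Every factor and every block of $Q$ has size $4$ or $p$, because there is no connected quandle of size $2p$ for $p>5$, and for both sizes the simple connected quandles are affine (the tetrahedron $\aff(\mathbb{Z}_2^2,f)$ and the quandles $\aff(\mathbb{Z}_p,g)$); in particular $Q/\alpha$ is abelian, so $\gamma_Q \le \alpha$. To conclude $\gamma_Q = \alpha$ it remains to rule out that $Q$ itself is abelian: if $Q \cong \aff(A,f)$ with $|A| = 4p$, then the Sylow $2$- and $p$-subgroups of $A$ are characteristic and therefore $f$-invariant submodules, giving two distinct non-trivial congruences with trivial meet, contradicting subdirect irreducibility. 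Hence $\gamma_Q = \alpha$ is the unique non-trivial congruence.

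Next, $\zeta_Q$ is a congruence of $Q$, so $\zeta_Q \in \{0_Q,\gamma_Q\}$. Suppose $\zeta_Q = \gamma_Q$. Since $Q/\gamma_Q$ is abelian, the chain $0_Q \le \gamma_Q \le 1_Q$ witnesses nilpotency of $Q$ of length at most $2$. By the preceding lemma $Q$ is faithful, so the result that finite connected faithful nilpotent quandles split as direct products of prime-power-size quandles (cited in the preliminaries as \cite[Theorem 1.4]{CP}) forces $Q \cong Q_1 \times Q_2$ with $|Q_1| = 4$ and $|Q_2| = p$; but then the two projection kernels are distinct non-trivial congruences of $Q$ with trivial meet, again contradicting subdirect irreducibility. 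Hence $\zeta_Q = 0_Q$.

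Finally, faithfulness of $Q$ gives $\zeta_Q = \c{Z(\dis(Q))}$, so $\c{Z(\dis(Q))} = 0_Q$; to translate this into triviality of $Z(\dis(Q))$ itself, I would first show that $Z(\dis(Q))$ acts semiregularly on $Q$. Indeed, if some $z \in Z(\dis(Q))$ fixed $a \in Q$, then by centrality $z(g(a)) = g(z(a)) = g(a)$ for every $g \in \dis(Q)$, so by connectedness $z$ would fix all of $Q$, forcing $z = 1$ since $\dis(Q) \le \Sym(Q)$. Given semiregularity, for $z \in Z(\dis(Q))$ and $b = z(a)$ the standard conjugation relation $L_{z(a)} = z L_a z^{-1}$ in $\lmlt(Q)$ yields $L_a L_b^{-1} = [L_a, z]$; since $Z(\dis(Q))$ is characteristic in $\dis(Q) \trianglelefteq \lmlt(Q)$, conjugation by $L_a$ preserves it, so $[L_a,z] \in Z(\dis(Q))$. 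Thus $\mathcal{O}_{Z(\dis(Q))} \le \c{Z(\dis(Q))} = 0_Q$, the $Z(\dis(Q))$-action on $Q$ is trivial, and by faithfulness of this action $Z(\dis(Q)) = 1$.

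The main obstacle will be the last step: while $\c{Z(\dis(Q))} = 0_Q$ follows at once from $\zeta_Q = 0_Q$, extracting the triviality of $Z(\dis(Q))$ itself requires comparing the orbit congruence $\mathcal{O}_{Z(\dis(Q))}$ with the relation $\c{Z(\dis(Q))}$, which in turn rests on establishing the semiregular action from connectedness and faithfulness.
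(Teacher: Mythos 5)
Your proof is correct and follows essentially the same route as the paper: the case $\zeta_Q\neq 0_Q$ is excluded via nilpotency of the chain $0_Q\leq\zeta_Q\leq 1_Q$ together with the splitting theorem for finite connected faithful nilpotent quandles, and $Z(\dis(Q))=1$ follows because its orbits are contained in the blocks of $\zeta_Q=\c{Z(\dis(Q))}=0_Q$. The only minor differences are ordering and detail: the paper deduces non-abelianness (hence that the unique proper congruence is $\gamma_Q$) directly from $\zeta_Q=0_Q$, whereas you rule out the abelian case separately via the Sylow decomposition of an affine representation, and you spell out the orbit-versus-$\c{Z(\dis(Q))}$ comparison that the paper leaves implicit.
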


\begin{proof}
	If $\zeta_Q\neq 0_Q$ then $Q$ is faithful and nilpotent and then it decomposes as a direct product of prime power size quandles \cite[Theorem 1.4]{CP}. Hence $Q$ is subdirectly reducible, contradiction. The orbits of $Z(\dis(Q))$ are contained in the blocks of $\zeta_Q$. Accordingly $Z(\dis(Q))=1$.
In particular $Q$ is not abelian and then its unique proper congruence is $\gamma_Q$ since the factor is abelian. 
\end{proof}

Assume that the unique factor of $Q$ has prime size and the blocks have size $4$. So that the blocks of $\gamma_Q$ are isomorphic to one of the following quandles: $\mathcal{P}_4$, $\aff(\mathbb{Z}_4,-1)$ or $\aff(\mathbb{Z}^2_2,f)$ (the unique connected quandle of size $4$), since these are all the homogenous quandles of size $4$. 
%
%
%

\begin{proposition}\label{dis_4p}
Let $Q$ be a subdirectly irreducible connected quandle of size $4p$ and $|Q/\gamma_Q|=p>5$. Then $p= 7$.
%
\end{proposition}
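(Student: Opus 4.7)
The plan is to enumerate the three possible isomorphism types of the size-$4$ block $[a]_{\gamma_Q}$ as a homogeneous subquandle—$\mathcal{P}_4$, $\aff(\mathbb{Z}_4,-1)$, or the connected quandle $\aff(\mathbb{Z}_2^2,f)$—and treat each case separately.

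First I would rule out the dihedral case by a halves argument: the only proper non-trivial subquandles of $\aff(\mathbb{Z}_4,-1)$ are $\{0,2\}$ and $\{1,3\}$, forming a canonical partition of the block. Because every left multiplication of $Q$ restricts to a quandle isomorphism between blocks of $\gamma_Q$ and isomorphisms preserve subquandles, assembling these halves across all blocks yields an $\lmlt(Q)$-invariant partition of $Q$ into $2p$ pairs. This would be a congruence of $Q$ distinct from $0_Q$, $\gamma_Q$ and $1_Q$, contradicting the uniqueness of $\gamma_Q$ established above.

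In each of the remaining cases set $H=\dis^{\gamma_Q}$. Since $Q/\gamma_Q$ is a simple affine quandle of prime size it is two-generated, so Lemma~\ref{embedding as quandle}(i) embeds $H\hookrightarrow\aut{[a]_{\gamma_Q}}^2\leq S_4\times S_4$; hence $|H|$ divides $576=2^6\cdot 3^2$. From $\dis(Q)/H\cong\dis(Q/\gamma_Q)\cong\mathbb{Z}_p$ and $\gcd(p,|H|)=1$, Schur--Zassenhaus gives a complement $\mathbb{Z}_p\leq\dis(Q)$, and the condition $Z(\dis(Q))=1$ (consequence of $\zeta_Q=0_Q$) makes its conjugation on $H$ faithful, so $p$ divides $|\aut{H}|$. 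In the connected-block case the block is strictly simple, so $Q\in\LSS(4,p,\sigma_Q)$; by Corollary~\ref{cor for pq}(i), $\gamma_Q\leq\sigma_Q$, and Theorem~\ref{mu p neq q} refines $H\cong\mathbb{Z}_2^{2+k}$ with $k\leq 2$. Then $p$ divides one of $|GL_2(\mathbb{F}_2)|=6$, $|GL_3(\mathbb{F}_2)|=168$, or $|GL_4(\mathbb{F}_2)|=20160$, whose prime factors are at most $7$; hence $p=7$ since $p>5$. In the projection-block case the same conclusion follows after replacing $H$ by the minimal subgroup $\dis_{\gamma_Q}$, which by Lemma~\ref{minimal congruences} is abelian and acts regularly on each block (faithfulness of $Q$ forces $\mathcal{O}_{\dis_{\gamma_Q}}=\gamma_Q$), hence embeds into the product of two transitive abelian subgroups of $S_4$ of order $4$. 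So $|\dis_{\gamma_Q}|$ divides $16$, and the primes dividing the automorphism group of any abelian group of order at most $16$ lie in $\{2,3,5,7\}$, yielding $p=7$.

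The main obstacle is the projection-block case, in which one must certify that no prime greater than $7$ can act faithfully by automorphisms on an abelian $2$-group of order at most $16$. This is a finite check, and the bound $7$ is tight—matching the connected case—precisely because $|GL_4(\mathbb{F}_2)|$ is divisible by $7$.
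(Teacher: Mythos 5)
Your treatment of the connected-block and projection-block cases follows the paper's strategy (embed the relevant normal subgroup into $\aut{[a]}\times\aut{[b]}$, get a complement $\mathbb{Z}_p$, and force $p\mid|\mathrm{Aut}(A)|$ for a small $2$-group $A$), but the step you use to discard the $\aff(\mathbb{Z}_4,-1)$ blocks has a genuine gap. From the fact that every $L_c$ maps the canonical halves $\{0,2\},\{1,3\}$ of one block to the halves of another you only get an $\lmlt(Q)$-invariant partition, i.e.\ compatibility of the relation in the \emph{second} argument; a congruence also requires $a\,\alpha\,b\Rightarrow a*c\,\alpha\,b*c$ for all $c$, equivalently that $L_aL_b^{-1}$ stabilizes each half setwise whenever $a,b$ lie in a common half. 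Nothing forces this: $L_aL_b^{-1}\in\dis_{\gamma_Q}$ restricts on another block to an automorphism of $\aff(\mathbb{Z}_4,-1)$, and translations by odd elements are such automorphisms and they \emph{swap} the two halves. Invariant partitions that are not congruences are exactly the phenomenon the paper has to live with elsewhere ($\sigma_Q$ is only an equivalence whose classes are $\lmlt(Q)$-blocks, not a congruence in general), so the contradiction with the uniqueness of $\gamma_Q$ does not follow. Consequently the dihedral-block case is simply not covered by your proof. Note that the paper does not exclude it either: it shows $\gamma_1(\dis(Q))=\gamma_2(\dis(Q))=\dis_{\gamma_Q}$ (Proposition \ref{sims}, using that $\dis(Q)/\dis^{\gamma_Q}\cong\mathbb{Z}_p$ is cyclic), observes that in the $\aff(\mathbb{Z}_4,-1)$ case this group embeds into $\aut{[a]}^2\cong(\mathbb{Z}_4\rtimes\mathbb{Z}_2)^2$, hence is solvable, hence abelian by Lemma \ref{minimal congruences}, hence semiregular on blocks and so contained in $\mathbb{Z}_4^2$ or $\mathbb{Z}_2^4$; the same divisibility count then bounds $p$ in that case too.

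Two smaller points in your projection-block case. First, before invoking Lemma \ref{minimal congruences} to get abelianness you must note that $\dis_{\gamma_Q}$ is solvable (immediate from the embedding into $S_4\times S_4$, but it is a hypothesis of the lemma). Second, your faithfulness argument ($Z(\dis(Q))=1$ forces the complement $\mathbb{Z}_p$ to act faithfully) was set up for $H=\dis^{\gamma_Q}$, where $\dis(Q)=H\cdot\mathbb{Z}_p$ makes a centralizing generator central; after ``replacing $H$ by $\dis_{\gamma_Q}$'' this no longer works verbatim, since an element of order $p$ centralizing the possibly smaller subgroup $\dis_{\gamma_Q}$ need not be central in $\dis(Q)$. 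The paper avoids the issue because in this situation $\dis^{\gamma_Q}=\gamma_1(\dis(Q))=\dis_{\gamma_Q}$ (cyclic quotient plus minimality of $\dis_{\gamma_Q}$ in $Norm(Q)$), which simultaneously gives the abelianness and lets the centre argument apply to the right subgroup; you would need to either prove this identification or supply a separate argument for faithfulness.
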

\begin{proof}
Let $G=\dis(Q)$. According to Proposition \ref{sims}, $\gamma_1(G)=\gamma_2(G)=\dis_{\gamma_Q}$ since $\dis(Q)$ is not nilpotent and $\dis_{\gamma_Q}$ is minimal in $Norm(Q)$. Let us discuss all the possible cases according to the properties of the blocks of $\gamma_Q$: 
\begin{itemize}
\item[(i)] If they are are connected, then $Q$ is LSS and we can apply Lemma \ref{embedding of ker} and $\gamma_1(G)$ embeds into $\mathbb{Z}_2^4$.

\item[(ii)] 
If the blocks of $\gamma_Q$ are isomorphic to $\aff(\mathbb{Z}_4,-1)$. then $\gamma_1(G)|_{[a]}$ embeds into $\aut{[a]}\cong \mathbb{Z}_4\rtimes \mathbb{Z}_2$, see \cite[Theorem 2.1]{Elham}. Therefore, $\gamma_1(G)=\dis_{\gamma_Q}$ is solvable and therefore it is abelian (see \ref{minimal congruences}). Hence $\gamma_1(G)$ is $\gamma_Q$-semiregular, so $|\gamma_1(G)|_{[a]}|= 4$ and then $\gamma_1(G)$ embeds into $\mathbb{Z}_4^2$ or into $\mathbb{Z}_2^4$. 

\item[(iii)] If the blocks of $\gamma_Q$ are projection subquandles, then by Lemma \cite[Lemma 4.1]{Principal} $\gamma_Q$ is abelian and $\gamma_1(G)$ is transitive and semiregular on each block of $\gamma_Q$. Therefore $|\gamma_1(G)|_{[a]}|=4$, so $\gamma_1(G)$ embeds into $\mathbb{Z}_4^{2}$ or into $\mathbb{Z}_2^4$.
\end{itemize}
Hence, $\dis(Q)\cong A\rtimes \mathbb{Z}_p$, and $p$ divides the size of $\aut{A}$ where $A=\mathbb{Z}_2^{2+k}$ and $k\leq 2$ or $A=\mathbb{Z}_4^k$ and $k\leq 2$. In the first case $p\in \{2,3,5,7\}$ and in the second case $p\in \{2,3\}$. \end{proof}

A complete list of subdirectly irreducible connected quandles of size $28$ which falls in the class described in Proposition \ref{dis_4p} is given in the following table (none of them is latin):

\medskip
\begin{center}

\begin{small}{\renewcommand\arraystretch{1.2} 
\begin{tabular}{ | c | c| c | c| c| c|}
\hline
RIG & $|Q/\gamma_Q|$ & $[a] $ &$\dis(Q)$&  Note \\
\hline
(28,3) & 7 & $\aff(\mathbb{Z}_2^2,f)$ & $\mathbb{Z}_2^3\rtimes \mathbb{Z}_7$ & $\LSS(4,7,\gamma_Q$)\\
(28,4) & 7 & $\aff(\mathbb{Z}_2^2,f)$ & $\mathbb{Z}_2^3\rtimes \mathbb{Z}_7$ & $\LSS(4,7,\gamma_Q$)\\
(28,5) & 7 & $\aff(\mathbb{Z}_2^2,f)$ & $\mathbb{Z}_2^3\rtimes \mathbb{Z}_7$ &$\LSS(4,7,\gamma_Q$)\\
(28,6) & 7 & $\aff(\mathbb{Z}_2^2,f)$ & $\mathbb{Z}_2^3\rtimes \mathbb{Z}_7$ &$\LSS(4,7,\gamma_Q)$\\
\hline
\end{tabular}}

\end{small}
\end{center}
\medskip

\noindent Let us consider the case in which $Q/\gamma_Q$ is the unique connected quandle of size $4$, i.e. $\aff(\mathbb{Z}_2^{2},f)$.

\begin{proposition}
Let $Q$ be a connected subdirectly irreducible quandle of size $4p$, $p> 5$ and $|Q/\gamma_Q|=4$. Then $Q\in\LSS(p,4,0_Q)$.\end{proposition}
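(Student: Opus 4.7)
The plan is first to identify the factor and the blocks of $\gamma_Q$, then to rule out projection blocks so that Proposition \ref{LSS iff} places $Q$ in $\LSS(p,4,\sigma_Q)$, and finally to pin down $\sigma_Q=0_Q$ through the trichotomy of Lemma \ref{cases for sigma}. Since $Q/\gamma_Q$ is connected of size $4$, it must coincide with the unique connected quandle of size $4$, namely $\aff(\mathbb{Z}_2^2,f)$ with $|f|=3$, which is strictly simple. The blocks of $\gamma_Q$ have prime size $p>5$, so by the classification of homogeneous quandles of prime size \cite{EGS} each block is either the strictly simple affine quandle $\aff(\mathbb{Z}_p,k)$ with $k\neq 0,1$, or the projection quandle $\mathcal{P}_p$.

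The hard part will be ruling out the projection alternative. The plan here is to imitate the proof of Proposition \ref{proj blocks}: if the blocks were $\mathcal{P}_p$, then $Q$ would be a faithful connected extension of the strictly simple quandle $\aff(\mathbb{Z}_2^2,f)$ by the projection quandle $\mathcal{P}_p$ of prime power size, so \cite[Theorem 4.3]{Principal} would apply. Combined with the facts that $\dis_{\gamma_Q}$ is a minimal element of $Norm(Q)$ and hence abelian (Lemma \ref{minimal congruences}), acts transitively and semiregularly on each block (as used in the projection case of the proof of Proposition \ref{dis_4p}), and embeds into $\mathbb{Z}_p^k$ with $k\leq 2$ by Lemma \ref{embedding as quandle}, together with the equality $\dis(Q)/\dis^{\gamma_Q}\cong \mathbb{Z}_2^2$, this should constrain $\dis(Q)$ to the point that $|Q|$ cannot equal $4p$ for $p>5$. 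The subtlety compared to Proposition \ref{proj blocks} is that the factor $\dis(Q/\gamma_Q)\cong \mathbb{Z}_2^2$ is not cyclic, so Proposition \ref{sims} does not directly deliver $\gamma_1(\dis(Q))=\gamma_2(\dis(Q))$; an additional analysis of the action of $\dis(Q)/\dis^{\gamma_Q}$ on $\dis^{\gamma_Q}$ will be required to close this gap.

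With projection blocks excluded, Proposition \ref{LSS iff} gives $Q\in \LSS(p,4,\sigma_Q)=\LSS(p,2^2,\sigma_Q)$, and Lemma \ref{cases for sigma} leaves three options for $\sigma_Q$. Since $m=1$, Corollary \ref{cor for pq}(ii) shows that the class $\LSS(p,q^n,1_Q)$ is empty, ruling out $\sigma_Q=1_Q$. Since the factor prime is $q=2$, Proposition \ref{ex on 2to the n p}, which requires $q>2$, rules out $\sigma_Q=\gamma_Q$. Hence $\sigma_Q\wedge \gamma_Q=0_Q$, and Proposition \ref{then q=2}, applied with $q=2$ and $n=1$ so that $q^{2n}=4$, yields $\sigma_Q=0_Q$ as desired.
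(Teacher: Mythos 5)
Your determination of $\sigma_Q$ is correct and follows the paper's own route: once the blocks are known to be connected strictly simple, $Q\in\LSS(p,2^2,\sigma_Q)$, and then Corollary \ref{cor for pq}(ii) excludes $\sigma_Q=1_Q$, Proposition \ref{ex on 2to the n p} excludes $\sigma_Q=\gamma_Q$ (since $q=2$), and Proposition \ref{then q=2} with $q^{2n}=4$ gives $\sigma_Q=0_Q$. The problem is the step before that: you never actually exclude projection blocks. You propose to imitate the proof of Proposition \ref{proj blocks} via \cite[Theorem 4.3]{Principal}, then correctly observe that the decisive ingredient there, namely $\gamma_1(\dis(Q))=\gamma_2(\dis(Q))$ obtained from Proposition \ref{sims}, is unavailable because $\dis(Q/\gamma_Q)\cong\mathbb{Z}_2^2$ is not cyclic, and you conclude that ``an additional analysis \ldots will be required to close this gap'' without supplying it. As written, the claim that the resulting constraints force $|Q|\neq 4p$ is a hope, not an argument, so the exclusion of projection blocks --- which is the only nontrivial content of the proposition beyond bookkeeping --- is left open.

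The paper closes this point with a different citation that makes the case immediate: since connected quandles of size $4p$ with $p>2$ are faithful, if the blocks of $\gamma_Q$ were projection quandles then $Q$ would be a faithful connected extension of a strictly simple quandle by a projection quandle of \emph{prime} size, and \cite[Theorem 4.8]{Principal} (the same result already invoked in Theorem \ref{LSS with P_2}) bounds the size of such a quandle to $6$ or $12$, contradicting $|Q|=4p>20$. Replacing your sketched analysis by this citation repairs the proof; alternatively you would have to carry out in detail the study of the action of $\dis(Q)/\dis^{\gamma_Q}\cong\mathbb{Z}_2^2$ on $\dis^{\gamma_Q}$ that you only announce, which is precisely the work the paper's choice of reference avoids.
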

\begin{proof}
The blocks of $\gamma_Q$ are homogeneous quandles of prime size. So they are either projection or connected. Assume that the blocks of $\gamma_Q$ are projection. Then the quandle $Q$ is a connected faithful extension of a strictly simple factor by a prime size projection quandle. So the size of $Q$ is either $6$ or $12$ according to \cite[Theorem 4.8]{Principal}. Therefore the blocks are connected strictly simple quandle and then $Q\in \LSS(p,4,\sigma_Q)$. According to Proposition \ref{ex on 2to the n p} and to Proposition \ref{then q=2} $\sigma_Q=0_Q$. 
\end{proof}

\begin{proposition}\label{dis of 4p}
Let $Q\in \LSS(p,4,0_Q)$. Then $Q$ is latin and $\dis(Q)\cong \mathbb{Z}_p^2\rtimes_\rho \mathcal{Q}_8$ where $\rho$ is a faithful action. Moreover $L_a^3=1$ for every $a\in Q$ and $p=1 \pmod 3$.
\end{proposition}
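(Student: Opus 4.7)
The plan is to read off all the routine structural information from earlier results in the class $\LSS(p,q^{2n},0_Q)$, and then to carry out one small extra argument, namely ruling out the dihedral group among the two extraspecial $2$-groups of order $8$.

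Put $G=\dis(Q)$ and $f=\widehat{L}_a$. Since the factor $Q/\gamma_Q$ is the unique connected quandle of size $4$, namely $\aff(\mathbb{Z}_2^2,f_0)$ with $|f_0|=3$, we are in the case $m=1$, $q=2$, $2n=2$. Theorem \ref{dis mu and S =0} combined with Corollary \ref{cor for pq}(ii) yields $G\cong \gamma_2(G)\rtimes_\rho K$ with $\gamma_2(G)=\dis_{\gamma_Q}\cong\mathbb{Z}_p^2$ and $K$ an extraspecial $2$-group of order $2^{2n+1}=8$, while Corollary \ref{faithful action and sigma =0} gives that $\rho$ is faithful. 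Because $\dis_{\gamma_Q}\cong \mathbb{Z}_p^{2m}$, Proposition \ref{prop of LSS} then gives that $Q$ is latin, and Proposition \ref{on order of LeftMul of LSS} identifies $|L_a|$ with $|f_0|=3$ and yields $p\equiv 1\pmod 3$.

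It remains to show that $K\cong \mathcal{Q}_8$. The only other extraspecial $2$-group of order $8$ is the dihedral group, whose automorphism group has order $8$ and in particular contains no element of order $3$, whereas $\Aut(\mathcal{Q}_8)\cong S_4$ does. So it suffices to exhibit an automorphism of $K$ of order $3$. Since $\gamma_2(G)$ and $\gamma_1(G)$ are characteristic, the order-$3$ automorphism $f$ descends to $\bar f\in \Aut(G/\gamma_2(G))$ and further to $\bar{\bar f}\in \Aut(G/\gamma_1(G))$, and $|\bar{\bar f}|$ divides $|\bar f|$, which divides $3$. Under the canonical identifications $G/\gamma_2(G)\cong K$ (from the semidirect product decomposition) and $G/\gamma_1(G)\cong K/Z(K)\cong\mathbb{Z}_2^2$, Lemma \ref{rep for factors} recognises $\bar{\bar f}$ as the automorphism $f_0$ of $\dis(Q/\gamma_Q)$, which has order $3$. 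Hence $|\bar f|=3$, and we conclude $K\cong \mathcal{Q}_8$.

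The only genuinely new step is this last identification of $K$; everything else is a direct application of results already established in the preceding sections, so I expect the main obstacle to be carefully pinning down that the induced automorphism on $G/\gamma_2(G)$ really has order $3$ (and not just $1$), which is why I route the argument through the further quotient $G/\gamma_1(G)$ whose induced map is pinned down by Lemma \ref{rep for factors}.
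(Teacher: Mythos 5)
Your proposal is correct and follows essentially the same route as the paper: the decomposition $\dis(Q)\cong\mathbb{Z}_p^2\rtimes_\rho K$ with $K$ extraspecial of order $8$ and $\rho$ faithful, latinness via Proposition \ref{prop of LSS}, the order and congruence-on-$p$ statements via Lemma \ref{order of aut and left mult} and Proposition \ref{on order of LeftMul of LSS}, and finally ruling out $\mathcal{D}_8$ because its automorphism group (of order $8$) has no element of order $3$ while $f$ induces an order-$3$ automorphism of $K\cong G/\gamma_2(G)$. Your detour through $G/\gamma_1(G)$ to pin down that this induced automorphism has order exactly $3$ just makes explicit a point the paper leaves implicit (it is contained in the proof of Proposition \ref{on order of LeftMul of LSS}), so the two arguments coincide in substance.
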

\begin{proof}
According to Theorem \ref{dis mu and S =0} and Corollary \ref{cor for pq}(ii) $\dis(Q)\cong G=\mathbb{Z}_p^2\rtimes_\rho K$ where $K$ is an extraspecial $2$-group of size $8$ and $\rho$ is a faithful action. 
Let $f=\widehat{L_a}$. According to Lemma \ref{order of aut and left mult} and Lemma \ref{on order of LeftMul of LSS} $|f|=|L_a|=|L_{[a]}|=3$ for every $a\in Q$ and $p=1\pmod 3$. So, the automorphism $f_{\gamma_2(G)}\in \aut{K}$ has also order $3$. The automorphisms group of $\mathcal{D}_8$ has size $8$, then it has no element of order $3$, so $K\cong \mathcal{Q}_8$. Moreover $Q$ is latin by Proposition \ref{prop of LSS}.
%
\end{proof}
%

%

%
%
%

Faithful representations of dimension $2$ of $\mathcal{Q}_8$ are irreducible (see Lemma \ref{faithful action}(ii)) and they are all isomorphic \cite{Mayr}. Let $p$ be a prime such that $p=1 \pmod 3$ and let $k\in \mathbb{Z}_p^*$ be an element of order $3$. We define $G_k=\mathbb{Z}_p^2\rtimes_\rho \mathcal{Q}_8$ where
\begin{displaymath}
\rho_x=\begin{bmatrix}
0 & -1\\
1 & 0
\end{bmatrix},\quad
\rho_{y}=\begin{bmatrix}
k^2 & k\\
k & -k^2
\end{bmatrix},\quad \rho_z=\begin{bmatrix}
-1 & 0\\
0 & -1
\end{bmatrix}.
\end{displaymath}
In particular $G_k$ is the unique semidirect product $\mathbb{Z}_p^2\rtimes_\rho \mathcal{Q}_8$ where $\rho$ is a faithful action up to isomorphism.

\begin{theorem}\label{4p iff}
Let $p$ be a prime such that $p=1 \pmod 3$ and $G_k=\mathbb{Z}_p^2\rtimes_\rho \mathcal{Q}_8$. The following are equivalent:
\begin{itemize}
	\item[(i)] $Q$ is a connected subdirectly irreducible quandle of size $4p$.
	\item[(ii)] $Q\cong \mathcal{Q}(G_k,Fix(f),f)$ where $f^3=1$, $|Fix(f)|=2p$, $f$ acts irreducibly on $G_k/\gamma_1(G_k)$ and $\dis(Q)\cong G_k$. 
\end{itemize}

\begin{proof}
(i) $\Rightarrow$ (ii) It follows from Proposition \ref{dis of 4p} by virtue of Lemma \ref{order of aut and left mult}.

(ii) $\Rightarrow$ (i) Let $G=G_k$ and $[G,f]$ be the subgroup generated by $\setof{gf(g)^{-1}}{g\in G}$. Recall that the action of $\dis(Q)$ is the canonical left action of $[G,f]$ over $G/Fix(f)$ and $\dis(Q)\cong [G,f]/Core_G(Fix(f))$ (see \cite[Section 2]{GB}). If $f_{\gamma_1(G)}$ acts irreducibly, then 
$$[G/\gamma_1(G),f_{\gamma_1(G)}]=Im(1-f_{\gamma_1(G)})=G/\gamma_1(G).$$ 
Since $Z(K)=\Phi(K)$ and $K/Z(K)\cong G/\gamma_1(G)$ then $K$ is generated by $\setof{gf_{\gamma_2(G)}(g)^{-1}K}{g\in G}$ as well, i.e. $G/\gamma_2(G)=K=[K,f_{\gamma_2(G)}]$. The dimension of $Fix(f)\cap \gamma_2(G)$ is one, then $\gamma_2(G)\cap [G,f]$ is a non-trivial normal subgroup of $G$. The action $\rho$ has no invariant subspaces then $\gamma_2(G)\leq [G,f]$ and $Core_{G}(Fix(f))=1$ (since normal subgroups of $G$ contained in $\gamma_2(G)$ are the invariant subspace under $\rho$). Therefore $G=[G,f]\cong \dis(Q) $ and its action is transitive. Then $Q$ is connected and it has size $[G_k:Fix(f)]=4p$.
\end{proof}
\end{theorem}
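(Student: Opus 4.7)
The plan is to prove each direction by combining Proposition~\ref{dis of 4p} (which pins down the displacement group and the order of the left multiplications) with the canonical representation of a connected quandle over its displacement group \cite[Theorem 4.1]{HSV}, and with the control of relative displacement groups from Section~\ref{non-ab LSS}.

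For $(i)\Rightarrow(ii)$, I would first apply Proposition~\ref{dis of 4p} to obtain $Q\in\LSS(p,4,0_Q)$ with $\dis(Q)\cong G_k$ and $|L_a|=3$. Writing $Q\cong\Q(\dis(Q),\dis(Q)_a,\widehat{L_a})$ and using that $Q$ is faithful (noted at the opening of Section~\ref{sec 4p}) to identify $\dis(Q)_a=Fix(\widehat{L_a})$, set $f=\widehat{L_a}$. Then $|Fix(f)|=|G_k|/|Q|=2p$, and Lemma~\ref{order of aut and left mult}, applied with $G_k=[G_k,f]$, gives $|f|=3$. Since $Q/\gamma_Q$ is the unique strictly simple connected quandle of size $4$, Lemma~\ref{rep for factors} identifies it with $\aff(G_k/\gamma_1(G_k),f_{\gamma_1(G_k)})$ and so forces $f_{\gamma_1(G_k)}$ to act irreducibly on $G_k/\gamma_1(G_k)\cong\mathbb{Z}_2^2$.

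For $(ii)\Rightarrow(i)$, the plan is to prove $[G_k,f]=G_k$ and $Core_{G_k}(Fix(f))=1$: the canonical action of $[G_k,f]/Core_{G_k}(Fix(f))$ on $G_k/Fix(f)$ will then realize $Q$ as a connected quandle of size $[G_k:Fix(f)]=4p$ with $\dis(Q)\cong G_k$, and the congruence induced by $\gamma_2(G_k)$ has strictly simple factor of size $4$ and strictly simple blocks of size $p$, so $Q\in\LSS(p,4,0_Q)$ by Proposition~\ref{LSS iff} and in particular is subdirectly irreducible. For $[G_k,f]=G_k$, irreducibility of $f_{\gamma_1(G_k)}$ on the simple $\mathbb{F}_2$-module $G_k/\gamma_1(G_k)$ makes $1-f_{\gamma_1(G_k)}$ invertible, giving $G_k/\gamma_1(G_k)=[G_k/\gamma_1(G_k),f_{\gamma_1(G_k)}]$; since $\Phi(\mathcal{Q}_8)=Z(\mathcal{Q}_8)$ and $\mathcal{Q}_8/Z(\mathcal{Q}_8)\cong G_k/\gamma_1(G_k)$, the Burnside basis theorem lifts this to $\mathcal{Q}_8=[\mathcal{Q}_8,f_{\gamma_2(G_k)}]$, yielding $G_k=\gamma_2(G_k)\cdot[G_k,f]$. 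From $f^3=1$ and $p\equiv 1\pmod 3$, $f|_{\gamma_2(G_k)}$ is diagonalizable over $\mathbb{F}_p$ with cube-root-of-unity eigenvalues, and the hypothesis $|Fix(f)|=2p$ combined with the fact that $Fix(f)$ embeds into $\mathcal{Q}_8$ when $Fix(f)\cap\gamma_2(G_k)$ is trivial forces $|Fix(f)\cap\gamma_2(G_k)|=p$. The Fitting decomposition $\gamma_2(G_k)=Fix(f|_{\gamma_2(G_k)})\oplus[\gamma_2(G_k),f]$ then has one-dimensional summands, so $[\gamma_2(G_k),f]\le\gamma_2(G_k)\cap[G_k,f]$ is a nontrivial normal subgroup of $G_k$ (using that $[G_k,f]$ is normal since $f$ is a group automorphism); by irreducibility of the faithful action $\rho$ (Lemma~\ref{faithful action}(ii)) this forces $\gamma_2(G_k)\le[G_k,f]$, whence $G_k=[G_k,f]$. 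For the core, any nontrivial normal subgroup $N\trianglelefteq G_k$ with $N\le Fix(f)$ intersects $\gamma_2(G_k)$ in a $\rho$-invariant subspace: the full subspace contradicts $|Fix(f)\cap\gamma_2(G_k)|=p$, while trivial intersection combined with faithfulness of $\rho$ (equivalently $Z(G_k)=1$) forces $N=1$.

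The main obstacle I anticipate is the dimension count $|Fix(f)\cap\gamma_2(G_k)|=p$, since it requires delicately combining the numerical hypothesis $|Fix(f)|=2p$ with the eigenvalue structure of the order-$3$ automorphism $f$ on $\gamma_2(G_k)\cong\mathbb{F}_p^2$ and the embedding obstructions coming from the semidirect-product structure; once this count is secured, the lifting of generation from $G_k/\gamma_1(G_k)$ to $G_k$ via the Frattini quotient of $\mathcal{Q}_8$ and the trivialization of the core are routine consequences of the faithful irreducible action $\rho$.
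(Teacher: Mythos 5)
Your proposal is correct and takes essentially the same route as the paper: (i)$\Rightarrow$(ii) via Proposition \ref{dis of 4p}, the canonical representation over $\dis(Q)$ and Lemma \ref{order of aut and left mult}; (ii)$\Rightarrow$(i) by proving $[G_k,f]=G_k$ (lifting the irreducibility of $f_{\gamma_1(G_k)}$ through $\Phi(\mathcal{Q}_8)=Z(\mathcal{Q}_8)$, then using that $Fix(f)\cap\gamma_2(G_k)$ is one-dimensional together with the irreducibility of $\rho$) and $Core_{G_k}(Fix(f))=1$, exactly as in the paper's argument. The only step you assert without proof is the final subdirect irreducibility (Proposition \ref{LSS iff} alone gives only the LSS property), but — as in the paper, which also leaves this implicit — it follows in one line: a subdirectly reducible connected LSS quandle is a direct product of strictly simple quandles by Theorem \ref{LSS reducible}, hence affine with abelian displacement group, contradicting $\dis(Q)\cong G_k$.
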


According to the isomorphism Theorem \ref{iso theorem1} we need to compute conjugacy classes of automorphisms of $G_k$ satisfying the condition of Theorem \ref{4p iff}. Note that automorphisms of $G_k$ are uniquely determined by their action on generators, and in particular their restriction to the derived subgroup of $G_k$ can be represented by a matrix of rank $2$. A complete description of the automorphisms of such group is given in the Appendix \ref{Sec:appendix}.

\begin{theorem}\label{iso class 4p}
	Let $p>5$ be a prime such that $p=1\pmod 3$. The quandles $\mathfrak{Q}_\lambda=\mathcal{Q}(G_k,Fix(f(\lambda)),f(\lambda))$ where
\begin{equation*}
		f(\lambda)=\begin{cases}
		x \mapsto y, 	\quad y \mapsto xyz,\quad
		  z\mapsto z,\quad 		
		f(\lambda)|_{\gamma_1(G_k)} =\begin{bmatrix}
		 	1+\lambda & -k(1+\lambda)\\
		 k^2(1+\lambda) & 0	\end{bmatrix}	
				\end{cases}
		\end{equation*}
	and $\lambda=k,k^2$ are the unique quandles in $\LSS(p,4,0_Q)$ up to isomorphism. In particular, 
	they are the unique non-affine latin quandles of size $4p$.
%
\end{theorem}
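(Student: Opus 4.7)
The plan is to mirror the strategy of Theorem \ref{iso_class_invol}: reduce to a conjugacy-class count in $\aut{G_k}$ via Theorems \ref{4p iff} and \ref{iso theorem1}, then enumerate via orbit-stabilizer. Isomorphism classes of quandles in $\LSS(p,4,0_Q)$ correspond bijectively to $\aut{G_k}$-conjugacy classes of automorphisms $f$ satisfying $f^3=1$, $|Fix(f)|=2p$, irreducibility of $f_{\gamma_1(G_k)}$, and $Core_{G_k}(Fix(f))=1$ (the last being equivalent to $\dis(\mathfrak{Q})\cong G_k$). Using the description of $\aut{G_k}$ from Appendix \ref{Sec:appendix}, an automorphism is determined by its action on generators of $\mathcal{Q}_8$ together with its restriction $F$ to $\gamma_2(G_k)\cong\mathbb{Z}_p^2$, subject to the compatibility $F\rho_g F^{-1}=\rho_{f(g)}$ for every $g\in \mathcal{Q}_8$. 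Since $f_{\gamma_1(G_k)}$ must be a nontrivial order-$3$ element of $GL_2(2)\cong S_3$, composition with an inner automorphism puts the $\mathcal{Q}_8$-action into the normal form $x\mapsto y$, $y\mapsto xyz$, $z\mapsto z$ of the statement.

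The compatibility relations $F\rho_x F^{-1}=\rho_y$ and $F\rho_y F^{-1}=\rho_{xyz}$ together with $F^3=I$ (forced by $f^3=1$) reduce to a linear system in the four entries of $F$ whose solution set is precisely the one-parameter family $F(\lambda)$ of the statement, parameterized by $\lambda\in\mathbb{F}_p$. A direct computation produces the characteristic polynomial $t^2-(1+\lambda)t+(1+\lambda)^2$; using $k^3=1$, the condition $F(\lambda)^3=I$ forces $\lambda\in\{k,k^2\}$ (the case $\lambda=1$ yields eigenvalues that are primitive $6$th roots of unity, and $\lambda=-1$ is degenerate). For each admissible $\lambda$, the matrix $F(\lambda)$ has eigenvalues $1$ and $\lambda$, so $Fix(f(\lambda))\cap\gamma_2(G_k)=\ker(F(\lambda)-I)\cong\mathbb{Z}_p$, while the $\mathcal{Q}_8$-side contributes the pointwise-fixed cosets $\{1,z\}$, yielding $|Fix(f(\lambda))|=2p$; triviality of $Core_{G_k}(Fix(f(\lambda)))$ follows from irreducibility of $\rho$, so Theorem \ref{4p iff} applies.

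Non-conjugacy of $f(k)$ and $f(k^2)$ in $\aut{G_k}$ follows from Schur's lemma: any conjugating $\psi$ must intertwine the $\mathcal{Q}_8$-action on $\gamma_2(G_k)$, hence act as a scalar on the irreducible $\mathcal{Q}_8$-module $\mathbb{F}_p^2$, and scalars cannot exchange the distinct eigenvalues $k,k^2$ of $F(k),F(k^2)$ respectively. Exhaustiveness is then verified by an orbit-stabilizer count analogous to the one in the proof of Theorem \ref{iso_class_invol}: the centralizers $C_{\aut{G_k}}(f(k))$ and $C_{\aut{G_k}}(f(k^2))$ are computed from the appendix data, and the resulting two orbit sizes are shown to sum to the total number of valid automorphisms. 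Finally, the latin statement follows because any non-affine latin quandle $Q$ of size $4p$ is connected, faithful, non-simple (simple latin quandles have prime power order), and subdirectly irreducible (the reducible case produces a direct product of strictly simple quandles, hence an affine quandle); its unique proper congruence cannot have blocks of size $4$ and factor of size $p$ (ruled out for $p>5$ by Proposition \ref{dis_4p}, and for $p=7$ by the table of $\LSS(4,7,\gamma_Q)$-quandles above, none of which are latin), so it must have blocks of size $p$ and factor the unique connected quandle of size $4$, placing it in $\LSS(p,4,0_Q)$ via Propositions \ref{ex on 2to the n p} and \ref{then q=2}. The main obstacle is the centralizer computation in $\aut{G_k}$, since the interaction between the $\mathcal{Q}_8$-action data and the $\mathbb{F}_p^2$-module data has to be tracked simultaneously; this is the step where the appendix's explicit description of $\aut{G_k}$ must be invoked with care.
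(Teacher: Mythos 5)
Your overall strategy is the paper's: reduce via Theorems \ref{4p iff} and \ref{iso theorem1} to conjugacy classes in $\aut{G_k}$, verify the two explicit automorphisms qualify, separate them, and prove exhaustiveness by an orbit--stabilizer count against the total number of admissible automorphisms, with the latin statement coming from the prime-power-size result for simple latin quandles plus the structural reductions of Section \ref{sec 4p}. However, two of your intermediate claims are wrong as stated. First, the Schur-lemma non-conjugacy argument does not work: if $\psi f(k)\psi^{-1}=f(k^2)$, then $H=\psi|_{\gamma_2(G_k)}$ intertwines $\rho$ with $\rho\circ\psi_{\gamma_2(G_k)}$ (a twist by an automorphism of $\mathcal{Q}_8$), not with $\rho$ itself, so $H$ need not be scalar -- indeed most restrictions of automorphisms of $G_k$ are not scalar. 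What \emph{is} true, and is the paper's one-line argument, is that $\gamma_2(G_k)$ is characteristic, so conjugation conjugates the restrictions, and $\det F(k)=k\neq k^2=\det F(k^2)$ (equivalently, the spectra $\{1,k\}$ and $\{1,k^2\}$ differ); replace your scalar claim by this. Second, the assertion that $F(\lambda)^3=I$ forces $\lambda\in\{k,k^2\}$ is false: the family is $F(\lambda)=(1+\lambda)F_0$, and $1+\lambda=-1$ also gives an order-$3$ matrix with eigenvalues $k,k^2$. That case is excluded not by the order condition but by $|Fix(f)|=2p$, which forces $1$ to be an eigenvalue of $F$; this is precisely how the paper's Lemma \ref{aut of Gk_2} filters the admissible matrices. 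In your write-up this slip is not fatal only because your exhaustiveness argument is the counting one, but the claim should be corrected.

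Beyond these two points, the substance of the proof -- the count of $16p$ admissible automorphisms and the centralizer order $3p(p-1)$ -- is deferred to ``the appendix data'' rather than established; those are exactly Lemma \ref{aut of Gk_2} and Lemma \ref{centralizer}, whose case-by-case verification (choices of the signs $s,l$, the one-dimensional solution space of the intertwining equations, and the $p$ choices of $w$) is where the real work of the theorem lies, so your proposal stands or falls with those computations. Your treatment of the latin statement is slightly more detailed than the paper's (in particular the explicit elimination of the $|Q/\gamma_Q|=p$ case for $p=7$ via the table of size-$28$ quandles, none of which is latin), and it is consistent with the paper's reasoning; note the paper additionally records that for $p\le 5$ all latin quandles of size $4p$ are affine, which is outside the hypothesis $p>5$ here.
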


\begin{proof}
According to Theorem \ref{iso theorem1} we need to compute the conjugacy classes of automorphisms as in Theorem \ref{4p iff}. The number of such automorphisms is $16p$ according to Lemma \ref{aut of Gk_2}. The automorphisms $f(k)$ and $f(k^2)$ are not conjugate, since their restriction to $\gamma_2(G_k)$ have different determinant. Then using Lemma \ref{centralizer} we have
\begin{displaymath}
\frac{|\aut{G_k}|}{|C_{\aut{G_k}}(f(k))|}+
\frac{|\aut{G_k}|}{|C_{\aut{G_k}}(f(k^2))|}=2\frac{24p^2(p-1)}{3p(p-1)}=16p.
\end{displaymath}
Hence $f(k), f(k^2)$ are the representative of the conjugacy classes of the desired automorphisms.

 Since simple latin quandles have prime power size, $\mathfrak{Q}_\lambda$ with $\lambda=k,k^2$ are the unique non-affine latin quandles of size $4p$ (a GAP computer search reveals that for $p\leq 5$ latin quandles of size $4p$ are affine).
\end{proof}

\subsection*{Abelian extensions of the four-elements latin quandle}\label{Sec:ab ext}

Let $Q$ be a connected quandle and $\alpha\in Con(Q)$.	Then $Q$ in an \textit{extension of $Q/\alpha$ by $[a]$} and it is isomorphic to $Q/\alpha\times_\beta [a]=(Q/\alpha\times [a],*)$ where $(a,s)*(b,t)=(a*b,\beta(a,b,s)(t))$ and $\beta:Q\times Q\times [a]\to \sym{([a])}$ is called {\it dynamical cocycle} (the permutations $\beta(a,b,s)$ satisfy suitable conditions, see \cite[Section 2]{AG}). Moreover for every $\gamma:Q\to \sym{(S)}$, $$\sigma(a,b,s)=\gamma(a*b)\beta(a,b,\gamma(a)^{-1}(s))\gamma(b)^{-1}$$ is a dynamical cocycle and $Q\times_\beta S\cong Q\times_\sigma S$.

We turn our attention to a particular family of cocycles. An {\it abelian cocycle} of $Q$ with coefficients in an abelian group $A$ is a triple $\beta=(\psi,\phi,\theta)$ where $\psi:Q\times Q\to \aut{A}$, $\phi:Q\times Q\to \mathrm{End}(A)$ and $\theta:Q\times Q\to A$ satisfy the following conditions
\begin{align*}
\psi_{a,b*c}(\theta_{b,c})+\theta_{a,b*c} & = \psi_{a*b,a*c}(\theta_{a,c})+\phi_{a*b,a*c}(\theta_{a,b})+\theta_{a*b,a*c}\\
 \phi_{a,b*c} &= \phi_{a*b,a*c}\phi_{a,b}+\psi_{a*b,a*c}\phi_{a,c} \\
 \psi_{a,b*c}\psi_{b,c} & = \psi_{a*b,a*c}\psi_{a,c},\label{cocycle}\tag{CC}\\ 
 \psi_{a,b*c}\phi_{b,c} & =  \phi_{a*b,a*c}\psi_{a,b},\\ 
  \phi_{a,a}+\psi_{a,a}&=1\\
  \theta_{a,a}&=0.
\end{align*}
%
We call such conditions {\it cocycle conditions} and the quandle $Q\times_{\beta} A$ where $\beta(a,b,s)(t)=\phi_{a,b}(s)+\psi_{a,b}(t)+\theta_{a,b}$ for every $a,b\in Q/\alpha$ and $s,t\in A$ is called {\it abelian extension of $Q$} (in the literature this construction is also known as {\it quandle modules} \cite{AG}).

If $Q$ is latin, $u\in Q$ and $\beta=(\phi,\psi,\theta)$ is an abelian cocycle  we can define $\gamma(a)=\psi_{a/u,u}^{-1}$ for every $a\in Q$ in order to construct an abelian cocycle as follows:
\begin{eqnarray*}
\sigma(u)(a,b,s)(t)&=&\gamma(a*b)\left(\phi_{a,b}\gamma(a)^{-1}(s)+\psi_{a,b}\gamma(b)^{-1}(t)+\theta_{a,b}\right)=\\
&=&\underbrace{\phi_{(a*b)/u),u}^{-1}\phi_{a,b}\psi_{a/u,u}(s)}_{\phi(u)_{a,b}(s)}+\underbrace{\psi_{(a*b)/u,u}^{-1}\psi_{a,b}\psi_{b/u,u}(t)}_{\psi(u)_{a,b}(t)} +\underbrace{\psi_{(a*b)/u,u}^{-1}(\theta_{a,b})}_{\theta(u)_{a,b}}.
\end{eqnarray*}
In particular, $Q\times_{\sigma(u)} A$ is isomorphic to $Q\times_\beta A$ and $\psi(u)_{a,u}=\psi_{u,u}$ for every $a\in Q$. We call $\sigma(u)=(\psi(u),\phi(u),\theta(u))$ a {\it $u$-normalized cocycle}. 

%
Using the same techniques developed in \cite{MeAndPetr} it can be showed that normalized cocycles with coefficients in $\mathbb{Z}_p$ have some special properties, taking also advantage from the fact that the automorphism group of $\mathbb{Z}_p$ is abelian (all the computations are straightforward manipulation of the cocycle conditions and then omitted). We define $g,f,h: Q\times Q\longrightarrow Q\times Q$ as:
$$g: (a,b)\to (u*a,u*b), \quad f: (a,b)\to (a*( b/u),a*u),\quad h:(a,b)\to ((b/(u\backslash a))* a,b).$$
 These mappings are permutations of $Q\times Q$ \cite[Section 4]{MeAndPetr}. 
\begin{proposition}\label{invariant under orb}
Let $Q$ be a latin quandle and $\beta=(\psi,\phi,\theta)$ a $u$-normalized abelian cocycle with coefficients in $\mathbb{Z}_p$. 
Then:
\begin{itemize}
\item[(i)] $\psi_{a,u}=\psi_{u,a}=\psi_{u,u}=\psi_{a,a}$;
\item[(ii)] $\psi_{k(a,b)}=\psi_{a,b}$,
\end{itemize}
for every $a,b\in Q$ and every $k\in \langle f,g,h\rangle$.
\end{proposition}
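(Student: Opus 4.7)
The plan is to exploit the commutativity of $\aut{\Z_p}\cong \Z_p^*$ together with the latin hypothesis (which makes $R_u$ and $L_u$ bijections of $Q$) and to combine the cocycle condition
\begin{equation*}
\psi_{\alpha,\beta*\gamma}\,\psi_{\beta,\gamma} \;=\; \psi_{\alpha*\beta,\alpha*\gamma}\,\psi_{\alpha,\gamma} \qquad (\ast)
\end{equation*}
with the normalization $\psi_{x,u}=\psi_{u,u}$ valid for every $x\in Q$. Each time an argument of $(\ast)$ is specialized to $u$, some factor collapses to $\psi_{u,u}$ by normalization; since $\aut{\Z_p}$ is abelian and every value of $\psi$ is a unit, such factors can be cancelled, leaving a clean identity for the remaining terms.

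For (i), I would first apply $(\ast)$ with $\alpha=\gamma=u$, obtaining $\psi_{u,\beta*u}\,\psi_{\beta,u}=\psi_{u*\beta,u}\,\psi_{u,u}$; the $u$-normalization forces $\psi_{\beta,u}=\psi_{u*\beta,u}=\psi_{u,u}$, so cancellation gives $\psi_{u,\beta*u}=\psi_{u,u}$, and the bijectivity of $R_u$ yields $\psi_{u,a}=\psi_{u,u}$ for every $a\in Q$. Next I apply $(\ast)$ with $(\alpha,\beta,\gamma)=(a,u,u)$: using $u*u=u$ and $\psi_{a,u}=\psi_{u,u}$, the identity reduces to $\psi_{a*u,a*u}=\psi_{u,u}$; a further use of the bijectivity of $R_u$ gives $\psi_{x,x}=\psi_{u,u}$ for every $x\in Q$.

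For (ii), the cases of $g$ and $f$ are immediate from $(\ast)$ and part (i). Setting $\alpha=u$ in $(\ast)$ and cancelling the $\psi_{u,u}$-factors provided by (i) gives $\psi_{\beta,\gamma}=\psi_{u*\beta,u*\gamma}$, i.e. $g$-invariance. Setting $\gamma=u$ and cancelling in the same way yields $\psi_{\alpha,\beta*u}=\psi_{\alpha*\beta,\alpha*u}$; since every element of the latin quandle can be written as $\beta*u$ and $f(\alpha,\beta*u)=(\alpha*\beta,\alpha*u)$, this is $f$-invariance on every pair.

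The main obstacle is $h$-invariance. Writing $c=u\backslash a$ and $d=b/c$, one has $a=u*c$, $b=d*c$ and $h(a,b)=(d*a,b)$. Applying $(\ast)$ with $(\alpha,\beta,\gamma)=(d,a,c)$ and with $(\alpha,\beta,\gamma)=(a,d,c)$ produces respectively
\begin{equation*}
\psi_{d*a,b}\,\psi_{d,c}=\psi_{d,a*c}\,\psi_{a,c}, \qquad \psi_{a,b}\,\psi_{d,c}=\psi_{a*d,a*c}\,\psi_{a,c},
\end{equation*}
so the target identity $\psi_{d*a,b}=\psi_{a,b}$ is equivalent to $\psi_{d,a*c}=\psi_{a*d,a*c}$. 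This does not follow from $(\ast)$ alone. To close the argument I would combine $(\ast)$ with the mixed cocycle condition $\psi_{a,b*c}\phi_{b,c}=\phi_{a*b,a*c}\psi_{a,b}$ and with the diagonal identity $\phi_{a,a}=1-\psi_{u,u}$ (which follows from (i) and $\phi_{a,a}+\psi_{a,a}=1$), and then use the self-distributive rewriting $d*a=(d*u)*b$ to recast $(d*a,b)$ in a form to which the already-established $f$- and $g$-invariance apply. The delicate point is that the auxiliary $\phi$-factors entering these manipulations must cancel; this is where the commutativity of $\aut{\Z_p}$, and hence the restriction to coefficients in $\Z_p$, is used in an essential way.
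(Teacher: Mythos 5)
Your part (i) and the $f$- and $g$-cases of part (ii) are correct and are exactly the routine specializations of the multiplicative condition $(\ast)$ at $u$ (the paper omits its proof altogether, so there is no written argument to compare with, but this much is certainly the intended computation). The problem is the $h$-case, which you explicitly leave open after reducing it to $\psi_{d,a*c}=\psi_{a*d,a*c}$: that case is the only non-trivial content of the proposition, and your closing sketch (``combine $(\ast)$ with the mixed condition and the diagonal identity, then rewrite $d*a$\dots'') is a plan, not a proof. Your suspicion that $(\ast)$ alone does not suffice is in fact correct, which shows the gap is genuine and not a one-line omission. Concretely, take $Q=\aff(\mathbb{Z}_2^2,f)$ with $f$ of order $3$, $u=0$, $p$ odd, and define $\psi_{a,b}=\lambda$ whenever $a=u$, $b=u$ or $a=b$, and $\psi_{a,b}=-\lambda$ otherwise ($\lambda\in\mathbb{Z}_p^*$). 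A finite check over all triples (using $1+f+f^2=0$ and the latin structure) shows this $\psi$ satisfies $(\ast)$ and the $u$-normalization, and it also satisfies your (i); but for $c\neq u$ one has $u\backslash(u*c)=c$ and $c*(u*c)=(1+f+f^2)(c)=u$, hence $h(u*c,c)=(u,c)$, while $\psi_{u*c,c}=-\lambda\neq\lambda=\psi_{u,c}$. So $h$-invariance is simply not a consequence of $(\ast)$ plus normalization: any correct proof must genuinely invoke the remaining cocycle conditions involving $\phi$ (and possibly $\theta$), which your argument never actually does.

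Moreover, the route you sketch for closing the gap meets a concrete obstruction you only allude to: the values of $\phi$ lie in $\mathrm{End}(\mathbb{Z}_p)$ and need not be invertible (in the paper's eventual conclusion $\phi_{a,b}=1-\lambda$, which vanishes when $\lambda=1$), so the condition $\psi_{a,b*c}\phi_{b,c}=\phi_{a*b,a*c}\psi_{a,b}$ transfers information about $\psi$ only where the relevant $\phi$-values are nonzero, and the ``cancellation of the auxiliary $\phi$-factors'' cannot be taken for granted; commutativity of $\aut{\mathbb{Z}_p}$ does not fix this. As it stands, then, the proposal establishes (i) and the $f$,$g$ part of (ii), but the $h$ part --- the step the whole statement hinges on --- is missing, and filling it requires an argument that uses the full abelian-cocycle structure rather than further manipulation of $(\ast)$.
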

Applying Proposition \ref{invariant under orb} to the unique connected quandle of size $4$, $Q=\aff(\mathbb{Z}_2^2,f)$ we have that $0$-normalized cocycles of $Q$ have the following form
\begin{equation}\label{norm}
\begin{small}
\psi=\begin{bmatrix}
\lambda&\lambda&\lambda&\lambda\\
        \lambda&\lambda& \mu & \mu\\
        \lambda& \mu &\lambda& \mu \\
        \lambda&\mu & \mu & \lambda
\end{bmatrix},\quad \phi=\begin{bmatrix}
1-\lambda&\phi_0&\phi_0&\phi_0\\
       \phi_1&1-\lambda& \phi_2& \phi_3\\
       \phi_1&\phi_3&1-\lambda&\phi_2\\
       \phi_1&\phi_2& \phi_3& 1-\lambda
\end{bmatrix}.
\end{small}
\end{equation}
where $\lambda,\mu\in \aut{\mathbb{Z}_p}$ and $\phi_i\in \mathrm{End}(\mathbb{Z}_p)$ for $0\leq i\leq 3$. Starting from \eqref{norm} and using the cocycle conditions \eqref{cocycle} we have the following: 

\begin{proposition}\label{final on ab ext}
Let $\beta=(\phi,\psi,\theta)$ be an abelian $0$-normalized cocycle of $Q=\aff(\mathbb{Z}_2^2,f)$ with coefficients in $\mathbb{Z}_p$. Then $\psi_{a,b}=\lambda$ and $\phi_{a,b}=1-\lambda$ for every $a,b\in Q$ for some $0\neq \lambda\in \mathbb{Z}_p$ . 
\end{proposition}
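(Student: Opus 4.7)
The plan is to use the cocycle conditions \eqref{cocycle} systematically to collapse the six scalar parameters $\lambda, \mu, \phi_0, \phi_1, \phi_2, \phi_3$ appearing in \eqref{norm} down to a single parameter $\lambda$. Since the coefficient ring $\mathbb{Z}_p$ is commutative, every cocycle condition becomes a scalar polynomial equation that can be checked by plugging specific elements of $Q$ into the explicit quandle table for $Q = \aff(\mathbb{Z}_2^2,f)$, where $f$ acts as a $3$-cycle on the three non-zero elements.

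First I will apply the $\psi$-identity $\psi_{a, b*c}\psi_{b,c} = \psi_{a*b, a*c}\psi_{a,c}$ to a triple that crosses between the two $\langle f,g,h\rangle$-orbits on pairs of distinct non-zero elements, for instance $(a,b,c) = (1,2,3)$ for which $b*c = 0$ and $a*b = 0$ but $a*c \neq 0$ so that exactly one $\mu$-entry appears on one side and none on the other. This yields an equation of the form $\lambda\mu = \lambda^2$, which forces $\mu = \lambda$ because $\lambda \in \aut{\mathbb{Z}_p}$ is invertible; hence $\psi \equiv \lambda$.

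With $\psi$ constant equal to $\lambda$, the identity $\psi_{a, b*c}\phi_{b,c} = \phi_{a*b, a*c}\psi_{a,b}$ collapses to $\phi_{b,c} = \phi_{L_a(b), L_a(c)}$ for every $a, b, c$, so $\phi$ is invariant under the diagonal action of $\lmlt(Q)$ on $Q \times Q$. The left multiplication group of the tetrahedral quandle is $A_4$, which is $2$-transitive on the underlying four-point set, so the diagonal action has exactly two orbits on $Q \times Q$ and $\phi$ takes at most two values. The axiom $\phi_{a,a} + \psi_{a,a} = 1$ already pins the diagonal value to $1-\lambda$; denote the common off-diagonal value by $\varphi$.

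Finally, to determine $\varphi$ I will apply the $\phi$-identity $\phi_{a, b*c} = \phi_{a*b, a*c}\phi_{a,b} + \psi_{a*b, a*c}\phi_{a,c}$ at two triples of distinct elements: one with $b*c$ still distinct from $a$ (so $\phi_{a, b*c} = \varphi$ and every $\phi$-factor on the right is off-diagonal), yielding $\varphi = \varphi(\varphi + \lambda)$; and one with $b*c = a$ (so the left-hand side becomes $1-\lambda$), yielding $1-\lambda = \varphi(\varphi + \lambda)$. Comparing the two equations immediately gives $\varphi = 1-\lambda$, which finishes the proof. The delicate step will be the first one: many natural choices of $(a,b,c)$ produce tautological scalar equations because of the symmetries already built into \eqref{norm}, and it is essential to exploit the asymmetric behaviour of the two $g$-orbits on off-diagonal pairs in $\{1,2,3\}^2$ under the map $h$ from Proposition \ref{invariant under orb} in order to force $\mu = \lambda$ from a single application of the $\psi$-cocycle identity.
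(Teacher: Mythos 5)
Your steps 2 and 3 are fine once $\psi$ is known to be constant (and they follow the same ``straightforward manipulation of the cocycle conditions'' route that the paper itself only sketches), but your step 1 contains a genuine gap, and everything after it depends on it. With the labelling implicit in your own computation (so that $2\ast 3=1\ast 2=0$ and $1\ast 3\neq 0$), the triple $(a,b,c)=(1,2,3)$ gives
$\psi_{a,b\ast c}\psi_{b,c}=\psi_{1,0}\,\psi_{2,3}=\lambda\mu$ on the left and $\psi_{a\ast b,a\ast c}\psi_{a,c}=\psi_{0,1\ast 3}\,\psi_{1,3}=\lambda\mu$ on the right: each side contains exactly one $\mu$-factor, so you obtain a tautology, not $\lambda\mu=\lambda^{2}$, contrary to your claim that one side has a $\mu$-entry and the other none.

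Worse, no choice of triple can repair this within the $\psi$-identity alone. Writing $n(a,b)=1$ when $\psi_{a,b}=\mu$ in \eqref{norm} (i.e.\ $a\neq b$ and $0\notin\{a,b\}$) and $n(a,b)=0$ otherwise, a check over all triples of the four-element quandle shows that $n(a,b\ast c)+n(b,c)$ and $n(a\ast b,a\ast c)+n(a,c)$ always differ by $0$ or $2$; the only non-tautological instances, for example $(a,b,c)=(1,0,3)$ where the left side is $\psi_{1,1}\psi_{0,3}=\lambda^{2}$ and the right side is $\psi_{3,2}\psi_{1,3}=\mu^{2}$, yield $\mu^{2}=\lambda^{2}$. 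Consequently the third condition in \eqref{cocycle} only forces $\mu=\pm\lambda$, and the assignment $\mu=-\lambda$ satisfies every instance of it, so $\mu=\lambda$ cannot follow ``from a single application of the $\psi$-cocycle identity'', nor from any number of applications of it. To exclude $\mu=-\lambda$ you must bring in the conditions coupling $\psi$ with $\phi$ (and $\theta$), e.g.\ $\psi_{a,b\ast c}\phi_{b,c}=\phi_{a\ast b,a\ast c}\psi_{a,b}$ together with $\phi_{a,a}=1-\psi_{a,a}$ and the second condition, evaluated on suitable triples; this is precisely the computation your outline skips, and until it is supplied, steps 2 and 3 (which assume $\psi\equiv\lambda$) do not apply.
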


If an abelian cocycle $\beta$ is as in Proposition \ref{final on ab ext}, then the kernel of the canonical projection $Q\times_\beta A\longrightarrow Q$ is a central congruence \cite[Proposition 7.5]{CP}. Therefore subdirectly irreducible latin quandles of size $4p$ are not abelian extensions of $\aff(\mathbb{Z}_2^2,f)$.

\section{Non connected LSS quandles}\label{non-connected}
In this section we study non connected LSS quandles. Every subset of a projection subquandle is a subquandle,  so LSS projection quandle have size 2 or 3. From now on we consider just non-projection quandles. We will denote by $\pi_Q$ the orbit decomposition with respect to the action of $\dis(Q)$.

\begin{lemma}
Let $Q$ be a finite non-connected non-projecction LSS quandle and $\alpha,\beta\in Con(Q)$. Then: 
\begin{itemize}
\item[(i)] $Q/\alpha$ is strictly simple.
\item[(ii)] $Q/\alpha$ is projection if and only if $\alpha=\pi_Q$. Moreover $Q/\pi_Q\cong \mathcal{P}_2$.
\item[(iii)]$\alpha\wedge \beta=0_Q$.
\end{itemize}
\end{lemma}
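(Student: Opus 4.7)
The plan is to mirror the proof of Lemma~\ref{lemma_1_on_HSS} from the connected case, using the LSS hypothesis in the form that any proper subquandle of $Q$ is strictly simple, and hence cannot itself contain a proper subquandle with more than one element.

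For (i), I would take distinct blocks $[a]_\alpha\neq [b]_\alpha$ of a non-trivial congruence $\alpha$ (so that some block has more than one element) and, assuming $|[a]_\alpha|\geq 2$, observe that $[a]_\alpha$ is a proper subquandle with more than one element of $Sg([a]_\alpha\cup [b]_\alpha)$. Hence $Sg([a]_\alpha\cup [b]_\alpha)$ is not strictly simple and, by LSS, cannot be a proper subquandle of $Q$; so it equals $Q$. Thus any two distinct classes of $Q/\alpha$ generate $Q/\alpha$, which is therefore strictly simple.

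For (ii), the forward direction is immediate: $\dis(Q)$ acts trivially on $Q/\pi_Q$ by the definition of orbits, so $Q/\pi_Q$ is a projection quandle; by (i) it is strictly simple, hence isomorphic to $\mathcal{P}_2$, the unique strictly simple projection quandle. Conversely, if $Q/\alpha$ is projection then $\dis(Q)$ acts trivially on it, so every $\dis(Q)$-orbit is contained in a single $\alpha$-block, giving $\pi_Q\leq \alpha$. By (i) both $Q/\alpha$ and $Q/\pi_Q$ are isomorphic to $\mathcal{P}_2$ with two blocks, and $\pi_Q\leq \alpha$ between congruences with the same number of blocks forces $\pi_Q=\alpha$.

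For (iii) I would argue by contradiction using (i) and the correspondence theorem. Suppose $\alpha\neq \beta$ are distinct nontrivial proper congruences with $\alpha\wedge\beta\neq 0_Q$. By (i) applied to $\alpha\wedge\beta$, the quotient $Q/(\alpha\wedge\beta)$ is strictly simple, hence its only congruences are trivial. So $\alpha/(\alpha\wedge\beta)$ and $\beta/(\alpha\wedge\beta)$ must each equal $0$ or $1$: the value $0$ forces $\alpha=\alpha\wedge\beta$ (so $\alpha\leq \beta$, and symmetrically $\beta\leq \alpha$, giving $\alpha=\beta$), while the value $1$ forces $\alpha=1_Q$ (respectively $\beta=1_Q$). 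Either outcome contradicts the hypotheses.

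The main obstacle I expect is justifying (i) cleanly: in the non-connected setting congruence uniformity fails, so one must rule out a pathological scenario in which $\alpha$ has one large block (an entire orbit) together with singleton blocks in the other orbit, which would otherwise produce a proper non-trivial subquandle in $Q/\alpha$. Such configurations are incompatible with left-distributivity and the LSS property: a subquandle of the form $O\cup \{b\}$ (with $O$ an orbit of size $\geq 2$) would strictly contain a proper subquandle, violating strict simplicity unless it coincides with $Q$. Verifying this short compatibility argument is where the non-connected case genuinely differs from the connected one.
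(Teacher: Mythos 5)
The genuine gap is in part (i), precisely at the point you flagged. Your main argument (which is also the paper's: its proof of (i) just re-runs Lemma~\ref{lemma_1_on_HSS}(ii)) shows that two distinct classes generate $Q/\alpha$ whenever at least one of the two blocks has more than one element; strict simplicity of $Q/\alpha$ needs this for \emph{every} pair, and pairs of singleton classes are not covered. Your proposed patch does not close this: the dangerous object is not a subquandle of $Q$ of the form $O\cup\{b\}$ (showing $Sg(O\cup\{b\})=Q$ is easy and beside the point), but a proper subquandle of $Q/\alpha$ consisting entirely of singleton classes, i.e.\ a proper subquandle $S\subsetneq Q$ that is a union of singleton $\alpha$-classes while some other class is large. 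Moreover, your claim that such configurations are incompatible with left-distributivity and the LSS property is not provable: in the quandle $Q=Q_1\cup Q_2$ of Proposition~\ref{faithful non connected}(ii), with $Q_1=\aff(\mathbb{Z}_p,-1)$ and $Q_2=\{1,-1\}$, the partition with classes $Q_1$, $\{1\}$, $\{-1\}$ is a congruence (compatibility with $\ast$ and $\backslash$ is a direct check, since every $L_a$ with $a\in Q_1$ swaps $\pm1$ and every $L_{\pm1}$ translates $Q_1$), and its factor is the three-element quandle of Proposition~\ref{medials}, which contains the proper subquandle $\{[1],[-1]\}$ and so is not strictly simple; this congruence also meets $\pi_Q$ nontrivially. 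So (i), and with it (iii), only holds under the implicit assumption — silently imported by the paper from the connected case, where congruence uniformity is available — that the congruences considered have no singleton classes; no argument of the kind you sketch can remove that hypothesis.

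Parts (ii) and (iii) are fine granted (i). Your (ii) is essentially the paper's argument (projection factor forces $\pi_Q\leq\alpha$, and both factors being $\mathcal{P}_2$ forces equality). Your (iii) is a genuinely different route: you apply (i) to $\alpha\wedge\beta$, use that finite strictly simple quandles are simple (available from the paper's description of strictly simple quandles), and invoke the correspondence theorem, whereas the paper argues directly on blocks, comparing $[a]_{\alpha\wedge\beta}$ with the strictly simple blocks of $\alpha$ and $\beta$ and using that the blocks of the meet all have the same size when its factor is connected. Your version is shorter, but it inherits the problem above through its reliance on (i), and, like the paper's statement, it must be read for distinct proper congruences.
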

\begin{proof}
(i) The proof is the same as in Lemma \ref{lemma_1_on_HSS}(ii).

(ii) By virtue of item(i), $Q/\alpha$ is projection and strictly simple, so $Q/\alpha=\mathcal{P}_2$ and $\pi_Q\leq \alpha$. Since $Q/\pi_Q\cong \mathcal{P}_2$ then $\alpha=\pi_Q$.

(iii) Let $\alpha$, $\beta$ and $\gamma=\alpha\wedge \beta\in Con(Q)$. If both the factors $Q/\alpha$, $Q/\beta$ are projection, then by item (ii) $\gamma=\alpha=\beta=\pi_Q$. Let assume that $Q/\gamma$ is connected. The blocks of $\gamma$ have all the same size and $[a]_\gamma$ is a subquandle of $[a]_\alpha$. So either $\alpha=\beta$ or $\gamma=0_Q$ since the blocks are strictly simple quandles.
\end{proof}

First we characterize the subdirectly reducible non connected LSS quandles.

\begin{proposition}
Let $Q$ be a subdirectly reducible non-connected non-projection LSS quandle. Then $Q\cong \mathcal{P}_2\times R$ where $R$ is a connected strictly simple quandle.
\end{proposition}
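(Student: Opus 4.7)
The plan is to exhibit $Q$ as a subdirect product of $\mathcal{P}_2$ with a connected strictly simple quandle $R$, and then to show that this subdirect embedding is already a bijection. Because $Q$ is non-connected, the congruence $\pi_Q$ is non-trivial, and by part (ii) of the preceding lemma $Q/\pi_Q\cong\mathcal{P}_2$. Since $Q$ is subdirectly reducible, $\pi_Q$ cannot be the least non-trivial congruence, so there exists another non-trivial $\beta\neq \pi_Q$. Part (iii) of the lemma gives $\beta\wedge\pi_Q=0_Q$, and by (i)--(ii) the quotient $R:=Q/\beta$ is strictly simple but not projection, hence connected strictly simple. Consequently the canonical map $\phi:Q\hookrightarrow \mathcal{P}_2\times R$ is an injective quandle morphism, and it suffices to check $|Q|=2|R|$.

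Write $O_0,O_1$ for the two $\dis(Q)$-orbits on $Q$ (i.e.\ the two blocks of $\pi_Q$). The key observation is that $\pi_Q\wedge\beta=0_Q$ forces every block $[a]_\beta$ to meet each $O_i$ in at most one point, so $|[a]_\beta|\leq 2$ for every $a\in Q$. I would then prove a uniformity statement: whenever $g\in\dis(Q)$ satisfies $g(a)=b$, it restricts to a bijection $[a]_\beta\to[b]_\beta$ (because $\beta$ is a congruence and $g$ is a quandle automorphism), so the size of the block of $\beta$ through a point is constant on each $\dis(Q)$-orbit.

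Since $\beta\neq 0_Q$, some block $[a]_\beta$ has size $2$, and it then necessarily contains exactly one element of $O_0$ and one of $O_1$. Uniformity within each orbit now forces every block of $\beta$ to have size $2$, so $|Q|=2|Q/\beta|=2|R|$. Combined with the injectivity of $\phi$, this makes $\phi$ a bijective quandle morphism, hence an isomorphism $Q\cong\mathcal{P}_2\times R$.

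I expect the argument to be essentially routine in light of the preceding lemma; the only step requiring genuine care is the uniformity of block sizes of $\beta$ on each $\dis(Q)$-orbit, which is precisely what rules out the \emph{a priori} pathological possibility of $\beta$ having a mixture of singleton and two-element blocks.
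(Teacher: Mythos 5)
Your argument is correct and follows essentially the same route as the paper: choose a second nontrivial congruence $\beta$ meeting $\pi_Q$ trivially, embed $Q$ subdirectly into $\mathcal{P}_2\times Q/\beta$ with $Q/\beta$ connected strictly simple, and show every $\beta$-block has exactly two elements so that the embedding is onto. Your explicit uniformity step (block size is constant on each $\dis(Q)$-orbit, and a two-element block meets both orbits) correctly fills in what the paper merely asserts; only note that the right justification is that elements of $\dis(Q)$ are products of left translations, which preserve every congruence, rather than ``$g$ is a quandle automorphism'', since a general automorphism need not fix $\beta$.
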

\begin{proof}
Let $\alpha\in Con(Q)$ such that $\alpha\wedge \pi_Q=0_Q$.
Then $Q$ embeds subdirectly into $\mathcal{P}_2\times Q/\alpha$. If $Q/\alpha\cong \mathcal{P}_2$, then $Q$ is projection. So $Q/\alpha$ is connected and strictly simple and $Q$ embeds into $\mathcal{P}_2\times Q/\alpha$. The blocks with respect to $\alpha$ have all the same size and therefore they have size $2$ since $\alpha$ is a proper congruence of $Q$. So $|Q|=2|Q/\alpha|$ and then $Q\cong \mathcal{P}_2\times Q/\alpha$.
\end{proof}

Let assume that $Q$ is subdirectly irreducible, so $\pi_Q$ is the unique congruence of $Q$. Let us denote the components of $Q$ by $Q_1, Q_2$ and their mutual actions by
\begin{displaymath}
\rho_1: Q_1\to Conj(\aut{Q_2}), \quad \rho_2:Q_2\to Conj(\aut{Q_1}),
\end{displaymath}
where $\rho_i$ are morphisms of quandles. In particular the action need to satisfy
\begin{equation}\label{conditions on actions}
\rho_1({\rho_2(a)(x)})=L_a\rho_1(x) L_a^{-1}, \quad \rho_2(\rho_1(x)(a))=L_x\rho_2(a) L_x^{-1},
\end{equation}
%
for every $x\in Q_1$ and $a\in Q_2$, see \cite[Lemma 1.18]{AG}. 

The class of \textit{2-reductive medial} quandles is the class of quandles such that $\pi_Q\leq \lambda_Q$. Such quandles are described by the following construction which can be found in \cite{Medial}: let $\setof{A_i}{i\in I}$ be a family of abelian groups and let $C=\setof{c_{i,j}\in A_j}{i,j\in I}$ such that $c_{i,i}=0$ and $A_i=\langle c_{j,i}, \, j\in I\rangle$ for every $i\in I$ (if $I$ is finite we can arrange the elements of $C$ is a matrix). Then $Q=(\bigcup_{i\in I}A_i,*)$ where $a*b=b+c_{j,i}$ whenever $a\in A_j$ and $b\in A_i$ is a 2-reductive medial quandle and all $2$-reductive medial quandles arise in this way.  

\begin{proposition}\label{medials}
Let $Q$ be a non faithful subdirectly irreducible LSS quandle. Then $\pi_Q=\lambda_Q$ and $Q$ is one of the following medial $2$-reductive quandles: 
$$ \left\lbrace\mathbb{Z}_1\cup \mathbb{Z}_2, \begin{bmatrix}
0 & 1\\
1 & 0
\end{bmatrix}\right\rbrace, \qquad \left\lbrace\mathbb{Z}_2\cup \mathbb{Z}_2, \begin{bmatrix}
0 & 1\\
1 & 0
\end{bmatrix}\right\rbrace.$$
%
\end{proposition}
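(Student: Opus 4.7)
The preceding lemma ensures that $\pi_Q$ is a congruence with $Q/\pi_Q\cong \mathcal{P}_2$ and that any two distinct proper congruences of $Q$ intersect trivially. Since $Q$ is subdirectly irreducible, $\pi_Q$ must therefore be the \emph{unique} proper congruence of $Q$. My first goal is to show $\pi_Q=\lambda_Q$. Non-faithfulness gives $\lambda_Q\neq 0_Q$, and $\lambda_Q=1_Q$ would force $L_a=L_b$ for every $a,b\in Q$, hence $a\ast b=L_a(b)=L_b(b)=b$, making $Q$ a projection quandle, against assumption. Thus $\lambda_Q=\pi_Q$; in particular $\pi_Q\leq \lambda_Q$, i.e.\ $Q$ is $2$-reductive medial.

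Once we know $\pi_Q\leq \lambda_Q$, for $a,b$ in the same orbit the equality $L_a=L_b$ combined with idempotency yields $a\ast b=L_a(b)=L_b(b)=b$, so each orbit $Q_i$ is a projection subquandle of $Q$. Because $Q$ itself is non-projection, each $Q_i$ with $|Q_i|>1$ is a \emph{proper} projection subquandle; the LSS hypothesis then forces $Q_i$ to be strictly simple, and since the only strictly simple projection quandle is $\mathcal{P}_2$, we conclude $|Q_i|\leq 2$. As $Q/\pi_Q\cong \mathcal{P}_2$ there are exactly two orbits, so the only possibilities are $|Q|=3$ and $|Q|=4$ (the case $|Q|=2$ would force $Q\cong\mathcal{P}_2$ again).

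The final step is to apply the classification of $2$-reductive medial quandles recalled before the statement: the components become abelian groups $A_i\cong \mathbb{Z}_{|Q_i|}$ with $|A_i|\leq 2$, equipped with constants $c_{j,i}\in A_i$ satisfying $c_{i,i}=0$ and $A_i=\langle c_{j,i}\rangle$. The generation condition uniquely determines each $c_{j,i}$ (trivially in $\mathbb{Z}_1$, and as the nonzero element in $\mathbb{Z}_2$), and the resulting quandles are exactly the two listed. A direct check then confirms that both examples really are non-projection, non-faithful, subdirectly irreducible LSS quandles. The only slightly delicate step is the elimination of $\lambda_Q=1_Q$ in the first paragraph; the rest of the argument reduces to bookkeeping with the explicit $2$-reductive medial construction and with the well-known fact that the only strictly simple projection quandle has size $2$.
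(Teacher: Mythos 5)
Your proof is correct and follows the same route as the paper: identify $\pi_Q$ as the unique proper nontrivial congruence, deduce $\lambda_Q=\pi_Q$ (so $Q$ is $2$-reductive medial), observe the two orbits are proper projection subquandles hence of size at most $2$ by strict simplicity, and read off the two possible constant matrices from the medial construction. The extra details you supply (ruling out $\lambda_Q=1_Q$ and pinning down the constants $c_{j,i}$ via the generation condition) are sound and only make explicit what the paper leaves implicit.
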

\begin{proof}
Let $Q$ be a non-faithful non-connected LSS quandle. Since $\pi_Q$ is the unique congruence of $Q$ then $\lambda_Q=\pi_Q$ and so $Q$ is a $2$-reductive medial quandle. The components of $Q$ are projection subquandles, so they have at most $2$ elements. If $|Q_1|=1$ and $|Q_2|=2$ then $Q$ is as in (i). If $|Q_1|=|Q_2|=2$ then $Q$ is as in (ii).
\end{proof}

\begin{lemma}\label{trivialaction}
	Let $Q=Q_1\cup Q_2$ be a finite non-connected LSS quandle. If there exists $a\in Q_2$ such that $\rho_1(b)(a)=a$, for every $b\in Q_1$, then $Q_2=\{a\}$.
\end{lemma}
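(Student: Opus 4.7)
The natural approach is to produce a concrete subquandle containing the fixed element $a$ together with all of $Q_1$, and then exploit the LSS hypothesis.

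\emph{Step 1: Build the subquandle.} Consider the set $S = Q_1 \cup \{a\}$. I would verify that $S$ is closed under the quandle operation by checking the four possible types of products:
\begin{itemize}
\item[(i)] for $x_1,x_2 \in Q_1$, $x_1 \ast x_2 \in Q_1 \subseteq S$, since $Q_1$ is a subquandle of $Q$ (being an orbit of $\dis(Q)$);
\item[(ii)] for $x \in Q_1$, $x \ast a = \rho_1(x)(a) = a \in S$, directly by the hypothesis;
\item[(iii)] for $x \in Q_1$, $a \ast x = \rho_2(a)(x) \in Q_1 \subseteq S$, since $\rho_2(a) \in \aut{Q_1}$;
\item[(iv)] $a \ast a = a \in S$.
\end{itemize}
Hence $S$ is a subquandle of $Q$ with $|S| = |Q_1| + 1$.

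\emph{Step 2: Apply LSS.} Assume, for contradiction, that $Q_2 \neq \{a\}$. Then $|S| = |Q_1|+1 < |Q_1|+|Q_2| = |Q|$, so $S$ is a \emph{proper} subquandle of $Q$. Since $Q$ is LSS, $S$ must be strictly simple, i.e.\ $S$ has no proper subquandle with more than one element. But $Q_1 \subseteq S$ is a subquandle of $S$ with $Q_1 \neq S$, because $a \notin Q_1$. As long as $|Q_1| \geq 2$, this exhibits a proper subquandle of $S$ of size at least two, contradicting the strict simplicity of $S$. Therefore $Q_2 = \{a\}$.

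\emph{Main obstacle.} The argument above is clean in the generic case $|Q_1| \geq 2$. The subtle point is the degenerate situation $|Q_1| = 1$: then $|S| = 2$ is automatically strictly simple, and the argument via $Q_1 \subsetneq S$ collapses. In this case $Q_1 = \{b\}$ is a singleton $\dis(Q)$-orbit and the hypothesis just says $\rho_1(b)(a) = a$; one would have to extract a contradiction either from the mutual action axioms \eqref{conditions on actions} (which force $\rho_1(b)$ to commute with every $\rho_2(a')|_{Q_1}$ and $\rho_2(a)$ to commute with every $L_{a'}|_{Q_1}$ for $a'\in Q_2$) or from the non-projection assumption combined with the fact that $Fix(\rho_1(b)) \cap Q_2$ is a subquandle of the strictly simple quandle $Q_2$ (hence equal to $\{a\}$ or all of $Q_2$). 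This singleton-orbit case is where the real work, if any, lies; the bulk of the lemma is just the subquandle construction of Step~1 together with the strict-simplicity constraint of Step~2.
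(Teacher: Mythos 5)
Your Steps 1 and 2 are exactly the paper's proof: it forms $Q_1\cup\{a\}$, notes it is a subquandle containing $Q_1$ as a proper subquandle, and concludes from the LSS hypothesis that it must equal $Q$; so for $|Q_1|\geq 2$ your argument is correct and is the intended one.

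As for the degenerate case you flag: it cannot be closed, because the statement itself fails when $Q_1$ is a singleton. Take $Q_2$ a connected strictly simple quandle, say $\aff(\mathbb{Z}_3,-1)$, and $Q_1=\{b\}$ with $b$ acting trivially on $Q_2$ (and $Q_2$ necessarily acting trivially on the singleton $Q_1$); this is precisely the quandle of Proposition \ref{faithful non connected}(i) with the roles of the two components interchanged. It is a finite non-connected, non-projection LSS quandle, $\rho_1(b)$ fixes every $a\in Q_2$, yet $|Q_2|=3$. So the lemma tacitly assumes $|Q_1|>1$ --- which is exactly what the paper's phrase ``contains $Q_1$ as a proper subquandle'' needs under the convention that proper subquandles have more than one element --- and this assumption holds in every place the lemma is invoked (cases (ii) and (iii) of Proposition \ref{faithful non connected}, where the acting component is a connected strictly simple quandle). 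Your suspicion that the singleton-orbit case hides real work is therefore resolved negatively: there is nothing to prove there, only an implicit hypothesis to make explicit; no appeal to the compatibility conditions \eqref{conditions on actions} can rescue the literal statement.
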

\begin{proof}
	If the action $\rho_1$ fixes $a$, then $Q_1\cup \{a\}$ is a subquandle of $Q$ which contains $Q_1$ as a proper subquandle. Then $Q=Q_1\cup \{a\}$. 
\end{proof}
\begin{proposition}\label{faithful non connected}
Let $Q=Q_1\cup Q_2$ be a faithful subdirectly irreducible LSS quandle. Then $Q$ is one of the following:
\begin{itemize}
\item[(i)] $Q_1$ is a connected strictly simple quandle and $Q_2=\{x\}$ and $x$ acts trivially on $Q_1$. 
\item[(ii)] $Q_1=\aff(\mathbb{Z}_p,-1)$, $Q_2 =\{1,-1\}\cong \mathcal{P}_2$ and  $$(\pm 1 )\ast  a=a\pm 1, \quad a\ast (\pm 1)=\mp 1$$
 for every $a\in Q_1$. 
\item[(iii)] $Q_1$ and $Q_2$ are connected strictly simple quandles of the same size. 
\end{itemize}
\end{proposition}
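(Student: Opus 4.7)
The proof proceeds by case analysis on the types of the two components. Each component $Q_i$ is a proper subquandle of $Q$, so by the LSS hypothesis it must be strictly simple; as a finite strictly simple quandle, $Q_i$ is either a singleton, the projection quandle $\mathcal{P}_2$, or a connected strictly simple quandle admitting an affine representation $\mathrm{Aff}(\mathbb{F}_{p^n},t)$ with $t\ne 1$. Faithfulness discards the cases with $|Q_1|,|Q_2|\le 2$ directly: two singletons force $Q=\mathcal{P}_2$; a singleton plus $\mathcal{P}_2$ makes every $L_a$ the identity; and two copies of $\mathcal{P}_2$ produce a collision $L_{a_i}=L_{b_j}$ in every sign pattern of the mutual actions. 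Lemma \ref{trivialaction} further discards the degenerate situation in which one of the mutual actions pointwise fixes an entire component.

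For case (i), $Q_2=\{x\}$ and $Q_1$ is connected strictly simple. Since $\aut{Q_2}$ is trivial, $\rho_1(y)(x)=x$ holds automatically and the second identity of \eqref{conditions on actions} collapses to $\rho_2(x)=L_y\rho_2(x)L_y^{-1}$ for every $y\in Q_1$; that is, $\rho_2(x)$ centralizes $\lmlt(Q_1)$ in $\sym(Q_1)$. Writing $Q_1=\mathrm{Aff}(\mathbb{F}_{p^n},t)$, the stabilizer of $0$ in $\lmlt(Q_1)$ is $\langle L_0\rangle$ and a direct computation shows it is self-normalizing, so by the standard identification $C_{\sym(Q_1)}(\lmlt(Q_1))\cong N_{\lmlt(Q_1)}(\langle L_0\rangle)/\langle L_0\rangle$ the centralizer is trivial, giving $\rho_2(x)=\mathrm{id}$.

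For case (ii), $Q_2=\mathcal{P}_2=\{c,d\}$ and $Q_1=\mathrm{Aff}(\mathbb{F}_{p^n},t)$ is connected strictly simple. Since $\aut{Q_2}=S_2$ is abelian, the quandle morphism $\rho_1:Q_1\to Conj(S_2)$ satisfies $\rho_1(x\ast y)=\rho_1(y)$, and connectedness of $Q_1$ forces $\rho_1$ to be constant; transitivity of $\dis(Q)$ on $Q_2$ forces the constant to be the swap, yielding $a\ast(\pm 1)=\mp 1$. Evaluating the second identity of \eqref{conditions on actions} at $c$ and at $d$ gives $\rho_2(d)=L_x\rho_2(c)L_x^{-1}$ and $\rho_2(c)=L_x\rho_2(d)L_x^{-1}$ for every $x\in Q_1$. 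The first forces $\rho_2(c)$ to centralize the translation subgroup $\dis(Q_1)$, so $\rho_2(c)=\tau_s$ is itself a translation, and an explicit computation gives $L_x\tau_sL_x^{-1}=\tau_{ts}$; substituting into the second yields $\tau_s=\tau_{t^2s}$. Faithfulness forces $s\ne 0$, so $t^2=1$ and hence $t=-1$. Since every additive subgroup of $\mathbb{F}_{p^n}$ is $(-1)$-invariant, strict simplicity of $Q_1$ forces $p^n=p$ prime, and the dilatation $a\mapsto s^{-1}a$ of $Q_1$ normalises the action to the form given in (ii).

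For case (iii), both $Q_1,Q_2$ are connected strictly simple and the content is $|Q_1|=|Q_2|$. For $x\in Q_1$ and $c\in Q_2$ the subquandle $Sg(x,c)$ meets each $Q_i$ in a subquandle of $Q_i$, hence in a singleton or in all of $Q_i$; the LSS hypothesis rules out the intermediate possibility $Sg(x,c)=Q_i\cup\{c'\}$ (else $Q_i$ would be a proper subquandle of $Sg(x,c)$ of size at least $2$), so $Sg(x,c)\in\{\{x,c_x\}\cong\mathcal{P}_2,\,Q\}$, the pair case being equivalent to $\rho_1(x)(c_x)=c_x$ and $\rho_2(c_x)(x)=x$. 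An application of Lemma \ref{trivialaction} together with the cocycle conditions shows that the subquandle $\{y\in Q_1:\rho_1(y)=\mathrm{id}\}$ of $Q_1$ is empty, so each $\rho_1(x)$ is a non-identity automorphism of a strictly simple connected quandle and hence has at most one fixed point; this gives uniqueness of $c_x$ and defines a partial map $\psi:Q_1\to Q_2$. The main obstacle is then to show that $\psi$ is everywhere defined and bijective: this requires using subdirect irreducibility to rule out the case in which $\rho_1(x)$ is a fixed-point-free translation for some $x$, and pairing it with the symmetric construction $\psi':Q_2\to Q_1$ to produce an inverse, forcing $|Q_1|=|Q_2|$.
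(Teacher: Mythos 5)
Your preliminary reductions and cases (i)--(ii) are sound and run parallel to the paper's argument (the paper kills the all-projection configurations by noting $\lmlt(Q)$ would be abelian, and in (i)--(ii) it concludes triviality of the commuting automorphism directly from $fL_af^{-1}=L_{f(a)}$ and faithfulness, where you instead invoke the centralizer-of-a-transitive-group identification -- both work; your remark that a singleton plus $\mathcal{P}_2$ ``makes every $L_a$ the identity'' is not literally true, since the singleton may act by the swap, but faithfulness still fails because the two projection elements have equal left translations, so this is only a slip of wording).

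The genuine gap is in case (iii), which is precisely where the content of the proposition lies: you never actually prove $|Q_1|=|Q_2|$. You set up the partial map $\psi$ sending $x$ to the unique fixed point of $\rho_1(x)$ and then state that ``the main obstacle is to show that $\psi$ is everywhere defined and bijective,'' deferring it to an unproved appeal to subdirect irreducibility and to an unproved ``pairing'' with $\psi'$; neither step is carried out, and the hinted ingredients are not the right ones (ruling out a fixed-point-free $\rho_1(x)$ does not need subdirect irreducibility: if some $\rho_1(x)$ were a nontrivial translation, then since translations form an abelian normal subgroup of $\Aut{Q_2}$ all values of $\rho_1$ would be translations, so the image would be a connected subquandle of the projection quandle $Conj(\dis(Q_2))$, forcing $\rho_1$ constant, which contradicts the injectivity of $\rho_1$ you must first establish -- and you also never exclude the constant non-identity case). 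The bijectivity claim likewise needs an argument, e.g.\ that each fiber $\{y\in Q_1 : y\ast c=c\}$ is a subquandle of $Q_1$, hence by strict simplicity and Lemma \ref{trivialaction} has at most one element, giving injectivity of $\psi$ and, symmetrically, of $\psi'$. The paper avoids all of this: after showing both mutual actions are non-constant (hence injective, by simplicity of the components) it quotes \cite[Proposition 3.5]{Principal}, that connected subquandles of $\Aut{R}$ for $R$ connected strictly simple have size exactly $|R|$, which yields $|Q_1|=|Q_2|$ at once. As written, your case (iii) is a plan rather than a proof.
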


\begin{proof}
 If the components of $Q$ are both projection quandles, then they have size 1 or 2 and so $\lmlt(Q)$ is abelian. The center of $\lmlt(Q)$ of faithful quandles is trivial, therefore $Q$ is not faithful. So we have the following cases:

(i) Let $Q_1=R$ and $Q_2=\{x\}$ where $R$ is a connected strictly simple quandle. So $L_x|_{R}=f\in \aut{R}$ and
\begin{displaymath}
a\ast (x\ast b)=L_af(b)=(a*x)\ast (a*b)=x\ast (a*b)=f L_a(b),
\end{displaymath}
i.e. $f$ centralizes $L_a$ for every $a\in R$. Then $f=1$. 

(ii) Let $Q=\{x_1,x_2\}\cup R$ where $R=\aff(\mathbb{Z}_p^n,f)$ is a connected strictly simple quandle. So the image of $\rho_2$ has size $1$ ($R$ is simple and $\aut{Q_2}\cong \mathbb{Z}_2$). By virtue of Lemma \ref{trivialaction}, $\rho_2(a)(x_1)=x_2$ and $\rho_2(a)(x_2)=x_1$ for every $a\in Q_2$, and the action of $Q_1$ on $Q_2$ is not trivial. So we have:
\begin{eqnarray}
L_a L_b^{-1} (x_i\ast z)&=&L_a L_b^{-1}(x_i) *L_a L_b^{-1} (z)=x_i\ast L_a L_b^{-1}(z)\label{cond1}.\\
a\ast ( x_i  \ast b) &=& (a\ast x_i )\ast (a\ast b)=x_{j} \ast (a\ast b).\label{con2}
\end{eqnarray}
According to \eqref{cond1}, $\rho_1(x_1)$ and $\rho_1(x_2)$ commute with all the generators of $\dis(R)$. Since $\aut{R}\cong \dis(R)\rtimes \mathbb{Z}_{p^n-1}$,  $\dis(R)$ is a maximal abelian subgroup of $\aut{Q}$ then $\rho_1(x_1),\rho_1(x_2)\in \dis(R)$ i.e. $\rho_1(x_1)(a)=a+\lambda$ and $\rho_1(x_2)(a)=a+\sigma$ for every $a\in R$. Equation \eqref{con2} states that $\rho_1(x_2)=L_a \rho_1(x_1) L_a^{-1}$ for all $a\in R$, therefore $\sigma=f(\lambda)$. Moreover
\begin{eqnarray*}
f^2(\lambda)=L_0^2 (x_1\ast 0)=\rho_2(0)^2(x_1)\ast 0=x_1\ast 0=\lambda.
\end{eqnarray*}
Then $f=-1$, $n=1$ and $f(\lambda)=-\lambda=\sigma\neq 0$.
Let $\{\pm 1\} \cup_{\lambda} R$ the quandle described above with $\pm 1 * a=a\pm \lambda$ and $a*(\pm 1)=\mp 1$ for every $a\in R$. The map:
\begin{displaymath}
\phi: \{\pm 1\} \cup_{\lambda} R\longrightarrow \{\pm 1\} \cup_{1} R, \quad \pm 1 \mapsto \pm 1,\,  a\mapsto a\lambda^{-1} 
\end{displaymath}
for every $a\in R$ is an isomorphism.

(iii) Assume that both $Q_1=R$ and $Q_2=S$ are connected strictly simple quandles. So we have morphisms of quandles $\rho:R\to \aut{S}$ and $\sigma: S\to \aut{R}$. Since both $S$ and $R$ are strictly simple the image of $\rho$ and $\sigma$ is either a trivial quandle or they are injective. Assume that one of the two action is constant and given by $f$, then
\begin{displaymath}
a\ast (x\ast b)=L_a f(b)=(a\ast x)\ast (a\ast b)= f L_a(b),
\end{displaymath}
for every $a,b\in R$ and $x\in S$. So $f=1$ and then by Lemma \ref{trivialaction} $|R|=1$. So we can assume that both the mappings $\sigma$ and $\rho$ are injective.\\
According to \cite[Proposition 3.5]{Principal} the connected subquandles of $\aut{R}$ have size equal to the size of $R$. Therefore $|S|=|R|$. 
 \end{proof}

\begin{example}
Let $Q_1=\aff(\mathbb{Z}_p^n, f)$ and $Q_2= \aff(\mathbb{Z}_p^n, g)$ be strictly simple quandles. The actions defined by $\rho_1(y)(a)=(1-f)(y)+f(a)$ and $\rho_2(b)(y)=(1-g)(b)+g(y)$ for every $x,y\in Q_1$ and every $a,b\in Q_2$ satisfy \eqref{conditions on actions}. Then for every pair of strictly simple quandles we can construct a quandle $Q=Q_1\cup Q_2$ as in Proposition \ref{faithful non connected}(iii).
\end{example}

\section{Appendix}\label{Sec:appendix}

\subsection*{Matrices and representations}



%

We collect some basic results about matrices of rank $2$ and faithful representations.

\begin{lemma}\label{eigenvalues of A}
	Let $A\in GL_2(p)$ with prime order $q>2$ where $q$ divides $p^2-1$. 
	\begin{itemize}
		\item[(i)] $A$ is diagonalizable if and only if $q$ divides $p-1$.
		\item[(ii)] $A$ has no eigenvalues if and only if $q$ divides $p+1$.
	\end{itemize}
	If $q=2$ then $A$ is diagonalizable.
\end{lemma}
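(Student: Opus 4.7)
The plan is to analyze the minimal polynomial of $A$, exploiting that it must divide $x^q-1$, which is separable over $\mathbb{F}_p$ since $q\mid p^2-1$ implies $q\neq p$. Consequently $A$ is diagonalizable over $\overline{\mathbb{F}_p}$, with both eigenvalues being $q$-th roots of unity; moreover at least one eigenvalue is a primitive $q$-th root, since otherwise every eigenvalue would equal $1$ and the minimal polynomial would be $x-1$, forcing $A=I$ and contradicting $|A|=q>1$. So the entire question reduces to whether these eigenvalues lie in $\mathbb{F}_p$, which in turn is equivalent to whether $\mathbb{F}_p^\ast$ contains an element of order $q$, i.e.\ whether $q\mid p-1$.

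For (i), one direction is immediate: if $A$ is diagonalizable over $\mathbb{F}_p$, then its eigenvalues lie in $\mathbb{F}_p^\ast$, and the existence of a primitive $q$-th root among them yields $q\mid p-1$. Conversely, if $q\mid p-1$, then all $q$-th roots of unity lie in $\mathbb{F}_p$, hence both eigenvalues of $A$ are in $\mathbb{F}_p$, and together with separability of the minimal polynomial this makes $A$ diagonalizable over $\mathbb{F}_p$.

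For (ii), I would use that $q$ is an odd prime dividing $(p-1)(p+1)$, so $q$ divides exactly one of the two factors. Combined with (i), if $q\mid p+1$ then $q\nmid p-1$ and $A$ cannot be diagonalizable over $\mathbb{F}_p$. The only remaining subtlety, which I regard as the main point requiring care, is to rule out the ``mixed'' possibility that $A$ has exactly one eigenvalue in $\mathbb{F}_p$. Here I would appeal to Galois-stability: the multiset $\{\lambda_1,\lambda_2\}$ is preserved by the Frobenius $x\mapsto x^p$ because the characteristic polynomial of $A$ has coefficients in $\mathbb{F}_p$; so if $\lambda_1\in\mathbb{F}_p$ and $\lambda_2\notin\mathbb{F}_p$, then $\lambda_2^p\in\{\lambda_1,\lambda_2\}$ forces $\lambda_2^p=\lambda_2$, i.e.\ $\lambda_2\in\mathbb{F}_p$, a contradiction. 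Hence neither eigenvalue lies in $\mathbb{F}_p$. The converse direction of (ii) is the contrapositive of this argument via (i).

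Finally, for the $q=2$ case, note that $2\mid p^2-1$ forces $p$ to be odd, so $-1\in\mathbb{F}_p$ and both $2$-nd roots of unity already lie in $\mathbb{F}_p$. The separability argument above then shows $A$ is diagonalizable over $\mathbb{F}_p$. Overall the argument is fairly routine once the separability of $x^q-1$ and the Galois-closure observation are in place.
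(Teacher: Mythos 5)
Your proof is correct, but it follows a genuinely different route from the paper. The paper argues by counting: the cyclic group $\langle A\rangle$ of order $q$ acts on the $p+1$ one-dimensional subspaces of $\mathbb{Z}_p^2$, so $p+1=e+nq$ where $e$ is the number of eigenspaces; reducing mod $q$ and using that an odd prime $q$ divides exactly one of $p-1$, $p+1$, one reads off immediately that $e=1$ is impossible and that $e=0$ exactly when $q\mid p+1$, while $e\neq 0$ (diagonalizability) occurs exactly when $q\mid p-1$; the $q=2$ case is settled by inspecting trace and determinant. You instead work with the minimal polynomial: it divides the separable polynomial $x^q-1$ (as $q\neq p$), so $A$ is diagonalizable over $\overline{\mathbb{F}_p}$ with $q$-th roots of unity as eigenvalues, at least one primitive, and the whole question becomes rationality of these eigenvalues, i.e.\ whether $q\mid p-1$; the ``mixed'' case of exactly one rational eigenvalue is excluded by Frobenius-stability of the root set (your step ``$\lambda_2^p\in\{\lambda_1,\lambda_2\}$ forces $\lambda_2^p=\lambda_2$'' silently uses injectivity of Frobenius, which is fine but worth a word). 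Each approach has its merits: the paper's orbit count is shorter and tailored to the $2\times 2$ situation, disposing of the ``one eigenvalue'' configuration without any Galois-theoretic input, whereas your argument is the more standard structural one, makes the role of separability and of the dichotomy $q\mid p-1$ versus $q\mid p+1$ transparent, and extends with no change to higher-dimensional analogues. Your treatment of $q=2$ (oddness of $p$ from $2\mid p^2-1$, then separability of $x^2-1$) is also sound and slightly cleaner than the paper's explicit trace/determinant check.
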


\begin{proof}
	We have that, either $q$ divides $p-1$ or $q$ divides $p+1$. Considering the action of $A$ on subspaces of dimension one of $\mathbb{Z}_p^2$ we have $p+1=e+nq$, where $e$ is the number of eigenspaces of $A$. So $e$ can not be $1$, and since $q> 2$, $e=0$ if and only if $q$ divides $p+1$ and $e=2$ if and only if $q$ divides $p-1$.\\
	If the order of $A$ is $2$ then either $A$ is diagonal or it has trace zero and determinand $-1$. So the eigenvectors of $A$ are $1$ and $-1$ and $A$ is diagonalizable.
\end{proof}

Recall that a {\it Singer cycle} is a maximal cyclic irreducible subgroup of $GL_n(p)$ and it has size $p^n-1$.

\begin{lemma}\label{normalizer of matrices} \cite[Theorem 2.3.5]{256}
	Let $A\in GL_2(p)$.
	\begin{itemize}
		\item[(i)] If $A$ is diagonalizable but not scalar then $C_{GL_2(p)}(A)$ is the subgroups of diagonal matrices.
		
		\item[(ii)] If $A$ is irreducible then $C_{GL_2(p)}(A)$ is a Singer cycle and the normalizer is isomorphic to $C_{GL_2(p)}(A)\rtimes_{\rho} \mathbb{Z}_2$ and the action $\rho$ is by $p$-powering.
	\end{itemize}
\end{lemma}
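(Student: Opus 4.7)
For part (i), the plan is a direct matrix computation in an adapted basis. After conjugating, assume $A=\mathrm{diag}(\lambda,\mu)$ with $\lambda\neq \mu$; conjugation does not change the isomorphism type of the centralizer, and if the statement holds for the conjugate it holds for $A$. Then writing $B=\m{a}{b}{c}{d}$ and computing $AB$ versus $BA$ gives $(\lambda-\mu)b=(\mu-\lambda)c=0$, which forces $b=c=0$. Hence $C_{GL_2(p)}(A)$ consists exactly of the invertible diagonal matrices in the chosen basis.

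For part (ii), the strategy is to identify the commutative subalgebra $\mathbb{F}_p[A]\subseteq M_2(\mathbb{F}_p)$ with the field $\mathbb{F}_{p^2}$. Since $A$ is irreducible its minimal polynomial equals its characteristic polynomial and is an irreducible quadratic over $\mathbb{F}_p$, so $\mathbb{F}_p[A]\cong \mathbb{F}_p[t]/(m_A(t))\cong \mathbb{F}_{p^2}$. First I would show that the centralizer in the matrix algebra is exactly $\mathbb{F}_p[A]$: any matrix commuting with $A$ preserves the $\mathbb{F}_p[A]$-module structure on $\mathbb{F}_p^2$, and because the minimal polynomial equals the characteristic polynomial the module is cyclic (pick any nonzero vector $v$, then $\{v,Av\}$ is a basis), so endomorphisms commuting with $A$ are multiplications by elements of $\mathbb{F}_p[A]$. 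Invertibility then gives $C_{GL_2(p)}(A)=\mathbb{F}_p[A]^\ast\cong \mathbb{F}_{p^2}^\ast$, which is cyclic of order $p^2-1$ and, being a maximal cyclic irreducible subgroup, is a Singer cycle.

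For the normalizer, let $C=C_{GL_2(p)}(A)$ and $N=N_{GL_2(p)}(C)$. Conjugation induces a morphism $N/C\to \aut{C}$. Since the scalar matrices $\mathbb{F}_p^\ast$ are the center of $GL_2(p)$ and are contained in $C$, the image actually lands in the group of automorphisms of $C\cong \mathbb{F}_{p^2}^\ast$ that restrict to the identity on $\mathbb{F}_p^\ast$ and preserve the field structure inherited from $\mathbb{F}_p[A]$; this group is the Galois group $\mathrm{Gal}(\mathbb{F}_{p^2}/\mathbb{F}_p)\cong \mathbb{Z}_2$, generated by Frobenius $x\mapsto x^p$. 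Hence $[N:C]\leq 2$. To realize the non-trivial coset I would exhibit Frobenius explicitly: viewing $\mathbb{F}_p^2$ as $\mathbb{F}_{p^2}$ via the cyclic vector above, the map $F:x\mapsto x^p$ is $\mathbb{F}_p$-linear and satisfies $F\cdot a\cdot F^{-1}=a^p$ for every $a\in \mathbb{F}_{p^2}^\ast=C$, so $F\in N\setminus C$ and the action on $C$ is $p$-powering. Moreover $F^2=1$ since $\mathbb{F}_{p^2}$ has degree $2$ over $\mathbb{F}_p$, yielding the split extension $N=C\rtimes_\rho \mathbb{Z}_2$.

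The only real step with content is identifying $C_{M_2(\mathbb{F}_p)}(A)=\mathbb{F}_p[A]$; everything else is either a one-line calculation (part (i)) or a standard Galois/Frobenius identification (for the normalizer). The bookkeeping one has to be careful with is checking that the Frobenius, defined through the isomorphism $\mathbb{F}_p^2\cong \mathbb{F}_{p^2}$ depending on the choice of cyclic vector, indeed lies in $GL_2(\mathbb{F}_p)$ and has order $2$, and that it acts by $p$-th power on $C$ under the chosen identification.
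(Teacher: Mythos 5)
Your argument is correct. Note that the paper does not prove this lemma at all: it is imported verbatim from \cite[Theorem 2.3.5]{256}, so there is no in-paper proof to compare with; your route is the standard one. Part (i) is the right one-line eigenbasis computation. For part (ii), the key step you correctly isolate is $C_{M_2(\mathbb{F}_p)}(A)=\mathbb{F}_p[A]\cong\mathbb{F}_{p^2}$ via the cyclic-vector argument, which gives $C_{GL_2(p)}(A)\cong\mathbb{F}_{p^2}^\ast$, and the Frobenius map realizes the nontrivial coset of the normalizer with the $p$-powering action, with $[N:C]\le 2$ coming from the Galois group. The only assertion I would ask you to justify in one more sentence is why an element $g$ of the normalizer induces a \emph{field} automorphism of $C\cup\{0\}$ rather than merely a group automorphism of $C$: this holds because $\mathbb{F}_p[A]=C\cup\{0\}$ (every nonzero element of the field $\mathbb{F}_p[A]$ is invertible), so $g$ stabilizes the subalgebra $\mathbb{F}_p[A]$ of $M_2(\mathbb{F}_p)$, and conjugation is an algebra automorphism fixing the central scalars $\mathbb{F}_p$; hence the induced map lies in $\mathrm{Gal}(\mathbb{F}_{p^2}/\mathbb{F}_p)\cong\mathbb{Z}_2$. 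With that sentence added, the proof is complete, including the splitting $N=C\rtimes\langle F\rangle$ since the Frobenius has order $2$ and lies outside $C$.
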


\begin{lemma}\label{faithful action}\cite[Theorem 2.3.5]{256}
	Let $G$ be a group and $\rho:G\longrightarrow GL_2(p)$ be a faithful representation.
	\begin{itemize}
		\item[(i)]  If the action $\rho_g$ is irreducible for some $g\in G$ then $C_G(g)$ is cyclic. In particular if $G$ is abelian then it is cyclic. 
		
		\item[(ii)] If $G$ is not abelian then $\rho$ is irreducible.
		
	\end{itemize}
\end{lemma}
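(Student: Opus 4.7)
The plan is to leverage the centralizer description already recorded in Lemma \ref{normalizer of matrices}, and to handle non-abelianness via a Maschke-style complete reducibility argument.

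For part (i), assume $\rho_g$ is irreducible on $\mathbb{F}_p^2$. By Lemma \ref{normalizer of matrices}(ii) the centralizer $C_{GL_2(p)}(\rho_g)$ is a Singer cycle, hence cyclic of order $p^2-1$. The key observation is that the restriction of $\rho$ to $C_G(g)$ lands inside $C_{GL_2(p)}(\rho_g)$: if $h$ commutes with $g$ in $G$, then $\rho(h)$ commutes with $\rho(g)$. Since $\rho$ is faithful, $\rho|_{C_G(g)}$ is an embedding of $C_G(g)$ into a cyclic group, proving $C_G(g)$ cyclic. For the ``in particular'' clause, observe that $G$ abelian forces $C_G(g)=G$, so $G$ itself is cyclic.

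For part (ii), argue the contrapositive: if $\rho$ is reducible, then $G$ is abelian. Reducibility yields a $\rho(G)$-invariant line, and in a suitable basis every $\rho(h)$ is upper triangular. In all the contexts where the lemma is invoked in the paper (for instance $G=\mathbb{Z}_q^n$ with $q\neq p$, or $G=\mathcal Q_8$ with $p$ odd, as in the proof of Proposition \ref{dis of 4p}), the order $|G|$ is coprime to $p$, so Maschke's theorem applies and the representation is completely reducible. Upper triangular and completely reducible together force simultaneous diagonalisability, so $\rho(G)$ is contained in the diagonal torus, which is abelian; by faithfulness $G$ itself is abelian, contradiction.

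The main obstacle is that the bare reducibility of $\rho$ does not by itself preclude $\rho(G)$ from being non-abelian (the Borel subgroup of upper triangular matrices is non-abelian), so part (ii) really rests on having $p\nmid|G|$, which is the implicit hypothesis of the standard reference [Theorem 2.3.5]{256} cited for this result and which is satisfied in every application made in the paper. Once that coprimality is built into the ambient setting, the Maschke reduction is entirely routine and there are no further subtleties.
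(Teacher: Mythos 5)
The paper never proves this lemma --- it is quoted directly from \cite[Theorem 2.3.5]{256} with no argument given --- so your proof has to stand on its own, and for the most part it does. Your part (i) is the natural argument and is complete: any $h\in C_G(g)$ has $\rho(h)$ commuting with the irreducible $\rho_g$, so $\rho(C_G(g))\leq C_{GL_2(p)}(\rho_g)$, which is a Singer cycle (Lemma \ref{normalizer of matrices}(ii)) and hence cyclic of order $p^2-1$; faithfulness transfers cyclicity back to $C_G(g)$, and the ``in particular'' clause follows because $C_G(g)=G$ when $G$ is abelian.

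For part (ii) you are right to flag that the bare statement is not literally true for an arbitrary group: the Borel subgroup of upper triangular matrices gives a faithful, reducible, non-abelian representation, so a hypothesis such as $G$ finite with $p\nmid|G|$ is genuinely needed and is implicit in the cited source. Under that hypothesis your argument is correct: reducibility gives an invariant line, Maschke gives an invariant complement, and in the adapted basis $\rho(G)$ lies in the diagonal torus, so $\rho(G)$ and hence (by faithfulness) $G$ is abelian, contradicting the assumption. This coprimality does hold in every place the paper invokes the lemma ($G\cong\mathbb{Z}_q^n$ with $q\mid p^2-1$ in Proposition \ref{ex on 2to the n p}, and $G\cong\mathcal{Q}_8$ with $p$ odd in the discussion around Proposition \ref{dis of 4p}), so the only caveat is that what you prove is the lemma with the $p'$-hypothesis made explicit rather than the unqualified statement printed in the paper --- which is the correct resolution, not a defect of your argument.
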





%
\subsection*{The group $G_{p,q}$} \label{Sec:appendix2}

Let us investigate the properties of the group which is relevant for the contents of Section \ref{Sec:pq}.

%

\begin{proposition}\label{uniqueness of the group}
	Let $p,q$ be primes such that $p^2=1\pmod q$. Then there exists a unique non-trivial semidirect product $G_{p,q}=\mathbb{Z}_p^2\rtimes_{\rho} \mathbb{Z}_q$ with $det(\rho(1))=1$ and $|\rho(1)|=q$ up to isomorphism. In particular $Z(G_{p,q})=1$ and $\gamma_1(G_{p,q})=\mathbb{Z}_p^2\times \{0\}$.
\end{proposition}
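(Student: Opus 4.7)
My plan is to reduce the uniqueness statement to a conjugacy problem for cyclic subgroups of $SL_2(p)$ inside $GL_2(p)$. First I would observe that in any $G=\mathbb{Z}_p^2\rtimes_{\rho}\mathbb{Z}_q$ of the stated form (with $p\neq q$), the subgroup $\mathbb{Z}_p^2\times\{0\}$ is the unique, hence characteristic, Sylow $p$-subgroup. Consequently, any isomorphism $\varphi\colon G\to G'$ between two such semidirect products restricts to some $\alpha\in GL_2(p)$ on $\mathbb{Z}_p^2\times\{0\}$ and induces $\beta\in \aut{\mathbb{Z}_q}$ on the quotient; comparing $\varphi\bigl((0,1)(w,0)(0,1)^{-1}\bigr)$ two ways forces $\alpha\,\rho(1)\,\alpha^{-1}=\rho'(1)^{\beta(1)}$, so $\langle\rho(1)\rangle$ and $\langle\rho'(1)\rangle$ are $GL_2(p)$-conjugate. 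Conversely, such a conjugacy produces an explicit isomorphism $G\to G'$, so uniqueness of $G_{p,q}$ amounts to the existence of a single $GL_2(p)$-conjugacy class of subgroups of order $q$ in $SL_2(p)$.

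Next I would establish this single conjugacy class via Lemma \ref{eigenvalues of A}, splitting on whether $q\mid p-1$ or $q\mid p+1$ (disjoint cases when $q>2$). If $q\mid p-1$, any generator $A$ is conjugate to $\mathrm{diag}(\lambda,\lambda^{-1})$ for some $\lambda\in \mathbb{Z}_p^*$ of order $q$, and all such subgroups coincide with the unique order-$q$ subgroup of a fixed split torus (using that $\mathbb{Z}_p^*$ has a unique subgroup of order $q$ and that all split tori are $GL_2(p)$-conjugate). If $q\mid p+1$ and $q>2$, then $A$ is irreducible and lies in a Singer cycle; Lemma \ref{normalizer of matrices} gives that all Singer cycles are $GL_2(p)$-conjugate, each contains a unique order-$q$ subgroup, and the powers of a fixed $A$ realise every irreducible degree-$2$ factor of the $q$-th cyclotomic polynomial over $\mathbb{Z}_p$ (Galois orbits under Frobenius on primitive $q$-th roots of unity have size $2$, as $p\equiv -1\pmod q$), yielding again a single conjugacy class. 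The case $q=2$ forces $\rho(1)=-I$ and is immediate.

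Finally I would verify the structural claims by the standard semidirect-product commutator identity. Writing $(v,k)(w,l)=(v+\rho(k)w,\,k+l)$ in $G_{p,q}$, direct expansion gives
\[
[(v,k),(w,l)]=\bigl((1-\rho(l))v+(\rho(k)-1)w,\,0\bigr),
\]
so $\gamma_1(G_{p,q})\leq \mathbb{Z}_p^2\times\{0\}$. The case analysis guarantees that $\rho(k)-I$ is invertible for every $k\neq 0$ (no eigenvalue equals $1$), so setting $w=0$, $l=1$ and varying $v$ gives equality. For the centre, $(v,k)\in Z(G_{p,q})$ forces $\rho(k)=I$, whence $k=0$ by faithfulness of $\rho$, and then $v\in Fix(\rho(1))=0$, so $Z(G_{p,q})=1$.

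The main obstacle is the uniqueness of the $GL_2(p)$-conjugacy class of order-$q$ subgroups of $SL_2(p)$. Once this is reduced via Lemmas \ref{eigenvalues of A} and \ref{normalizer of matrices} to the conjugacy of split or non-split maximal tori together with standard facts about $q$-th roots of unity over $\mathbb{Z}_p$, the remaining computations are routine.
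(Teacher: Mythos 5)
Your proposal is correct and takes essentially the same route as the paper: the same case split via Lemma \ref{eigenvalues of A} (diagonalizable generators when $q\mid p-1$, irreducible generators inside conjugate Singer cycles when $q\mid p+1$, cf.\ Lemma \ref{normalizer of matrices}), producing a matrix conjugating one generator to a power of the other and hence an explicit isomorphism of the semidirect products, followed by the same computations of $Z(G_{p,q})$ via $Fix(\rho(1))=0$ and of $\gamma_1(G_{p,q})$ via invertibility of $1-\rho(1)$. Your additional forward direction (an abstract isomorphism forces conjugacy, via the characteristic Sylow $p$-subgroup) is not needed for uniqueness but is harmless.
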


\begin{proof}
	
	Let $G_A=\mathbb{Z}_p^2\rtimes_{\rho} \mathbb{Z}_q$ such that $\rho(1)=A$, $det(A)=1$ and the order of $A$ is $q$ (note that $A$ is not a scalar matrix, since it has determinant $1$ and order $q>2$).  Let $A,A^\prime$ be such matrices. Using Lemma \ref{eigenvalues of A} we have two cases.\\
	If $q$ divides $p-1$, then $A$ is diagonalizable with eigenvalues $\lambda_1,\lambda_1^{-1}$ and $A^\prime$ is diagonalizable with eigenvalues $\lambda_1^k,\lambda_1^{-k}$ for some $k$.\\ 
	If $q$ divides $p+1$, $A$ and $A^\prime$ acts irreducibly, and by \cite[Theorem 2.3.3]{256}, they generate conjugate subgroups, i.e. $A^\prime$ and $A^k$ are conjugate for some $1\leq k\leq q-1$. \\
	In both cases there exists $\Phi\in GL_2(p)$ such that $\Phi A^\prime=A^k\Phi$. The map
	\begin{displaymath}
	\phi:G_{A^\prime} \longrightarrow G_A, \quad 
		\begin{cases}
	a^\prime\mapsto \Phi(a)\\
	b^\prime\mapsto \Phi(b)\\
	c^\prime\mapsto c^k
	\end{cases} 
	\end{displaymath}
	is a group isomorphism.
	
	The center of this group is $Z(G_A)=Fix(A)\times ker(\rho)=Fix(A)\times \{0\}$, since the action is faithful. The subgroup $Fix(A)$ correspond to the eigenspace of $A$ relative to $1$, which is trivial (if $A$ is diagonalizable and $\lambda_1=1$, then $det(A)=\lambda_2=1$ and then $A=I$). Moreover $\gamma_1(G_A)$ contains the image of $1-A$ which has dimension $2$, so the derived subgroup coincide with the normal summand $\mathbb{Z}_p^2\times \{0\}$.
\end{proof}
We call $G_{p,q}$ the unique group described in Proposition \ref{uniqueness of the group} which admits a presentation as follows:
\begin{equation}\label{presentation of G_A}
G_{p,q}\cong \langle a,b,c \, | \, a^p=b^p=c^q=1, \, cac^{-1}=a^\alpha b^\beta,\, cbc^{-1}=a^\gamma b^\alpha\rangle. 
\end{equation}
where $\alpha,\beta,\gamma,\delta\in \mathbb{Z}_p$ and $$A=\begin{bmatrix}
\alpha & \gamma\\
\beta &\delta
\end{bmatrix} \in GL_2(p).$$ 
\begin{proposition}\label{aut of G_A}
	The automorphisms of $G_{p,q}$ are given by
	\begin{equation}\label{formula for aut}
	h = 	\begin{cases}
	h|_{\gamma_1(G_{p,q})}=H\in GL_2(p),\quad c\mapsto  c^{\pm 1} d_h       
	\end{cases} 
	\end{equation}
	for some $d_h\in \langle a,b\rangle$ and $HAH^{-1}=A^{\pm 1}$. In particular, 
	\begin{displaymath}
	|\aut{G_{p,q}}|=
		\begin{cases}
	2p^2(p-1)^2, \, \text{if $q$ divides $p-1$}\\
	2p^2(p-1)(p+1), \, \text{if $q$ divides $p+1$}.
	\end{cases}.
	\end{displaymath}
\end{proposition}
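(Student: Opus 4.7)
My plan is to use the derived subgroup $\gamma_1(G_{p,q})=\langle a,b\rangle\cong\mathbb{Z}_p^2$ (identified in Proposition \ref{uniqueness of the group}) as the main handle, since it is characteristic and so preserved by every $h\in\aut{G_{p,q}}$. This forces $h|_{\gamma_1(G_{p,q})}=H\in GL_2(p)$. The induced automorphism of the cyclic quotient $G_{p,q}/\gamma_1(G_{p,q})\cong\mathbb{Z}_q$ sends $c\gamma_1(G_{p,q})$ to $c^u\gamma_1(G_{p,q})$ for a unique $u\in\mathbb{Z}_q^*$, so $h(c)=c^u d_h$ for some $d_h\in\gamma_1(G_{p,q})$. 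Conjugation by $d_h$ is trivial on the abelian group $\gamma_1(G_{p,q})$, so applying $h$ to $cvc^{-1}=A(v)$ (for $v\in\gamma_1(G_{p,q})$) reduces to $HA=A^u H$, i.e.\ $HAH^{-1}=A^u$.

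The heart of the argument is to force $u=\pm1$, and I would split cases using Lemma \ref{eigenvalues of A}. If $q\mid p-1$, then $A$ is diagonalizable with eigenvalues $\lambda,\lambda^{-1}$ of order $q$; since $HAH^{-1}$ and $A^u$ have identical eigenvalue multisets, $\{\lambda^u,\lambda^{-u}\}=\{\lambda,\lambda^{-1}\}$, and $\lambda$ having order $q$ forces $u\equiv\pm1\pmod q$. If $q\mid p+1$, then $A$ acts irreducibly on $\mathbb{F}_p^2$, and Lemma \ref{normalizer of matrices}(ii) gives $N_{GL_2(p)}(\langle A\rangle)=C_{GL_2(p)}(A)\rtimes\mathbb{Z}_2$ with the involution acting by $p$-powering. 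Since $q\mid p+1$ yields $p\equiv-1\pmod q$, every $H\in N_{GL_2(p)}(\langle A\rangle)$ satisfies $HAH^{-1}\in\{A,A^{-1}\}$, so again $u\in\{1,-1\}$.

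For the converse, given $H\in GL_2(p)$ with $HAH^{-1}=A^u$ for $u=\pm 1$ and any $d_h\in\gamma_1(G_{p,q})$, I would verify the presentation \eqref{presentation of G_A} is preserved. The relations $h(a)^p=h(b)^p=1$ are immediate in the exponent-$p$ group $\gamma_1(G_{p,q})$; the twisted conjugation relations reduce by construction to $HAH^{-1}=A^u$; and writing $\gamma_1(G_{p,q})$ additively an easy induction gives
\[
(c^u d_h)^q=c^{uq}\Bigl(\sum_{i=0}^{q-1}A^{-iu}\Bigr)(d_h).
\]
From $A^{-uq}-I=(A^{-u}-I)\sum_{i=0}^{q-1}A^{-iu}=0$ together with the invertibility of $A^{-u}-I$ (because $1$ is not an eigenvalue of $A$, since $Fix(A)=0$ by Proposition \ref{uniqueness of the group}), the sum vanishes and $h(c)^q=1$ for every $d_h$. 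Thus $h$ extends to a well-defined endomorphism; it is surjective because $H\in GL_2(p)$ and $u\neq 0$, hence an automorphism of the finite group $G_{p,q}$.

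Finally the count follows cleanly: the pairs $(H,u)$ with $HAH^{-1}=A^u$ and $u=\pm1$ exhaust $N_{GL_2(p)}(\langle A\rangle)$, whose order is $2|C_{GL_2(p)}(A)|$. By Lemma \ref{normalizer of matrices} this centralizer has order $(p-1)^2$ when $q\mid p-1$ and $p^2-1$ when $q\mid p+1$; multiplying by the $p^2$ free choices of $d_h\in\gamma_1(G_{p,q})$ gives $2p^2(p-1)^2$ and $2p^2(p-1)(p+1)$ respectively. I expect the main obstacle to be the step forcing $u=\pm1$, especially the $q\mid p+1$ case, which rests essentially on the Singer-cycle description of $N_{GL_2(p)}(\langle A\rangle)$.
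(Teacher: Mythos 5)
Your proposal is correct and follows essentially the same route as the paper: reduce an automorphism to a triple $(H,u,d_h)$ via the characteristic subgroup $\gamma_1(G_{p,q})$, force $u=\pm 1$ by splitting into the diagonalizable case ($q\mid p-1$) and the irreducible/Singer-cycle case ($q\mid p+1$) using Lemmas \ref{eigenvalues of A} and \ref{normalizer of matrices}, check $h(c)^q=1$ for all $p^2$ choices of $d_h$ via invertibility of $1-A^{\pm1}$, and count. The only cosmetic differences are your eigenvalue-multiset argument in place of the paper's explicit matrix computation (which also exhibits the anti-diagonal matrices realizing $u=-1$, a point you leave implicit but which follows since $A$ and $A^{-1}$ share eigenvalues).
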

\begin{proof}
	An automorphism of $G_{p,q}$ is uniquely determined by its image on $a,b,c$ and according to the presentation in \eqref{presentation of G_A} any automorphism $h\in \aut{G_{p,q}}$ is given by $h|_{\gamma_1(G_{p,q})}=H\in GL_2(p)$, and $h(c)=c^{u_h}d_h$ with $u_h\in \{1,\ldots,q-1\}$ and $d_h\in \mathbb{Z}_p^2$. On the other hand such a triple $(H,u_h,d_h)$ defines an automorphism of $G_{p,q}$ if and only if the following two conditions hold:
	\begin{eqnarray}
	HA=A^{u_h} H\label{matrix_eq}\\
	h(c^q)=1.
	\end{eqnarray}
	We consider two cases according to Lemma \ref{eigenvalues of A}. If $q$ divides $p-1$ then $A$ is diagonalizable, and using a basis of eigenvectors of $A$ an easy matrix computation shows that if $H$ satisfies \eqref{matrix_eq}, then either $u_h= 1$ and $H$ is diagonal or $u_h=-1$ and $H$ has zero entries on the diagonal ($A$ is not scalar). Therefore there exists $2(p-1)^2$ such matrices. \\
	If $q$ divides $p+1$, $H$ satisfies \eqref{matrix_eq} if and only if $H$ belongs to the normalizer of $A$. In such case, the action of the elements of the normalizer is by $p$-powering. Therefore, $u_h=\pm 1$, since $p=-1$ mod $q$ and the size of the normalizer is $2(p-1)(p+1)$ (see Lemma \ref{normalizer of matrices}(ii)). So $h$ is defined as in \eqref{formula for aut}. It is easy to check that:
	\begin{displaymath}
	h(c)^q=(c^{\pm 1} d_h)^q = c^q \prod_{j=0}^{q-1} \left[A^{\pm j}(d_h)\right]= \prod_{j=0}^{q-1} \left[A^{ j}(d_h)\right].
	\end{displaymath}
	Switching to additive notation, since $(1-A)\sum_{j=0}^{q-1} A^{ j}=1-A^{ q}=0$ and $1-A\in GL_2(p)$, then $\sum_{j=0}^{q-1} A^{j}=0$, i.e $h(c^q)=1$. So every mapping defined as in \eqref{formula for aut} preserves the relations between the generators of $G_{p,q}$ for all the $p^2$ choices of $d_h$.
	%
	%
	%
\end{proof}
\begin{remark}\label{existence of aut with desired prop}
	Let $f\in \aut{G_{p,q}}$ with $u_f=-1$.  
	\begin{itemize}
		\item[(i)] Normal subgroups of $G_{p,q}$ contained in the derived subgroup are the eigenspaces of $A$. If the subgroup $Fix(f)$ is normal then it is trivial. Indeed $FAF^{-1}=A^{-1}$ and if $A(x)=k x$ for $x\in Fix(f)$ and some $0\neq k\in \mathbb{Z}_p$ then 
		\begin{displaymath}
		A^{-1}(x)=A^{-1}F(x)=FA(x)=kx=A(x)
		\end{displaymath}
		and so $A^2(x)=x$. Then $A(x)=x$ since the non-trivial orbits with respect to the action of $A$ has size $q>2$. Then $x\in Z(G)=1$.
		
		\item[(ii)] If $Tr(F)=0$ and $det(F)=-1$, the $F$ has eigenvalues $1$ and $-1$ with eigenvectors $a$ and $b$. Let $H=Fix(f)=\langle a\rangle$, the mapping
		\begin{equation}\label{right mult}		
		\varphi: G/H\longrightarrow G/H, \quad gH\mapsto gf(g)^{-1} H
		\end{equation}
		is bijective. Assume that $gH=c^n b^m H$ and $hH=c^l b^s H$ and $\varphi(gH)=\varphi(hH)$. Then $n=l$ since $Fix(f_{\gamma_1(G)})=1$ and so proceeding by equivalences we have
		\begin{eqnarray*}
			b^m f(b^{-m} c^{-n})H &=& b^s f(b^{-s} c^{-n})H\\
			b^{2m} (c^{-1}d)^{-n}H &=& b^{2s} (c^{-1}d)^{-n} H\\
			(c^{-1}d)^{n} b^{2(m-s)} (c^{-1}d)^{-n} &\in & H.
		\end{eqnarray*} 
		Switching to additive notation and setting $z=2(m-s)b$ we have that $A^n(z)=ka$ for some $k\in \mathbb{N}$. Using that $FA=A^{-1}F$ we get
		\begin{displaymath}
		FA^n(z)=F(ka)=ka=A^{-n}F(z)=A^{-n}(-z),
		\end{displaymath}
		and therefore $A^{2n}(ka)=A^n(-z)=-ka$. If $k\neq 0$ then $Fix(f)$ is an eigenspace of $A$, so according to (i) $k=0$ and then $s=m$. 
	\end{itemize}
\end{remark}

\subsection*{The group $G_k$}
The group $\mathcal{Q}_8$ has the following presentation 
$$\langle x,y,z\,|\, x^2=y^2=[x,y]=z,\, z^2=[z,y]=[z,x]=1\rangle.$$
In this section we investigate the group $G_k=\mathbb{Z}_p^2\rtimes_{\rho} \mathcal{Q}_8$, where $p=1\pmod 3$ and $k\in\mathbb{Z}_p$ such that $k^3=1$. The action $\rho$ is the irreducible action of $\mathcal{Q}_8$ defined by setting
\begin{displaymath}
\rho_x=\begin{bmatrix}
0 & -1\\
1 & 0
\end{bmatrix},\quad
\rho_{y}=\begin{bmatrix}
k^2 & k\\
k & -k^2
\end{bmatrix},\quad \rho_z=\begin{bmatrix}
-1 & 0\\
0 & -1
\end{bmatrix}.
\end{displaymath}
According to \cite{Mayr} $\rho$ is the unique irreducible action of $\mathcal{Q}_8$ of degree $2$, and moreover it is a fixed-point-free action, i.e. $Fix(\rho_h)=\setof{v\in \mathbb{Z}_p^2}{\rho_h(v)=v}=0$ for every $h\in \mathcal{Q}_8$. Equivalently $1-\rho_h\in GL_2(p)$ for every $h\in \mathcal{Q}_8$. In particular $\gamma_2(G_k)=\mathbb{Z}_p^2\times \{1\}$ and $\gamma_1(G)=\mathbb{Z}_p^2\rtimes Z(\mathcal{Q}_8)$. 
\begin{proposition}
The automorphisms of $G_k$ are given by the mappings
\begin{equation}\label{aut of Gk}
f=\begin{cases} x\mapsto h v_1,\quad y\mapsto gv_2,\quad z\mapsto zw,\quad F=f|_{\gamma_2(G_k)}\in GL_2(p) \end{cases}
\end{equation}
where $g,h\in \mathcal{Q}_8$ and $v_1,v_2,w\in \mathbb{Z}_p^2$ such that
\begin{displaymath}
\begin{cases}
f_{\gamma_2(G_k)}=\begin{cases} x\mapsto h\quad y\mapsto g,\quad z\mapsto z, \quad \text{is an automorphism of } \mathcal{Q}_8 \end{cases}\\
F\rho_x=\rho_h F, \\ 
F\rho_y=\rho_g F, \\
v_1=2^{-1}(1+\rho_h)(w),\\ 
v_2=2^{-1}(1+\rho_g)(w)
\end{cases}
\end{displaymath}
 In particular $|\aut{G_k}|=24(p-1)p^2$.
\end{proposition}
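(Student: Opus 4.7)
The plan is to extract canonical data characterizing $f$ from the characteristic structure of $G_k$ and then translate the defining relations of $G_k$ into constraints on this data. First I will argue that $\gamma_2(G_k) = \mathbb{Z}_p^2$ is characteristic in $G_k$ (it is the unique Sylow $p$-subgroup, equivalently the Fitting subgroup), so any automorphism $f$ restricts to $F := f|_{\gamma_2(G_k)} \in GL_2(p)$ and descends to an automorphism $\bar f$ of $G_k/\gamma_2(G_k) \cong \mathcal{Q}_8$. Because $z$ is the unique element of order $2$ in $\mathcal{Q}_8$ one has $\bar f(z) = z$, so $f(z) = zw$ for a unique $w \in \mathbb{Z}_p^2$, and setting $h := \bar f(x)$, $g := \bar f(y)$ one writes $f(x) = hv_1$, $f(y) = gv_2$ with $v_1, v_2 \in \mathbb{Z}_p^2$.

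Next I translate each relation of $G_k$ into a condition on the tuple $(\bar f, F, w, v_1, v_2)$. Preservation of the conjugation relations $xax^{-1} = \rho_x(a)$ and $yay^{-1} = \rho_y(a)$ for $a \in \gamma_2(G_k)$ yields the intertwining conditions $F\rho_x = \rho_h F$ and $F\rho_y = \rho_g F$. Expanding $(hv_1)^2$ in the semidirect product, using $h^2 = z$ (since $\bar f \in \aut{\mathcal{Q}_8}$) together with $\rho_h^{-1} = -\rho_h$ (a consequence of $\rho_h^2 = \rho_z = -I$), one obtains $(hv_1)^2 = z(I - \rho_h)(v_1)$; comparing with $f(z) = zw$ gives $w = (I - \rho_h)(v_1)$, and inverting via $(I - \rho_h)(I + \rho_h) = I - \rho_h^2 = 2I$ yields $v_1 = 2^{-1}(I + \rho_h)(w)$, and symmetrically $v_2 = 2^{-1}(I + \rho_g)(w)$. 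The relation $z^2 = 1$ is automatic because $\rho_z = -I$, and the commutator relation $[x,y] = z$ reduces — after a short semidirect product computation exploiting $[h,g] = z$ and the identity $ghg = h$ in $\mathcal{Q}_8$ — to an identity that holds for every $w$. Conversely, any tuple satisfying these conditions patches together into an automorphism of $G_k$.

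For the enumeration, $\bar f$ ranges over $\aut{\mathcal{Q}_8}$, which has order $24$. For each $\bar f$, the uniqueness up to equivalence of the faithful $2$-dimensional representation $\rho$ of $\mathcal{Q}_8$ (the fact attributed to Mayr and already used above) implies that $g \mapsto \rho_{\bar f(g)}$ is equivalent to $\rho$, so some $F \in GL_2(p)$ realizes the intertwining relations; any two such $F$ differ by an element of $C_{GL_2(p)}(\rho(\mathcal{Q}_8))$, which by Schur's lemma together with the absolute irreducibility of $\rho$ is the scalar subgroup of order $p-1$. Finally $w$ is free in $\mathbb{Z}_p^2$, contributing $p^2$ choices, and $v_1, v_2$ are then determined. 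Multiplying gives $24 \cdot (p-1) \cdot p^2 = 24(p-1)p^2$.

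The main technical obstacle is the verification that the commutator relation $[x,y] = z$ imposes no further constraint on $w$; this is essential to produce the full $p^2$-parameter family, and its proof is a somewhat delicate manipulation relying crucially on $\rho_h^2 = \rho_g^2 = -I$ and on the relation $ghg = h$ in $\mathcal{Q}_8$ (itself a consequence of $g^2 = z$ and $[h,g] = z$). Once this is in place, the remaining steps are a routine inventory of the constraints forced by each generator relation together with the standard Schur-lemma count of intertwiners.
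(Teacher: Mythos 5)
Your proposal is correct and follows essentially the same route as the paper: restrict to the characteristic subgroup $\gamma_2(G_k)$, descend to an automorphism of $\mathcal{Q}_8$, translate the defining relations into the intertwining conditions and the formulas $v_1=2^{-1}(1+\rho_h)(w)$, $v_2=2^{-1}(1+\rho_g)(w)$, observe that the commutator relation is vacuous thanks to the anticommutation $\rho_h\rho_g=-\rho_g\rho_h$, and count by fibering $\aut{G_k}$ over $\aut{\mathcal{Q}_8}$ with fibers of size $(p-1)p^2$. The only cosmetic difference is that you obtain the $(p-1)$ intertwiners via Schur's lemma and absolute irreducibility, whereas the paper computes the centralizer of $\rho(\mathcal{Q}_8)$ directly; the content is the same.
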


\begin{proof}
If $f\in \aut{G_k}$ then $f_{\gamma_2(G_k)}\in \aut{\mathcal{Q}_8}$ and then $h,g$ do not commute. On the other hand assume that $f_{\gamma_2(G_k)}$ is an automorphism of $\mathcal{Q}_8$, then the mapping $f$ as in \eqref{aut of Gk} is an automorphism of $G_k$ if and only if it respects the relation between the generators of $G_k$. Hence for every $v\in \mathbb{Z}_p^2$ we have
\begin{equation}\label{condition on F}
\begin{cases}
f(xvx^{-1})=F\rho_x(v)=f(x)f(v)f(x)^{-1}=\rho_hF(v)\\
f(yvy^{-1})=F\rho_y(v)=f(y)f(v)f(y)^{-1}=\rho_gF(v)\\
\end{cases}.
\end{equation}
Moreover the mapping $f$ has to satisfy
\begin{equation}\label{condition 1}
\begin{cases} f(x)^2=h v_1 h v_1=h^2 \rho_h^{-1}(v_1)v_1=zw=f(z)\\
 f(y)^2=g v_2 g v_2=g^2 \rho_g^{-1}(v_2)v_2=zw=f(z)\\
 [f(x),f(y)]=v_1^{-1} h^{-1} v_2^{-1} g^{-1} h v_1 g v_2=f(z)=zw.
\end{cases}
\end{equation}
After a straightforward manipulation and using that $\rho_h$ is fixed-point-free and that $(1-\rho_h)^{-1}=2^{-1}(1+\rho_h)$ for every non central element $h\in \mathcal{Q}_8$ we have that \eqref{condition 1} is equivalent to
\begin{equation}
\begin{cases} v_1=2^{-1}(1+\rho_h)(w)\\
v_2=2^{-1}(1+\rho_g)(w)\\
w=2^{-1}\left((1-\rho_h)(1+\rho_g)+(1-\rho_g)(1+\rho_h)\right)(w).
\end{cases}
\end{equation}
Using that $\rho_g\rho_h+\rho_h\rho_g=0$ we have that the last condition is trivially satisfied by any $w\in \mathbb{Z}_p^2$. The elements $v_1$ and $v_2$ are uniquely determined by $w$.\\
The mapping
\begin{equation}\label{homo}
\aut{G_k}\longrightarrow \aut{\mathcal{Q}_8},\quad f\mapsto f_{\gamma_2(G_k)}
\end{equation}
is a surjective group homomorphism. Indeed setting $w=v_1=v_2=0$ \eqref{condition on F} has a solution for every $f_{\gamma_2(G_k)}\in \aut{\mathcal{Q}_8}$. The kernel of the homomorphism \eqref{homo} has size $(p-1)p^2$, since \eqref{condition on F} is satisfied just by diagonal matrices and $v_1, v_2$ are uniquely determined by the choice of $w$. Therefore $|\aut{G_k}|=|\aut{\mathcal{Q}_8}|(p-1)p^2$.
\end{proof}

\noindent In the following Lemma we account the number of the relevant automorphisms of $G_k$ as addressed in Theorem \ref{4p iff}. Note that the if $f\in \aut{G_k}$ has order $3$ and the induced automorphism $f_{\gamma_1(G_k)}\neq 1$ then it has also order $3$ and therefore it acts irreducibly on $G_k/\gamma_1(G_k)$.
\begin{lemma}\label{aut of Gk_2}
The subset of $\aut{G_k}$  
$$\mathcal{F}=\setof{f\in \aut{G_k}}{|f|=3, \, f_{\gamma_1(G_k)}\neq 1, \, |Fix(f)|=2p}$$ 
has size $16p$.
\end{lemma}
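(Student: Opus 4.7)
The plan is to parametrize each $f\in\aut{G_k}$ by the triple $(\phi,F,w)$ of the preceding proposition (with $\phi=f_{\gamma_2(G_k)}\in\aut{\mathcal Q_8}$, $F=f|_{\gamma_2(G_k)}$, and $w$ encoding $f(z)=zw$), and to count how many such triples satisfy the three conditions defining $\mathcal{F}$. First, $|f|=3$ together with $f_{\gamma_1(G_k)}\ne 1$ forces the induced map on $G_k/\gamma_1(G_k)\cong\mathbb Z_2^2$ to be a nontrivial order-$3$ element of $GL_2(\mathbb F_2)\cong S_3$, so $\phi$ must permute $\langle x\rangle,\langle y\rangle,\langle xy\rangle$ cyclically. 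The group $\aut{\mathcal Q_8}\cong S_4$ has exactly $8$ elements of order $3$, giving $8$ admissible $\phi$'s.

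For each such $\phi$, Schur's lemma (using that $\rho$ is absolutely irreducible) shows that the intertwiners $F$ with $F\rho_h=\rho_{\phi(h)}F$ form a one-dimensional $\mathbb F_p$-space spanned by some $F_0$. The product $F_0^3$ commutes with $\rho(\mathcal Q_8)$ and hence equals $\mu I$ for some scalar $\mu$; a direct matrix computation on a representative (for example $\phi(x)=y$, $\phi(y)=xy$ yields $F_0=\bigl(\begin{smallmatrix}1&k^2\\0&k\end{smallmatrix}\bigr)$ with $F_0^3=I$) normalizes $\mu=1$, so $(aF_0)^3=I$ has the three solutions $a\in\{1,k,k^2\}$ in $\mathbb F_p$. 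Since $F_0\ne I$ and $F_0$ is not a scalar (because $\rho\circ\phi\ne\rho$), $F_0$ is diagonalizable with two distinct eigenvalues in $\{1,k,k^2\}$; exactly $2$ of the $3$ rescalings $aF_0$ carry $1$ as an eigenvalue, while the third has eigenvalues $\{k,k^2\}$.

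The main calculation translates $f^3=1$ and $|Fix(f)|=2p$ into conditions on $w$. Using the semidirect decomposition $\aut{G_k}=A_0\ltimes\mathrm{Inn}_N$, where $A_0$ preserves $\mathcal Q_8$ setwise (equivalently $w=0$) and $\mathrm{Inn}_N$ is conjugation by $N=\gamma_2(G_k)$, one writes $f=\iota_n f_0$ with $n=-w/2$ and verifies
\[(\iota_n f_0)^3=\iota_{(I+F+F^2)(n)}\,f_0^3.\]
Since $Z(G_k)=1$, $f^3=1$ is equivalent to $F^3=I$ and $(I+F+F^2)(w)=0$. For the fixed-point count, $f|_N=F$ gives $Fix(f)\cap N=\ker{F-I}$, while $\pi(Fix(f))\subseteq Fix(\phi)=\{1,z\}$; an element $n_0 z$ is fixed precisely when $(F-I)(n_0)=w$, so $|Fix(f)|=2p$ iff $F$ has eigenvalue $1$ and $w\in\img{I-F}$. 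The identity $(I-F)(I+F+F^2)=I-F^3=0$ forces $\img{I-F}\subseteq\ker{I+F+F^2}$, and both are one-dimensional when $F\ne I$, $F^3=I$, and $F$ has eigenvalue $1$, so they coincide. For each of the $8$ $\phi$'s and the $2$ valid $F$'s, there are exactly $p$ corresponding $w$'s; the third $F$ (eigenvalues $\{k,k^2\}$) is ruled out because $\ker{F-I}=0$ forces $|Fix(f)|=2$. Summing, $|\mathcal F|=8\cdot 2\cdot p=16p$.

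The main obstacle will be the bookkeeping in the decomposition $\aut{G_k}=A_0\ltimes\mathrm{Inn}_N$ and the derivation of the cube formula for $(\iota_n f_0)^3$; once these are in place, the eigenvalue analysis and the identity $\ker{I+F+F^2}=\img{I-F}$ for $F\ne I$ with $F^3=I$ reduce the count to elementary arithmetic.
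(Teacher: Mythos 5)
Your count is correct and follows the same skeleton as the paper's ($8$ choices of induced data, $2$ admissible matrices $F$ each, $p$ choices of $w$), but the justification of each factor is genuinely different. The paper splits on the induced matrix on $G_k/\gamma_1(G_k)\cong\mathbb{Z}_2^2$ (two order-$3$ matrices, reduced to one via $f\leftrightarrow f^2$), parametrizes lifts by the signs $s,l$, finds the admissible restrictions $F$ by case-by-case matrix computations, and counts $w$ through the fixed element $zv$, i.e.\ $w=(1-F)(v)$, leaving the order-$3$ condition on the reconstructed $f$ implicit. You parametrize directly by the eight order-$3$ elements of $\aut{\mathcal{Q}_8}\cong S_4$, obtain the admissible $F$'s from Schur's lemma (one-dimensional intertwiner space, then exactly two cube-root rescalings with eigenvalue $1$), and use $\aut{G_k}=A_0\ltimes\mathrm{Inn}_N$ together with the cube formula $(\iota_nf_0)^3=\iota_{(I+F+F^2)(n)}f_0^3$ to translate $|f|=3$ and $|Fix(f)|=2p$ into $F^3=I$, $(I+F+F^2)(w)=0$ and $w\in\img{I-F}$, noting the last two define the same line. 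This buys you an argument with no case analysis and, unlike the paper, an explicit sufficiency direction (every counted triple really lies in $\mathcal{F}$), at the price of verifying the semidirect decomposition of $\aut{G_k}$ and the cube formula, both of which check out.

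The one step needing another line is the normalization $F_0^3=I$, which you verify for a single $\phi$. The total $16p$ requires that for each of the eight $\phi$'s the scalar $\mu$ with $F_0^3=\mu I$ be a cube in $\mathbb{F}_p^*$; if it failed for some $\phi$, that class would contribute nothing. It does hold, and your one computation suffices once transported: all order-$3$ elements of $\aut{\mathcal{Q}_8}$ are conjugate, and conjugating $F_0$ by an intertwiner of the conjugating automorphism leaves $\mu$ unchanged; alternatively, intertwiners for $\mathrm{inn}_c\circ\phi$ and for $\phi^2$ can be taken to be $\pm\rho_cF_0$ and $F_0^2$, whose cubes are again $\pm I$. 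With that remark added, your argument is complete; the eigenvalue bookkeeping, the identity $\ker{I+F+F^2}=\img{I-F}$, and the exclusion of the $\{k,k^2\}$ case are all correct as written.
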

\begin{proof}
If $f\in \mathcal{F}$ then the order of $f_{\gamma_1(G_k)}$ is $3$ and so it is one of the two following matrices:
\begin{displaymath}
		A=\begin{bmatrix}
	0 & 1\\
	1 & 1
	\end{bmatrix}, \qquad B=\begin{bmatrix}
	1 & 1\\
	1 & 0
	\end{bmatrix}.
	\end{displaymath}
  Let $f_{\gamma_1(G_k)}=A$, then the image of $f$ of the generators of $G_k$ is  
  	\begin{displaymath}
	f=\begin{cases}
	x \mapsto yz^s v_1, \quad 	y \mapsto xyz^l v_2,\quad
	z \mapsto z w, \quad f|_{\gamma_1(G_k)} = F
	\end{cases}
	\end{displaymath}
	for $s,l=0,1$ and $v_1,v_2,w\in \gamma_2(G_k)$.	The order of $F$ is $3$ and so its eigenvalues are $1$ and $\lambda$ and the order of $\lambda$ is $3$. So $\lambda=k^t$ where $t=1,2$ and $det(F)=Tr(F)-1=\lambda$. The matrix $F$ has to satisfy the following conditions
	\begin{equation}\label{condition on F of order 3}
	F\rho_x=(-1)^s\rho_yF, \quad F\rho_y=(-1)^l \rho_x \rho_y F.
	\end{equation}	
	A case-by-case discussion on the values of $s$ and $l$ shows that for each choice of $s,l$ there exists just one solution to \eqref{condition on F of order 3}. Moreover, since $Fix(f)\leq \gamma_1(G_k)$ ($f_{\gamma_1(G_k)}$ acts irreducibly) there exists $v\in \gamma_2(G_k)$ such that $f(zv)=zv$. Then $f(zv)=zwf(v)=zv$ holds if and only if $w=(1-F)(v)$. Hence there exists $p$ choices for $w$, since the image of $1-F$ has dimension $1$. So there exists $8p$ such automorphisms.\\
	Note that $f\in \mathcal{F}$ if and only if $f^2\in \mathcal{F}$ and moreover $f_{\gamma_1(G_k)}=A$ if and only if $f_{\gamma_1(G_k)}^2=B$. Therefore there are $8p$ automorphisms in $\mathcal{F}$ such that the induced automorphism over $G_k/\gamma_1(G_k)$ is $B$. Hence the size of $\mathcal{F}$ is $16p$.
\end{proof}

\begin{lemma}\label{centralizer}
Let $f(\lambda)\in \aut{G_k}$ given by
\begin{equation*}		f(\lambda)=\begin{cases}
		x \mapsto y, 	\quad y \mapsto xy,\quad
		  z\mapsto z,\quad 		
		F(\lambda)={f(\lambda)}|_{\gamma_2(G_k)} =\begin{bmatrix}
		 	-k(1+\lambda) & -(1+\lambda)\\
		 0&-k^2(1+\lambda)  	\end{bmatrix}	
				\end{cases}
		\end{equation*}
	where $\lambda=k,k^2$. Then $|C_{\aut{G_k}}(f(\lambda))|=3p(p-1)$.
	\end{lemma}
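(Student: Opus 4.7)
The plan is to exploit the restriction homomorphism $\pi\colon \aut{G_k}\twoheadrightarrow \aut{\mathcal{Q}_8}$ induced by the quotient $G_k/\gamma_2(G_k)\cong \mathcal{Q}_8$, whose kernel $K$ satisfies $|K|=(p-1)p^2$ by the preceding computation. I will compute the cardinality of the image $\pi(C_{\aut{G_k}}(f(\lambda)))$ and of the intersection $K\cap C_{\aut{G_k}}(f(\lambda))$ separately and multiply.

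For the image: $\aut{\mathcal{Q}_8}\cong S_4$ acts on the three ``axes'' $\{\pm x\},\{\pm y\},\{\pm xy\}$. The induced map $\bar f(\lambda)$ cyclically permutes these axes (one checks $\bar f(\lambda)(xy)=y\cdot xy=x$ using $yx=zxy$), so $\bar f(\lambda)$ is a $3$-cycle in $S_4$, with centralizer of order $3$. Since $f(\lambda)\in C_{\aut{G_k}}(f(\lambda))$ and $\pi(f(\lambda))=\bar f(\lambda)$ generates this cyclic centralizer, the projection is surjective, contributing a factor of $3$.

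For the kernel intersection: any $h\in K$ has the form $h(x)=xv_1$, $h(y)=yv_2$, $h(z)=zw$, with $h|_{\gamma_2(G_k)}=H$; since $\rho$ is irreducible of degree $2$ and the matrices $\rho_x,\rho_y$ anticommute, a direct Schur-type argument forces $H=aI$ with $a\in\mathbb{F}_p^*$, and the automorphism relations give $v_1=2^{-1}(1+\rho_x)(w)$, $v_2=2^{-1}(1+\rho_y)(w)$. Thus $h\in K$ is parameterized by $(a,w)\in\mathbb{F}_p^*\times\mathbb{F}_p^2$. Evaluating $hf(\lambda)=f(\lambda)h$ at $z$ yields $w=F(\lambda)w$, i.e.\ $w\in\mathrm{Fix}(F(\lambda))$; since $F(\lambda)$ has eigenvalues $1$ and $\lambda\ne 1$, this is a $1$-dimensional subspace, giving $p$ choices for $w$. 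Evaluating on $\gamma_2(G_k)$ gives $aI\cdot F(\lambda)=F(\lambda)\cdot aI$, which is automatic, so $a$ is entirely free. The remaining conditions, from evaluating at $x$ and $y$, reduce to $v_2=F(\lambda)v_1$ and $\rho_y^{-1}(v_1)=(F(\lambda)-I)v_2$; I plan to verify these are automatic consequences of $w\in\mathrm{Fix}(F(\lambda))$ by a direct computation using the explicit fixed vector ($(1,0)$ for $\lambda=k$ and $(1,k^2-k)$ for $\lambda=k^2$), combined with the identities $\rho_x^2=\rho_y^2=-I$, $\rho_x\rho_y+\rho_y\rho_x=0$, and $1+k+k^2=0$. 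This gives $|K\cap C_{\aut{G_k}}(f(\lambda))|=p(p-1)$.

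Multiplying the two factors gives $|C_{\aut{G_k}}(f(\lambda))|=3\cdot p(p-1)=3p(p-1)$, as required. The main obstacle is the last verification: checking that three a priori independent centralizer conditions (one per generator of $G_k$) collapse to the single linear condition $w\in\mathrm{Fix}(F(\lambda))$, rather than cutting out a smaller subspace. This is essentially bookkeeping, but it is the only place where the specific matrix $F(\lambda)$ and the compatibility between $\rho_x,\rho_y$ and $F(\lambda)$ play a nontrivial role.
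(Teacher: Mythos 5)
Your proposal is correct and takes essentially the same route as the paper: both pass to the induced automorphism on $G_k/\gamma_2(G_k)\cong\mathcal{Q}_8$, whose centralizer in $\aut{\mathcal{Q}_8}$ has order $3$, and both count the kernel part as $p(p-1)$ via the scalar restriction to $\gamma_2(G_k)$ and the condition $w\in \mathrm{Fix}(F(\lambda))$, with the conditions coming from the generators $x$ and $y$ holding automatically (your deferred verification does go through, using $1+k+k^2=0$). Your use of $|C_{\aut{G_k}}(f(\lambda))|=|\pi(C_{\aut{G_k}}(f(\lambda)))|\cdot |K\cap C_{\aut{G_k}}(f(\lambda))|$ merely streamlines the paper's bookkeeping, which treats the three possible induced automorphisms as separate cases and verifies one of them explicitly.
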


\begin{proof}
If $h\in C_{\aut{G_k}}(f(\lambda))$ then $h_{\gamma_2(G_k)}$ centralizes $f(\lambda)_{\gamma_2(G_k)}$ and therefore $h_{\gamma_2(G_k)}$ is a power of $f(\lambda)_{\gamma_2(G_k)}$. Hence 
\begin{equation*}
h=\begin{cases} x\mapsto t v_1,\quad y\mapsto gv_2,\quad z\mapsto zw,\quad H=h|_{\gamma_2(G_k)}\in GL_2(p) \end{cases}
\end{equation*}
and $(t,g)$ is $(x,y)$ or $(y,xy)$ or $(xy,x)$. \\
If $(t,g)=(x,y)$ then 
\begin{equation}\label{comm condition}
\begin{cases}
hf(\lambda)(x)=h(y)=yv_2=f(\lambda) h(x)=yF^\lambda(v_1)\\
hf(\lambda)(y)=h(xy)=xv_1 y v_2=f(\lambda) h(y)=xy F^\lambda(v_2)\\
hf(\lambda)(z)=h(z)=zw=f(\lambda) h(z)=zF(\lambda)(w)\\
\end{cases}  \Leftrightarrow \quad
\begin{cases}
v_2=F(\lambda)(v_1)\\
(1- F(\lambda))(v_2)-\rho_y (v_1) =0\\
w=F(\lambda)(w)\\
\end{cases}
\end{equation}
Moreover $F$ is a diagonal matrix and therefore it centralizes $F(\lambda)$. Using that $v_1=(1-\rho_x)^{-1}(w)=2^{-1}(1+\rho_x)(w)$ we have that every choice of $w\in Fix(F(\lambda))$ provides a solution to \eqref{comm condition} and so it defines an element of the centralizer of $f(\lambda)$. Hence there are $p(p-1)$ such automorphisms.\\
Similarly it can be proved that there exists $p(p-1)$ automorphisms in the centralizer of $f(\lambda)$ for the other choices of $t$ and $g$. Hence the size of the centralizer is $3p(p-1)$.
%
\end{proof}

\bibliographystyle{amsalpha}
\bibliography{references}

\end{document}